\newtheorem{definition}{Definition}[section]
\newtheorem{theorem}[definition]{Theorem}
\newtheorem{lemma}[definition]{Lemma}
\newtheorem{corollary}[definition]{Corollary}
\newtheorem{remark}[definition]{Remark}
\newtheorem{example}[definition]{Example}
\newtheorem{conjecture}[definition]{Conjecture}
\newtheorem{problem}[definition]{Problem}
\newtheorem{assumption}[definition]{Assumption}
\newtheorem{proposition}[definition]{Proposition}
\begin{document} 

\title{\bf Spin models and distance-regular \\  graphs
of $q$-Racah type
}
\author{
Kazumasa Nomura and 
Paul Terwilliger}
\date{}

\maketitle
\begin{abstract} Let $\Gamma$ denote a distance-regular graph, with vertex set $X$ and diameter $D\geq 3$. We assume that
$\Gamma$ is formally self-dual and $q$-Racah type. Let $A$ denote the adjacency matrix of $\Gamma$. Pick $x \in X$, and let
$A^*=A^*(x) $ denote the dual adjacency matrix of $\Gamma$ with respect to $x$. The matrices $A, A^*$ generate the 
subconstituent algebra $T=T(x)$. We assume that for every choice of $x$ the algebra $T$ contains a certain central element $Z=Z(x)$
whose significance is illuminated by the following relations:
\begin{align*}
 {\sf A} + \frac{q {\sf B}{\sf C}-q^{-1} {\sf C}{\sf B}}{q^2-q^{-2}} = Z, \qquad 
 {\sf B}+ \frac{q {\sf C}{\sf A}-q^{-1} {\sf A}{\sf C}}{q^2-q^{-2}} = Z, \qquad 
 {\sf C} + \frac{q {\sf A}{\sf B} - q^{-1} {\sf B}{\sf A}}{q^2-q^{-2}} = Z.
\end{align*}
The  matrices $\sf A$, $\sf B$ satisfy 
$ {\sf A}= (A-\varepsilon I)/\alpha$ and $ {\sf B} = (A^*-\varepsilon I)/\alpha $, where
$\alpha, \varepsilon$ are complex scalars used to describe the eigenvalues of $A$ and $A^*$.
The matrix $\sf C$ is defined using the third displayed equation.
We use $Z$ to construct a spin model $\sf W$ afforded by $\Gamma$.
We investigate the combinatorial implications of $Z$. We reverse the logical
direction and recover $Z$ from $\sf W$. We finish with some open problems.
\bigskip

\noindent
{\bf Keywords}. Type II matrix; spin model; Bose-Mesner algebra, dual Bose-Mesner algebra,  $Q$-polynomial property.
\hfil\break
\noindent {\bf 2020 Mathematics Subject Classification}.
Primary: 05E30.
 \end{abstract}
 
\section{Introduction}
\noindent
The concept of a spin model was introduced V.~F.~R Jones \cite{Jones} in his study of knot invariants.
A spin model is a symmetric matrix over the complex numbers, that satisfies two conditions called type II and type III; see Definition \ref{def:spinM} below.
\medskip

\noindent  Spin models are relevant to algebraic graph theory, as we now explain.
In \cite{Jones} Jones described three examples of spin models:
the Potts model, the square model, and the odd cyclic model.
In \cite{JaegerHS}, Fran\c{c}ois Jaeger pointed out that these 
spin models are contained in the Bose-Mesner algebra of a distance-regular
graph $\Gamma$.
For the Potts model $\Gamma$ is a complete graph,
for the square model $\Gamma$ is the $4$-cycle,
and for the odd cyclic model $\Gamma$ is an odd cycle.
Motivated by these examples, Jaeger hunted for a spin model contained in the Bose-Mesner algebra
of a strongly-regular graph $\Gamma$. He discovered such a spin model for $\Gamma$ the Higman-Sims graph \cite{HS, JaegerHS}.
Since that discovery, other authors found  spin models contained in the Bose-Mesner algebra
of  the even cycle \cite{BB},
the Hadamard graphs \cite{nomHadamard},
and the double cover of the Higman-Sims graph \cite{mun}.
\medskip

\noindent In \cite{N:algebra}  Nomura showed that a spin model $W$ is contained in a certain algebra $N(W)$, now called the Nomura algebra of $W$.
It can happen that $W \in M \subseteq N(W)$, where $M$ is the Bose-Mesner algebra of a distance-regular graph $\Gamma$. In this case, we say that
 $\Gamma$ affords $W$. Each spin model mentioned in the  previous paragraph is afforded by the distance-regular graph in question  \cite[Section~15]{nomSpinModel}.
 These examples motivate us to study the pairs $W, \Gamma$ where $W$ is a spin model and $\Gamma$ is a distance-regular graph that affords $W$.
In the next three paragraphs, we summarize what is known about these pairs. 
 \medskip
 
\noindent  Let $\Gamma$ denote a distance-regular graph that affords a spin model $W$.
In \cite[Theorem~1.1]{curtNom} Curtin and Nomura show that $\Gamma$ is formally self-dual in the sense of \cite[Section~2.3]{bcn}. Formal self-duality
 is a special case of the $Q$-polynomial property \cite{delsarte}, so $\Gamma$ must be $Q$-polynomial. Treatments of the $Q$-polynomial property can be found in \cite{bbit, bannai, bcn, dkt,int};
see also \cite[Section~9]{curtin2hom}.
 \medskip
 
\noindent  Let $\Gamma$ denote a distance-regular graph that affords a spin model $W$.
 According to  \cite{nomSM}, if $\Gamma$ is bipartite then  $\Gamma$ is 2-homogeneous in the sense of \cite{nomHomNP}.
In \cite{curtNom}, Curtin and Nomura obtain formulas for the intersection numbers of $\Gamma$.
In \cite{C:thin}, Curtin shows that every irreducible module for every subconstituent algebra of $\Gamma$ is thin in the sense of \cite[p.~381]{tSub1}.
In \cite{CW}, Caughman and Wolff describe these irreducible modules in detail.
In \cite{CNhom}, Curtin and Nomura show that $\Gamma$ is 1-homogeneous in the sense of  \cite{nomHomNP}. They also show that
with respect any vertex of $\Gamma$, the subgraph induced on the first subconstituent is strongly-regular, and the subgraph induced on
the last subconstituent is distance-regular.
\medskip

\noindent Let $\Gamma$ denote a distance-regular graph that is formally self-dual. In \cite{nomSpinModel}, we showed that if
every irreducible module for every subconstituent algebra of $\Gamma$ takes a certain form, then 
$\Gamma$  affords a spin model. We explicitly constructed this spin model for the case in which $\Gamma$ has $q$-Racah type.
\medskip
 
 \noindent We remark that the spin model concept lead to several advances beyond graph theory.
 For example, the spin model concept motivated the notions of  a spin Leonard pair \cite{C:spinLP}, a modular
 Leonard triple \cite{mlt}, and the pseudo intertwiners for a Leonard triple of $q$-Racah type \cite{pseudo}. In \cite[Theorem~8.6]{TLus} and
 \cite[Section~3]{TLus2},
an algebraic analog of a spin model is used to describe the Lusztig automorphism of the $q$-Onsager algebra.
 Given a tridiagonal pair of $q$-Racah type, an algebraic analog of a spin model is used in \cite[p.~3]{Tqtet} to turn the underlying vector space into a module for the $q$-tetrahedron algebra.
\medskip

\noindent We now describe the present paper. Let $\Gamma$ denote a distance-regular graph, with vertex set $X$ and diameter $D\geq 3$ (official definitions begin in Section 2). 
We assume that $\Gamma$ is formally self-dual and
$q$-Racah type. We also make an assumption about the subconstituent algebras of $\Gamma$, to be described shortly.
Our goal is  to construct a spin model afforded by $\Gamma$. 
 Let $A$ denote the adjacency matrix of $\Gamma$. Pick $x \in X$, and let $A^*=A^*(x)$ denote the
dual adjacency matrix of $\Gamma$ with respect to $x$. By construction, the eigenvalues of $A$ and $A^*$ can be ordered such that
\begin{align*}
 \theta_i = \theta^*_i = \alpha( a q^{2i-D}+ a^{-1} q^{D-2i}) + \varepsilon  \qquad \qquad (0 \leq i \leq D),
 \end{align*}
where $a, \alpha, \varepsilon \in \mathbb C$ and $a, \alpha $ are nonzero.
The matrices $A, A^*$ generate the subconstituent algebra $T=T(x)$.
We assume that for every choice of $x$ the algebra $T$ contains a certain central element $Z=Z(x)$, whose significance is illuminated by the following  relations:
 \begin{align*}
 {\sf A} + \frac{q {\sf B}{\sf C}-q^{-1} {\sf C}{\sf B}}{q^2-q^{-2}} &= Z, \\
 {\sf B}+ \frac{q {\sf C}{\sf A}-q^{-1} {\sf A}{\sf C}}{q^2-q^{-2}} &= Z, \\
 {\sf C} + \frac{q {\sf A}{\sf B} - q^{-1} {\sf B}{\sf A}}{q^2-q^{-2}}& = Z.
 \end{align*}
 These relations are the universal Askey-Wilson relations in $\mathbb Z_3$-symmetric form; see Remark \ref{rem:AW}.
The above matrices $\sf A$ and $\sf B$ are defined by
\begin{align*} 
 {\sf A}= \frac{A-\varepsilon I}{\alpha}, \qquad \qquad {\sf B} = \frac{A^*-\varepsilon I}{\alpha}.
 \end{align*}
The above matrix $\sf C$ is defined using the third displayed relation.
In a moment, we will use the scalars
\begin{align*}
\tau_i =  (-1)^i a^{-i} q^{i(D-i)} \qquad \qquad (0 \leq i \leq D).
\end{align*}
The Bose-Mesner algebra $M$ of $\Gamma$ is the subalgebra of ${\rm Mat}_X(\mathbb C)$ generated by $A$. This algebra has a basis
$\lbrace A_i \rbrace_{i=0}^D$ of distance matrices and a basis $\lbrace E_i \rbrace_{i=0}^D$ of primitive idempotents.
We find all the invertible matrices $W\in  M$ such that $W^{-1} {\sf B} W = {\sf C}$. As we will see, these are precisely the
matrices of the form
\begin{align*}
W = f \sum_{i=0}^D \tau_i E_i, \qquad \qquad 0 \not=f \in \mathbb C.
\end{align*}
The dual Bose-Mesner algebra $M^*=M^*(x)$ is  the subalgebra of ${\rm Mat}_X(\mathbb C)$ generated by $A^*$. This algebra has a basis
$\lbrace A^*_i \rbrace_{i=0}^D$ of dual distance matrices and a basis $\lbrace E^*_i \rbrace_{i=0}^D$ of dual primitive idempotents.
We find all the invertible matrices $W^* \in M^*$ such that $W^* {\sf A} (W^*)^{-1} ={\sf C}$. As we will see,  these are precisely the
matrices of the form
\begin{align*}
W^* = f \sum_{i=0}^D \tau_i E^*_i \qquad \qquad 0 \not= f \in \mathbb C.
\end{align*}
For the time being, pick any $0 \not=f \in \mathbb C$ and define
\begin{align*}
{\sf W} = f \sum_{i=0}^D \tau_i E_i, \qquad \qquad {\sf W}^* = f \sum_{i=0}^D \tau_i E^*_i.
\end{align*}
 Later we will pick $f$ more carefully.
Since $\sf W, W^*$ are invertible matrices in $T$, there exists an algebra isomorphism $\rho: T \to T$
that sends
\begin{align*}
S \mapsto ({\sf W^*W})^{-1} S ({\sf W^*W}) \qquad \qquad S \in T.
\end{align*}
We will show that $\rho$ sends ${\sf A}\mapsto {\sf B} \mapsto {\sf C} \mapsto  {\sf A}$. We will also show that $\rho(E_i)=E^*_i $ $(0 \leq i \leq D)$ and $\rho({\sf W})={\sf W}^*$.
From this we get the braid relation
\begin{align} \label{eq:BR}
\sf W W^* W = W^* W W^*.
\end{align}
For $0 \leq i \leq D$ let $k_i$ denote the number of vertices in $X$ at distance $i$ from $x$.
We will show that
\begin{align*}  
            \vert X \vert = \Biggl( \sum_{i=0}^D \tau_i k_i \Biggr)\Biggl( \sum_{i=0}^D \tau^{-1}_i k_i \Biggr).
\end{align*}           
We will also show that
  \begin{align*} 
 {\sf  W} = f \frac{ \sum_{i=0}^D \tau^{-1}_i A_i}{\sum_{i=0}^D \tau^{-1}_i k_i},
  \qquad \qquad
 {\sf W}^{-1} = \frac{1}{f}\, \frac{ \sum_{i=0}^D \tau_i A_i}{\sum_{i=0}^D \tau_i k_i}.
\end{align*}
These facts yield
 \begin{align} \label{eq:WcW}
         {\sf W} \circ{\sf  W}^{-1} = \vert X \vert^{-1} J,
  \end{align}
where $J\in {\rm Mat}_X(\mathbb C)$ is the all 1's matrix and $\circ $ is the entry-wise product. Using \eqref{eq:BR}, \eqref{eq:WcW} we will show that $\sf W$ is a spin model, provided that
\begin{align*}
f^2 = \vert X\vert^{1/2} \sum_{i=0}^D t^{-1}_i k_i.
\end{align*}
In this case, the spin model $\sf W$ is afforded by $\Gamma$ as we will show. 
\noindent
 Our main result is Theorem \ref{thm:main}. In the main body of the paper, we will investigate the combinatorial implications 
 of the central element $Z$. We will also reverse the logical direction, and recover $Z$ from $\sf W$. 
  At the end of the paper we give some open problems.
 \medskip
 
  \noindent The paper is organized as follows.
 Section 2 contains some preliminaries.
 In Section 3, we review the definitions of a type II matrix and spin model.
  In Section 4, we review some basic facts about a distance-regular graph $\Gamma$.
  In Section 5, we describe how the Bose-Mesner algebra $M$ and each dual Bose-Mesner algebra $M^*$ are related.
    In Section 6, we recall what it means for $\Gamma$ to be formally self-dual.
     In Section 7, we introduce the matrix $Z$.
  In Section 8, we introduce the matrices $\sf A, B, C$ and show how they are related to $Z$.
   In Section 9, we find the invertible matrices $W \in M$ such that $W^{-1} {\sf B} W = {\sf C}$, and the invertible matrices $W^* \in M^*$ such that $W^* {\sf A} (W^*)^{-1} = {\sf C}$.
    In Section 10, we introduce the matrices $\sf W, W^*$.
     In Section 11, we use $\sf W, W^*$ to obtain an algebra isomorphism $\rho : T \to T$ that sends ${\sf A} \mapsto {\sf B}\mapsto {\sf C} \mapsto {\sf A}$.
  In Section 12, we use $\rho $ to get the braid relation $\sf W W^* W = W^* W W^*$.
   In Section 13, we show that ${\sf W} \circ {\sf W^{-1}} = \vert X \vert^{-1}  J$.
    In Section 14, we show that $\sf W$ is a spin model afforded by $\Gamma$.
     In Section 15, we discuss the combinatorial implications of $Z$.
  In Section 16, we reverse the logical direction and recover $Z$ from $\sf W$.
   In Section 17, we give some suggestions for future research.
     Section 18 is an appendix that contains some formulas that are used in the main body of the paper. 
 
\section{Preliminaries}

\noindent 
We now begin our formal argument. The following concepts and notation will be used throughout the paper.
Recall the integers $\mathbb Z = \lbrace 0, \pm 1, \pm 2,\ldots \rbrace$
and the field $\mathbb C$ of  complex numbers. Let $X$ denote a nonempty finite set. An element of $X$ is called a {\it vertex}.  Let ${\rm Mat}_X(\mathbb C)$ denote the $\mathbb C$-algebra consisting of the matrices
that have rows and columns indexed by $X$ and all entries in $\mathbb C$. Let $I \in {\rm Mat}_X(\mathbb C)$ denote the identity matrix.
Let $J \in {\rm Mat}_X(\mathbb C)$ have all entries 1. For $A \in {\rm Mat}_X(\mathbb C)$ let $A^t$ denote the transpose of $A$, and let $\overline A$ denote the complex conjugate of $A$.
\medskip

\noindent For $A, B \in {\rm Mat}_X(\mathbb C)$ let $A \circ B$ denote the matrix in ${\rm Mat}_X(\mathbb C)$
that has $(y,z)$-entry $A_{y,z} B_{y,z}$ for all $y,z \in X$. We call $\circ$ the {\it Hadamard product} or {\it entrywise product}.
For $A, B \in {\rm Mat}_X(\mathbb C)$  define $\lbrack A, B \rbrack=AB-BA$.
\medskip

\noindent On occasion we will speak of the square root $\vert X \vert^{1/2}$. We always choose $\vert X \vert^{1/2}>0$.

\section{Type II matrices and spin models}

\noindent 
In this section, we first review a family of matrices in ${\rm Mat}_X(\mathbb C)$, said to have type II. We then review a special  type II matrix, called a spin model.

\begin{definition}\rm Let $W \in {\rm Mat}_X(\mathbb C)$ have all entries nonzero. Define a matrix $W^{(-)} \in {\rm Mat}_X(\mathbb C)$ that has  entries
 \begin{align*}
  W^{(-)}_{a,b} = \frac{1}{W_{b,a}} \qquad \qquad (a,b \in X).
  \end{align*}
Thus
\begin{align} \label{eq:WWJ}
W^t \circ W^{(-)} = J.
\end{align}
\end{definition}

  \begin{definition}\label{def:type2} \rm  (See \cite[Definition~2.1]{Jones}). A matrix $W\in {\rm Mat}_X(\mathbb C)$ is called {\it type II}
  whenever the following conditions hold:
  \begin{enumerate}
  \item[\rm (i)] $W$ has all entries nonzero;
  \item[\rm (ii)] $W W^{(-)} \in {\rm Span} (I)$.
   \end{enumerate}
  \end{definition}
  
  \begin{example}\rm
  Assume that $\vert X \vert=2$. Let $W \in {\rm Mat}_X(\mathbb C)$ have all entries nonzero. Write
  \begin{align*}
  W = \begin{pmatrix} r & s \\
           t & u
           \end{pmatrix}.
           \end{align*}
\noindent We have
\begin{align*}
W^{(-)} = \begin{pmatrix} r^{-1} & t^{-1} \\ s^{-1} & u^{-1}
\end{pmatrix}.
\end{align*}
Observe that
\begin{align*}
     W W^{(-)} = \begin{pmatrix} 2 & \frac{r}{t}+\frac{s}{u} \\ 
     \frac{t}{r} + \frac{u}{s} & 2 
     \end{pmatrix}.
\end{align*}
The matrix $W$ is type II if and only if $ru+st=0$. In this case,
\begin{align*}
W W^{(-)} = 2I.
\end{align*}
  \end{example}
  \noindent In the next result, we list some basic facts about  type II matrices.
  
  \begin{lemma} \label{lem:WW} Assume that $W \in {\rm Mat}_X(\mathbb C)$ is type II. Then the following {\rm (i)--(iii)} hold:
  \begin{enumerate}
  \item[\rm (i)]  $W W^{(-)} = \vert X \vert I$;
    \item[\rm (ii)] $W^{-1}$ exists;
      \item[\rm (iii)] $W^{-1} = \vert X \vert^{-1} W^{(-)}$.
  \end{enumerate}
  \end{lemma}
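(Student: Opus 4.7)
The plan is to leverage Definition \ref{def:type2}(ii) directly: since $WW^{(-)} \in {\rm Span}(I)$, I can write $WW^{(-)} = \lambda I$ for some scalar $\lambda \in \mathbb{C}$, and everything reduces to identifying $\lambda$ and then extracting the inverse from the resulting equation.

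For part (i), the key step is to compute a single diagonal entry of $WW^{(-)}$ from the definition of $W^{(-)}$. For any $a \in X$,
\begin{align*}
(WW^{(-)})_{a,a} = \sum_{b \in X} W_{a,b} W^{(-)}_{b,a} = \sum_{b \in X} W_{a,b} \cdot \frac{1}{W_{a,b}} = |X|,
\end{align*}
where the second equality uses the definition $W^{(-)}_{b,a} = 1/W_{a,b}$ and the fact that $W$ has all entries nonzero (so no division by zero occurs). Comparing with $WW^{(-)} = \lambda I$, this forces $\lambda = |X|$, giving (i).

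For parts (ii) and (iii), I would observe that since $X$ is nonempty the scalar $|X|$ is nonzero, so (i) can be rewritten as $W \bigl(|X|^{-1} W^{(-)}\bigr) = I$. This exhibits $|X|^{-1} W^{(-)}$ as a right inverse of $W$ in the finite-dimensional algebra ${\rm Mat}_X(\mathbb{C})$; for square matrices over a field this automatically yields a two-sided inverse, so $W^{-1}$ exists and equals $|X|^{-1} W^{(-)}$.

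There is no real obstacle here: the entire argument is a two-line diagonal computation plus the standard fact that a right inverse of a square matrix is the inverse. The only thing worth being careful about is invoking the nonzero-entry hypothesis when reciprocating $W_{a,b}$, so that the sum is well-defined before concluding $\lambda = |X|$.
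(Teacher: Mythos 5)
Your argument matches the paper's proof: both compute the $(a,a)$-entry of $WW^{(-)}=\alpha I$ to conclude $\alpha=|X|$, and then read off (ii) and (iii) from (i). Your additional remark that a one-sided inverse of a square matrix is two-sided is a small elaboration the paper leaves implicit, but the approach is the same.
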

  \begin{proof} (i) By Definition \ref{def:type2}, there exists $\alpha \in \mathbb C$ such that $W W^{(-)} = \alpha I$. We show that $\alpha = \vert X \vert$. To do this, let $a \in X$
  and consider the $(a,a)$-entry of each side of $W W^{(-)} = \alpha I$. This yields
  \begin{align*}
  \alpha  = \sum_{b \in X} W_{a,b} W^{(-)}_{b,a} 
               = \sum_{b \in X} 1
               = \vert X\vert.
              \end{align*}
              \noindent (ii), (iii) By (i).
  \end{proof}
  
  \begin{lemma} \label{lem:type2Char}
  For $W \in {\rm Mat}_X(\mathbb C)$ the following are equivalent:
  \begin{enumerate}
  \item[\rm (i)]  $W$ is type II;
  \item[\rm (ii)] $W^{-1} $ exists and 
  \begin{align}
   W^t \circ W^{-1} = \vert X \vert^{-1} J.
   \label{eq:WCW}
   \end{align}
  \end{enumerate}
  \end{lemma}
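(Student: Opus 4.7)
The plan is to prove the two implications separately, using Lemma~\ref{lem:WW} and equation \eqref{eq:WWJ} for the forward direction, and inverting this reasoning for the converse. Both directions are essentially entry-wise calculations, and the main content is tracking what it means for all entries of $W$ to be nonzero.

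For (i)$\Rightarrow$(ii), I would assume $W$ is type II. By Lemma~\ref{lem:WW}, $W^{-1}$ exists and $W^{-1} = \vert X\vert^{-1} W^{(-)}$. Substituting this into $W^t \circ W^{-1}$ and using \eqref{eq:WWJ} gives
\begin{align*}
W^t \circ W^{-1} = \vert X\vert^{-1}(W^t \circ W^{(-)}) = \vert X\vert^{-1} J,
\end{align*}
as required.

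For (ii)$\Rightarrow$(i), suppose $W^{-1}$ exists and $W^t \circ W^{-1} = \vert X \vert^{-1} J$. Taking $(a,b)$-entries, this reads $W_{b,a}(W^{-1})_{a,b} = \vert X\vert^{-1}$ for all $a,b \in X$. Since $\vert X \vert^{-1}\neq 0$, every entry of $W$ is nonzero, so $W^{(-)}$ is defined, and moreover $(W^{-1})_{a,b} = \vert X\vert^{-1}/W_{b,a} = \vert X\vert^{-1} W^{(-)}_{a,b}$. Thus $W^{-1} = \vert X \vert^{-1} W^{(-)}$, whence $WW^{(-)} = \vert X \vert I \in {\rm Span}(I)$, so $W$ is type II by Definition~\ref{def:type2}.

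There is no real obstacle here: the only subtlety is that condition (ii) implicitly encodes the nonvanishing of the entries of $W$ through the identity $W_{b,a}(W^{-1})_{a,b} = \vert X\vert^{-1}$, and one must extract this before the symbol $W^{(-)}$ can legitimately be used. Once that is noted, both implications are single-line computations.
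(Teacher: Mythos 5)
Your proof is correct and follows essentially the same route as the paper: the forward direction via Lemma~\ref{lem:WW} and equation~\eqref{eq:WWJ}, and the reverse direction by reading off from~\eqref{eq:WCW} that $W$ has nonzero entries and $W^{(-)} = \vert X \vert W^{-1}$, then concluding $W W^{(-)} = \vert X \vert I$. You simply make the entry-wise bookkeeping explicit where the paper leaves it implicit.
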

\begin{proof}  ${\rm (i)} \Rightarrow {\rm (ii)}$ By \eqref{eq:WWJ} and Lemma \ref{lem:WW}(ii),(iii).\\
\noindent  ${\rm (ii)} \Rightarrow {\rm (i)}$  By  \eqref{eq:WCW}, the matrix $W$ has all entries nonzero and $W^{(-)}= \vert X \vert W^{-1}$. Therefore
\begin{align*}
W W^{(-)} = \vert X \vert W W^{-1} = \vert X \vert I.
\end{align*}
The matrix $W$ is type II in view of Definition \ref{def:type2}.
\end{proof}

\begin{lemma} \label{lem:MinusM}  Assume that $W \in {\rm Mat}_X(\mathbb C)$ is type II.  Let $0 \not= \alpha \in \mathbb C$. Then $\alpha W$ is type II.
\end{lemma}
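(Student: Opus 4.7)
The plan is to verify the two conditions of Definition \ref{def:type2} directly for the scaled matrix $\alpha W$, using what we already know about $W$.

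First I would observe that since all entries of $W$ are nonzero and $\alpha\neq 0$, every entry of $\alpha W$ is nonzero; this takes care of condition (i). Next I would compute $(\alpha W)^{(-)}$ from the definition: its $(a,b)$-entry is $1/((\alpha W)_{b,a}) = \alpha^{-1} W^{(-)}_{a,b}$, so
\begin{align*}
(\alpha W)^{(-)} = \alpha^{-1} W^{(-)}.
\end{align*}
Multiplying the two scaled matrices then gives $(\alpha W)(\alpha W)^{(-)} = \alpha \alpha^{-1} W W^{(-)} = W W^{(-)}$, which by Lemma \ref{lem:WW}(i) equals $|X| I \in \mathrm{Span}(I)$. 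This verifies condition (ii) of Definition \ref{def:type2}, so $\alpha W$ is type II.

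The argument is essentially a one-line scaling check, so there is no serious obstacle; the only thing to be careful about is getting the exponent on $\alpha$ correct in $(\alpha W)^{(-)}$, since the entrywise reciprocal inverts the scalar. Everything else follows immediately from Lemma \ref{lem:WW} and Definition \ref{def:type2}.
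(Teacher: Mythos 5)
Your proof is correct. The paper's own proof simply says ``Use Lemma \ref{lem:type2Char}'' (i.e.\ scale the identity $W^t\circ W^{-1}=|X|^{-1}J$, noting $(\alpha W)^t\circ(\alpha W)^{-1}=\alpha\alpha^{-1}\,W^t\circ W^{-1}$), whereas you verify Definition \ref{def:type2} directly via $(\alpha W)^{(-)}=\alpha^{-1}W^{(-)}$ and Lemma \ref{lem:WW}(i); these are the same trivial scaling check phrased through two equivalent formulations of the type II condition.
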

\begin{proof} Use Lemma \ref{lem:type2Char}.
\end{proof}

\begin{lemma} Assume that $W \in {\rm Mat}_X(\mathbb C)$ is type II.   Then $W^t$ and $W^{-1}$ are type II.
\end{lemma}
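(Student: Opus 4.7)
The plan is to apply the symmetric characterization of type II matrices given in Lemma \ref{lem:type2Char}: a matrix $M \in {\rm Mat}_X(\mathbb{C})$ is type II if and only if $M^{-1}$ exists and $M^t \circ M^{-1} = |X|^{-1} J$. Armed with this criterion, both assertions reduce to routine manipulations with transposition and the Hadamard product.

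First I would handle $W^t$. The key step is to transpose the defining identity $W^t \circ W^{-1} = |X|^{-1} J$ from Lemma \ref{lem:type2Char}. Using the elementary identities $(A \circ B)^t = A^t \circ B^t$, $(W^{-1})^t = (W^t)^{-1}$, and $J^t = J$, this yields
$$W \circ (W^t)^{-1} = |X|^{-1} J.$$
Since $W = (W^t)^t$ and $(W^t)^{-1}$ exists (it equals $(W^{-1})^t$), this is precisely the criterion of Lemma \ref{lem:type2Char} applied to $W^t$. Hence $W^t$ is type II.

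Next, for $W^{-1}$: its inverse $W$ trivially exists, so by Lemma \ref{lem:type2Char} it suffices to verify $(W^{-1})^t \circ W = |X|^{-1} J$. Since the Hadamard product is commutative, this is the same as $W \circ (W^{-1})^t$, which equals $|X|^{-1} J$ by the transposed identity derived in the previous paragraph. Hence $W^{-1}$ is type II.

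There is no real obstacle here; the content is entirely formal. The only mildly interesting point is to notice that the criterion $M^t \circ M^{-1} = |X|^{-1} J$ is symmetric in $M$ and $M^{-1}$ up to a transposition, so once the property has been packaged in the clean form of Lemma \ref{lem:type2Char} both statements fall out of a single transposition of one equation.
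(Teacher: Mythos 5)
Your proof is correct and uses exactly the same approach as the paper, which simply states ``Use Lemma \ref{lem:type2Char}'' without spelling out the details; you have supplied the routine transposition and Hadamard-product manipulations that the paper leaves to the reader.
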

\begin{proof} Use Lemma \ref{lem:type2Char}.
\end{proof}

 \begin{definition}\label{def:spinM} \rm (See \cite[Definition~2.1]{Jones}). A matrix $W \in {\rm Mat}_X(\mathbb C)$  is called a {\it spin model} whenever the following {\rm (i)--(iii)} hold:
 \begin{enumerate}
 \item[\rm (i)] 
  $W$ is symmetric;
  \item[\rm (ii)] $W$ is 
   type II;
 \item[\rm (iii)]   
 for all $a,b,c \in X$,
  \begin{align} \label{eq:type3}
  \sum_{e\in X} \frac{W_{e,b} W_{e,c}}{W_{e,a}} = \vert X \vert^{1/2} \frac{ W_{b,c}}{W_{a,b}W_{c,a}}.
  \end{align}
  \end{enumerate}
  We call condition \eqref{eq:type3} the {\it type III} condition or the {\it star-triangle} condition.
  \end{definition}
\begin{lemma} Assume that $W \in {\rm Mat}_X(\mathbb C)$ is a spin model. Then $-W$ is a spin model.
\end{lemma}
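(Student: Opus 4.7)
The plan is to verify the three defining conditions (i)--(iii) of Definition \ref{def:spinM} for the matrix $-W$, using the corresponding properties of $W$. Each condition is handled by tracking how the sign propagates, so no substantial new ideas are required.

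First I would check symmetry: since $W$ is symmetric by Definition \ref{def:spinM}(i), so is $-W$, which establishes condition (i). Next I would invoke Lemma \ref{lem:MinusM} with scalar $\alpha=-1$ to conclude that $-W$ is type II, establishing condition (ii).

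The only step that requires any calculation is the type III (star-triangle) condition. The key observation is that the left-hand side of \eqref{eq:type3} applied to $-W$ has three factors of $-1$ in each summand (two in the numerator, one in the denominator), so the overall sum picks up a factor of $-1$; and the right-hand side applied to $-W$ also acquires a single overall sign (one factor of $-1$ in the numerator, two in the denominator). Thus both sides of \eqref{eq:type3} for $-W$ equal $-1$ times the corresponding sides for $W$, and the identity transfers from $W$ to $-W$.

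The whole argument is essentially bookkeeping of signs; I do not expect a real obstacle. The only thing worth being careful about is that condition (iii) assumes all entries of the matrix are nonzero, but that is guaranteed by the type II property (Definition \ref{def:type2}(i)) applied to $-W$, which we have just verified.
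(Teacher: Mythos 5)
Your proof is correct and follows exactly the same route as the paper: check symmetry directly, cite Lemma \ref{lem:MinusM} for the type II property, and observe that both sides of \eqref{eq:type3} pick up the same overall sign of $-1$ when $W$ is replaced by $-W$. The paper's version is terser (it simply asserts that \eqref{eq:type3} remains valid under the replacement), but your sign-counting is the intended justification.
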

\begin{proof} The matrix $-W$ is symmetric. The matrix $-W$ is type II by Lemma \ref{lem:MinusM}. The equation \eqref{eq:type3} remains valid if we replace $W$ by $-W$.
\end{proof}
\begin{lemma} \label{lem:WminusSM} {\rm (See \cite[Section~2]{Jones} and \cite[Proposition~2.1]{kawagoe}).}  Assume that $W \in {\rm Mat}_X(\mathbb C)$ is a spin model. Then $W^{(-)}$ is a spin model.
\end{lemma}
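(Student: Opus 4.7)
My plan is to verify the three defining conditions of a spin model (Definition \ref{def:spinM}) for $W^{(-)}$, with essentially all the content concentrated in the star-triangle (type III) condition.

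The first two conditions come cheaply. Symmetry of $W^{(-)}$ is immediate from symmetry of $W$: one has $W^{(-)}_{a,b} = 1/W_{b,a} = 1/W_{a,b} = W^{(-)}_{b,a}$. For type II, Lemma \ref{lem:WW}(iii) yields $W^{(-)} = |X|\,W^{-1}$; the earlier lemma (the one stating that $W^{-1}$ is type II whenever $W$ is) together with Lemma \ref{lem:MinusM} applied with $\alpha = |X|$ then shows $W^{(-)}$ is type II.

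For the star-triangle relation, using symmetry of $W$ to write $W^{(-)}_{x,y} = 1/W_{x,y}$ and clearing the resulting reciprocals reduces the type III condition for $W^{(-)}$ to the identity
\[
\sum_{e \in X} \frac{W_{e,a}}{W_{e,b}\,W_{e,c}} \;=\; |X|^{1/2}\,\frac{W_{a,b}\,W_{a,c}}{W_{b,c}} \qquad (a,b,c \in X). \qquad (\ast)
\]
To prove $(\ast)$ I would recast the original type III condition for $W$ as a single matrix-vector identity. Fix $b, c \in X$ and define vectors $\mathbf w, \mathbf v \in \mathbb C^X$ by $\mathbf w_e = W_{e,b} W_{e,c}$ and $\mathbf v_a = |X|^{1/2} W_{b,c}/(W_{a,b} W_{a,c})$. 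Since $W^{(-)}_{a,e} = 1/W_{e,a}$, the type III condition for $W$ says precisely that $W^{(-)}\,\mathbf w = \mathbf v$, read off one $a$-component at a time.

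Now Lemma \ref{lem:WW} gives $(W^{(-)})^{-1} = |X|^{-1} W$, so inverting produces $\mathbf w = |X|^{-1} W\,\mathbf v$. Reading off the $e$-component, dividing through by the constant prefactor, relabeling the summation index, and using symmetry of $W$ once more delivers $(\ast)$. I expect the main obstacle to be spotting the matrix-vector reformulation: once one recognizes that type III for $W$ is the single equation $W^{(-)} \mathbf w = \mathbf v$, the invertibility of $W^{(-)}$ (itself a consequence of type II for $W$) swaps the roles of $W$ and $W^{(-)}$ and delivers the corresponding identity for $W^{(-)}$ for free.
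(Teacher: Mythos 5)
Your proof is correct, and it fills a genuine gap: the paper does not supply its own argument for this lemma, deferring instead to external references (Jones and Kawagoe), so there is no internal proof to compare against. Your treatment of symmetry and the type II condition is routine and fine, and the key step is exactly right: fixing $b,c$ and packaging the star-triangle condition for $W$ as the single vector equation $W^{(-)}\mathbf{w}=\mathbf{v}$, then using $(W^{(-)})^{-1}=|X|^{-1}W$ (from Lemma \ref{lem:WW}) to obtain $\mathbf{w}=|X|^{-1}W\mathbf{v}$, which after unwinding components and relabeling is precisely the star-triangle condition for $W^{(-)}$. This is a clean, self-contained verification.
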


 \section{Distance-regular graphs}
 
 \noindent In this section we review some basic results about
 distance-regular graphs, focussing on the $Q$-polynomial property and the subconstituent algebra. Our review is adapted from
 \cite{tSub1,tSub2, tSub3} 
 and 
 \cite{int};  for more information see
 \cite{bannai, bbit, bcn, dkt}.
Let $\Gamma = (X, \mathcal R)$ denote a finite, undirected, connected graph,
without loops or multiple edges, with vertex set $X$ and 
adjacency relation
$\mathcal R$.  
Let $\partial $ denote the
path-length distance function for $\Gamma $,  and define
$D = \mbox{max}\lbrace \partial(y,z) \vert y,z \in X\rbrace $.  
We call $D$  the {\it diameter} of $\Gamma $. For all vertices $y \in X$ and all integers $i$ $(0 \leq i \leq D)$ define the set
$\Gamma_i(y) = \lbrace z \in X\vert \partial(y,z)=i \rbrace$. 
The graph  $\Gamma$ is called {\it distance-regular}
whenever for all integers $h,i,j\;(0 \leq h,i,j \leq D)$ 
and  all
vertices $y,z \in X$ with $\partial(y,z)=h,$ the number
$p^h_{i,j} = \vert \Gamma_i(y) \cap \Gamma_j(z) \vert$
is independent of $y$ and $z$. The $p^h_{i,j}$  are called
the {\it intersection numbers} of $\Gamma$. For the rest of this paper, we assume that $\Gamma$ is distance-regular with $D\geq 3$.
By construction $p^h_{i,j} = p^h_{j,i} $ for $0 \leq h,i,j\leq D$.  
By the triangle inequality, the following hold for $0 \leq h,i,j\leq D$:
\begin{enumerate}
\item[\rm (i)] $p^h_{i,j}= 0$ if one of $h,i,j$ is greater than the sum of the other two;
\item[\rm (ii)] $p^h_{i,j}\not=0$ if one of $h,i,j$ is equal to the sum of the other two.
\end{enumerate}
We abbreviate
\begin{align*}
c_i = p^i_{1,i-1} \;\; (1 \leq i \leq D), \qquad a_i = p^i_{1,i} \;\; (0 \leq i \leq D), \qquad  b_i = p^i_{1,i+1} \;\; (0 \leq i \leq D-1).
\end{align*}
We emphasize that $c_i \not= 0 $  $(1 \leq i \leq D)$ and $b_i \not= 0 $ $(0 \leq i \leq D-1)$.
Note that $a_0=0$ and $c_1=1$.
For $0 \leq i \leq D$ define $k_i = p^0_{i,i}$ and note that $k_i = \vert \Gamma_i(y) \vert$ for all $y \in X$.
We call $k_i$ the $i$th {\it valency} of $\Gamma$. 
By \cite[p.~195]{bannai} we have
\begin{align} \label{eq:kicibi}
k_i = \frac{ b_0 b_1 \cdots b_{i-1}}{c_1 c_2 \cdots c_i} \qquad \qquad (0 \leq i \leq D).
\end{align}
Abbreviating $k=k_1$ we have
\begin{align*}
k=c_i + a_i + b_i  \qquad \qquad (0 \leq i \leq D),
\end{align*}
where $c_0=0$ and $b_D=0$.
\medskip

\noindent We recall the Bose-Mesner algebra of $\Gamma.$ 
For 
$0 \leq i \leq D$ define $A_i \in {\rm Mat}_X(\mathbb C)$ that has
$(y,z)$-entry
\begin{align*}
(A_i)_{y,z} = \begin{cases}  
1, & {\mbox{\rm if $\partial(y,z)=i$}};\\
0, & {\mbox{\rm if $\partial(y,z) \ne i$}}
\end{cases}
 \qquad \qquad (y,z \in X).
\end{align*}
We call $A_i$ the $i$th {\it distance matrix} of $\Gamma$. 
We call $A=A_1$  the {\it adjacency
matrix} of $\Gamma$. Note that
(i) $A_0 = I$;
 (ii)
$J=\sum_{i=0}^D A_i $;
(iii)  $A_i^t = A_i  \;(0 \leq i \leq D)$;
(iv)  $\overline{A_i} = A_i  \;(0 \leq i \leq D)$;
(v) $A_iA_j = \sum_{h=0}^D p^h_{i,j} A_h \;( 0 \leq i,j \leq D) $.
Consequently the matrices
 $\lbrace A_i\rbrace_{i=0}^D$
form a basis for a commutative subalgebra $M$ of ${\rm Mat}_X(\mathbb C)$, called the 
{\it Bose-Mesner algebra} of $\Gamma$.
The matrix $A$ generates $M$ \cite[Corollary~3.4]{int}. 
The matrices $\lbrace A_i \rbrace_{i=0}^D$ are symmetric and mutually commute, so they can be simultaneously diagonalized over the real numbers. Consequently
$M$ has a second basis 
$\lbrace E_i\rbrace_{i=0}^D$ such that
(i) $E_0 = |X|^{-1}J$;
(ii) $I=\sum_{i=0}^D E_i$;
(iii) $E_i^t =E_i  \;(0 \leq i \leq D)$;
(iv) $\overline{E_i} =E_i  \;(0 \leq i \leq D)$;
(v) $E_iE_j =\delta_{i,j}E_i  \;(0 \leq i,j \leq D)$.
We call $\lbrace E_i\rbrace_{i=0}^D$  the {\it primitive idempotents}
of $\Gamma$. 
\medskip

\noindent We recall the first and second eigenmatrices of $\Gamma$. Let ${\rm Mat}_{D+1}(\mathbb C)$ denote the $\mathbb C$-algebra consisting of the 
$(D+1) \times (D+1)$ matrices that have all entries in $\mathbb C$. We index the rows and columns by $0,1,\ldots, D$. Since $\lbrace A_i \rbrace_{i=0}^D$ and
$\lbrace E_i \rbrace_{i=0}^D$ are bases for $M$, there exist matrices $P, Q \in {\rm Mat}_{D+1}(\mathbb C)$ such that
\begin{align}  \label{eq:APQ}
A_j = \sum_{i=0}^D P_{i,j} E_i, \qquad \qquad E_j = \vert X \vert^{-1} \sum_{i=0}^D Q_{i,j} A_i \qquad \qquad (0 \leq j \leq D).
\end{align}
Note that $PQ = QP = \vert X \vert I$. By construction, the entries of $P$ and $Q$ are real.
We call $P$ (resp. $Q$) the {\it first eigenmatrix} (resp. {\it second eigenmatrix}) of $\Gamma$; see \cite[p.~60]{bannai}.
 \medskip
 
 \noindent We recall the Krein parameters of $\Gamma$. 
Note that $A_i \circ A_j = \delta_{i,j} A_i$  $(0 \leq i,j\leq D)$. Therefore  $M$  is closed under $\circ$.
Consequently, there exist scalars $q^{h}_{i,j} \in \mathbb C $ $(0 \leq h,i,j\leq D)$ such that
\begin{align*} 
E_i \circ E_j = \vert X \vert^{-1} \sum_{h=0}^D q^h_{i,j} E_h \qquad \qquad (0 \leq i,j\leq D).
\end{align*}  
The scalars $q^{h}_{i,j}$ are called the {\it Krein parameters} of $\Gamma$.
By construction $q^h_{i,j} = q^h_{j,i}$  $(0 \leq h,i,j\leq D)$.
By \cite[p.~69]{bannai} the scalar $q^h_{i,j}$ is real and nonnegative $(0 \leq h,i,j\leq D)$.
\medskip

 \noindent We recall the $Q$-polynomial property. The ordering $\lbrace E_i \rbrace_{i=0}^D$ is said to be {\it $Q$-polynomial} whenever the following hold
for $0 \leq h,i,j\leq D$:
\begin{enumerate}
\item[\rm (i)] $q^h_{i,j}=0$ if one of $h,i,j$ is greater than the sum of the other two;
\item[\rm (ii)]  $q^h_{i,j}\not=0$ if one of $h,i,j$ is equal to the sum of the other two.
\end{enumerate}
For the rest of this section, we  assume that the ordering $\lbrace E_i \rbrace_{i=0}^D$ is $Q$-polynomial. We abbreviate
\begin{align*}
c^*_i = q^i_{1,i-1} \;\; (1 \leq i \leq D), \qquad a^*_i = q^i_{1,i} \;\; (0 \leq i \leq D), \qquad  b^*_i = q^i_{1,i+1} \;\; (0 \leq i \leq D-1).
\end{align*}
We emphasize that $c^*_i \not= 0 $  $(1 \leq i \leq D)$ and $b^*_i \not= 0 $ $(0 \leq i \leq D-1)$. By \cite[Lemma~5.15]{int} we have $a^*_0=0$ and $c^*_1=1$.
For $0 \leq i \leq D$ define $k^*_i = q^0_{i,i}$. By \cite[Lemma~5.15]{int} we have $k^*_i = {\rm rank}(E_i)$.
We call $k^*_i$ the $i$th {\it multiplicity} of the ordering $\lbrace E_j \rbrace_{j=0}^D$. 
By \cite[p.~276]{bannai} we have
\begin{align*}
k^*_i = \frac{ b^*_0 b^*_1 \cdots b^*_{i-1}}{c^*_1 c^*_2 \cdots c^*_i} \qquad \qquad (0 \leq i \leq D).
\end{align*}
 Abbreviate $k^*=k^*_1$. By \cite[Lemma~5.15]{int} we have
 \begin{align*}
 k^* = c^*_i + a^*_i + b^*_i \qquad \qquad (0 \leq i \leq D),
 \end{align*}
 where $c^*_0=0$ and $b^*_D=0$.
 \medskip

 \noindent
We  recall the dual Bose-Mesner algebras of $\Gamma$.
For the rest of this section, fix
a vertex $x \in X$. We call $x$ the {\it base vertex}.
For 
$ 0 \leq i \leq D$ let $E_i^*=E_i^*(x)$ denote the diagonal
matrix in 
 ${\rm Mat}_X(\mathbb C)$
 that has $(y,y)$-entry
\begin{align*}
(E_i^*)_{y,y} = \begin{cases} 1, & \mbox{\rm if $\partial(x,y)=i$};\\
0, & \mbox{\rm if $\partial(x,y) \ne i$}
\end{cases}
 \qquad \qquad (y \in X).
\end{align*}
We call $E_i^*$ the  $i$th {\it dual primitive idempotent of $\Gamma$
 with respect to $x$} \cite[p.~378]{tSub1}. Note that
(i) $I=\sum_{i=0}^D E_i^*$;
(ii) $(E_i^*)^t = E_i^*$ $(0 \leq i \leq D)$;
(iii) $\overline{E^*_i} = E_i^*$ $(0 \leq i \leq D)$;
(iv) $E_i^*E_j^* = \delta_{i,j}E_i^* $ $(0 \leq i,j \leq D)$.
Consequently the matrices
$\lbrace E_i^*\rbrace_{i=0}^D$ form a 
basis for a commutative subalgebra
$M^*=M^*(x)$ of 
${\rm Mat}_X(\mathbb C)$.
We call 
$M^*$ the {\it dual Bose-Mesner algebra of
$\Gamma$ with respect to $x$} \cite[p.~378]{tSub1}.
\medskip

\noindent We recall the dual distance matrices of $\Gamma$.
  For $0 \leq i \leq D$ let  $A^*_i = A^*_i(x) $ denote the diagonal matrix in ${\rm Mat}_X(\mathbb C)$
that has $(y,y)$-entry
\begin{align*}
 (A^*_i)_{y,y} = \vert X \vert (E_i)_{x,y} \qquad \qquad (y \in X).
 \end{align*}
 By \cite[Lemma~5.8]{int} the matrices $\lbrace A^*_i \rbrace_{i=0}^D$ form a basis for $M^*$. We have
 (i) $A^*_0=I$;
(ii) $\sum_{i=0}^D A^*_i = \vert X \vert E^*_0$;
(iii)  $(A^*_i)^t = A^*_i \;(0 \leq i \leq D)$;
(iv)  $\overline{A^*_i}= A^*_i \;(0 \leq i \leq D)$;
(v) $A^*_i A^*_j = \sum_{h=0}^D q^h_{i,j} A^*_h\; (0 \leq i,j\leq D)$.
We call $A^*_i$ the $i$th {\it dual distance matrix of $\Gamma$ with respect to $x$ and the ordering $\lbrace E_j\rbrace_{j=0}^D$}. We call $A^*=A^*_1$ 
the {\it dual adjacency matrix of $\Gamma$ with respect to $x$ and the ordering $\lbrace E_j\rbrace_{j=0}^D$}. The matrix $A^*$ generates $M^*$ \cite[Corollary~11.6]{int}.
By \cite[Lemmas~3.9,~5.10]{int} we have
\begin{align} \label{eq:AsPQ}
A^*_j = \sum_{i=0}^D Q_{i,j} E^*_i, \qquad \qquad E^*_j = \vert X \vert^{-1} \sum_{i=0}^D P_{i,j} A^*_i \qquad \qquad (0 \leq j \leq D).
\end{align}

\noindent
We recall the subconstituent algebras of $\Gamma$.
Let $T=T(x)$ denote the subalgebra of ${\rm Mat}_X(\mathbb C)$ generated by 
$M$ and $M^*$. 
The algebra $T$ is finite-dimensional and noncommutative. We call $T$ the {\it subconstituent algebra}
(or {\it Terwilliger algebra}) {\it of $\Gamma$ 
 with respect to $x$} \cite[Definition 3.3]{tSub1}. Note that $T$ is generated by $A, A^*$.
\medskip

\noindent We recall the scalars $\lbrace \theta_i \rbrace_{i=0}^D$ and  $\lbrace \theta^*_i \rbrace_{i=0}^D$.
Abbreviate $\theta_i = P_{i,1}$ and $\theta^*_i = Q_{i,1}$ for $0 \leq i \leq D$. By \eqref{eq:APQ} we have
\begin{align*}
A = \sum_{i=0}^D \theta_i E_i, \qquad \qquad E_1 = \vert X \vert^{-1} \sum_{i=0}^D \theta^*_i A_i.
\end{align*}
By \eqref{eq:AsPQ} we have
\begin{align*}
A^* = \sum_{i=0}^D \theta^*_i E^*_i, \qquad \qquad E^*_1 = \vert X \vert^{-1} \sum_{i=0}^D \theta_i A^*_i.
\end{align*}
For $0 \leq i \leq D$ the scalar $\theta_i$ is the eigenvalue of $A$ associated with $E_i$, and $\theta^*_i$ is the eigenvalue of $A^*$ associated with $E^*_i$.
The scalars $\lbrace \theta_i \rbrace_{i=0}^D$ are mutually distinct since $A$ generates $M$ \cite[Lemma~3.5]{int}.
The scalars $\lbrace \theta^*_i \rbrace_{i=0}^D$ are mutually distinct since $A^*$ generates $M^*$ \cite[Lemma~11.7]{int}.
\medskip

 \section{How $M$ and $M^*$ are related}
 
   \noindent We continue to discuss the distance-regular graph $\Gamma=(X,\mathcal R)$ with diameter $D\geq 3$. Throughout this section, $\lbrace E_i \rbrace_{i=0}^D$ denotes a $Q$-polynomial
   ordering of the primitive idempotents of $\Gamma$.  Throughout this section, we fix $x \in X$ and write $T=T(x)$.
 We describe how the Bose-Mesner algebra $M$ and the dual Bose-Mesner algebra $M^*$ are related. We also discuss how the eigenvalues
 $\lbrace \theta_i \rbrace_{i=0}^D$, $\lbrace \theta^*_i\rbrace_{i=0}^D$ are related. We consider a special case called $q$-Racah.
 
 \begin{lemma} \label{lem:TPR} {\rm (See \cite[Lemma~3.2]{tSub1}).}
For $0 \leq h,i,j\leq D$ we have
\begin{enumerate}
\item[\rm (i)] $E^*_h A_i E^*_j = 0$ if and only if  $p^h_{i,j}=0$;
\item[\rm (ii)] $E_h A^*_i E_j = 0 $ if and only if  $q^h_{i,j}=0$.
\end{enumerate}
 \end{lemma}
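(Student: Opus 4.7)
For part (i), I plan a direct entry-wise calculation. Because $E^*_h$ and $E^*_j$ are diagonal, the $(y,z)$-entry of $E^*_h A_i E^*_j$ equals $(E^*_h)_{y,y}(A_i)_{y,z}(E^*_j)_{z,z}$, which is $1$ exactly when $\partial(x,y)=h$, $\partial(y,z)=i$, $\partial(x,z)=j$ all hold, and is $0$ otherwise. Hence $E^*_h A_i E^*_j=0$ if and only if no ordered pair $(y,z)\in X\times X$ realizes this triple of distances simultaneously. Fix any $y$ with $\partial(x,y)=h$, which exists since $k_h>0$. By the definition of the intersection numbers applied to the pair $(y,x)$, the number of such $z$ equals $p^h_{i,j}$. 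Therefore the vanishing of $E^*_h A_i E^*_j$ is equivalent to $p^h_{i,j}=0$.

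For part (ii), the entries of $E_h A^*_i E_j$ do not admit such a transparent description, so instead I plan to compute its squared Frobenius norm. Using the symmetry of $E_h$, $E_j$, $A^*_i$, the cyclic property of trace, and $E_h^2=E_h$, one has
\begin{align*}
\Vert E_h A^*_i E_j \Vert^2 \;=\; \operatorname{tr}\bigl(E_h A^*_i E_j A^*_i\bigr).
\end{align*}
Substituting the diagonal description $(A^*_i)_{u,u}=\vert X\vert(E_i)_{x,u}$, applying the Krein relation $E_h\circ E_j=\vert X\vert^{-1}\sum_m q^m_{h,j}E_m$, and using $E_m E_i=\delta_{m,i}E_i$ together with the constant diagonal value $(E_i)_{x,x}=k^*_i/\vert X\vert$ collapses the trace to the single scalar $k^*_i\,q^i_{h,j}$. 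Since $q^i_{h,j}\ge 0$ and $k^*_i>0$, this norm vanishes precisely when $q^i_{h,j}=0$.

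To upgrade $q^i_{h,j}=0$ to the desired $q^h_{i,j}=0$, I would invoke the symmetry $k^*_h q^h_{i,j}=k^*_i q^i_{h,j}=k^*_j q^j_{h,i}$. This follows from observing that $\operatorname{tr}\bigl((E_h\circ E_i)E_j\bigr)=\sum_{y,z}(E_h)_{y,z}(E_i)_{y,z}(E_j)_{y,z}$ is visibly symmetric in $h,i,j$, while expanding the Hadamard product via the Krein relation shows it equals $\vert X\vert^{-1}k^*_j q^j_{h,i}$. The only delicate point in the entire argument is the bookkeeping inside the norm calculation for (ii); once the Hadamard product identity is extracted cleanly, the rest reduces to orthogonality of the primitive idempotents and the constant-diagonal evaluation.
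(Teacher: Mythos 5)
Your proof is correct. Note that the paper does not supply its own proof of this lemma; it simply cites \cite[Lemma~3.2]{tSub1}. Your part (i) is the standard entry-wise argument and is airtight: since $E^*_h, E^*_j$ are $0$-$1$ diagonal matrices, the $(y,z)$-entry of $E^*_h A_i E^*_j$ equals $1$ exactly when $\partial(x,y)=h$, $\partial(y,z)=i$, $\partial(x,z)=j$, and counting such $z$ for a fixed $y\in\Gamma_h(x)$ gives $p^h_{i,j}$. Your part (ii) via the Frobenius norm is also correct and is essentially the argument used in the cited source: the reduction
\begin{align*}
\operatorname{tr}\bigl(E_h A^*_i E_j A^*_i\bigr)
= |X|^2\sum_{u,v}(E_h)_{u,v}(E_j)_{u,v}(E_i)_{x,u}(E_i)_{x,v}
= |X|\,q^i_{h,j}\,(E_i E_i E_i)_{x,x}\big/1
= k^*_i\,q^i_{h,j}
\end{align*}
is exactly right (using $E_h\circ E_j=|X|^{-1}\sum_m q^m_{h,j}E_m$, orthogonality $E_mE_i=\delta_{m,i}E_i$, and the constant diagonal $(E_i)_{x,x}=k^*_i/|X|$), and the upgrade from $q^i_{h,j}=0$ to $q^h_{i,j}=0$ via the symmetric triple sum $\sum_{y,z}(E_h)_{y,z}(E_i)_{y,z}(E_j)_{y,z}=|X|^{-1}k^*_j\,q^j_{h,i}$ is the standard way to see $k^*_h q^h_{i,j}=k^*_i q^i_{h,j}$. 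The nonnegativity of the Krein parameters and positivity of the $k^*_i$ finish the argument. No gaps.
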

 
\begin{lemma} \label{lem:linDep} {\rm (See 
\cite[Section~7]{sum2}).}
\begin{enumerate}
\item[\rm (i)] the following matrices are linearly independent:
\begin{align*} 
\lbrace E^*_hA_i E^*_j \vert 0 \leq h,i,j\leq D, \; p^h_{i,j}\not=0\rbrace;
\end{align*}
\item[\rm (ii)]  the following matrices are linearly independent:
\begin{align*} 
\lbrace E_h A^*_i E_j \vert 0 \leq h,i,j\leq D, \; q^h_{i,j}\not=0\rbrace.
\end{align*}
\end{enumerate}
\end{lemma}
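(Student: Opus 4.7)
The plan for (i) is a direct combinatorial support argument. For any $y,z \in X$,
\[
(E^*_h A_i E^*_j)_{y,z} = (E^*_h)_{y,y}(A_i)_{y,z}(E^*_j)_{z,z},
\]
which equals $1$ exactly when $\partial(x,y)=h$, $\partial(y,z)=i$, $\partial(x,z)=j$ and vanishes otherwise. Since the triple $(\partial(x,y),\partial(y,z),\partial(x,z))$ is determined by the ordered pair $(y,z)$, the matrices $\lbrace E^*_h A_i E^*_j \rbrace_{h,i,j}$ have pairwise disjoint supports. By Lemma \ref{lem:TPR}(i), $E^*_h A_i E^*_j$ is nonzero whenever $p^h_{i,j}\not=0$, so the listed family consists of nonzero matrices with pairwise disjoint supports and is therefore linearly independent.

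For (ii) the entries of $E_h A^*_i E_j$ are not $\lbrace 0,1 \rbrace$-valued, so I plan to replace the support argument by a Gram matrix computation under the nondegenerate bilinear form $\langle Y,Z\rangle := {\rm tr}(YZ^t)$ on ${\rm Mat}_X(\mathbb C)$. Since $E_h$, $A^*_i$, $E_j$ are each symmetric, the cyclic trace identity combined with $E_h E_{h'}=\delta_{h,h'}E_h$ and $E_j E_{j'}=\delta_{j,j'}E_j$ reduces the pairing to
\[
\langle E_h A^*_i E_j,\, E_{h'} A^*_{i'} E_{j'}\rangle = \delta_{h,h'}\delta_{j,j'}\,{\rm tr}(E_h A^*_i E_j A^*_{i'}).
\]
The key step is evaluating ${\rm tr}(E_h A^*_i E_j A^*_{i'})$. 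Using the diagonality of $A^*_i$, the identity $(A^*_i)_{y,y}=\vert X\vert (E_i)_{x,y}$, the Hadamard formula $E_h\circ E_j=\vert X\vert^{-1}\sum_l q^l_{h,j}E_l$, orthogonality $E_l E_{l'}=\delta_{l,l'}E_l$, and the standard equality $(E_l)_{x,x}=k^*_l/\vert X\vert$ (from $E_l\in M$ having constant diagonal equal to ${\rm tr}(E_l)/\vert X\vert$), the computation collapses to
\[
{\rm tr}(E_h A^*_i E_j A^*_{i'}) = \delta_{i,i'}\, q^i_{h,j}\, k^*_i.
\]
So the full Gram matrix is diagonal with $(h,i,j)$-entry $q^i_{h,j}k^*_i$.

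To finish (ii), I would invoke the Krein symmetry $q^h_{i,j}k^*_h = q^i_{h,j}k^*_i$, which follows by noting that ${\rm tr}((E_i\circ E_j)E_h) = \sum_{y,z}(E_i)_{y,z}(E_j)_{y,z}(E_h)_{y,z}$ is fully symmetric in $(h,i,j)$ while also equalling $\vert X\vert^{-1}q^h_{i,j}k^*_h$. Since every $k^*_l$ is nonzero, the symmetry gives $q^i_{h,j}\not=0$ iff $q^h_{i,j}\not=0$. Restricting the Gram matrix to the stated index set $\lbrace (h,i,j) : q^h_{i,j}\not=0\rbrace$ thus yields a diagonal matrix with nonzero diagonal entries, giving the desired linear independence. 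The main obstacle is the trace computation for (ii); it is essentially bookkeeping, but requires simultaneous use of the diagonality of $A^*_i$, the Hadamard-product relation, and idempotent orthogonality, so careful index tracking is needed to avoid errors.
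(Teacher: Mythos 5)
Your proof is correct. Part (i) is the standard disjoint-support observation: each $E^*_h A_i E^*_j$ is a $\{0,1\}$-matrix supported on the ordered pairs $(y,z)$ with $(\partial(x,y),\partial(y,z),\partial(x,z))=(h,i,j)$, these supports are mutually disjoint as $(h,i,j)$ varies, and the matrices indexed by $p^h_{i,j}\neq 0$ are nonzero by Lemma \ref{lem:TPR}(i). For part (ii) your Gram-matrix computation under $\langle Y,Z\rangle = \mathrm{tr}(YZ^t)$ is a valid and natural dual replacement for the support argument; I checked the key trace identity
\[
\mathrm{tr}(E_h A^*_i E_j A^*_{i'}) = \delta_{i,i'}\,q^i_{h,j}\,k^*_i,
\]
which follows exactly as you describe from the diagonality of $A^*_i$, the Hadamard relation $E_h\circ E_j=\vert X\vert^{-1}\sum_l q^l_{h,j}E_l$, idempotent orthogonality, and $(E_i)_{x,x}=k^*_i/\vert X\vert$. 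The finishing step via the Krein symmetry $q^h_{i,j}k^*_h = q^i_{h,j}k^*_i$ (a consequence of the full symmetry of $\sum_{y,z}(E_h)_{y,z}(E_i)_{y,z}(E_j)_{y,z}$) is also correct, and with $k^*_i>0$ it gives that the diagonal Gram entries are nonzero precisely on the claimed index set, yielding linear independence. The paper does not present its own argument but cites \cite[Section~7]{sum2}; the pairing/trace argument you use for (ii) is the standard method for this type of dual statement and is, to my knowledge, essentially the one in that reference. One small simplification worth noting: you do not strictly need the Krein symmetry identity — since the Gram entry factors as $\delta_{h,h'}\delta_{i,i'}\delta_{j,j'}q^i_{h,j}k^*_i$, it suffices to observe that $q^i_{h,j}\neq 0$ whenever $q^h_{i,j}\neq 0$, which is exactly what the symmetry of the triple sum gives; but invoking the symmetry identity as you do is equally clean.
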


\begin{lemma} \label{lem:red1} {\rm (See \cite[Lemma~6.1]{int}).} For $0 \leq i \leq D$ we have
\begin{align*}
&E_0 E^*_0 A_i = E_0 E^*_i, \qquad \qquad E^*_0 E_0 E^*_i = \vert X \vert^{-1} E^*_0 A_i,
\\
& A_i E^*_0 E_0 = E^*_i E_0, \qquad \qquad E^*_i E_0 E^*_0 = \vert X \vert^{-1} A_i E^*_0.
\end{align*}
\end{lemma}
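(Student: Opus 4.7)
The plan is to verify each of the four identities by direct entrywise computation, exploiting the very simple structure of the factors $E_0$ and $E^*_0$. Recall that $E_0 = \vert X \vert^{-1} J$ is the rank-one matrix with every entry equal to $\vert X \vert^{-1}$, while $E^*_0$ is the diagonal matrix with a unique nonzero entry at position $(x,x)$, and each $E^*_i$ is the diagonal matrix whose $(y,y)$-entry is $1$ if $\partial(x,y) = i$ and $0$ otherwise. With these descriptions, the computations are short.

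First I would prove the top-left identity $E_0 E^*_0 A_i = E_0 E^*_i$. Since $E^*_0$ is diagonal and supported only at $(x,x)$, for any $y,z \in X$ we get
\begin{align*}
(E_0 E^*_0 A_i)_{y,z} = \sum_{w\in X} (E_0)_{y,w}(E^*_0)_{w,w}(A_i)_{w,z} = \vert X\vert^{-1}(A_i)_{x,z},
\end{align*}
which equals $\vert X\vert^{-1}$ exactly when $\partial(x,z) = i$. On the other hand, since $E^*_i$ is diagonal,
\begin{align*}
(E_0 E^*_i)_{y,z} = (E_0)_{y,z}(E^*_i)_{z,z} = \vert X\vert^{-1}(E^*_i)_{z,z},
\end{align*}
which agrees with the first computation. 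A completely analogous entrywise check handles the top-right identity $E^*_0 E_0 E^*_i = \vert X \vert^{-1} E^*_0 A_i$: both sides vanish unless $y = x$, and when $y = x$ both sides equal $\vert X \vert^{-1}$ precisely when $\partial(x,z) = i$.

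For the two identities in the second row, rather than redo the calculation I would simply take transposes. Each of $E_0$, $E^*_0$, $E^*_i$, and $A_i$ is symmetric, so transposing $E_0 E^*_0 A_i = E_0 E^*_i$ (and reversing the order of multiplication) immediately yields $A_i E^*_0 E_0 = E^*_i E_0$; similarly, transposing the second identity gives $E^*_i E_0 E^*_0 = \vert X \vert^{-1} A_i E^*_0$.

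There is no real obstacle here: the identities are formal consequences of the rank-one structure of $E_0$ together with the diagonal structure of $E^*_0$ and $E^*_i$. The only thing to be careful about is bookkeeping, specifically remembering that in a product $MDM'$ with $D$ diagonal, the sum over the middle index collapses to a single term, and using the symmetry of all four factors to obtain the second pair of identities from the first without repeating the calculation.
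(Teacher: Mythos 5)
Your proof is correct. The paper itself gives no proof for this lemma but simply cites \cite[Lemma~6.1]{int}; your direct entrywise verification is a complete and natural argument. The key observations you use — that $E_0 = \vert X\vert^{-1}J$ is rank one, that $E^*_0$ is a diagonal matrix supported only at $(x,x)$ so the middle sum collapses to the $w=x$ term, and that $(A_i)_{x,z}=(E^*_i)_{z,z}$ by definition — are exactly the right reductions, and the transpose trick for the second row of identities is sound since all four matrices involved are symmetric.
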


\begin{lemma} \label{lem:red2} {\rm (See \cite[Lemma~6.2]{int}).}  For $0 \leq i \leq D$ we have
\begin{align*}
&E^*_0 E_0 A^*_i = E^*_0 E_i, \qquad \qquad E_0 E^*_0 E_i = \vert X \vert^{-1} E_0 A^*_i,
\\
& A^*_i E_0 E^*_0 = E_i E^*_0, \qquad \qquad E_i E^*_0 E_0 = \vert X \vert^{-1} A^*_i E_0.
\end{align*}
\end{lemma}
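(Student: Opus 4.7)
The plan is to verify all four identities by direct entry-wise computation, using the explicit structural forms of the matrices involved. Recall $E_0 = |X|^{-1}J$ (every entry equals $|X|^{-1}$), $E^*_0$ is the diagonal matrix with $(x,x)$-entry $1$ and all other diagonal entries $0$, and by definition $A^*_i$ is the diagonal matrix with $(y,y)$-entry $|X|(E_i)_{x,y}$. This lemma is the natural ``dual companion'' of Lemma \ref{lem:red1}, with the roles of the pairs $(A_i,E^*_i)$ and $(A^*_i,E_i)$ interchanged, so one should expect a parallel short argument.

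First I would establish the identity $E^*_0 E_0 A^*_i = E^*_0 E_i$. Since left-multiplication by $E^*_0$ kills every row except row $x$, both sides vanish outside row $x$, and it suffices to check the $(x,z)$-entries. On the left,
\[
(E_0 A^*_i)_{x,z} \;=\; \sum_{w \in X}(E_0)_{x,w}(A^*_i)_{w,z} \;=\; |X|^{-1}(A^*_i)_{z,z} \;=\; (E_i)_{x,z},
\]
using that $A^*_i$ is diagonal. The $(x,z)$-entry of $E^*_0 E_i$ is $(E_i)_{x,z}$, so the two sides agree.

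Next I would verify $E_0 E^*_0 E_i = |X|^{-1} E_0 A^*_i$ by comparing $(y,z)$-entries. Because $E^*_0$ is supported only at $(x,x)$, the left side collapses to $(E_0)_{y,x}(E_i)_{x,z} = |X|^{-1}(E_i)_{x,z}$. On the right, since $A^*_i$ is diagonal, we get $|X|^{-1}(E_0)_{y,z}(A^*_i)_{z,z} = |X|^{-1} \cdot |X|^{-1} \cdot |X|(E_i)_{x,z} = |X|^{-1}(E_i)_{x,z}$, matching the left side. The two remaining identities $A^*_i E_0 E^*_0 = E_i E^*_0$ and $E_i E^*_0 E_0 = |X|^{-1} A^*_i E_0$ then follow immediately by taking the transpose of the first two identities, since $E_0$, $E^*_0$, $E_i$, and $A^*_i$ are all symmetric.

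There is no genuine obstacle here; the only thing to be careful about is the bookkeeping of scalar factors, ensuring that the $|X|^{-1}$ coming from $E_0 = |X|^{-1}J$ and the $|X|$ built into the definition $(A^*_i)_{y,y}=|X|(E_i)_{x,y}$ cancel cleanly in the first identity and produce the explicit $|X|^{-1}$ on the right-hand side of the second. The structural parallel with Lemma \ref{lem:red1} makes this essentially a mechanical verification.
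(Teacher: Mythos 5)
Your proof is correct. The paper itself does not prove this lemma---it simply cites it from \cite[Lemma~6.2]{int}---so there is no in-paper argument to compare against. Your direct entry-wise verification is clean: the key cancellations (the $|X|^{-1}$ from $E_0=|X|^{-1}J$ against the $|X|$ in $(A^*_i)_{y,y}=|X|(E_i)_{x,y}$) are tracked correctly in both identities, and the reduction of the last two identities to the first two by transposition is valid since $E_0$, $E^*_0$, $E_i$, $A^*_i$ are all symmetric. This is exactly the ``dual companion'' computation to Lemma \ref{lem:red1}, as you note.
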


\begin{lemma}
\label{lem:r5} {\rm (See \cite[Lemma~6.7]{int}).} 
For $0 \leq i \leq D$ we have
\begin{align*}
E_0 E^*_i E_0 = \vert X \vert^{-1} k_i E_0, \qquad \qquad
E^*_0 E_i E^*_0 = \vert X \vert^{-1} k^*_i E^*_0.
\end{align*}
\end{lemma}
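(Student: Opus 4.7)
The plan is to prove the two identities in parallel, exploiting the duality between $E_0, k_i, A_i$ on the one hand and $E^*_0, k^*_i, A^*_i$ on the other. The key auxiliary facts are:
\begin{align*}
A_i E_0 = k_i E_0, \qquad A^*_i E^*_0 = k^*_i E^*_0, \qquad E_0 E^*_0 E_0 = |X|^{-1} E_0, \qquad E^*_0 E_0 E^*_0 = |X|^{-1} E^*_0.
\end{align*}
The first equality follows from $E_0 = |X|^{-1} J$ together with $A_i J = k_i J$ (each row of $A_i$ sums to $k_i$). For the second, note that $A^*_i$ and $E^*_0$ are diagonal, so $A^*_i E^*_0 = (A^*_i)_{x,x}\, E^*_0$; by definition $(A^*_i)_{x,x} = |X|(E_i)_{x,x}$, and since $E_i \in M$ is a polynomial in $A$, its diagonal is constant in $y \in X$, hence $(E_i)_{x,x} = |X|^{-1}\mathrm{tr}(E_i) = |X|^{-1} k^*_i$. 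The third identity follows from the rank-one factorization $E_0 = |X|^{-1} \mathbf{1}\mathbf{1}^t$ and $\mathbf{1}^t E^*_0 \mathbf{1} = 1$; the fourth follows from the observation that $E^*_0 M E^*_0 = (M)_{x,x} E^*_0$ for any $M$, applied with $M=E_0$.

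Once these are in hand, I would apply Lemma \ref{lem:red1}, which gives $E_0 E^*_i = E_0 E^*_0 A_i$. Right-multiplying by $E_0$ and using $A_i E_0 = k_i E_0$ yields
\begin{align*}
E_0 E^*_i E_0 = E_0 E^*_0 A_i E_0 = k_i\, E_0 E^*_0 E_0 = |X|^{-1} k_i E_0.
\end{align*}
For the dual identity, I would similarly invoke Lemma \ref{lem:red2}: $E^*_0 E_i = E^*_0 E_0 A^*_i$. Right-multiplying by $E^*_0$ and using $A^*_i E^*_0 = k^*_i E^*_0$ gives
\begin{align*}
E^*_0 E_i E^*_0 = E^*_0 E_0 A^*_i E^*_0 = k^*_i\, E^*_0 E_0 E^*_0 = |X|^{-1} k^*_i E^*_0.
\end{align*}

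There is no serious obstacle here; the argument is almost entirely bookkeeping, and the only mildly delicate point is the identification $(A^*_i)_{x,x} = k^*_i$, which requires remembering that for a distance-regular graph the diagonal entries of any $E_i \in M$ are constant along $X$. Everything else follows from the defining properties of $E_0$ and $E^*_0$ and the reduction lemmas already cited from \cite{int}.
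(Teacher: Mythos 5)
Your proof is correct. The paper itself gives no proof of this lemma --- it simply cites \cite[Lemma~6.7]{int} --- so there is no in-paper argument to compare against; your derivation via Lemmas \ref{lem:red1} and \ref{lem:red2}, together with the eigenvector facts $A_iE_0=k_iE_0$ and $A^*_iE^*_0=k^*_iE^*_0$, is a sound and standard route, and your justification of $(A^*_i)_{x,x}=k^*_i$ (the diagonal of any element of $M$ is a scalar multiple of $I$ since only $A_0$ has nonzero diagonal, and that scalar for $E_i$ is $|X|^{-1}\mathrm{tr}(E_i)=|X|^{-1}\mathrm{rank}(E_i)=|X|^{-1}k^*_i$) is correct. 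One small shortcut you missed: the auxiliary identities $E_0E^*_0E_0=|X|^{-1}E_0$ and $E^*_0E_0E^*_0=|X|^{-1}E^*_0$ that you re-derive already appear verbatim as Lemma \ref{lem:r6} in the paper, so you could have cited that directly rather than reproving the rank-one factorization and the $(x,x)$-entry compression.
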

 
 \begin{lemma}
\label{lem:r6} {\rm (See \cite[Section~2]{int}).} 
We have
\begin{align*}
E_0 E^*_0 E_0 = \vert X \vert^{-1} E_0, \qquad \qquad 
E^*_0 E_0 E^*_0 = \vert X \vert^{-1} E^*_0.
\end{align*}
\end{lemma}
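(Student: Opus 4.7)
The plan is to deduce both identities as the $i=0$ special case of Lemma \ref{lem:r5}. That lemma asserts $E_0 E^*_i E_0 = |X|^{-1} k_i E_0$ and $E^*_0 E_i E^*_0 = |X|^{-1} k^*_i E^*_0$ for $0 \leq i \leq D$, so it suffices to verify the two numerical facts $k_0 = 1$ and $k^*_0 = 1$.

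For $k_0$, I would invoke the identification $k_i = |\Gamma_i(y)|$ (valid for any $y \in X$) recalled in Section~4: this gives $k_0 = |\{y\}| = 1$. For $k^*_0$, I would use the identification $k^*_i = \text{rank}(E_i)$ together with $E_0 = |X|^{-1} J$, which clearly has rank one, so $k^*_0 = 1$. Substituting $i = 0$ and $k_0 = k^*_0 = 1$ into the two formulas of Lemma \ref{lem:r5} then immediately yields the two identities of Lemma \ref{lem:r6}.

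If a self-contained verification independent of Lemma \ref{lem:r5} is preferred, I would instead write $E_0 = |X|^{-1} \mathbf{1}\mathbf{1}^t$ and $E^*_0 = e_x e_x^t$, where $\mathbf{1}$ is the all-ones column vector and $e_x$ is the standard basis vector at $x$. Both identities then reduce, after a routine matrix multiplication, to the scalar fact $\mathbf{1}^t e_x = e_x^t \mathbf{1} = 1$: indeed $E_0 E^*_0 E_0 = |X|^{-2}\mathbf{1}(\mathbf{1}^t e_x)(e_x^t \mathbf{1})\mathbf{1}^t$ collapses to $|X|^{-1} E_0$, and similarly for $E^*_0 E_0 E^*_0$.

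There is no genuine obstacle to overcome here; the lemma is a short consequence of the preceding material, and the only real care required is the book-keeping needed to confirm $k_0 = k^*_0 = 1$.
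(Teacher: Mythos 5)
Your proposal is correct. The paper itself does not supply a proof here — it simply cites \cite[Section~2]{int} — so there is no in-paper argument to compare against. Both of your routes are sound: the specialization of Lemma~\ref{lem:r5} at $i=0$ (with $k_0 = |\Gamma_0(y)| = 1$ and $k^*_0 = \mathrm{rank}(E_0) = \mathrm{rank}(|X|^{-1}J) = 1$), and the direct rank-one factorization $E_0 = |X|^{-1}\mathbf{1}\mathbf{1}^t$, $E^*_0 = e_x e_x^t$. The second route is arguably preferable as a matter of exposition, since it is self-contained and makes no use of the $Q$-polynomial hypothesis that underlies Lemma~\ref{lem:r5} and the definition of $k^*_i$, whereas Lemma~\ref{lem:r6} is a purely combinatorial identity about $E_0$ and $E^*_0$ that holds without it.
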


\begin{lemma} \label{lem:need} {\rm (See \cite[Lemma~6.13]{int}).} 
For $S \in M$ we have the logical implications
\begin{align*}
S=0\quad  \Leftrightarrow\quad SE^*_0=0 \quad \Leftrightarrow \quad E^*_0S=0.
\end{align*}
Moreover for $S^* \in M^*$ we have the logical implications
\begin{align*}
S^*=0 \quad \Leftrightarrow \quad S^*E_0=0 \quad \Leftrightarrow\quad  E_0S^*=0.
\end{align*}
\end{lemma}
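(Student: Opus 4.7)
The plan is to prove the two nontrivial implications $E^*_0 S = 0 \Rightarrow S = 0$ and $SE^*_0 = 0 \Rightarrow S = 0$ for $S \in M$, and then handle the dual statement about $S^* \in M^*$ by the same method.

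First I would expand $S = \sum_{i=0}^D s_i A_i$ using the basis $\{A_i\}_{i=0}^D$ of $M$, and for each $j$ sandwich to obtain
\[
E^*_0 \, S \, E^*_j \;=\; \sum_{i=0}^D s_i \, E^*_0 A_i E^*_j.
\]
By Lemma \ref{lem:TPR}(i), $E^*_0 A_i E^*_j = 0$ if and only if $p^0_{i,j} = 0$. From the identity $A_0 = I$ inside $A_i A_j = \sum_h p^h_{i,j} A_h$, one reads off $p^0_{i,j} = k_i \delta_{i,j}$, so every term with $i \neq j$ drops out and the nonzero term $E^*_0 A_j E^*_j$ survives. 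Thus $E^*_0 S E^*_j = s_j \, E^*_0 A_j E^*_j$, and the hypothesis $E^*_0 S = 0$ forces $s_j = 0$ for every $j$, so $S = 0$.

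For the other direction $SE^*_0 = 0 \Rightarrow S = 0$, I would apply the transpose trick: since each $A_i$ is symmetric the algebra $M$ is closed under transposition, and $(E^*_0)^t = E^*_0$, so $(SE^*_0)^t = E^*_0 S^t = 0$ with $S^t \in M$; the preceding paragraph then yields $S^t = 0$, hence $S = 0$. The implications in the reverse direction from $S = 0$ are trivial, which completes the first half of the lemma.

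The second half, for $S^* \in M^*$, is entirely parallel: expand $S^* = \sum_i s^*_i A^*_i$ in the basis $\{A^*_i\}_{i=0}^D$, compute the sandwich $E_j S^* E_0 = \sum_i s^*_i E_j A^*_i E_0$, and use Lemma \ref{lem:TPR}(ii) together with $q^h_{i,0} = \delta_{h,i}$ (which one reads off from $A^*_0 = I$ inside $A^*_i A^*_j = \sum_h q^h_{i,j} A^*_h$) to isolate $E_j S^* E_0 = s^*_j \, E_j A^*_j E_0$ with $E_j A^*_j E_0 \neq 0$. Taking transposes (each $A^*_i$ is symmetric) then gives $S^* E_0 = 0 \Leftrightarrow E_0 S^* = 0 \Leftrightarrow S^* = 0$.

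The main obstacle is entirely mild: it is a bookkeeping argument, and the only subtlety is correctly identifying the degenerate values $p^0_{i,j} = k_i \delta_{i,j}$ and $q^h_{i,0} = \delta_{h,i}$ that govern which sandwiches vanish. In particular, the full strength of Lemma \ref{lem:linDep} is not needed, since we rely only on the single nonvanishing block $E^*_0 A_j E^*_j \neq 0$ (respectively $E_j A^*_j E_0 \neq 0$) for each $j$, which comes from the easy direction of Lemma \ref{lem:TPR}.
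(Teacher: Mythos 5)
Your argument is correct. The paper itself does not supply a proof of this lemma; it is imported wholesale as \cite[Lemma~6.13]{int}, so there is no in-paper argument to compare against. Your derivation is a sound self-contained proof: sandwiching $S=\sum_i s_i A_i$ between $E^*_0$ and $E^*_j$ kills every term except $s_j\,E^*_0 A_j E^*_j$ because $p^0_{i,j}=k_i\delta_{i,j}$, the surviving matrix is nonzero by Lemma \ref{lem:TPR}(i) since $k_j\neq 0$, transposition (using the symmetry of the $A_i$, respectively the $A^*_i$) transfers the conclusion between the left and right multiplications, and the dual half proceeds identically via $q^h_{i,0}=\delta_{h,i}$. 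Your remark that the full strength of Lemma \ref{lem:linDep} is not needed is also accurate.

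One minor observation: there is an even more elementary route that bypasses Lemma \ref{lem:TPR} entirely. The product $SE^*_0$ is supported on the single column indexed by $x$, where its $(y,x)$-entry equals $S_{y,x}=\sum_i s_i (A_i)_{y,x}=s_{\partial(x,y)}$. Since every distance $j\in\{0,\ldots,D\}$ is realized as $\partial(x,y)$ for some $y$ (indeed $k_j\neq 0$), the vanishing of $SE^*_0$ forces $s_j=0$ for all $j$ at once. But your version is clean and correct, and the use of Lemma \ref{lem:TPR} keeps the argument parallel to the dual case.
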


\noindent Next, we describe how $A$ and $A^*$ are related.
\begin{lemma} \label{thm:LEMOne} {\rm (See \cite[Lemma~5.4]{tSub3}).} 
There exists a unique sequence of complex scalars $\beta$, $\gamma$, $\gamma^*$, $\varrho$, $\varrho^*$
such that
\begin{align*}
0 &= \lbrack A, A^2 A^* - \beta A A^* A + A^* A^2 - \gamma(AA^*+ A^*A) - \varrho A^* \rbrack,
\\
0 &= \lbrack A^*, A^{*2} A - \beta A^* A A^* + A A^{*2} - \gamma^*(A^*A+ AA^*) - \varrho^* A \rbrack.
\end{align*}
\end{lemma}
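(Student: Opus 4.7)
The plan is to exploit the tridiagonal structure of $A$ and $A^*$ with respect to the dual and primary idempotent decompositions, together with the $Q$-polynomial hypothesis, to exhibit the five scalars and then argue uniqueness.

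First I would use Lemma \ref{lem:TPR}(i) to observe that $E^*_h A E^*_j = 0$ whenever $|h-j|>1$, since $p^h_{1,j}=0$ in that case. Thus $A$ is tridiagonal with respect to the decomposition $V = \bigoplus_{i=0}^D E^*_i V$, while $A^* = \sum_i \theta^*_i E^*_i$ acts as the scalar $\theta^*_i$ on each $E^*_i V$. The symmetric statement holds with the roles of $A, A^*$ and of $E_i, E^*_i$ interchanged, using Lemma \ref{lem:TPR}(ii) and the $Q$-polynomial property.

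Next I would compute
\[
E^*_h\bigl(A^2 A^* - \beta AA^*A + A^*A^2 - \gamma(AA^*+A^*A) - \varrho A^*\bigr)E^*_j
\]
by inserting resolutions $I=\sum_k E^*_k$ and using the scalar action of $A^*$. By tridiagonality only a few terms survive, and the expression reduces to $E^*_h A^{|h-j|}E^*_j$ multiplied by a polynomial in $\theta^*_h, \theta^*_j$ and the neighbouring eigenvalues. Vanishing of the commutator with $A$ then amounts to the requirement that
\[
\theta^{*2}_{i-1} - \beta\,\theta^*_{i-1}\theta^*_i + \theta^{*2}_i - \gamma(\theta^*_{i-1}+\theta^*_i) - \varrho
\]
be independent of $i$ for $1\le i \le D$, together with an analogous off-diagonal condition. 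For any $Q$-polynomial ordering the eigenvalues $\theta^*_i$ satisfy a three-term recurrence of exactly the form needed to solve this condition uniquely for $\beta, \gamma, \varrho$; in the $q$-Racah case one finds $\beta = q^2+q^{-2}$, with explicit rational expressions for $\gamma$ and $\varrho$ in the parameters $\alpha, a, q, \varepsilon$. The second relation is obtained by swapping the roles of $A, A^*$ and arguing in the $E_i$-decomposition, yielding the same $\beta$ together with new $\gamma^*, \varrho^*$.

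For uniqueness, suppose $(\beta,\gamma,\varrho)$ and $(\beta',\gamma',\varrho')$ both satisfy the first relation. Subtracting gives
\[
\bigl[A,\; (\beta'-\beta)AA^*A + (\gamma-\gamma')(AA^*+A^*A) + (\varrho-\varrho')A^*\bigr] = 0.
\]
Pairing with $E^*_h(\cdot)E^*_j$ for small $|h-j|$ and invoking Lemma \ref{lem:linDep}(i) together with the mutual distinctness of the $\theta^*_i$ and the hypothesis $D\ge 3$ forces the three differences to vanish; uniqueness of $\gamma^*, \varrho^*$ follows by the same argument in the dual decomposition. The main obstacle is organizing the block computation of $E^*_h[A,R]E^*_j$ cleanly enough that the required vanishing conditions reduce to a single polynomial identity in the $\theta^*_i$; once this reduction is in place, both existence and uniqueness drop out from the $Q$-polynomial recurrence and elementary linear algebra.
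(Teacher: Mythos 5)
The paper cites this lemma from an external reference and gives no proof, so I evaluate your argument on its own terms. Your strategy — diagonalize one of the two operators, read off block entries, reduce to eigenvalue identities — is the right one, but you have chosen the wrong decomposition for each relation, and the reduction you describe then fails. The argument $R = A^2 A^* - \beta A A^* A + A^* A^2 - \gamma(AA^*+A^*A) - \varrho A^*$ of the first commutator is \emph{linear} in $A^*$ with $A$-coefficients, so you should work in the $E_i$-decomposition, where $A$ acts as the scalar $\theta_i$ on $E_iV$ and $A^*$ is tridiagonal by Lemma~\ref{lem:TPR}(ii): there $E_h R E_j = \bigl(\theta_h^2 - \beta\theta_h\theta_j + \theta_j^2 - \gamma(\theta_h+\theta_j) - \varrho\bigr)\,E_h A^* E_j$ and $E_h[A,R]E_j = (\theta_h-\theta_j)\,E_hRE_j$, so the first relation is equivalent to $\theta_{i-1}^2 - \beta\theta_{i-1}\theta_i + \theta_i^2 - \gamma(\theta_{i-1}+\theta_i) - \varrho = 0$ for $1\le i\le D$ — a constraint in the $\theta_i$, not the $\theta^*_i$ (compare Lemma~\ref{prop:five}(iv)). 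In your $E^*_i$-decomposition the simplification you claim does not occur: the term $AA^*A$ gives $E^*_h AA^*A E^*_j = \sum_k \theta^*_k E^*_h A E^*_k A E^*_j$, whose $k$-dependent weights cannot be pulled out as a scalar, and $A^2$ is pentadiagonal there; so $E^*_hRE^*_j$ is a genuine linear combination of the independent blocks $E^*_hAE^*_kAE^*_j$ of Lemma~\ref{lem:linDep}(i), not ``$E^*_hA^{|h-j|}E^*_j$ times a polynomial in $\theta^*_h,\theta^*_j$.'' Consequently the constraint $\theta^{*2}_{i-1} - \beta\theta^*_{i-1}\theta^*_i + \theta^{*2}_i - \gamma(\theta^*_{i-1}+\theta^*_i) - \varrho = 0$ you derive for the first relation is wrong; that identity belongs to the \emph{second} relation, computed in the $E^*_i$-decomposition where $A^*$ is diagonal, with $\gamma,\varrho$ replaced by $\gamma^*,\varrho^*$.

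There is a second gap. Even after swapping the decompositions, your uniqueness argument gives a unique $(\beta,\gamma,\varrho)$ from the first relation and a unique $(\beta',\gamma^*,\varrho^*)$ from the second; the lemma asserts that the same $\beta$ serves in both, and you simply write ``yielding the same $\beta$.'' That equality is precisely the nontrivial content here: it encodes the fact that $\{\theta_i\}$ and $\{\theta^*_i\}$ obey three-term recurrences with a common coefficient $\beta+1$ (Lemma~\ref{prop:five}(i)), which is a theorem about $Q$-polynomial orderings and must be invoked explicitly, not taken for granted.
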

\noindent The above two relations are called the {\it  tridiagonal relations}; see \cite{qSerre}.

\begin{lemma} \label{prop:five}  {\rm (See \cite[Lemma~5.4]{tSub3})}. We refer to the scalars 
 $\beta, \gamma, \gamma^*,
\varrho, \varrho^*$ from Lemma \ref{thm:LEMOne}.
\begin{enumerate}
\item[\rm (i)] 
The expressions
\begin{align*}
\frac{\theta_{i-2}-\theta_{i+1}}{\theta_{i-1}-\theta_i},\qquad \qquad  
 \frac{\theta^*_{i-2}-\theta^*_{i+1}}{\theta^*_{i-1}-\theta^*_i} 
\end{align*} 
 are both equal to $\beta +1$ for $2\leq i \leq D-1$;
 \item[\rm (ii)]   
$\gamma = \theta_{i-1}-\beta \theta_i + \theta_{i+1} $ $(1 \leq i \leq D-1)$;
\item[\rm (iii)] 
$\gamma^* = \theta^*_{i-1}-\beta \theta^*_i + \theta^*_{i+1}$ $(1 \leq i \leq D-1)$;
\item[\rm (iv)] 
$\varrho = \theta^2_{i-1}-\beta \theta_{i-1}\theta_i+\theta_i^2-\gamma (\theta_{i-1}+\theta_i)$ $(1 \leq i \leq D)$;
\item[\rm (v)] 
$\varrho^*= \theta^{*2}_{i-1}-\beta \theta^*_{i-1}\theta^*_i+\theta_i^{*2}-
\gamma^* (\theta^*_{i-1}+\theta^*_i)$ $(1 \leq i \leq D)$.
\end{enumerate}
\end{lemma}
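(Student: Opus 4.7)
The plan is to extract the claimed relations from the first tridiagonal relation by sandwiching it between primitive idempotents, and to obtain (iii), (v) and the dual half of (i) by the same manoeuvre applied to the second relation.

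Let $X$ denote the bracketed expression in the first tridiagonal relation, so $[A,X]=0$. Using $A=\sum_{i=0}^{D}\theta_i E_i$ together with orthogonality $E_hE_j=\delta_{h,j}E_h$, one computes
\begin{align*}
E_h X E_j = \bigl(\theta_h^2-\beta\theta_h\theta_j+\theta_j^2-\gamma(\theta_h+\theta_j)-\varrho\bigr)\,E_hA^*E_j.
\end{align*}
Meanwhile $[A,X]=\sum_{h,j}(\theta_h-\theta_j)E_hXE_j=0$, and since the $\theta_i$ are mutually distinct, this forces $E_hXE_j=0$ for all $h\ne j$. Invoking the $Q$-polynomial property together with Lemma~\ref{lem:TPR}(ii), the matrix $E_hA^*E_j$ is nonzero whenever $|h-j|=1$; consequently, for such $(h,j)$, the scalar coefficient above must vanish.

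Setting $h=i,\,j=i-1$ (with $1\le i\le D$) yields (iv). Subtracting the vanishing condition at $(h,j)=(i-1,i)$ from the one at $(h,j)=(i+1,i)$ for $1\le i\le D-1$, and dividing by the nonzero factor $\theta_{i+1}-\theta_{i-1}$ (nonzero because the $\theta_j$ are distinct), one obtains
\begin{align*}
\gamma=\theta_{i-1}-\beta\theta_i+\theta_{i+1},
\end{align*}
which is (ii). Applying (ii) at $i$ and at $i-1$ and subtracting produces
\begin{align*}
\beta(\theta_i-\theta_{i-1})=(\theta_{i+1}-\theta_i)+(\theta_{i-1}-\theta_{i-2}),
\end{align*}
and a brief rearrangement gives $\beta+1=(\theta_{i-2}-\theta_{i+1})/(\theta_{i-1}-\theta_i)$ for $2\le i\le D-1$, which is the first half of (i).

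For the remaining assertions (iii), (v), and the $\theta^*$ half of (i), I would repeat the argument verbatim on the second tridiagonal relation, now sandwiching between the dual primitive idempotents $E^*_h,E^*_j$ and using $A^*=\sum_i\theta^*_iE^*_i$. The role of $E_hA^*E_j$ is played by $E^*_hAE^*_j$, whose nonvanishing for $|h-j|=1$ follows from Lemma~\ref{lem:TPR}(i) combined with the fact that $b_i\ne0$ and $c_i\ne0$ ensure $p^{i\pm1}_{1,i}\ne0$. The only point that requires care is the indexing: the constraints $2\le i\le D-1$ in (i) and $1\le i\le D-1$ in (ii),(iii) versus $1\le i\le D$ in (iv),(v) arise precisely from the range of $(h,j)$ with $|h-j|=1$ available at the endpoints of the $Q$-polynomial chain. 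There is no substantive obstacle; the main thing to keep straight is the bookkeeping of these index ranges.
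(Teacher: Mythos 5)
Your argument is correct. The paper itself gives no proof of this lemma, deferring to the cited reference \cite[Lemma~5.4]{tSub3}, and your sandwiching argument, extracting the scalar identities from $E_h X E_j = 0$ for $|h-j|=1$ via $E_h[A,X]E_j = (\theta_h-\theta_j)E_hXE_j$, is the standard and essentially the same approach used there; the index bookkeeping for (i)--(v) and the dual argument via Lemma~\ref{lem:TPR}(i) are handled correctly.
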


\noindent The scalars $\lbrace \theta_i \rbrace_{i=0}^D$ and  $\lbrace \theta^*_i \rbrace_{i=0}^D$ can be expressed in closed form \cite[Theorem~11.2]{someAlg}. There are several cases.  The
``most general'' case is called $q$-Racah, and described below. See \cite{bbit, bannai, cerzo, qRacTet, augIto, someAlg, nomSpinModel, TZ} for more information about the $q$-Racah case.

\begin{definition} \label{def:qRacah} \rm The $Q$-polynomial ordering $\lbrace E_i \rbrace_{i=0}^D$ is said to have {\it $q$-Racah type} whenever
there exist $q, u, v, \varepsilon,u^*,v^*,\varepsilon^* \in \mathbb C$
\noindent such that
\begin{align*}
& q \not=0,\quad q^2 \not=1, \quad q^2 \not=-1, \quad uvu^*v^* \not=0, \\
&\theta_i =  u q^{2i-D} + v q^{D-2i}+ \varepsilon  \qquad \qquad  (0 \leq i \leq D), \\
&\theta^*_i =  u^* q^{2i-D} + v^* q^{D-2i}+\varepsilon^* \qquad \quad (0 \leq i \leq D).
\end{align*}
\end{definition}

\noindent Assume that the ordering $\lbrace E_i \rbrace_{i=0}^D$ has $q$-Racah type. For convenience, we adjust the notation in Definition \ref{def:qRacah}.
Define scalars $a, b \in \mathbb C$ such that
\begin{align}
a^2 = u/v, \qquad \qquad b^2 = u^*/v^*.       \label{eq:ab}
\end{align}
Note that $a,b$ are nonzero. We have
\begin{align} \label{eq:th1}
 \theta_i =  \alpha( a q^{2i-D}+ a^{-1} q^{D-2i}) + \varepsilon  \qquad \qquad (0 \leq i \leq D), \\
  \theta^*_i =  \alpha^*( b q^{2i-D}+ b^{-1} q^{D-2i}) + \varepsilon^*  \qquad \qquad (0 \leq i \leq D), \label{eq:th2}
\end{align}
where $\alpha = a v$ and $\alpha^* = b v^*$. Note that $\alpha^2=uv$ and $(\alpha^*)^2=u^*v^*$.

\begin{example}\rm \label{ex:cycle}
Assume that $\Gamma$ is an ordinary $N$-cycle. Note that $N=2D$ or $N=2D+1$. 
Pick $q \in \mathbb C$ such that $q^2$ is a primitive $N$th root of unity.  By \cite[p.~304]{bannai} there exists a $Q$-polynomial ordering $\lbrace E_i \rbrace_{i=0}^D$ of the primitive idempotents of
$\Gamma$ such that
\begin{align*}
 \theta_i = \theta^*_i = q^{2i} + q^{-2i} \qquad \qquad (0 \leq i \leq D).
 \end{align*}
 The ordering $\lbrace E_i \rbrace_{i=0}^D$ satisfies the conditions of Definition  \ref{def:qRacah}, with 
 \begin{align*}
 u = u^*= q^D, \qquad \qquad 
 v = v^*= q^{-D}, \qquad \qquad 
 \varepsilon = \varepsilon^*=0.
 \end{align*}
 Therefore,  this ordering has $q$-Racah type. The corresponding scalars $a, b, \alpha, \alpha^*$ from  \eqref{eq:ab}--\eqref{eq:th2} satisfy
 \begin{align*}
 a^2 = b^2 = q^{2D}, \qquad \qquad 
 \alpha  = a q^{-D}, \qquad \qquad  \alpha^*= b q^{-D}.
 \end{align*}
 Note that $\alpha^2=1$ and $(\alpha^*)^2 = 1$.
 \end{example}

 \section{Formal self-duality}
 \noindent We continue to discuss the distance-regular graph $\Gamma=(X,\mathcal R)$ with diameter $D\geq 3$. Throughout this section, $\lbrace E_i \rbrace_{i=0}^D$ denotes an ordering of the primitive
 idempotents of $\Gamma$.
 
 \begin{definition}\label{def:fsd} \rm (See \cite[Section~2.3]{bcn}).
The ordering $\lbrace E_i \rbrace_{i=0}^D$ is said to be {\it formally self-dual} whenever
$P=Q$.
\end{definition}

\begin{lemma} {\rm (See \cite[Section~2.3]{bcn}).} Assume that the ordering $\lbrace E_i \rbrace_{i=0}^D$ is formally self-dual.
Then $p^h_{i,j} = q^h_{i,j} $ for $0 \leq h,i,j\leq D$. 
\end{lemma}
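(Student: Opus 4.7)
The plan is to expand the products $A_iA_j$ and $E_i\circ E_j$ in two different ways and compare coefficients, using the invertibility of the eigenmatrix.

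First I would translate the intersection numbers into a matrix equation involving $P$. Using $A_j=\sum_r P_{r,j}E_r$ together with $E_rE_s=\delta_{r,s}E_r$, we get $A_iA_j=\sum_r P_{r,i}P_{r,j}E_r$. On the other hand, from $A_iA_j=\sum_h p^h_{i,j}A_h$ and another application of $A_h=\sum_r P_{r,h}E_r$, we obtain $A_iA_j=\sum_r\bigl(\sum_h p^h_{i,j}P_{r,h}\bigr)E_r$. Since $\{E_r\}_{r=0}^D$ is a basis of $M$, matching coefficients yields
\[
P_{r,i}P_{r,j}=\sum_{h=0}^D p^h_{i,j}P_{r,h}\qquad (0\leq r\leq D).
\]

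Next I would run the dual computation. Using $E_j=|X|^{-1}\sum_r Q_{r,j}A_r$ and $A_r\circ A_s=\delta_{r,s}A_r$, we have $E_i\circ E_j=|X|^{-2}\sum_r Q_{r,i}Q_{r,j}A_r$. Comparing with $E_i\circ E_j=|X|^{-1}\sum_h q^h_{i,j}E_h=|X|^{-2}\sum_r\bigl(\sum_h q^h_{i,j}Q_{r,h}\bigr)A_r$, and using that $\{A_r\}_{r=0}^D$ is a basis of $M$, we obtain
\[
Q_{r,i}Q_{r,j}=\sum_{h=0}^D q^h_{i,j}Q_{r,h}\qquad (0\leq r\leq D).
\]

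Now assume $P=Q$. Subtracting the two displayed identities gives $\sum_{h=0}^D (p^h_{i,j}-q^h_{i,j})P_{r,h}=0$ for every $r$, i.e.\ the column vector $(p^h_{i,j}-q^h_{i,j})_{h=0}^D$ lies in the kernel of $P$. But from $PQ=|X|I$ and $P=Q$ we have $P^2=|X|I$, so $P$ is invertible. Hence $p^h_{i,j}=q^h_{i,j}$ for $0\leq h,i,j\leq D$. There is no real obstacle here; the only thing to be careful about is the bookkeeping with indices and confirming that $P$ is invertible, both of which are immediate from the material already established.
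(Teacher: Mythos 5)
Your proof is correct and complete. The paper simply cites \cite[Section~2.3]{bcn} for this lemma without giving its own argument, and the derivation you give (extracting $P_{r,i}P_{r,j}=\sum_h p^h_{i,j}P_{r,h}$ and $Q_{r,i}Q_{r,j}=\sum_h q^h_{i,j}Q_{r,h}$ from the two multiplications, then using $P=Q$ and invertibility of $P$) is precisely the standard one found there.
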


\begin{lemma} \label{lem:ntspin} {\rm (See \cite[Proposition~10.4]{nomSpinModel}).} The following are equivalent:
\begin{enumerate}
\item[\rm (i)] the ordering $\lbrace E_i \rbrace_{i=0}^D$ is formally self-dual;
\item[\rm (ii)] the ordering $\lbrace E_i \rbrace_{i=0}^D$ is $Q$-polynomial, and $\theta_i = \theta^*_i$ for $0 \leq i \leq D$.
\end{enumerate}
\end{lemma}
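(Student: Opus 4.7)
The plan is to prove the two implications $(i)\Rightarrow(ii)$ and $(ii)\Rightarrow(i)$ separately, with the first being essentially formal and the second being the substantive direction.

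For $(i)\Rightarrow(ii)$, I would argue directly from Definition \ref{def:fsd} and the conventions of Section 4. Assuming $P=Q$, the equality $\theta_i=P_{i,1}=Q_{i,1}=\theta^*_i$ is immediate for each $i$. For the $Q$-polynomial property, recall that distance-regularity already furnishes polynomials $v_j \in \mathbb C[t]$ of degree exactly $j$ satisfying $A_j = v_j(A)$, which translates into $P_{i,j}=v_j(\theta_i)$ via the spectral decomposition $A=\sum_i \theta_i E_i$. Applying $P=Q$ gives $Q_{i,j}=v_j(\theta_i)=v_j(\theta^*_i)$, so each $Q_{i,j}$ is a polynomial of exact degree $j$ in $\theta^*_i$. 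This polynomial characterization is equivalent to the defining triangle-inequality property of the Krein parameters, hence $\{E_i\}_{i=0}^D$ is $Q$-polynomial.

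For $(ii)\Rightarrow(i)$, the $Q$-polynomial hypothesis yields polynomials $v^*_j$ of degree $j$ with $Q_{i,j}=v^*_j(\theta^*_i)$, obeying a three-term recurrence whose coefficients are the Krein parameters $c^*_{j+1},a^*_j,b^*_{j-1}$; dually, $P_{i,j}=v_j(\theta_i)$ for polynomials $v_j$ of degree $j$ with recurrence coefficients $c_{j+1},a_j,b_{j-1}$ from the intersection numbers. Under $\theta_i=\theta^*_i$, both families are evaluated at the same $D+1$ nodes. I would show $v_j=v^*_j$ for all $j$ by induction, with the base cases $v_0=v^*_0=1$ and $v_1(\theta)=\theta=v^*_1(\theta)$ being immediate from $a_0=a^*_0=0$ and $c_1=c^*_1=1$. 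For the inductive step, I would combine the orthogonality relations
\[
\sum_i m_i\, v_j(\theta_i)v_\ell(\theta_i) = \vert X\vert k_j \delta_{j,\ell},\qquad
\sum_i k_i\, v^*_j(\theta_i)v^*_\ell(\theta_i) = \vert X\vert m_j \delta_{j,\ell},
\]
with the duality relation $k_i Q_{i,j}=m_j P_{j,i}$ (which follows from tracing $A_i\cdot\vert X\vert E_j$ two ways using the bases of $M$), and the normalizations $v_j(\theta_0)=k_j$, $v^*_j(\theta_0)=m_j$. Matching leading coefficients of the two recurrences forces $c_{j+1}=c^*_{j+1}$, and then the three-term recurrence identity $c_{j+1}v_{j+1}-c^*_{j+1}v^*_{j+1}=(a^*_j-a_j)v_j+(b^*_{j-1}-b_{j-1})v_{j-1}$ (a polynomial of degree $\leq j$) forces $a_j=a^*_j$ and $b_{j-1}=b^*_{j-1}$. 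Hence $v_{j+1}=v^*_{j+1}$, closing the induction, and yielding $P_{i,j}=Q_{i,j}$ for all $i,j$, i.e.\ $P=Q$.

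The hard part is the inductive step, specifically the implicit extraction $k_i=m_i$ from the hypothesis $\theta_i=\theta^*_i$. Under $\theta_i=\theta^*_i$, the measures $\sum_i m_i\delta_{\theta_i}$ and $\sum_i k_i\delta_{\theta_i}$ supporting the two families of orthogonal polynomials share their nodes but not a priori their weights; one must deduce $m_i=k_i$ before orthogonal-polynomial uniqueness can collapse $v_j$ and $v^*_j$. My plan is to obtain this equality by combining the global trace identities $\sum_i m_i=\sum_i k_i=\vert X\vert$ and $\sum_i \theta_i m_i=\mathrm{tr}(A)=0=\mathrm{tr}(A^*)=\sum_i \theta_i k_i$ with the duality $k_i Q_{i,j}=m_j P_{j,i}$ specialized at $j=1$, which under $Q_{i,1}=\theta^*_i=\theta_i$ produces a system rigid enough to force $k_i=m_i$. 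Given the technical density, I would also keep in reserve the option of citing \cite[Proposition~10.4]{nomSpinModel} for the fully packaged argument.
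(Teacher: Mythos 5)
The paper offers no proof of Lemma~\ref{lem:ntspin} itself; it simply cites \cite[Proposition~10.4]{nomSpinModel}, so your fallback of deferring to that reference is exactly what the paper does. Your direction $(i)\Rightarrow(ii)$ is sound.

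However, your direction $(ii)\Rightarrow(i)$ has a genuine gap in the inductive step. The claim that matching leading coefficients ``forces $c_{j+1}=c^*_{j+1}$'' is vacuous: under the inductive hypothesis $v_j=v^*_j$ and $v_{j-1}=v^*_{j-1}$, both $c_{j+1}v_{j+1}$ and $c^*_{j+1}v^*_{j+1}$ have leading coefficient equal to that of $t\,v_j$ (the $a$- and $b$-terms in the recurrence do not reach degree $j+1$), so this equality is automatic and imposes no constraint relating $c_{j+1}$ to $c^*_{j+1}$. The identity $c_{j+1}v_{j+1}-c^*_{j+1}v^*_{j+1}=(a^*_j-a_j)v_j+(b^*_{j-1}-b_{j-1})v_{j-1}$ is one polynomial relation among several unknowns and does not by itself force $a_j=a^*_j$, $b_{j-1}=b^*_{j-1}$, or $c_{j+1}=c^*_{j+1}$. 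You correctly isolate the crux, namely $k_i=m_i$ so that the two orthogonality measures coincide, but the route you propose is only a sketch of a sketch: the two trace identities give the first two moments of each measure, and duality at $j=1$ gives $k_i\theta_i=m_1P_{1,i}$, but it is not visible how these alone pin down all $D+1$ equalities $m_i=k_i$. Until that equality is established, orthogonal-polynomial uniqueness cannot be invoked and the induction does not close. Absent a concrete derivation, the clean move is to cite \cite[Proposition~10.4]{nomSpinModel}, as the paper does.
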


\begin{example}\rm \label{ex:cycle2}
For the $N$-cycle  $\Gamma$ 
in Example \ref{ex:cycle}, the given ordering $\lbrace E_i \rbrace_{i=0}^D$ of the primitive idempotents is formally self-dual by Lemma
\ref{lem:ntspin}.
\end{example}

\noindent Sometimes we  speak of $\Gamma$ being formally self-dual and $q$-Racah type. This means that there exists an ordering
of the primitive idempotents of $\Gamma$ that is formally self-dual and $q$-Racah type.

   \section{The matrix $Z$}
   
 \noindent We continue to discuss the distance-regular graph $\Gamma=(X, \mathcal R)$ with diameter $D\geq 3$. 
   We fix $x \in X$ and write $T=T(x)$. We make two assumptions about $\Gamma$, one of which involves $x$.
   We use these assumptions  to construct a central element $Z=Z(x)$ in $T$.
 
 \begin{assumption} \label{def:spintype} \rm Let $\Gamma=(X,\mathcal R)$ denote a distance-regular graph with diameter $D\geq 3$. 
 We assume that there exists an ordering $\lbrace E_i \rbrace_{i=0}^D$ of the primitive idempotents of $\Gamma$ that is formally self-dual and $q$-Racah type.
 By Lemma  \ref{lem:ntspin} and the discussion below Definition \ref{def:qRacah},
 there exist nonzero $a, \alpha \in \mathbb C$ and $\varepsilon \in \mathbb C$ such that
 \begin{align}
 \theta_i = \theta^*_i = \alpha( a q^{2i-D}+ a^{-1} q^{D-2i}) + \varepsilon  \qquad \qquad (0 \leq i \leq D).  \label{eq:Gtheta}
 \end{align}
 \end{assumption}
 
  \begin{assumption} \label{def:spintypex} \rm We refer to Assumption \ref{def:spintype}. We fix $x \in X$ and write $T=T(x)$.
 We assume that the adjacency matrix $A$ and dual adjacency matrix $A^*$ satisfy  
 \begin{align}
 \label{eq:MainAssume}
 \begin{split}
& \sum_{i=0}^D E^*_i \frac{A -\varepsilon I}{\alpha} E^*_i  \biggl(1+ \frac{\theta_{i}-\varepsilon}{\alpha}\,\frac{1}{q+q^{-1}}\biggr) \\
& =
  \sum_{i=0}^D E_i \frac{A^* -\varepsilon I}{\alpha} E_i  \biggl(1+ \frac{\theta_{i}-\varepsilon}{\alpha}\,\frac{1}{q+q^{-1}}\biggr).
  \end{split}
\end{align}
 \end{assumption}

 \begin{example}\label{ex:cycle3} \rm We refer to the $N$-cycle  $\Gamma$ from Example \ref{ex:cycle}. Recall the scalar $q$ and the ordering $\lbrace E_i\rbrace_{i=0}^D$ such that
 \begin{align*}
 \theta_i = \theta^*_i = q^{2i} + q^{-2i} \qquad \qquad (0 \leq i \leq D).
 \end{align*}
 By Examples \ref{ex:cycle}, \ref{ex:cycle2}  the ordering $\lbrace E_i \rbrace_{i=0}^D$ is formally self-dual and $q$-Racah type. 
If $N=2D$  then define $a \in \mathbb C$ such that $a^2=-1$. If $N=2D+1$ then define $a=-q^{-D-1}$. For either parity of $N$, define $\alpha=a q^{-D}$ and $\varepsilon=0$.
Observe that $a, \alpha, \varepsilon $ satisfy  \eqref{eq:Gtheta}.
 We claim that $a, \alpha, \varepsilon$  satisfy  \eqref{eq:MainAssume}.
 To establish the claim, we show that each side of  \eqref{eq:MainAssume} is equal to 0. First assume that $N=2D$. Then $a_i = a^*_i = 0$ for 
$0 \leq i \leq D$. By this and Lemma \ref{lem:TPR}, 
\begin{align*}
E^*_i A E^*_i = 0, \qquad \qquad E_i A^* E_i = 0, \qquad \qquad (0 \leq i \leq D).
\end{align*}
Therefore, each side of \eqref{eq:MainAssume} is equal to 0.
Next assume that $N=2D+1$. Then $a_i = a^*_i = 0 $ $(0 \leq i \leq D-1)$ and $a_D = a^*_D = 1$. By Lemma \ref{lem:TPR},
\begin{align*}
&E^*_i A E^*_i = 0, \qquad \qquad E_i A^* E_i = 0, \qquad \qquad (0 \leq i \leq D-1).
\end{align*}
One checks that
\begin{align*}
&1+ \frac{\theta_{D}-\varepsilon}{\alpha}\,\frac{1}{q+q^{-1}}=0.
\end{align*}
By these comments, each side of \eqref{eq:MainAssume} is equal to 0.
We have shown that  $a, \alpha, \varepsilon$ satisfy  \eqref{eq:MainAssume}.
 \end{example}

 \begin{remark}\label{rem:notation} \rm The notation $a, \alpha, q$  in \eqref{eq:Gtheta},  \eqref{eq:MainAssume} 
 matches the notation in \cite{nomSpinModel}.
 The scalar $\varepsilon $ in  \eqref{eq:Gtheta},  \eqref{eq:MainAssume} 
 is called  $\beta$ in \cite{nomSpinModel}.
 The  papers \cite{CW, curtNom, CNhom} use a slightly different notation. 
 The $q$ in \cite{CW} is the same as our $q^2$. The $\eta$ in \cite{CW} is the same as our  $-aq^{1-D}$.
 The $p$ in  \cite{curtNom, CNhom}
 is the same as our $q^2$. The $x$ in  \cite{curtNom, CNhom} is the same as our $-aq^{1-D}$.
  \end{remark}

\noindent   Assumption \ref{def:spintype} is in effect from now until the end of Section 15.    Assumption \ref{def:spintypex} is in effect from now until the end of Section 13. 
 \medskip

\begin{lemma} \label{lem:gam} The scalars $\beta, \gamma, \gamma^*, \varrho, \varrho^*$ from Lemma    \ref{thm:LEMOne}
satisfy
\begin{align*}
\beta &= q^2+q^{-2}, \\
\gamma &= \gamma^* = - \varepsilon (q-q^{-1})^2 = \varepsilon(2-\beta), \\
 \varrho &= \varrho^* = (q-q^{-1})^2 \varepsilon^2 - (q^2
-q^{-2})^2 \alpha^2 = (\beta-2) \varepsilon^2 + (4-\beta^2) \alpha^2.
\end{align*}
\end{lemma}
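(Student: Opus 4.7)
\textbf{Proof plan for Lemma \ref{lem:gam}.} The plan is to feed the closed form \eqref{eq:Gtheta} into the five formulas of Lemma \ref{prop:five} and simplify. Since Assumption \ref{def:spintype} forces $\theta_i=\theta^*_i$ for $0\le i\le D$, items (ii),(iii) of Lemma \ref{prop:five} give $\gamma=\gamma^*$ and items (iv),(v) give $\varrho=\varrho^*$ at once, so only $\beta$, $\gamma$, $\varrho$ need to be computed.

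Set $\eta_i = \theta_i - \varepsilon = \alpha a q^{2i-D} + \alpha a^{-1} q^{D-2i}$. The key observation is that $\eta_i$ is a linear combination of the two geometric sequences with ratios $q^2$ and $q^{-2}$, so it satisfies the three-term recurrence
\begin{equation*}
\eta_{i-1} + \eta_{i+1} = (q^2 + q^{-2}) \eta_i.
\end{equation*}
Applying this recurrence twice (to push $\eta_{i+1}$ down to index $i-1$ and $\eta_{i-2}$ up to index $i-1$) yields
\begin{equation*}
\eta_{i-2} - \eta_{i+1} = (q^2 + q^{-2} + 1)(\eta_{i-1} - \eta_i),
\end{equation*}
and Lemma \ref{prop:five}(i) therefore forces $\beta = q^2 + q^{-2}$. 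With $\beta$ in hand, Lemma \ref{prop:five}(ii) gives
\begin{equation*}
\gamma = \theta_{i-1} - \beta \theta_i + \theta_{i+1} = (\eta_{i-1}+\eta_{i+1}) - \beta\eta_i + (2-\beta)\varepsilon = (2-\beta)\varepsilon,
\end{equation*}
since the $\eta$-terms cancel by the recurrence. Rewriting $(2-\beta)=-(q-q^{-1})^2$ gives the claimed formula for $\gamma$ (hence for $\gamma^*$).

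For $\varrho$, I would use Lemma \ref{prop:five}(iv) and first reduce to the ``$\varepsilon=0$'' case. Expanding $\theta_j = \eta_j + \varepsilon$ and using $\gamma=(2-\beta)\varepsilon$, the $\varepsilon$-cross-terms collapse and one obtains
\begin{equation*}
\varrho = \eta_{i-1}^2 - \beta \eta_{i-1}\eta_i + \eta_i^2 + (\beta-2)\varepsilon^2.
\end{equation*}
The core computation is then to evaluate $\eta_{i-1}^2 - \beta \eta_{i-1}\eta_i + \eta_i^2$. Writing $u = \alpha a q^{2i-D}$ and $v = \alpha a^{-1} q^{D-2i}$, so that $\eta_i = u+v$, $\eta_{i-1} = u q^{-2} + v q^2$, and $uv = \alpha^2$, a direct expansion shows that the coefficients of $u^2$ and $v^2$ cancel and what survives is $(4-\beta^2)uv = (4-\beta^2)\alpha^2$. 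This gives $\varrho = (\beta-2)\varepsilon^2 + (4-\beta^2)\alpha^2$, and the alternative form $(q-q^{-1})^2\varepsilon^2 - (q^2-q^{-2})^2\alpha^2$ follows from $\beta-2 = (q-q^{-1})^2$ and $4-\beta^2 = -(q^2-q^{-2})^2$.

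No step is a serious obstacle; the only place requiring care is the $u,v$ bookkeeping in the $\varrho$ calculation, where one must notice that the $u^2$- and $v^2$-coefficients in $\eta_{i-1}^2 + \eta_i^2$ exactly match those in $\beta \eta_{i-1}\eta_i$, leaving only the $uv$ contribution $(4-\beta^2)\alpha^2$. The whole argument is manifestly independent of $i$, as it must be.
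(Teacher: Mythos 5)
Your proposal is correct and is exactly the computation the paper intends: the paper's proof is the one-liner ``Use Lemma \ref{prop:five} and \eqref{eq:Gtheta},'' and you have fleshed it out by substituting the closed form for $\theta_i$ into the five formulas. All the algebra checks out (the recurrence $\eta_{i-1}+\eta_{i+1}=(q^2+q^{-2})\eta_i$, the vanishing of the $u^2$ and $v^2$ coefficients, and the identities $\beta-2=(q-q^{-1})^2$, $4-\beta^2=-(q^2-q^{-2})^2$), so this matches the paper's approach in both substance and route.
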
 
\begin{proof} Use Lemma \ref{prop:five} and  \eqref{eq:Gtheta}.
\end{proof}
 
 \begin{definition}\label{def:Z} \rm  
 Let the matrix $Z=Z(x)$ denote the common value of \eqref{eq:MainAssume}.
 \end{definition}
 
 \noindent As we discuss $Z$, the following abbreviations will be convenient.
  Define
  \begin{align}
 \vartheta_i = a q^{2i-D} + a^{-1} q^{D-2i} \qquad \qquad i \in \mathbb Z.  \label{eq:vtheta}
 \end{align}
 By \eqref{eq:Gtheta},
 \begin{align*}
 \theta_i =\theta^*_i = \alpha \vartheta_i + \varepsilon \qquad \qquad (0 \leq i \leq D).
 \end{align*}
 Note that $\lbrace \vartheta_i \rbrace_{i=0}^D$ are mutually distinct because $\lbrace \theta_i \rbrace_{i=0}^D$ are
 mutually distinct.
Define
 \begin{align} \label{eq:AB}
 {\sf A}= \frac{A-\varepsilon I}{\alpha}, \qquad \qquad {\sf B} = \frac{A^*-\varepsilon I}{\alpha}.
 \end{align}
 By construction,
 \begin{align} \label{eq:AsumBsum}
 {\sf A} = \sum_{i=0}^D \vartheta_i E_i, \qquad \qquad {\sf B} = \sum_{i=0}^D \vartheta_i E^*_i.
 \end{align}
 By \eqref{eq:MainAssume}, \eqref{eq:AB}, and the definition of $Z$.
  \begin{align}
& \sum_{i=0}^D E^*_i {\sf A} E^*_i  \biggl(1+ \frac{\vartheta_i }{q+q^{-1}}\biggr)  = Z =
  \sum_{i=0}^D E_i {\sf B} E_i  \biggl(1+ \frac{\vartheta_i }{q+q^{-1}}\biggr).         \label{eq:MainAssumeBF}
\end{align}

\noindent Recall that $\lbrace E_i\rbrace_{i=0}^D$ is a basis for the Bose-Mesner algebra $M$, and 
 $\lbrace E^*_i\rbrace_{i=0}^D$ is a basis for the dual Bose-Mesner algebra $M^*=M^*(x)$.
Recall that the algebra $T$ is generated by $M$ and $M^*$.
 \begin{proposition} \label{lem:Z1} The following {\rm (i)--(iv)} hold:
 \begin{enumerate}
 \item[\rm (i)] $Z \in T$;
 \item[\rm (ii)] for $0 \leq i \leq D$,
 \begin{align*}
  Z E_i = E_i Z = E_i {\sf B} E_i  \biggl( 1 + \frac{\vartheta_i}{q+q^{-1}} \biggr);
  \end{align*}
   \item[\rm (iii)] for $0 \leq i \leq D$,
 \begin{align*}
  Z E^*_i = E^*_i Z = E^*_i {\sf A} E^*_i  \biggl( 1 + \frac{\vartheta_i}{q+q^{-1}} \biggr);
  \end{align*}
 \item[\rm (iv)] $Z$ is central in $T$.
 \end{enumerate}
 \end{proposition}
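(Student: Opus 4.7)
The plan is to read off all four assertions directly from the two equivalent expressions for $Z$ supplied by \eqref{eq:MainAssumeBF}, using the orthogonality relations $E_iE_j=\delta_{i,j}E_i$ and $E^*_iE^*_j=\delta_{i,j}E^*_i$. Part (i) is essentially free: either expression makes it manifest that $Z\in T$. For instance, $E^*_i\in M^*\subseteq T$ and ${\sf A}=(A-\varepsilon I)/\alpha\in M\subseteq T$, so each summand $E^*_i{\sf A}E^*_i$ of the right-hand side of \eqref{eq:MainAssumeBF} is in $T$, hence so is $Z$.

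For part (ii), I would take $Z$ in the second form
\[
Z=\sum_{j=0}^D E_j {\sf B} E_j\,\Bigl(1+\tfrac{\vartheta_j}{q+q^{-1}}\Bigr)
\]
and multiply by $E_i$ on either side. Collapsing via $E_jE_i=\delta_{i,j}E_i$ isolates the $j=i$ term and yields the asserted formula, simultaneously from the left and from the right. Part (iii) is identical with the roles of $(E_i,{\sf A})$ and $(E^*_i,{\sf B})$ interchanged; here one instead uses the first form of $Z$ from \eqref{eq:MainAssumeBF}.

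For part (iv), the key observation is that for each $i$, the matrix $E_i{\sf B}E_i$ commutes with every $E_j$: both products $E_i{\sf B}E_i\cdot E_j$ and $E_j\cdot E_i{\sf B}E_i$ vanish when $j\neq i$ and equal $E_i{\sf B}E_i$ when $j=i$. Hence the second form of $Z$ shows that $Z$ commutes with every $E_j$, so with all of $M$, and in particular with $A$. Symmetrically, the identical observation for $E^*_i{\sf A}E^*_i$ together with the first form of $Z$ shows that $Z$ commutes with every $E^*_j$, so with all of $M^*$, and in particular with $A^*$. Since $T$ is generated by $A$ and $A^*$, this forces $Z$ to lie in the center of $T$.

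The only step with any content is the centrality argument in (iv), but it reduces to the elementary fact just noted about $E_iSE_i$; there is no computational obstacle. Every part of the proposition is a direct consequence of Definition \ref{def:Z} combined with the mutual orthogonality of the primitive (resp.\ dual primitive) idempotents and the fact that $T$ is generated by $A$ and $A^*$.
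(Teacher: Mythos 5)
Your proof is correct and follows essentially the same route as the paper: parts (i)--(iii) read off directly from the two expressions in \eqref{eq:MainAssumeBF} together with $E_rE_s=\delta_{r,s}E_r$ and $E^*_rE^*_s=\delta_{r,s}E^*_r$, and (iv) follows because (ii) and (iii) show $Z$ commutes with all the $E_j$ and all the $E^*_j$, hence with the generators $A,A^*$ of $T$. The only difference is that you spell out the centrality argument slightly more explicitly than the paper's terse ``By (ii), (iii) above.''
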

 \begin{proof} (i)  The matrices $A, A^*, \lbrace E_i \rbrace_{i=0}^D, \lbrace E^*_i \rbrace_{i=0}^D$ are contained in $T$.\\
 \noindent (ii) By \eqref{eq:MainAssumeBF} and  $E_r E_s= \delta_{r,s} E_r$ for $0 \leq r,s\leq D$. \\
  \noindent (iii) By \eqref{eq:MainAssumeBF} and $E^*_r E^*_s= \delta_{r,s} E^*_r$ for $0 \leq r,s\leq D$. \\
 \noindent (iv) By (ii), (iii) above.
 \end{proof}
 
 \noindent We have some comments about the formula \eqref{eq:vtheta}. Recall that $\beta = q^2+q^{-2}$.
 \medskip
 
 \begin{lemma} \label{lem:thBasic}
 We have
 \begin{enumerate}
 \item[\rm (i)] $\vartheta_{i-1} - \beta \vartheta_i + \vartheta_{i+1}=0 \quad (1 \leq i \leq D-1)$;
 \item[\rm (ii)] $\vartheta^2_{i-1} - \beta \vartheta_{i-1} \vartheta_i + \vartheta^2_i = - (q^2 - q^{-2})^2 \quad (1 \leq i \leq D)$.
 \end{enumerate}
 \end{lemma}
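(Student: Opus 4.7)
The claim is a pair of closed-form identities for the sequence $\vartheta_i = aq^{2i-D}+a^{-1}q^{D-2i}$, and the natural approach is direct manipulation using $\beta = q^2+q^{-2}$.

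For part (i), the plan is to substitute the defining formula for $\vartheta_{i-1}$ and $\vartheta_{i+1}$ and split into the $a$-part and $a^{-1}$-part. Concretely,
\begin{align*}
\vartheta_{i-1}+\vartheta_{i+1} = aq^{2i-D}(q^{-2}+q^{2}) + a^{-1}q^{D-2i}(q^{2}+q^{-2}) = (q^{2}+q^{-2})\vartheta_{i} = \beta \vartheta_{i},
\end{align*}
which is the assertion. No cleverness is required; this is the three-term recurrence satisfied by any sequence of the form $c_{1}q^{2i}+c_{2}q^{-2i}$.

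For part (ii), the cleanest route is to prove that the expression
\begin{align*}
f_{i} := \vartheta_{i-1}^{2} - \beta \vartheta_{i-1}\vartheta_{i} + \vartheta_{i}^{2}
\end{align*}
is independent of $i$, and then evaluate it at $i=1$. To see the invariance, I would compute
\begin{align*}
f_{i+1}-f_{i} = (\vartheta_{i+1}-\vartheta_{i-1})\bigl(\vartheta_{i+1}+\vartheta_{i-1}-\beta\vartheta_{i}\bigr),
\end{align*}
after grouping $\vartheta_{i+1}^{2}-\vartheta_{i-1}^{2}$ as a difference of squares and pulling a common factor of $\vartheta_{i+1}-\vartheta_{i-1}$ from the cross term. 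The second factor vanishes by part (i), so $f_{i+1}=f_{i}$ for $1\leq i\leq D-1$, establishing that $f_{i}$ is constant on $1\leq i\leq D$.

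It then remains to compute $f_{1} = \vartheta_{0}^{2} - \beta \vartheta_{0}\vartheta_{1} + \vartheta_{1}^{2}$. Expanding, all terms involving $a^{\pm 2}$ cancel and what survives is $2 - q^{4} - q^{-4} = -(q^{2}-q^{-2})^{2}$, giving the required constant. The only mild obstacle is bookkeeping in this last expansion; since $a$ drops out entirely (as it must, by symmetry of the expression under $a \leftrightarrow a^{-1}$ combined with the expected answer being $a$-free), I would organize the computation by collecting the coefficients of $a^{2}$, $a^{-2}$, and $a^{0}$ separately and check that the first two vanish.
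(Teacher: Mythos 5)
Your proof is correct and matches the paper's approach, which is simply direct computation from the formula $\vartheta_i = aq^{2i-D}+a^{-1}q^{D-2i}$ together with $\beta=q^2+q^{-2}$ (the paper's proof reads, in full, ``Use \eqref{eq:vtheta}''). Your treatment of part (ii)---showing $f_{i+1}-f_i$ factors and vanishes by part (i), then evaluating at $i=1$---is a slightly tidier organization than brute-force expansion at a general index, but it is the same elementary substitution argument in substance.
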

 \begin{proof} Use \eqref{eq:vtheta}.
 \end{proof}
 
 \section{Some universal Askey-Wilson relations}
 
 \noindent We continue to discuss the implications of Assumptions \ref{def:spintype}, \ref{def:spintypex}. Recall the matrices $\sf A$, $\sf B$ from \eqref{eq:AB}.
 In this section, we show that $\sf A$ and $\sf B$ 
 satisfy some universal Askey-Wilson relations that involve the matrix $Z$. We put these universal Askey-Wilson relations
 in $\mathbb Z_3$-symmetric form. 
 \medskip
 
 \noindent We refer the reader to \cite{vidunas, Zhidd} for background information about the original Askey-Wilson relations
 and their relationship to the tridiagonal relations. The universal Askey-Wilson relations were introduced in \cite{uaw}. Information about the $\mathbb Z_3$-symmetric form can be found in
 \cite{vinet1, vinet2, vinet3, fairlie, havlicek, hwh, hwh2, hwh3, dahaZ3, nomTB, odesskii, uaw, uawsl2, uawDAHA, pseudo, TZ}.

 \begin{proposition} \label{prop:AWZ} We have
 \begin{align*}
 &{\sf A}^2 {\sf B} - \beta {\sf A}{\sf B}{\sf A}  + {\sf B} {\sf A}^2 + (q^2-q^{-2})^2 {\sf B} \\
 & \qquad = (q^2-q^{-2})^2 Z - (q-q^{-1})(q^2-q^{-2}) Z{\sf A}, \\
  &{\sf B}^2 {\sf A} - \beta {\sf B} {\sf A}{\sf B}   + {\sf A}{\sf B}^2  + (q^2-q^{-2})^2 {\sf A} \\
 & \qquad = (q^2-q^{-2})^2 Z - (q-q^{-1})(q^2-q^{-2}) Z{\sf B}. 
 \end{align*}
 \end{proposition}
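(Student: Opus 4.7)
The plan is to verify both displayed identities by sandwiching them between the primitive idempotents of $M$ (for the first identity) and of $M^*$ (for the second), which reduces each matrix identity to a scalar identity among the eigenvalues $\vartheta_i$.

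Focus first on the claim $Y=W$, where $Y={\sf A}^2{\sf B}-\beta{\sf A}{\sf B}{\sf A}+{\sf B}{\sf A}^2+(q^2-q^{-2})^2{\sf B}$ and $W=(q^2-q^{-2})^2Z-(q-q^{-1})(q^2-q^{-2})Z{\sf A}$. Since $I=\sum_r E_r$, it suffices to check $E_r Y E_s = E_r W E_s$ for all $0\leq r,s\leq D$. Using ${\sf A}E_i=\vartheta_iE_i=E_i{\sf A}$ from \eqref{eq:AsumBsum} to pull each $\sf A$ through the outer idempotents, one obtains
\[
E_r Y E_s = \bigl(\vartheta_r^2-\beta\vartheta_r\vartheta_s+\vartheta_s^2+(q^2-q^{-2})^2\bigr)\,E_r{\sf B}E_s.
\]
Three cases arise. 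If $|r-s|\geq 2$, then the $Q$-polynomial property forces $q^r_{1,s}=0$, so Lemma \ref{lem:TPR}(ii) gives $E_rA^*E_s=0$ and hence $E_r{\sf B}E_s=0$. If $|r-s|=1$, the scalar factor vanishes by Lemma \ref{lem:thBasic}(ii). If $r=s$, the factor reduces to $(2-\beta)\vartheta_r^2+(q^2-q^{-2})^2$, which, using $\beta=q^2+q^{-2}$ (so $2-\beta=-(q-q^{-1})^2$) and $(q^2-q^{-2})^2=(q-q^{-1})^2(q+q^{-1})^2$, collapses to $(q-q^{-1})^2[(q+q^{-1})^2-\vartheta_r^2]$.

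On the $W$ side, the centrality of $Z$ from Proposition \ref{lem:Z1}(iv) combined with Proposition \ref{lem:Z1}(ii) yields $E_r Z E_s=\delta_{r,s}\,E_r{\sf B}E_r\bigl(1+\vartheta_r/(q+q^{-1})\bigr)$. Together with ${\sf A}E_s=\vartheta_sE_s$ this gives $E_r W E_s=0$ for $r\neq s$ and
\[
E_r W E_r=\bigl[(q^2-q^{-2})^2-(q-q^{-1})(q^2-q^{-2})\vartheta_r\bigr]\,\frac{(q+q^{-1})+\vartheta_r}{q+q^{-1}}\,E_r{\sf B}E_r.
\]
Extracting $q^2-q^{-2}=(q-q^{-1})(q+q^{-1})$ from the bracket leaves $(q-q^{-1})^2(q+q^{-1})[(q+q^{-1})-\vartheta_r]$, and a difference-of-squares step then collapses the whole coefficient to $(q-q^{-1})^2[(q+q^{-1})^2-\vartheta_r^2]$, matching the coefficient of $E_rYE_r$ exactly. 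Therefore $Y=W$.

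The second identity is proved by the parallel argument with $E^*_r$ in place of $E_r$: now ${\sf B}E^*_i=\vartheta_iE^*_i$ by \eqref{eq:AsumBsum}; Lemma \ref{lem:TPR}(i) with $i=1$ together with $p^r_{1,s}=0$ for $|r-s|\geq 2$ (the triangle inequality) gives $E^*_r{\sf A}E^*_s=0$ in the far-off-diagonal case; and Proposition \ref{lem:Z1}(iii) supplies $ZE^*_r=E^*_r{\sf A}E^*_r\bigl(1+\vartheta_r/(q+q^{-1})\bigr)$. The scalar bookkeeping is identical. The main obstacle is the diagonal case $r=s$: one must recognize the difference-of-squares collapse that makes the two coefficients coincide; all other cases are forced automatically by the vanishing lemmas. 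As a sanity check, substituting $A=\alpha{\sf A}+\varepsilon I$ and $A^*=\alpha{\sf B}+\varepsilon I$ into the first tridiagonal relation of Lemma \ref{thm:LEMOne} and using Lemma \ref{lem:gam} cleanly collapses it to $[{\sf A},Y]=0$, which is consistent with $Y=W$ since $W$ manifestly commutes with $\sf A$.
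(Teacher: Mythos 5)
Your proposal is correct and takes essentially the same route as the paper: sandwich both sides between $E_r$ and $E_s$, invoke Lemma \ref{lem:TPR}(ii) plus the $Q$-polynomial property to kill the $|r-s|\geq 2$ terms, use Lemma \ref{lem:thBasic}(ii) for $|r-s|=1$, and match coefficients in the diagonal case via Proposition \ref{lem:Z1}(ii). The only cosmetic difference is that you compute $E_r Y E_s$ and $E_r W E_s$ separately and show equality, whereas the paper subtracts and factors $E_i\Delta E_i$ as $(q^2-q^{-2})^2\bigl(1-\tfrac{\vartheta_i}{q+q^{-1}}\bigr)\bigl[(1+\tfrac{\vartheta_i}{q+q^{-1}})E_i{\sf B}E_i - E_iZ\bigr]$, which vanishes immediately by Proposition \ref{lem:Z1}(ii) — the same algebra packaged slightly differently.
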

 \begin{proof} To verify the first equation, let $\Delta$ denote the left-hand side minus the right-hand side. We show that $\Delta=0$.  Note that
 \begin{align*}
 \Delta = I \Delta I =   \sum_{i=0}^D \sum_{j=0}^D E_i \Delta E_j.
 \end{align*}
 For $0 \leq i, j\leq D$ we show that $E_i \Delta E_j=0$. We have
 \begin{align*}
 E_i \Delta E_j &= E_i {\sf B} E_j \Bigl( \vartheta^2_i - \beta \vartheta_i \vartheta_j + \vartheta^2_j + (q^2-q^{-2})^2 \Bigr)\\
& \qquad  - \delta_{i,j} (q^2-q^{-2})^2 \biggl( 1 - \frac{\vartheta_i}{q+q^{-1}} \biggr) E_i Z.
 \end{align*}
\noindent In this equation we examine the terms. By Lemma \ref{lem:TPR}(ii),  $E_i {\sf B} E_j=0$ if $\vert i-j\vert >1$.  By Lemma \ref{lem:thBasic}(ii), the coefficient of $E_i {\sf B} E_j$ is zero if
 $\vert i-j\vert = 1$. By these comments, $E_i \Delta E_j = 0$ if $i \not=j$. Also
\begin{align*}
E_i \Delta E_i &= E_i {\sf B} E_i 
 \Bigl( \vartheta^2_i (2- \beta) + (q^2-q^{-2})^2 \Bigr)
 - (q^2-q^{-2})^2 \biggl( 1 - \frac{\vartheta_i}{q+q^{-1}} \biggr) E_i Z
 \\
 &= (q^2 - q^{-2})^2 \biggl( 1 - \frac{\vartheta^2_i}{(q+q^{-1})^2} \biggr) E_i {\sf B} E_i
  - (q^2-q^{-2})^2 \biggl( 1 - \frac{\vartheta_i}{q+q^{-1}} \biggr) E_i Z \\
 &=(q^2-q^{-2})^2  \biggl( 1 - \frac{\vartheta_i}{q+q^{-1}} \biggr) \biggl( \biggl( 1 + \frac{\vartheta_i}{q+q^{-1}} \biggr)E_i {\sf B} E_i - E_i Z \biggr) 
 \\
 &=0,
 \end{align*}
 \noindent where the last equality holds by Proposition \ref{lem:Z1}(ii).
 We have shown that $E_i \Delta E_j=0$ for $0 \leq i,j\leq D$. Therefore $\Delta=0$, and the first equation is  verified. The second equation is similarly verified.
 \end{proof}

\begin{definition}\label{def:C} Define
\begin{align}\label{eq:C}
{\sf C} = Z - \frac{q {\sf A}{\sf  B}-q^{-1} {\sf B}{\sf A}}{q^2-q^{-2}}.
\end{align}
\end{definition}
\noindent Note that ${\sf C} \in T$.

 \begin{proposition} \label{lem:GABC} 
 We have
 \begin{align}
\label{eq:GZ2}  {\sf A} + \frac{q {\sf B}{\sf C}-q^{-1} {\sf C}{\sf B}}{q^2-q^{-2}} &= Z, \\
\label{eq:GZ3}  {\sf B}+ \frac{q {\sf C}{\sf A}-q^{-1} {\sf A}{\sf C}}{q^2-q^{-2}} &= Z, \\
\label{eq:GZ1} {\sf C} + \frac{q {\sf A}{\sf B} - q^{-1} {\sf B}{\sf A}}{q^2-q^{-2}}& = Z.
 \end{align}
 \end{proposition}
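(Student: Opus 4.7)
My plan is to derive the three displayed relations directly from Definition \ref{def:C} and Proposition \ref{prop:AWZ}, using the fact (Proposition \ref{lem:Z1}(iv)) that $Z$ is central in $T$. Observe first that \eqref{eq:GZ1} is nothing but a rearrangement of the defining equation \eqref{eq:C} for $\sf C$, so there is nothing to prove there. The work is concentrated in \eqref{eq:GZ2} and \eqref{eq:GZ3}, and by the obvious symmetry between $\sf A$ and $\sf B$ in the setup, it suffices to present the argument for one of them; I will sketch \eqref{eq:GZ2}.

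To prove \eqref{eq:GZ2}, I will substitute ${\sf C} = Z - (q{\sf AB}-q^{-1}{\sf BA})/(q^2-q^{-2})$ into $q{\sf BC}-q^{-1}{\sf CB}$. Expanding and using centrality of $Z$ to move $Z$ past $\sf B$, the contribution of the $Z$-term collapses to $(q-q^{-1})Z{\sf B}$, while the remaining cubic piece simplifies to
\begin{align*}
\frac{{\sf B}^2{\sf A} + {\sf A}{\sf B}^2 - \beta \,{\sf B}{\sf A}{\sf B}}{q^2-q^{-2}},
\end{align*}
where $\beta = q^2+q^{-2}$. Here is the key step: the second relation of Proposition \ref{prop:AWZ} precisely evaluates the numerator, giving
\begin{align*}
{\sf B}^2{\sf A} + {\sf A}{\sf B}^2 - \beta\, {\sf B}{\sf A}{\sf B} = (q^2-q^{-2})^2 Z - (q-q^{-1})(q^2-q^{-2})Z{\sf B} - (q^2-q^{-2})^2{\sf A}.
\end{align*}
Adding the two pieces, the $(q-q^{-1})Z{\sf B}$ terms cancel, leaving $q{\sf BC}-q^{-1}{\sf CB} = (q^2-q^{-2})(Z - {\sf A})$, and dividing by $q^2-q^{-2}$ and adding $\sf A$ yields \eqref{eq:GZ2}. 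The proof of \eqref{eq:GZ3} is word-for-word analogous: one expands $q{\sf CA}-q^{-1}{\sf AC}$, uses the centrality of $Z$, and applies the first relation of Proposition \ref{prop:AWZ} to simplify the cubic term $\beta\, {\sf A}{\sf B}{\sf A} - {\sf A}^2{\sf B} - {\sf B}{\sf A}^2$.

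The argument is essentially a bookkeeping calculation, so I do not expect any conceptual obstacle. The main care needed is in tracking signs and powers of $q$ when expanding $q{\sf BC}-q^{-1}{\sf CB}$ so that the cubic monomials are grouped in exactly the combination $\beta\,{\sf B}{\sf A}{\sf B} - {\sf B}^2{\sf A} - {\sf A}{\sf B}^2$ (and its $\sf A$-analogue) that appears on the left side of Proposition \ref{prop:AWZ}; any slip there will prevent the cancellation of the two $Z{\sf B}$ terms. Once that pairing is arranged correctly, the two Askey-Wilson relations from Proposition \ref{prop:AWZ} drop in cleanly, and the desired $\mathbb Z_3$-symmetric form is forced.
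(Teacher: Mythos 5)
Your proof is correct and follows exactly the strategy the paper uses: rearrange \eqref{eq:C} to get \eqref{eq:GZ1}, then eliminate ${\sf C}$ from the other two relations and match the resulting cubic expressions against the two identities of Proposition \ref{prop:AWZ}, with centrality of $Z$ producing the $(q-q^{-1})Z{\sf B}$ (respectively $(q-q^{-1})Z{\sf A}$) terms that cancel. One small caveat: the phrase ``obvious symmetry between $\sf A$ and $\sf B$'' is not quite right, since swapping $\sf A$ and $\sf B$ does not fix $\sf C$ as defined in \eqref{eq:C}; however this is harmless because you also spell out the explicit analogous computation for \eqref{eq:GZ3} (expand $q{\sf CA}-q^{-1}{\sf AC}$ and apply the first Askey--Wilson relation), which is the correct justification.
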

 \begin{proof} To verify \eqref{eq:GZ2} and \eqref{eq:GZ3}, eliminate ${\sf C}$ using \eqref{eq:C} and compare the results with 
 Proposition  \ref{prop:AWZ}. The relation \eqref{eq:GZ1} is a reformulation of \eqref{eq:C}.
 \end{proof}
 
\begin{remark}\label{rem:AW} \rm The universal Askey-Wilson algebra $\Delta_q$ was introduced in \cite{uaw}.
The relations \eqref{eq:GZ2}--\eqref{eq:GZ1} are a special case of the defining relations
for $\Delta_q$. This special case is discussed in \cite[Section~2.1]{vinet3} in connection with the representation theory of $U_q(\mathfrak{sl}_2)$.
See also \cite[Sections~5,14]{pseudo}.
\end{remark}
 
 \section{Some intertwiners}
 
 \noindent  We continue to discuss the implications of Assumptions \ref{def:spintype}, \ref{def:spintypex}. Recall the matrices $\sf A,B,C$ from Proposition \ref{lem:GABC}.
 In this section we have two goals. First, we find all the invertible matrices $W$ in the Bose-Mesner algebra $M$ such that 
 $W^{-1}{\sf B} W = {\sf C}$. Second, we find all the invertible matrices $W^*$ in the dual Bose-Mesner algebra $M^*$ such that  $W^* {\sf A} (W^*)^{-1} = {\sf C}$.
To reach these goals, we will adapt a strategy from \cite{pseudo}.

 
  \begin{lemma} \label{lem:GeigFORM} For $1 \leq i \leq D$ we have
 \begin{align*}
 \frac{ q \vartheta_i - q^{-1} \vartheta_{i-1}}{q^2-q^{-2}} = a q^{2i-D-1},
 \qquad \qquad
 \frac{q \vartheta_{i-1}- q^{-1} \vartheta_i}{q^2-q^{-2}} = a^{-1} q^{D-2i+1}.
 \end{align*}
 \end{lemma}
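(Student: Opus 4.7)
The plan is to prove both identities by direct substitution of the closed form $\vartheta_i = a q^{2i-D} + a^{-1} q^{D-2i}$ from \eqref{eq:vtheta}, followed by algebraic simplification. Since the claim is a statement about scalars (not matrices) and both sides are explicit Laurent polynomials in $q$ and $a$, no structural result is needed: this is purely a computation.

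For the first identity, I would expand the numerator using \eqref{eq:vtheta}:
\begin{align*}
q\vartheta_i - q^{-1}\vartheta_{i-1}
&= q\bigl(aq^{2i-D}+a^{-1}q^{D-2i}\bigr) - q^{-1}\bigl(aq^{2i-D-2}+a^{-1}q^{D-2i+2}\bigr) \\
&= aq^{2i-D+1} + a^{-1}q^{D-2i+1} - aq^{2i-D-3} - a^{-1}q^{D-2i+1}.
\end{align*}
The two $a^{-1}$-terms cancel, leaving $aq^{2i-D-3}(q^4-1) = aq^{2i-D-1}(q^2-q^{-2})$, and dividing by $q^2-q^{-2}$ gives $aq^{2i-D-1}$, as required.

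The second identity is handled in exactly the same way: expanding $q\vartheta_{i-1}-q^{-1}\vartheta_i$ via \eqref{eq:vtheta}, the two $a$-terms cancel, and the remaining $a^{-1}$-terms combine to $a^{-1}q^{D-2i-1}(q^4-1)=a^{-1}q^{D-2i+1}(q^2-q^{-2})$, which yields $a^{-1}q^{D-2i+1}$ after dividing. There is no obstacle here; the main thing to be careful about is bookkeeping of exponents (and noting that $q^2\ne \pm 1$ from Definition \ref{def:qRacah} ensures $q^2-q^{-2}\ne 0$, so the division is legitimate).
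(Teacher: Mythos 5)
Your computation is correct and matches the paper's intent exactly: the paper's proof is simply the citation "By \eqref{eq:vtheta}," i.e., direct substitution of the closed form of $\vartheta_i$, which is precisely what you carry out in detail. The exponent bookkeeping checks out, and your remark that $q^2\neq\pm 1$ guarantees $q^2-q^{-2}\neq 0$ is the right justification for the division.
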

 \begin{proof} By \eqref{eq:vtheta}.
 \end{proof}

 \begin{lemma}\label{lem:GeigProd} Let $0 \leq i,j\leq D$ such that $\vert i-j \vert =1$. Then
 \begin{align*} 
 1 = \frac{q \vartheta_i - q^{-1} \vartheta_j}{q^2-q^{-2}} \,\frac{q \vartheta_j - q^{-1} \vartheta_i}{q^2-q^{-2}}.
 \end{align*}
\end{lemma}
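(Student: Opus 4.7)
The plan is to reduce to Lemma \ref{lem:GeigFORM} by a case split on whether $j=i-1$ or $j=i+1$, and then observe that the product of the two explicit closed forms collapses to $1$.

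First I would note that the expression on the right-hand side is symmetric under the swap $i \leftrightarrow j$: interchanging $i$ and $j$ simply interchanges the two factors, leaving the product fixed. So it suffices to handle the case $j=i-1$ with $1 \leq i \leq D$; the case $j=i+1$ then follows by relabeling.

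Next, I would apply Lemma \ref{lem:GeigFORM} directly to the two factors. In the case $j=i-1$, that lemma gives
\begin{align*}
\frac{q\vartheta_i - q^{-1}\vartheta_{i-1}}{q^2-q^{-2}} = a q^{2i-D-1}, \qquad
\frac{q\vartheta_{i-1} - q^{-1}\vartheta_i}{q^2-q^{-2}} = a^{-1} q^{D-2i+1}.
\end{align*}
Multiplying these two expressions yields $a \cdot a^{-1} \cdot q^{2i-D-1+D-2i+1} = q^{0} = 1$, as required.

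There is no real obstacle here: the only thing to be careful about is the case separation on $|i-j|=1$, and the symmetry argument above removes any ambiguity. The identity is essentially a repackaging of the factorization already encoded in Lemma \ref{lem:GeigFORM}, which in turn comes from the explicit form $\vartheta_i = aq^{2i-D}+a^{-1}q^{D-2i}$.
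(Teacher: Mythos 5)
Your proof is correct and follows one of the two routes the paper itself indicates: the paper's proof reads ``By Lemma \ref{lem:thBasic}(ii) or Lemma \ref{lem:GeigFORM}'', and you have carried out the Lemma \ref{lem:GeigFORM} version in full detail, with the symmetry reduction to $j=i-1$ making the case analysis clean.
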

 \begin{proof} By Lemma \ref{lem:thBasic}(ii) or Lemma \ref{lem:GeigFORM}.
 \end{proof}

  \noindent For the moment, pick any $W \in M$ and write 
 \begin{align*} 
 W = \sum_{i=0}^D \alpha_i E_i \qquad \qquad \alpha_i \in \mathbb C.
 \end{align*}
 The matrix $W$ is invertible if and only if $\alpha_i \not=0$ for $0 \leq i \leq D$. In this case,
 \begin{align*}
 W^{-1} =\sum_{i=0}^D \alpha^{-1}_i E_i.
 \end{align*}
 In the next few results, we assume that $W$ is invertible and compare $W^{-1}{\sf B}W$ with $\sf C$.

 \begin{lemma}\label{lem:Gdif1} Assume that the above matrix $W$ is invertible. Then
 \begin{align*}
 {\sf C} -W^{-1}{\sf B}W= \sum_{\stackrel{0 \leq i,j \leq D}{\vert i-j\vert=1}} E_i{\sf B}E_j \biggl( \frac{q^{-1}  \vartheta_j - q \vartheta_i}{q^2-q^{-2}}-\frac{\alpha_j}{\alpha_i}  \biggr).
 \end{align*}
 \end{lemma}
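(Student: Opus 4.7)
The plan is to expand both $\sf C$ and $W^{-1}{\sf B}W$ in the ``basis'' $\{E_i {\sf B} E_j\}$ obtained by inserting $I = \sum_i E_i$ on the left and right, then compare term by term. Since $W$ and $W^{-1}$ are polynomials in the $E_i$, the identity $E_i W^{-1} = \alpha_i^{-1} E_i$ and $W E_j = \alpha_j E_j$ gives immediately
\[
W^{-1}{\sf B}W \;=\; \sum_{i,j=0}^{D} \frac{\alpha_j}{\alpha_i}\, E_i {\sf B} E_j .
\]
For $\sf C$, I would use Definition \ref{def:C} together with ${\sf A} = \sum_i \vartheta_i E_i$ from \eqref{eq:AsumBsum} to get $E_i {\sf A}{\sf B} E_j = \vartheta_i E_i {\sf B} E_j$ and $E_i {\sf B}{\sf A} E_j = \vartheta_j E_i {\sf B} E_j$. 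Combined with the centrality $E_i Z E_j = \delta_{i,j} E_i Z$ from Proposition \ref{lem:Z1}(iv), this yields
\[
E_i {\sf C} E_j \;=\; \delta_{i,j} E_i Z \;-\; \frac{q\vartheta_i - q^{-1}\vartheta_j}{q^2-q^{-2}}\, E_i {\sf B} E_j .
\]

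I would then organize the difference ${\sf C} - W^{-1}{\sf B}W = \sum_{i,j} E_i ({\sf C} - W^{-1}{\sf B}W) E_j$ into three cases. For $|i-j| > 1$, Lemma \ref{lem:TPR}(ii) applied to ${\sf B} = (A^*-\varepsilon I)/\alpha$ (with respect to the $Q$-polynomial ordering) forces $E_i {\sf B} E_j = 0$, and the $Z$-term vanishes too, so the contribution is $0$. For $|i-j|=1$, $E_i Z E_j = 0$ and the contribution is exactly
\[
E_i {\sf B} E_j \left( \frac{q^{-1}\vartheta_j - q\vartheta_i}{q^2-q^{-2}} - \frac{\alpha_j}{\alpha_i} \right),
\]
which is the asserted summand.

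The only genuinely non-routine step is the diagonal case $i=j$, which must collapse to $0$. Here the contribution is
\[
E_i Z - \frac{(q-q^{-1})\vartheta_i}{q^2-q^{-2}} E_i {\sf B} E_i - E_i {\sf B} E_i
\;=\; E_i Z - \left( 1 + \frac{\vartheta_i}{q+q^{-1}} \right) E_i {\sf B} E_i,
\]
using the simplification $\tfrac{q-q^{-1}}{q^2-q^{-2}} = \tfrac{1}{q+q^{-1}}$. This is precisely the content of Proposition \ref{lem:Z1}(ii), and hence vanishes. This is the point where Assumption \ref{def:spintypex} enters through the definition of $Z$, so I would flag it as the one place where more than bookkeeping is required; everything else is expansion and collection of like terms.
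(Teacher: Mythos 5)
Your proof is correct and follows the same strategy the paper's proof sketch indicates: insert $I=\sum_i E_i$ on both sides, expand ${\sf C}$ via its defining relation \eqref{eq:C} and ${\sf A}=\sum_i \vartheta_i E_i$, expand $W^{-1}{\sf B}W$ via $WE_j=\alpha_j E_j$, kill the $|i-j|>1$ blocks by Lemma \ref{lem:TPR}(ii), and cancel the diagonal blocks via Proposition \ref{lem:Z1}(ii). You have simply supplied the bookkeeping that the paper's two-sentence proof omits.
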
 
  \begin{proof} To verify this equation, multiply the matrix ${\sf C}-W^{-1}{\sf B}W$ on the left by $I$ and on the right by $I$. Evaluate the result using $I= E_0 + E_1+ \cdots + E_D$,
 Proposition \ref{lem:Z1}(ii),  \eqref{eq:C} and the fact that $E_i {\sf B} E_j=0$ if $\vert i - j \vert >1$ $(0 \leq i,j\leq D)$.
 \end{proof}

 \begin{proposition} \label{prop:GP1} Let $W \in M$ be invertible, and write 
  \begin{align*}
 W = \sum_{i=0}^D \alpha_i E_i \qquad \qquad \alpha_i \in \mathbb C.
 \end{align*}
\noindent The following are equivalent:
\begin{enumerate}
\item[\rm (i)] $W^{-1} {\sf B} W = {\sf C}$;
\item[\rm (ii)] $\displaystyle \frac{\alpha_j}{\alpha_i} = \frac{q^{-1}\vartheta_j - q\vartheta_i}{q^2-q^{-2}}  \qquad \mbox{\rm if} \quad \vert i - j \vert =1 \qquad (0 \leq i,j\leq D)$;
\item[\rm (iii)] $ \displaystyle \frac{\alpha_{i}}{\alpha_{i-1}} = \frac{q^{-1} \vartheta_i - q \vartheta_{i-1}}{q^2-q^{-2}} \qquad \qquad (1 \leq i \leq D)$;
\item[\rm (iv)] $ \displaystyle \frac{\alpha_{i}}{\alpha_{i-1}} = -a^{-1} q^{D-2i+1} \qquad \qquad (1 \leq i \leq D)$;
\item[\rm (v)] $ \alpha_i = \alpha_0 (-1)^i a^{-i} q^{i(D-i)}  \qquad \qquad (0 \leq i \leq D)$.
\end{enumerate}
\end{proposition}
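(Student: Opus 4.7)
The plan is to verify the chain (i)$\Leftrightarrow$(ii)$\Leftrightarrow$(iii)$\Leftrightarrow$(iv)$\Leftrightarrow$(v), invoking the technical lemmas already in place. The key machinery is Lemma \ref{lem:Gdif1} (which locates the obstruction to $W^{-1}{\sf B}W = {\sf C}$ in the $E_i{\sf B}E_j$ components with $|i-j|=1$), Lemmas \ref{lem:GeigFORM} and \ref{lem:GeigProd} (which handle the arithmetic of ratios of the $\vartheta_i$'s), and Lemma \ref{lem:linDep}(ii) (which gives the linear independence needed to extract coefficients).

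For (i)$\Leftrightarrow$(ii), I would apply Lemma \ref{lem:Gdif1} to write ${\sf C}-W^{-1}{\sf B}W$ as a sum $\sum_{|i-j|=1} E_i{\sf B}E_j\bigl(\tfrac{q^{-1}\vartheta_j-q\vartheta_i}{q^2-q^{-2}}-\tfrac{\alpha_j}{\alpha_i}\bigr)$. Since ${\sf B}=(A^*-\varepsilon I)/\alpha$ and $E_iE_j=0$ for $i\ne j$, each term $E_i{\sf B}E_j$ with $|i-j|=1$ is a nonzero scalar multiple of $E_iA^*E_j$; and by formal self-duality $q^h_{1,j}\ne 0$ whenever $|h-j|=1$, so Lemma \ref{lem:linDep}(ii) ensures these $E_iA^*E_j$ are linearly independent. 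Hence ${\sf C}-W^{-1}{\sf B}W = 0$ forces every coefficient to vanish, giving (ii); conversely (ii) makes each coefficient zero.

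For (ii)$\Leftrightarrow$(iii): the implication (ii)$\Rightarrow$(iii) is immediate by taking the index pair $(i-1,i)$ for $1\le i\le D$. For (iii)$\Rightarrow$(ii), the $j=i+1$ case is (iii) at index $i+1$, and the $j=i-1$ case requires reciprocating (iii) and using Lemma \ref{lem:GeigProd}, which gives exactly $(q^{-1}\vartheta_i-q\vartheta_{i-1})(q^{-1}\vartheta_{i-1}-q\vartheta_i)=(q^2-q^{-2})^2$, so $\alpha_{i-1}/\alpha_i=(q^{-1}\vartheta_{i-1}-q\vartheta_i)/(q^2-q^{-2})$ as required. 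Next, (iii)$\Leftrightarrow$(iv) follows by noting that Lemma \ref{lem:GeigFORM} (second equation) gives $(q\vartheta_{i-1}-q^{-1}\vartheta_i)/(q^2-q^{-2})=a^{-1}q^{D-2i+1}$, so $(q^{-1}\vartheta_i-q\vartheta_{i-1})/(q^2-q^{-2})=-a^{-1}q^{D-2i+1}$. Finally, (iv)$\Leftrightarrow$(v) is telescoping: from (iv), $\alpha_i/\alpha_0=\prod_{k=1}^i(-a^{-1}q^{D-2k+1})=(-1)^ia^{-i}q^{\sum_{k=1}^i(D-2k+1)}$, and the exponent simplifies to $i(D-i)$; the reverse implication is an immediate ratio calculation.

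The only step with any real content is (i)$\Leftrightarrow$(ii), since it uses the structural input (linear independence from formal self-duality) rather than just computing with $\vartheta_i$'s. The other implications are purely algebraic identities among the $\vartheta_i$'s handled by Lemmas \ref{lem:GeigFORM} and \ref{lem:GeigProd}, so I do not expect any obstacle beyond care with signs in the $q^{-1}\vartheta_j-q\vartheta_i$ expressions.
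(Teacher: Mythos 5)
Your proposal is correct and follows essentially the same route as the paper: (i)$\Leftrightarrow$(ii) from Lemmas \ref{lem:linDep}(ii) and \ref{lem:Gdif1}, (ii)$\Leftrightarrow$(iii) from Lemma \ref{lem:GeigProd}, (iii)$\Leftrightarrow$(iv) from Lemma \ref{lem:GeigFORM}, and (iv)$\Leftrightarrow$(v) by the telescoping product. You have merely expanded the terse citations in the paper's proof into explicit computations, and those computations (including the sign bookkeeping in the reciprocation step and the exponent sum $\sum_{k=1}^i(D-2k+1)=i(D-i)$) are accurate.
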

\begin{proof} ${\rm (i)} \Leftrightarrow {\rm (ii)}$ By Lemma \ref{lem:linDep}(ii) and Lemma  \ref{lem:Gdif1}. \\
\noindent  ${\rm (ii)} \Leftrightarrow {\rm (iii)}$ By Lemma \ref{lem:GeigProd}. \\
\noindent  ${\rm (iii)} \Leftrightarrow {\rm (iv)}$ By Lemma  \ref{lem:GeigFORM}. \\
\noindent  ${\rm (iv)} \Leftrightarrow {\rm (v)}$ This is routinely checked.
\end{proof}
 
  \noindent  
 We turn our attention to $M^*$. Our next goal is to find  all
   the invertible matrices $W^* \in M^*$ such that $W^* {\sf A} (W^*)^{-1} = {\sf C}$.
\medskip

  \noindent For the moment, pick any $W^* \in M^*$ and write 
 \begin{align*}
 W^* = \sum_{i=0}^D \alpha^*_i E^*_i \qquad \qquad \alpha^*_i \in \mathbb C.
 \end{align*}
 The matrix $W^*$ is invertible if and only if $\alpha^*_i \not=0$ for $0 \leq i \leq D$. In this case,
 \begin{align*} 
 (W^*)^{-1} =\sum_{i=0}^D (\alpha^*_i)^{-1} E^*_i.
 \end{align*}
\noindent In the next few results, we assume that  $W^*$ is invertible and compare $W^* {\sf A} (W^*)^{-1}$ with $\sf C$.

 \begin{lemma} \label{lem:Gdif2} Assume that the above matrix $W^*$ is invertible. Then
 \begin{align*}
 {\sf C} -W^* {\sf A} (W^*)^{-1}=\sum_{\stackrel{0 \leq i,j \leq D}{\vert i-j\vert=1}} E^*_i {\sf A}E^*_j \biggl(  \frac{q^{-1} \vartheta_i - q \vartheta_j}{q^2-q^{-2}} - \frac{\alpha^*_i}{\alpha^*_j}   \biggr).
 \end{align*}
 \end{lemma}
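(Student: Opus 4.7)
The plan is to mirror the proof of Lemma \ref{lem:Gdif1}, interchanging the roles of $M$ and $M^*$, of $\sf B$ and $\sf A$, and of $E_i$ and $E^*_i$. First I would sandwich the difference ${\sf C} - W^*{\sf A}(W^*)^{-1}$ between the resolution of the identity $I = \sum_{i=0}^D E^*_i$ on each side, and examine each block $E^*_i({\sf C} - W^*{\sf A}(W^*)^{-1})E^*_j$ individually. Since $W^* = \sum_i \alpha^*_i E^*_i$ and $(W^*)^{-1} = \sum_i (\alpha^*_i)^{-1} E^*_i$, the orthogonality of the $E^*_i$ immediately gives $E^*_i W^* {\sf A} (W^*)^{-1} E^*_j = (\alpha^*_i / \alpha^*_j)\, E^*_i {\sf A} E^*_j$.

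To compute $E^*_i {\sf C} E^*_j$, the right choice is \eqref{eq:GZ1} rearranged as ${\sf C} = Z - (q{\sf A}{\sf B} - q^{-1}{\sf B}{\sf A})/(q^2 - q^{-2})$, precisely because $\sf B$ is diagonal with respect to the $E^*_i$ basis. Since ${\sf B} = \sum_i \vartheta_i E^*_i$ by \eqref{eq:AsumBsum}, we have $E^*_i {\sf B} = \vartheta_i E^*_i$ and ${\sf B} E^*_j = \vartheta_j E^*_j$, so
\begin{align*}
E^*_i \,\frac{q{\sf A}{\sf B} - q^{-1}{\sf B}{\sf A}}{q^2 - q^{-2}}\, E^*_j = \frac{q\vartheta_j - q^{-1}\vartheta_i}{q^2 - q^{-2}}\, E^*_i {\sf A} E^*_j.
\end{align*}
The $Z$-contribution is handled by Proposition \ref{lem:Z1}(iii), giving $E^*_i Z E^*_j = \delta_{i,j}\, E^*_i {\sf A} E^*_i \bigl(1 + \vartheta_i/(q+q^{-1})\bigr)$.

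Combining these expressions, the case $i = j$ collapses: the coefficient $(q\vartheta_i - q^{-1}\vartheta_i)/(q^2 - q^{-2})$ simplifies to $\vartheta_i/(q+q^{-1})$, so $E^*_i {\sf C} E^*_i = E^*_i {\sf A} E^*_i$, which matches $E^*_i W^* {\sf A} (W^*)^{-1} E^*_i = E^*_i {\sf A} E^*_i$ and cancels. The case $|i - j| > 1$ is killed directly by Lemma \ref{lem:TPR}(i): since $p^i_{1,j} = 0$ under the triangle inequality, $E^*_i {\sf A} E^*_j = 0$. What survives is precisely the sum over $|i-j| = 1$ with coefficient $(q^{-1}\vartheta_i - q\vartheta_j)/(q^2 - q^{-2}) - \alpha^*_i/\alpha^*_j$, which is the claimed identity.

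I do not expect any serious obstacle: the argument is routine once the correct version of the relation for $\sf C$ is selected. The only point demanding attention is the diagonal cancellation, where the arithmetic identity $(q - q^{-1})\vartheta_i/(q^2 - q^{-2}) = \vartheta_i/(q+q^{-1})$ must be checked to line up the $Z$-contribution with the commutator term.
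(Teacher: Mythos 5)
Your proof is correct and matches the paper's approach: the paper's own proof of Lemma \ref{lem:Gdif2} is simply ``Similar to the proof of Lemma \ref{lem:Gdif1},'' and you carry out exactly the intended mirroring — sandwiching with $I = \sum_i E^*_i$, using the $\mathbb{Z}_3$-symmetric relation \eqref{eq:GZ1} (equivalently \eqref{eq:C}) for ${\sf C}$, Proposition \ref{lem:Z1}(iii) for the $Z$-blocks, and Lemma \ref{lem:TPR}(i) to kill the $|i-j|>1$ terms. The diagonal cancellation and the sign bookkeeping are handled correctly.
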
 
  \begin{proof} Similar to the proof of Lemma \ref{lem:Gdif1}.
 \end{proof}

 \begin{proposition} \label{prop:GP2} Let $W^* \in M^*$ be invertible, and write 
  \begin{align*}
 W^* = \sum_{i=0}^D \alpha^*_i E^*_i \qquad \qquad \alpha^*_i \in \mathbb C.
 \end{align*}
\noindent The following are equivalent:
\begin{enumerate}
\item[\rm (i)] $W^* {\sf A} (W^*)^{-1} = {\sf C}$;
\item[\rm (ii)] $\displaystyle \frac{\alpha^*_i}{\alpha^*_j} = \frac{q^{-1}\vartheta_i - q \vartheta_j}{q^2-q^{-2}}  \qquad \mbox{\rm if} \quad \vert i - j \vert =1 \qquad (0 \leq i,j\leq D)$;
\item[\rm (iii)] $ \displaystyle \frac{\alpha^*_{i}}{\alpha^*_{i-1}} = \frac{q^{-1} \vartheta_i - q \vartheta_{i-1}}{q^2-q^{-2}} \qquad \qquad (1 \leq i \leq D)$;
\item[\rm (iv)] $ \displaystyle \frac{\alpha^*_{i}}{\alpha^*_{i-1}} = -a^{-1} q^{D-2i+1} \qquad \qquad (1 \leq i \leq D)$;
\item[\rm (v)] $ \alpha^*_i = \alpha^*_0 (-1)^i a^{-i} q^{i(D-i)}  \qquad \qquad (0 \leq i \leq D)$.
\end{enumerate}
\end{proposition}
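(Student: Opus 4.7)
The proposal is to mimic the proof of Proposition \ref{prop:GP1} almost verbatim, transporting every step from the $(M,{\sf B})$ side to the $(M^*,{\sf A})$ side via Lemma \ref{lem:Gdif2} and the $M^*$-analogues of the auxiliary lemmas. Concretely, I would prove the chain ${\rm (i)} \Leftrightarrow {\rm (ii)} \Leftrightarrow {\rm (iii)} \Leftrightarrow {\rm (iv)} \Leftrightarrow {\rm (v)}$, in that order.

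For ${\rm (i)} \Leftrightarrow {\rm (ii)}$, the plan is to invoke Lemma \ref{lem:Gdif2}: the equation $W^*{\sf A}(W^*)^{-1}={\sf C}$ is equivalent to every summand on the right-hand side of that lemma vanishing. Since ${\sf A} = (A-\varepsilon I)/\alpha$, for indices $i\neq j$ we have $E^*_i {\sf A}E^*_j = \alpha^{-1}E^*_i A E^*_j$, and by the triangle inequality $p^i_{1,j}\neq 0$ whenever $\vert i-j\vert =1$, so Lemma \ref{lem:TPR}(i) guarantees $E^*_i A E^*_j \neq 0$ in those cases. Lemma \ref{lem:linDep}(i) then gives that the nonzero matrices $\{E^*_i{\sf A}E^*_j : \vert i-j\vert =1\}$ are linearly independent, so vanishing of the sum forces each coefficient to be zero, which is exactly condition (ii).

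For ${\rm (ii)} \Leftrightarrow {\rm (iii)}$, observe that (iii) is the subset of the equations in (ii) with $j=i-1$. The reverse direction is where Lemma \ref{lem:GeigProd} enters: the product
\[
\frac{q^{-1}\vartheta_i - q\vartheta_{i-1}}{q^2-q^{-2}}\cdot \frac{q^{-1}\vartheta_{i-1} - q\vartheta_i}{q^2-q^{-2}}=1,
\]
shows that the ratio $\alpha^*_{i-1}/\alpha^*_i$ dictated by the $j=i$ case is automatically the reciprocal of the one in (iii), so (iii) implies (ii). Then ${\rm (iii)} \Leftrightarrow {\rm (iv)}$ is immediate from Lemma \ref{lem:GeigFORM}, which rewrites the ratio $(q^{-1}\vartheta_i - q\vartheta_{i-1})/(q^2-q^{-2})$ as $-a^{-1}q^{D-2i+1}$, and ${\rm (iv)} \Leftrightarrow {\rm (v)}$ is a straightforward telescoping: iterate $\alpha^*_i = -a^{-1}q^{D-2i+1}\alpha^*_{i-1}$ from $i=1$ to collect a product $(-1)^i a^{-i} q^{\sum_{k=1}^i (D-2k+1)} = (-1)^i a^{-i} q^{i(D-i)}$.

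I do not anticipate a real obstacle: the only point requiring a little care is the ${\rm (i)} \Rightarrow {\rm (ii)}$ step, where one must be sure that every off-diagonal term $E^*_i{\sf A}E^*_j$ with $\vert i-j\vert =1$ is a genuinely nonzero, linearly independent vector in $T$ — and that is handled cleanly by combining Lemmas \ref{lem:TPR}(i) and \ref{lem:linDep}(i) as above. The rest is a symmetric mirror of Proposition \ref{prop:GP1}.
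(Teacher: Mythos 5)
Your proposal is correct and fills in exactly what the paper means by its terse "Similar to the proof of Proposition \ref{prop:GP1}": the step ${\rm (i)}\Leftrightarrow{\rm (ii)}$ via Lemma \ref{lem:Gdif2} together with Lemma \ref{lem:linDep}(i) (and Lemma \ref{lem:TPR}(i) plus the triangle inequality to see that the relevant summands are nonzero), then ${\rm (ii)}\Leftrightarrow{\rm (iii)}$ via Lemma \ref{lem:GeigProd}, ${\rm (iii)}\Leftrightarrow{\rm (iv)}$ via Lemma \ref{lem:GeigFORM}, and ${\rm (iv)}\Leftrightarrow{\rm (v)}$ by telescoping. This is the same route the paper takes, just spelled out.
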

\begin{proof} Similar to the proof of Proposition \ref{prop:GP1}.
\end{proof}

 \section{The Boltzmann pair $\sf W, W^*$}
 
 \noindent We continue to discuss the implications of Assumptions \ref{def:spintype}, \ref{def:spintypex}.
 Recall the matrices $\sf A,B,C$ from Proposition \ref{lem:GABC}. In this section, we introduce a pair of matrices
 ${\sf W} \in M$ and ${\sf W^*} \in M^*$ such that
 $\sf W^{-1} B W = C=W^* A (W^*)^{-1} $. We obtain some formulas involving $\sf W$ and $\sf W^*$.
\medskip

 \noindent  The following definition is motivated by Propositions \ref{prop:GP1}(v), \ref{prop:GP2}(v).
 
  \begin{definition} \label{def:Gtau} \rm Define the complex scalars  
   \begin{align*}
 \tau_i = (-1)^i a^{-i}q^{i(D-i)} \qquad \qquad (0 \leq i \leq D).
 \end{align*}
 \end{definition}
 \noindent With reference to Definition \ref{def:Gtau}, we have $\tau_i \not=0$ for $0 \leq i \leq D$. Moreover
 \begin{align*}
 \tau_0=1, \qquad \qquad \frac{\tau_i}{\tau_{i-1}} = -a^{-1} q^{D-2i+1} \qquad \qquad (1 \leq i \leq D).
 \end{align*}

 
 \begin{definition} \label{def:GWWs} \rm Pick $0 \not=f \in \mathbb C$. Define the matrices
  \begin{align}
 {\sf W} = f \sum_{i=0}^D \tau_i E_i, \qquad \qquad {\sf W^*} = f \sum_{i=0}^d \tau_i E^*_i. \label{eq:GWWS}
 \end{align}
Following \cite[Definition~1.2]{C:spinLP}  we call the pair $\sf W, W^*$ a {\it Boltzmann pair}.
 \end{definition}
 
 \noindent We have some comments about Definition \ref{def:GWWs}. Observe that ${\sf W} \in M$ and ${\sf W^*} \in M^*$. The matrix $\sf W$ (resp. $\sf W^*$) is symmetric (resp. diagonal).
 The matrices $\sf W, W^*$ are invertible, and 
  \begin{align}  \label{eq:GWWsInv}
 {\sf W}^{-1} = f^{-1} \sum_{i=0}^D \tau^{-1}_i E_i, \qquad \qquad ({\sf W}^*)^{-1} = f^{-1} \sum_{i=0}^D \tau^{-1}_i E^*_i.
 \end{align}
 We have
 \begin{align} \label{eq:GWWcom}
 \sf W A = A W, \qquad \qquad W^* B = B W^*.
 \end{align}
 For $0 \leq i \leq D$ we have
 \begin{align}
& {\sf W}E_i = E_i{\sf W} = f \tau_i E_i, \qquad \qquad {\sf W}^{-1} E_i = E_i {\sf W}^{-1} = f^{-1} \tau^{-1}_i E_i, \label{eq:GWE} \\
& {\sf W}^*E^*_i = E^*_i{\sf W}^* = f \tau_i E^*_i, \qquad  ({\sf W}^*)^{-1} E^*_i = E^*_i ({\sf W}^*)^{-1} = f^{-1} \tau^{-1}_i E^*_i.\label{eq:GWsE} 
 \end{align}
By Propositions \ref{prop:GP1}, \ref{prop:GP2},
 \begin{align}
 \sf W ^{-1} B  W = C, \qquad \qquad  W^* A( W^*)^{-1} =  C.  \label{eq:GWWs}
 \end{align}
 
 \noindent Next, we give some formulas involving $\sf W$ and $\sf W^*$. We will use these formulas shortly.
\medskip

\noindent Recall the valencies $\lbrace k_i \rbrace_{i=0}^D$ from  \eqref{eq:kicibi}.
  \begin{proposition} \label{lem:GEWE} We have
   \begin{align}\label{eq:GWk}
  &E^*_0 {\sf W} E^*_0  = \frac{ f \sum_{i=0}^D \tau_i k_i}{\vert X \vert} E^*_0,
   \qquad \qquad 
   E^*_0 {\sf W}^{-1} E^*_0  = \frac{  \sum_{i=0}^D \tau^{-1}_i k_i}{f\vert X \vert} E^*_0,
   \\
   &E_0 {\sf W}^* E_0  = \frac{ f \sum_{i=0}^D \tau_i k_i}{\vert X \vert} E_0,
   \qquad \qquad 
  E_0 ({\sf W}^*)^{-1} E_0  = \frac{ \sum_{i=0}^D \tau^{-1}_i k_i}{f\vert X \vert} E_0.
  \label{eq:GWk2}
  \end{align}
  \end{proposition}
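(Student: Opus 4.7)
The plan is to prove all four identities by direct expansion of the Boltzmann matrices using their definitions in \eqref{eq:GWWS} and \eqref{eq:GWWsInv}, then reducing each term to the basic identities recorded in Lemma \ref{lem:r5}. Formal self-duality will be invoked to identify $k^*_i$ with $k_i$, which unifies the four formulas.

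For the first identity, I substitute \eqref{eq:GWWS} into $E^*_0 {\sf W} E^*_0$ and distribute the $E^*_0$ factors over the sum to obtain
\begin{align*}
E^*_0 {\sf W} E^*_0 = f \sum_{i=0}^D \tau_i \, E^*_0 E_i E^*_0.
\end{align*}
By the second relation in Lemma \ref{lem:r5} we have $E^*_0 E_i E^*_0 = \vert X \vert^{-1} k^*_i E^*_0$. Formal self-duality (Definition \ref{def:fsd}) gives $P=Q$, hence $p^h_{i,j} = q^h_{i,j}$ for all $h,i,j$; in particular $k^*_i = q^0_{i,i} = p^0_{i,i} = k_i$. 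Inserting this and factoring $E^*_0$ out of the sum yields the desired formula.

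The second identity follows by the same argument applied to ${\sf W}^{-1}$ via \eqref{eq:GWWsInv}, with $\tau_i$ replaced by $\tau^{-1}_i$ and the overall factor $f$ replaced by $f^{-1}$. For the two identities in \eqref{eq:GWk2}, the roles of $\lbrace E_i \rbrace_{i=0}^D$ and $\lbrace E^*_i \rbrace_{i=0}^D$ are interchanged; after substituting the definitions of $\sf W^*$ and $(\sf W^*)^{-1}$, I apply the first relation $E_0 E^*_i E_0 = \vert X \vert^{-1} k_i E_0$ in Lemma \ref{lem:r5} (no self-duality needed here, since the valencies already appear on the right-hand side).

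I anticipate no real obstacle: the entire proposition is a routine direct computation. The only conceptual point worth flagging is the appeal to formal self-duality in the first two identities, which is what allows all four formulas to be written uniformly in terms of the valencies $\lbrace k_i \rbrace_{i=0}^D$ rather than the multiplicities $\lbrace k^*_i \rbrace_{i=0}^D$.
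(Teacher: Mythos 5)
Your proof is correct and follows essentially the same route as the paper: expand $\sf W$ via Definition \ref{def:GWWs}, apply Lemma \ref{lem:r5} to collapse each $E^*_0 E_i E^*_0$ (or $E_0 E^*_i E_0$), and invoke $k^*_i = k_i$ from formal self-duality. No discrepancies.
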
 
  \begin{proof}  To verify the equation on the left in \eqref{eq:GWk}, eliminate $\sf W$ using
  Definition  \ref{def:GWWs} and evaluate the result using Lemma
  \ref{lem:r5} along with $k_i = k^*_i$ $(0 \leq i \leq D)$. 
 To verify the equation on the right in \eqref{eq:GWk}, 
 eliminate $\sf W^{-1}$ using
  \eqref{eq:GWWsInv} and evaluate the result using Lemma
  \ref{lem:r5} along with $k_i = k^*_i$ $(0 \leq i \leq D)$. 
 The equations \eqref{eq:GWk2} are similarly verified.
\end{proof} 

\section{The isomorphism $\rho: T\to T$}

 \noindent We continue to discuss the implications of Assumptions \ref{def:spintype}, \ref{def:spintypex}.
 Recall the matrices $\sf A,B,C$ from Proposition \ref{lem:GABC}. These matrices generate the subconstituent algebra $T$.
 Recall the Boltzmann pair $\sf W, W^*$ 
 from Definition   \ref{def:GWWs}. In this section, we use $\sf W, W^*$ to
  construct an algebra isomorphism $\rho: T\to T$ that sends $\sf A\mapsto B\mapsto C\mapsto A$. 
  
 \begin{lemma} \label{lem:GrhoIso} There exists an algebra isomorphism
 $\rho:T\to T$ that sends
 \begin{align*}
 S \mapsto ({\sf W^*W})^{-1} S ({\sf W^*W}) 
 \end{align*}
 \noindent for all $S \in T$.
 \end{lemma}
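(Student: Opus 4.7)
The plan is to observe that conjugation by any invertible element of an algebra is always an algebra anti-... no wait, it is an algebra automorphism (of the whole matrix algebra $\operatorname{Mat}_X(\mathbb{C})$); the only substantive thing to check is that $T$ is stable under this conjugation, and then bijectivity on $T$ follows from finite-dimensionality.

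Concretely, I would proceed in three short steps. First I would verify that ${\sf W}^{*}{\sf W} \in T$ and $({\sf W}^{*}{\sf W})^{-1} \in T$. This is immediate from the setup: by Definition~\ref{def:GWWs} we have ${\sf W} \in M$ and ${\sf W}^{*} \in M^{*}$, and both $M$ and $M^{*}$ are contained in $T$ by construction, so their product lies in $T$. Invertibility, together with the formulas \eqref{eq:GWWsInv}, shows $({\sf W}^{*}{\sf W})^{-1} = {\sf W}^{-1}({\sf W}^{*})^{-1} \in T$ as well.

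Next, I would define $\rho : \operatorname{Mat}_X(\mathbb{C}) \to \operatorname{Mat}_X(\mathbb{C})$ by $\rho(S) = ({\sf W}^{*}{\sf W})^{-1} S ({\sf W}^{*}{\sf W})$. A direct check shows $\rho$ is $\mathbb{C}$-linear, respects the identity, and satisfies $\rho(ST) = \rho(S)\rho(T)$ because the interior factors $({\sf W}^{*}{\sf W})({\sf W}^{*}{\sf W})^{-1}$ cancel. Using step one, for every $S \in T$ the image $\rho(S)$ is a product of three elements of $T$, hence $\rho(S) \in T$. Thus $\rho$ restricts to an algebra homomorphism $T \to T$.

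Finally, for bijectivity, note that conjugation by $({\sf W}^{*}{\sf W})^{-1}$ provides a two-sided inverse to $\rho$ on all of $\operatorname{Mat}_X(\mathbb{C})$; by the same argument as above it also sends $T$ to $T$. Hence the restriction of $\rho$ to $T$ is an algebra isomorphism $T \to T$. There is no serious obstacle here; the only point to be careful about is confirming that both ${\sf W}^{*}{\sf W}$ and its inverse lie in $T$, which is exactly what was prepared in Section 10.
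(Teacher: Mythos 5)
Your argument is correct and is essentially identical to the paper's one-line proof, which simply notes that $\sf W$ and $\sf W^*$ are invertible elements of $T$; your three steps merely spell out the routine details of why conjugation by an invertible element of a subalgebra yields an automorphism of that subalgebra.
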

 \begin{proof} The matrices $\sf W,  W^*$ are invertible and contained in $T$.
 \end{proof}

 \begin{proposition} \label{lem:GrhoMap} The isomorphism $\rho$ sends
 \begin{align*}
\sf A \mapsto B \mapsto C \mapsto A.
 \end{align*}
 \end{proposition}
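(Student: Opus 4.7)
The plan is to verify the three images $\rho({\sf A})={\sf B}$, $\rho({\sf B})={\sf C}$, $\rho({\sf C})={\sf A}$ in turn, drawing on three inputs: the commutation identities \eqref{eq:GWWcom} (${\sf W A}={\sf A W}$ because ${\sf W},{\sf A}\in M$, and ${\sf W}^*{\sf B}={\sf B}{\sf W}^*$ because ${\sf W}^*,{\sf B}\in M^*$), the intertwining identities \eqref{eq:GWWs} (${\sf W}^{-1}{\sf B}{\sf W}={\sf C}$ and ${\sf W}^*{\sf A}({\sf W}^*)^{-1}={\sf C}$), and the fact that $Z$ is central in $T$ by Proposition \ref{lem:Z1}(iv), so that $\rho(Z)=Z$.

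For the first image, I unfold $\rho({\sf B})={\sf W}^{-1}({\sf W}^*)^{-1}{\sf B}{\sf W}^*{\sf W}$; since ${\sf W}^*$ commutes with ${\sf B}$ the inner conjugation is trivial, $({\sf W}^*)^{-1}{\sf B}{\sf W}^*={\sf B}$, and the remaining outer conjugation gives $\rho({\sf B})={\sf W}^{-1}{\sf B}{\sf W}={\sf C}$. For the second image, rewrite ${\sf W}^*{\sf A}({\sf W}^*)^{-1}={\sf C}$ as $({\sf W}^*)^{-1}{\sf C}{\sf W}^*={\sf A}$; since ${\sf W}$ commutes with ${\sf A}$, this yields $\rho({\sf C})={\sf W}^{-1}{\sf A}{\sf W}={\sf A}$. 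Thus $\rho$ realizes the shifts ${\sf B}\mapsto{\sf C}\mapsto{\sf A}$ by a direct conjugation calculation; the third assignment ${\sf A}\mapsto{\sf B}$ is what remains.

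For this last step, no direct intertwiner for ${\sf A}\mapsto{\sf B}$ is at hand, so I bootstrap via the universal Askey-Wilson relations. Applying the homomorphism $\rho$ to \eqref{eq:GZ2}, using $\rho(Z)=Z$ together with the two images already established, I obtain
\[
\rho({\sf A})+\frac{q\,{\sf C}{\sf A}-q^{-1}{\sf A}{\sf C}}{q^2-q^{-2}} \;=\; Z,
\]
and comparison with \eqref{eq:GZ3} forces $\rho({\sf A})={\sf B}$. The only (mild) obstacle is precisely this last step, where the $\mathbb{Z}_3$-symmetric form of the defining relations is what lets the orbit close: the first two images come from algebraic intertwining, while the third is extracted from the relations themselves, exploiting the centrality of $Z$.
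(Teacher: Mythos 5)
Your proof is correct, but your treatment of the third assignment $\rho({\sf A})={\sf B}$ differs genuinely from the paper's. You establish $\rho({\sf B})={\sf C}$ and $\rho({\sf C})={\sf A}$ exactly as the paper does, by unwinding the conjugation and using the commutativity of $M$, $M^*$ together with \eqref{eq:GWWs}. For $\rho({\sf A})={\sf B}$, the paper uses a transpose trick: starting from ${\sf W}^{-1}{\sf B}{\sf W}={\sf W}^*{\sf A}({\sf W}^*)^{-1}$, it rearranges to ${\sf W}{\sf W}^*{\sf A}={\sf B}{\sf W}{\sf W}^*$ and then transposes, exploiting the fact that ${\sf A},{\sf B},{\sf W},{\sf W}^*$ are all symmetric, to obtain ${\sf A}{\sf W}^*{\sf W}={\sf W}^*{\sf W}{\sf B}$. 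You instead apply the homomorphism $\rho$ to the $\mathbb{Z}_3$-symmetric relation \eqref{eq:GZ2}, use the centrality of $Z$ (so $\rho(Z)=Z$) and the two images already obtained, and then compare with \eqref{eq:GZ3} to force $\rho({\sf A})={\sf B}$. Both are valid. The transpose argument is more elementary and self-contained; your argument is more structural in that it exhibits why the $\mathbb{Z}_3$-symmetry of the defining relations makes the orbit close, and it would carry over to abstract settings without a transpose anti-automorphism at hand. One small remark: your statement that ``no direct intertwiner for ${\sf A}\mapsto{\sf B}$ is at hand'' is not quite right --- the transpose of \eqref{eq:GWWs} provides exactly such an intertwiner, which is what the paper exploits.
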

 \begin{proof} First, we show that $\rho$ sends $\sf A \mapsto B$. It suffices to show 
 \begin{align}
 \sf A W^* W = W^* W B. \label{eq:Gstep1}
 \end{align}
By  \eqref{eq:GWWs},
 \begin{align*}
\sf W^{-1} B W = W^* A (W^*)^{-1}.
 \end{align*}
 Rearranging the terms, we get
 \begin{align*}
\sf  W W^* A = B W W^*.
 \end{align*} In this equation we take the transpose of each side, and obtain \eqref{eq:Gstep1}. 
 \medskip
 
 \noindent Next, we show that $\rho$ sends $\sf B \mapsto C$. It suffices to show
 \begin{align*} 
\sf  B W^* W = W^* W C.
 \end{align*}
 Using  \eqref{eq:GWWcom}, \eqref{eq:GWWs} we obtain
 \begin{align*}
\sf B W^* W = W^* B W  = W^* W W^{-1} B W = W^* W C.
 \end{align*}
 Next, we show that $\rho$ sends $\sf C \mapsto A$. 
 It suffices to show
 \begin{align*}
 \sf C W^* W = W^* W A.
 \end{align*}
  Using  \eqref{eq:GWWcom}, \eqref{eq:GWWs} we obtain
 \begin{align*}
 \sf CW^* W = W^* A (W^*)^{-1} W^* W = W^* A W = W^* W A.
 \end{align*}
  \end{proof}
  
  \section{A braid relation for $\sf W$ and $\sf W^*$}

 \noindent We continue to discuss the implications of Assumptions \ref{def:spintype}, \ref{def:spintypex}.
 Recall the Boltzmann pair $\sf W, W^*$ 
 from Definition   \ref{def:GWWs}. In this section, we obtain the braid relation
  \begin{align*}
 \sf W W^* W  = W^* W W^*.
  \end{align*}
  We acknowledge that the above relation appeared earlier in the context of spin models; see \cite{mun} and also
  \cite[Corollary~5.4]{CW}, \cite[Lemma~11.6]{nomSpinModel}.
  \medskip
  
  \noindent Recall the isomorphism $\rho$ from Lemma \ref{lem:GrhoIso}.
  
  \begin{lemma} \label{lem:GrhoE} The following hold:
  \begin{enumerate}
  \item[\rm (i)] $\rho(E_i) = E^*_i$ $(0 \leq i \leq D)$;
  \item[\rm (ii)] $\rho({\sf W}) = {\sf W}^*$.
  \end{enumerate}
  \end{lemma}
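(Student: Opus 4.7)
The plan is to deduce both statements from the fact, already proven in Proposition \ref{lem:GrhoMap}, that $\rho$ is an algebra isomorphism sending ${\sf A} \mapsto {\sf B}$, together with the spectral decompositions ${\sf A} = \sum_{i=0}^D \vartheta_i E_i$ and ${\sf B} = \sum_{i=0}^D \vartheta_i E^*_i$ from \eqref{eq:AsumBsum}. The scalars $\{\vartheta_i\}_{i=0}^D$ are mutually distinct (as noted after \eqref{eq:vtheta}), which is the key hypothesis enabling the spectral argument.

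For part (i), I would invoke Lagrange interpolation. Since $\{\vartheta_i\}_{i=0}^D$ are mutually distinct and ${\sf A} = \sum_{j=0}^D \vartheta_j E_j$ with $E_i E_j = \delta_{i,j} E_i$, the primitive idempotent $E_i$ is given by the polynomial identity
\begin{equation*}
E_i = \prod_{\substack{0 \leq j \leq D \\ j \neq i}} \frac{{\sf A} - \vartheta_j I}{\vartheta_i - \vartheta_j}.
\end{equation*}
Applying the algebra isomorphism $\rho$ and using $\rho({\sf A}) = {\sf B}$ from Proposition \ref{lem:GrhoMap}, along with $\rho(I) = I$, I obtain
\begin{equation*}
\rho(E_i) = \prod_{\substack{0 \leq j \leq D \\ j \neq i}} \frac{{\sf B} - \vartheta_j I}{\vartheta_i - \vartheta_j}.
\end{equation*}
The same Lagrange interpolation argument applied to the decomposition ${\sf B} = \sum_{j=0}^D \vartheta_j E^*_j$ shows that the right-hand side equals $E^*_i$, establishing (i).

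For part (ii), I would simply apply $\rho$ term-by-term to the defining expression ${\sf W} = f \sum_{i=0}^D \tau_i E_i$ from Definition \ref{def:GWWs}. Using part (i) and the linearity of $\rho$, this gives
\begin{equation*}
\rho({\sf W}) = f \sum_{i=0}^D \tau_i \rho(E_i) = f \sum_{i=0}^D \tau_i E^*_i = {\sf W}^*,
\end{equation*}
the last equality being the definition of ${\sf W}^*$.

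There is no serious obstacle here: both parts are immediate consequences of Proposition \ref{lem:GrhoMap} once one recalls that the $E_i$ lie in the polynomial algebra generated by ${\sf A}$ (which holds precisely because the eigenvalues $\vartheta_i$ are distinct). The only subtle point is the use of distinctness, which permits the Lagrange form; without it, $\rho({\sf A}) = {\sf B}$ would not by itself determine $\rho(E_i)$.
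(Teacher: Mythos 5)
Your proof is correct and takes essentially the same route as the paper: the paper also expresses $E_i$ and $E^*_i$ by the Lagrange polynomial formulas in ${\sf A}$ and ${\sf B}$ respectively (citing ``linear algebra''), applies $\rho$ using $\rho({\sf A})={\sf B}$ from Proposition \ref{lem:GrhoMap}, and then derives (ii) term-by-term from Definition \ref{def:GWWs} and part (i). The only difference is that you spell out the role of the distinctness of the $\vartheta_i$, which the paper leaves implicit.
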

  \begin{proof} (i) By \eqref{eq:AsumBsum} and
 linear algebra,
       \begin{align*}
  E_i=\prod_{\stackrel{0 \leq j \leq D}{j \neq i}}
         \frac{{\sf A}-\vartheta_jI}{\vartheta_i-\vartheta_j},
         \qquad \qquad
           E^*_i=\prod_{\stackrel{0 \leq j \leq D}{j \neq i}}
         \frac{{\sf B}-\vartheta_jI}{\vartheta_i-\vartheta_j}.
         \end{align*}
We have $\rho({\sf A})={\sf B}$. By these comments $\rho(E_i)=E^*_i$.
\\
\noindent (ii)  By Definition \ref{def:GWWs} and (i) above,
\begin{align*}
\rho({\sf W}) = f \sum_{i=0}^D \tau_i \rho(E_i) = f \sum_{i=0}^D \tau_i E^*_i = {\sf W}^*.
\end{align*}
  \end{proof}
  
  \begin{proposition}\label{prop:GWWW} We have
 \begin{align}
 \label{eq:GWWW}
\sf  W W^* W = W^* W W^*.
\end{align}
\end{proposition}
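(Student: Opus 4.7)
The plan is to deduce the braid relation directly from Lemma \ref{lem:GrhoE}(ii), which asserts that the isomorphism $\rho$ sends $\sf W \mapsto W^*$. This is the crucial ingredient; once it is in hand, the identity \eqref{eq:GWWW} is a matter of rearranging the defining equation for $\rho$ on the element $\sf W$.

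More precisely, I would start by recalling from Lemma \ref{lem:GrhoIso} that $\rho(S) = ({\sf W^*W})^{-1} S ({\sf W^*W})$ for all $S \in T$. Specializing $S = \sf W$ and invoking Lemma \ref{lem:GrhoE}(ii), I get
\begin{align*}
({\sf W^*W})^{-1} {\sf W} ({\sf W^*W}) = {\sf W}^*.
\end{align*}
Multiplying both sides on the left by ${\sf W^*W}$ yields ${\sf W}({\sf W^*W}) = ({\sf W^*W}){\sf W}^*$, which upon expanding is exactly the braid relation ${\sf W W^* W} = {\sf W^* W W^*}$.

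The main (and in fact only) obstacle has already been handled: establishing $\rho(\sf W) = W^*$ in Lemma \ref{lem:GrhoE}(ii), which rests on the fact that $\rho$ sends $E_i \mapsto E^*_i$ together with the explicit spectral expansions of $\sf W$ and $\sf W^*$ from Definition \ref{def:GWWs}. After that, the argument is a single line of algebraic manipulation, with no further reliance on the $q$-Racah structure, on $Z$, or on the universal Askey--Wilson relations.
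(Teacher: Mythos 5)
Your proposal is correct and matches the paper's proof exactly: both evaluate $\rho({\sf W}) = {\sf W}^*$ from Lemma \ref{lem:GrhoE}(ii) using the definition of $\rho$ in Lemma \ref{lem:GrhoIso}, obtaining $({\sf W^*W})^{-1}{\sf W}({\sf W^*W}) = {\sf W}^*$, and then rearrange.
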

\begin{proof} This is Lemma \ref{lem:GrhoE}(ii) in disguise. Evaluating Lemma  \ref{lem:GrhoE}(ii) using    Lemma \ref{lem:GrhoIso}, we obtain
\begin{align*}
\sf (W^*W)^{-1} W (W^* W) = W^*.
\end{align*}
Rearrange the terms in this equation to get \eqref{eq:GWWW}.
\end{proof}

\noindent We mention some variations on Lemma  \ref{lem:GrhoE}(i).
  \begin{lemma} \label{lem:GWWcon}
  The following hold for $0 \leq i \leq D$:
  \begin{enumerate}
   \item[\rm (i)] ${\sf W} E^*_i {\sf W}^{-1} = ({\sf W}^*)^{-1} E_i {\sf W}^*$;
   \item[\rm (ii)] ${\sf W}^{-1} E^*_i {\sf W} = {\sf W}^* E_i ({\sf W}^*)^{-1}$.
  \end{enumerate}
  \end{lemma}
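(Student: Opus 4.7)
The plan is to deduce (i) from Lemma \ref{lem:GrhoE}(i) by algebraic rearrangement, and then to derive (ii) from (i) by taking transposes, using the symmetry of all the matrices involved.

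For (i): Lemma \ref{lem:GrhoE}(i) states that $\rho(E_i)=E^*_i$. Using the definition of $\rho$ from Lemma \ref{lem:GrhoIso}, this reads $({\sf W}^*{\sf W})^{-1} E_i ({\sf W}^*{\sf W}) = E^*_i$, or equivalently
\[
{\sf W}^*{\sf W}\, E^*_i \;=\; E_i\, {\sf W}^*{\sf W}.
\]
Multiplying this identity on the left by $({\sf W}^*)^{-1}$ and on the right by ${\sf W}^{-1}$ yields ${\sf W} E^*_i {\sf W}^{-1} = ({\sf W}^*)^{-1} E_i {\sf W}^*$, which is (i).

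For (ii): I would observe that every matrix appearing in (i) is symmetric. Indeed, ${\sf W} = f\sum_j \tau_j E_j$ is a linear combination of the symmetric primitive idempotents $E_j$, so ${\sf W}\in M$ is symmetric; the matrices ${\sf W}^*$ and each $E^*_i$ are diagonal, hence symmetric; and each $E_i$ is symmetric by construction. Since transpose commutes with inversion for invertible matrices, transposing both sides of (i) gives
\[
({\sf W}^{-1})^t (E^*_i)^t ({\sf W})^t \;=\; ({\sf W}^*)^t (E_i)^t (({\sf W}^*)^{-1})^t,
\]
and applying the above symmetries on each factor collapses this to ${\sf W}^{-1} E^*_i {\sf W} = {\sf W}^* E_i ({\sf W}^*)^{-1}$, which is (ii).

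There is no serious obstacle in this argument; both parts are immediate from earlier results, and the only care required is correctly tracking which side each factor multiplies from when doing the rearrangement in (i).
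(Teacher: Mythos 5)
Your proof is correct and follows exactly the paper's argument: part (i) is the rearrangement of $\rho(E_i)=E^*_i$ (the paper's ``Lemma~\ref{lem:GrhoE}(i) in disguise''), and part (ii) is obtained by transposing (i) using the symmetry of ${\sf W}$, ${\sf W}^*$, $E_i$, $E^*_i$. You simply make the algebra explicit where the paper leaves it implicit.
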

  \begin{proof} (i) This is Lemma \ref{lem:GrhoE}(i) in disguise. \\
\noindent (ii) For the equation in part (i), take the transpose of each side.
\end{proof}

\section{The matrix $\sf W$ is type II}

 \noindent We continue to discuss the implications of Assumptions \ref{def:spintype}, \ref{def:spintypex}.
 Recall that the distance matrices $\lbrace A_i \rbrace_{i=0}^D$ form a basis for the Bose-Mesner algebra $M$, and the dual distance matrices $\lbrace  A^*_i \rbrace_{i=0}^D$ form
 a basis for the dual Bose-Mesner algebra $M^*$.
 Recall the Boltzmann pair $\sf W, W^*$ 
 from Definition   \ref{def:GWWs}. In this section, we express $\sf W^{\pm 1} $ as a linear combination of $\lbrace A_i \rbrace_{i=0}^D$,
 and $(\sf W^*)^{\pm 1}$ as a linear combination of  $\lbrace A^*_i \rbrace_{i=0}^D$. We use the results to show that $\sf W$ is type II.

   \begin{lemma} \label{lem:Xprod} We have
   \begin{align}\label{eq:GNZ}
            \vert X \vert = \Biggl( \sum_{i=0}^D \tau_i k_i \Biggr)\Biggl( \sum_{i=0}^D \tau^{-1}_i k_i \Biggr).
            \end{align}
             \end{lemma}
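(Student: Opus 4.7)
The plan is to sandwich a conjugation identity from Lemma \ref{lem:GWWcon} between two copies of $E^*_0$ and compare both sides using the formulas of Proposition \ref{lem:GEWE} together with Lemma \ref{lem:r6}.

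More precisely, I would start from Lemma \ref{lem:GWWcon}(ii) with $i=0$, namely
\begin{align*}
{\sf W}^{-1} E^*_0 {\sf W} = {\sf W}^* E_0 ({\sf W}^*)^{-1},
\end{align*}
and multiply both sides on the left and right by $E^*_0$. On the left I obtain $E^*_0 {\sf W}^{-1} E^*_0 {\sf W} E^*_0$, which by Proposition \ref{lem:GEWE} (the two formulas in \eqref{eq:GWk}) collapses to
\begin{align*}
\Bigl(\tfrac{\sum_{i=0}^D \tau^{-1}_i k_i}{f\vert X \vert}\Bigr)\Bigl(\tfrac{f\sum_{i=0}^D \tau_i k_i}{\vert X \vert}\Bigr) E^*_0
= \frac{\bigl(\sum_{i=0}^D \tau_i k_i\bigr)\bigl(\sum_{i=0}^D \tau^{-1}_i k_i\bigr)}{\vert X \vert^2}\, E^*_0.
\end{align*}

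On the right I obtain $E^*_0 {\sf W}^* E_0 ({\sf W}^*)^{-1} E^*_0$. Since ${\sf W}^* \in M^*$ is diagonal and $\tau_0=1$, equation \eqref{eq:GWsE} gives ${\sf W}^* E^*_0 = f E^*_0$ and $({\sf W}^*)^{-1} E^*_0 = f^{-1} E^*_0$, so the right-hand side reduces to $f \cdot f^{-1} E^*_0 E_0 E^*_0 = E^*_0 E_0 E^*_0$, which equals $\vert X \vert^{-1} E^*_0$ by Lemma \ref{lem:r6}. Equating the two sides and canceling $E^*_0$ yields \eqref{eq:GNZ}.

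There is no real obstacle here — the work is essentially choosing the right identity to conjugate; once Lemma \ref{lem:GWWcon}(ii) is sandwiched with $E^*_0$, the two Proposition \ref{lem:GEWE} evaluations and Lemma \ref{lem:r6} do everything. The only subtle point is remembering that $\tau_0 = 1$, so that $E^*_0$ absorbs the outer ${\sf W}^*, ({\sf W}^*)^{-1}$ factors cleanly; this is what turns the computation into a pure comparison between the scalar $(\sum \tau_i k_i)(\sum \tau^{-1}_i k_i)/\vert X \vert^2$ and $\vert X \vert^{-1}$.
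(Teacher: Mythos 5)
Your proof is correct and takes essentially the same route as the paper's; the only difference is that you sandwich Lemma~\ref{lem:GWWcon}(ii) between two copies of $E^*_0$, whereas the paper uses Lemma~\ref{lem:GWWcon}(i), but since (ii) is just the transpose of (i), the two computations are mirror images of one another.
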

  \begin{proof}  Setting $i=0$ in Lemma  \ref{lem:GWWcon}(i), we obtain
  \begin{align*}
  {\sf W} E^*_0 {\sf W}^{-1} = ({\sf W}^*)^{-1} E_0 {\sf W}^*.
  \end{align*}
  In this equation, we multiply each side on the left by $E^*_0$ and on the right by $E^*_0$; this yields
  \begin{align} \label{eq:Gst}
   E^*_0 {\sf W} E^*_0 {\sf W}^{-1} E^*_0 &= E^*_0 ({\sf W}^*)^{-1} E_0 {\sf W}^* E^*_0.
   \end{align}
   \noindent We examine each side of \eqref{eq:Gst}. 
   Using \eqref{eq:GWk} and $(E^*_0)^2=E^*_0$,
   \begin{align*}
    E^*_0 {\sf W} E^*_0 {\sf W}^{-1} E^*_0 &= \bigl(E^*_0 {\sf W} E^*_0\bigr) \bigl( E^*_0 {\sf W}^{-1} E^*_0\bigr) \\
   &= \frac{f \sum_{i=0}^D \tau_i k_i}{\vert X \vert} E^*_0 \frac{ \sum_{i=0}^D \tau^{-1}_i k_i}{f\vert X \vert} E^*_0 \\
   &= \vert X \vert^{-2} \Biggl( \sum_{i=0}^D \tau_i k_i \Biggr) \Biggl( \sum_{i=0}^D \tau^{-1}_i k_i \Biggr) E^*_0.
   \end{align*}
   Using Lemma \ref{lem:r6} and \eqref{eq:GWsE},
   \begin{align*}
   E^*_0 ({\sf W}^*)^{-1} E_0 {\sf W}^* E^*_0
   = (f^{-1} E^*_0) E_0 (f E^*_0) 
    = E^*_0 E_0 E^*_0 
    = \vert X \vert^{-1} E^*_0.
   \end{align*}
   \noindent Evaluating \eqref{eq:Gst} using the above comments, we obtain
   \begin{align*}
   \vert X \vert^{-2} \Biggl( \sum_{i=0}^D \tau_i k_i \Biggr) \Biggl( \sum_{i=0}^D \tau^{-1}_i k_i \Biggr) E^*_0 = \vert X \vert^{-1} E^*_0.
   \end{align*}
   This implies \eqref{eq:GNZ}. 
     \end{proof}
     
     \begin{corollary} We have
        \begin{align*} 
        \sum_{i=0}^D \tau_i k_i \not=0, \qquad \qquad \sum_{i=0}^D \tau^{-1}_i k_i \not=0.
   \end{align*}
\end{corollary}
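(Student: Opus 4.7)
The plan is to observe that this corollary is an immediate consequence of Lemma \ref{lem:Xprod}. That lemma asserts
\begin{align*}
|X| = \Biggl(\sum_{i=0}^D \tau_i k_i\Biggr)\Biggl(\sum_{i=0}^D \tau_i^{-1} k_i\Biggr).
\end{align*}
Since $X$ is a nonempty finite set, $|X|$ is a positive integer and in particular $|X|\neq 0$. A product of two complex numbers is nonzero only if each factor is nonzero, so both sums must be nonzero.

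There is no real obstacle here; the only thing to check is that we are entitled to apply Lemma \ref{lem:Xprod}, which requires Assumptions \ref{def:spintype} and \ref{def:spintypex}. These are in force throughout this section, so the application is legitimate. I would write the proof as a single sentence pointing to \eqref{eq:GNZ} together with $|X|\neq 0$.
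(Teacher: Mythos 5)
Your proof matches the paper's own one-line argument: the paper simply cites \eqref{eq:GNZ} from Lemma \ref{lem:Xprod}, and since $|X|\neq 0$ both factors in the product must be nonzero. Your reasoning is correct and identical in approach.
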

     \begin{proof} By  \eqref{eq:GNZ}.
     \end{proof}

  \begin{proposition}\label{prop:GAi}
  We have
  \begin{align} \label{eq:Gfrac12}
  &{\sf W} = f \frac{ \sum_{i=0}^D \tau^{-1}_i A_i}{\sum_{i=0}^D \tau^{-1}_i k_i},
  \qquad \qquad
 {\sf W}^{-1} = \frac{1}{f}\, \frac{ \sum_{i=0}^D \tau_i A_i}{\sum_{i=0}^D \tau_i k_i}, \\
  &
 {\sf  W}^* = f \frac{ \sum_{i=0}^D \tau^{-1}_i A^*_i}{\sum_{i=0}^D \tau^{-1}_i k_i}, 
  \qquad \qquad 
  ({\sf W}^*)^{-1} = \frac{1}{f}\, \frac{ \sum_{i=0}^D \tau_i A^*_i}{\sum_{i=0}^D \tau_i k_i}.
  \label{eq:Gfrac34}
  \end{align}
  \end{proposition}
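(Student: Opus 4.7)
The plan is to expand ${\sf W}$ and ${\sf W}^{-1}$ in the basis $\lbrace A_i\rbrace_{i=0}^D$ of $M$, writing
\[
{\sf W} = \sum_{i=0}^D w_i A_i, \qquad {\sf W}^{-1} = \sum_{i=0}^D w'_i A_i \qquad (w_i, w'_i \in \mathbb C),
\]
and to pin down $w_i, w'_i$ by evaluating Lemma~\ref{lem:GWWcon}(ii) at $i=0$, namely
\[
{\sf W}^{-1} E^*_0 {\sf W} \;=\; {\sf W}^* E_0 ({\sf W}^*)^{-1}.
\]
Since $(A_i)_{y,z}=1$ when $\partial(y,z)=i$ and is $0$ otherwise, these expansions just say ${\sf W}_{y,z}=w_{\partial(y,z)}$ and $({\sf W}^{-1})_{y,z}=w'_{\partial(y,z)}$.

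The first step is to compute both sides of the displayed equation entrywise. On the left, using $(E^*_0)_{u,v}=\delta_{u,x}\delta_{v,x}$, one finds $({\sf W}^{-1} E^*_0 {\sf W})_{y,z} = ({\sf W}^{-1})_{y,x}\,{\sf W}_{x,z} = w'_{\partial(x,y)}\,w_{\partial(x,z)}$. On the right, ${\sf W}^*$ and $({\sf W}^*)^{-1}$ are diagonal with $(u,u)$-entries $f\tau_{\partial(x,u)}$ and $f^{-1}\tau_{\partial(x,u)}^{-1}$ respectively (by \eqref{eq:GWWS} and \eqref{eq:GWWsInv}), and $E_0 = |X|^{-1}J$, so a short calculation gives $({\sf W}^* E_0 ({\sf W}^*)^{-1})_{y,z} = |X|^{-1}\tau_{\partial(x,y)}\tau_{\partial(x,z)}^{-1}$. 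Equating these for all $y,z\in X$ and specializing in turn to $y=x$, to $z=x$, and to $y=z=x$ yields
\[
w_i = w_0\tau_i^{-1},\qquad w'_i = w'_0\tau_i \qquad (0\leq i\leq D), \qquad w_0 w'_0 = |X|^{-1}.
\]

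Next I would identify the constants $w_0$ and $w'_0$. Since $A_0=I$ and the remaining $A_i$ have zero diagonal, $w_0={\sf W}_{x,x}$; by Proposition~\ref{lem:GEWE} this equals $f\sum_i\tau_i k_i/|X|$, which by Lemma~\ref{lem:Xprod} simplifies to $f/\sum_i\tau_i^{-1} k_i$. The same reasoning applied to ${\sf W}^{-1}$ gives $w'_0 = f^{-1}/\sum_i\tau_i k_i$. Substituting back produces \eqref{eq:Gfrac12}. For the dual identities \eqref{eq:Gfrac34}, write ${\sf W}^* = \sum_i v_i A^*_i$ and $({\sf W}^*)^{-1} = \sum_i v'_i A^*_i$; formal self-duality $Q=P$ yields $A^*_k = \sum_j P_{j,k} E^*_j$, and comparing with the spectral forms \eqref{eq:GWWS}, \eqref{eq:GWWsInv} of ${\sf W}^*$ and $({\sf W}^*)^{-1}$ produces exactly the linear systems $\sum_k v_k P_{j,k} = f\tau_j$ and $\sum_k v'_k P_{j,k} = f^{-1}\tau_j^{-1}$ already satisfied by $w_k$ and $w'_k$. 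Hence $v_i = w_i$ and $v'_i = w'_i$, which gives \eqref{eq:Gfrac34}.

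The only mildly delicate step is the entrywise calculation in the second paragraph, and even that is a short direct manipulation that exploits the sparsity of $E^*_0$ and the diagonal structure of ${\sf W}^*, ({\sf W}^*)^{-1}$. Everything else simply assembles Lemma~\ref{lem:GWWcon}, Proposition~\ref{lem:GEWE}, and Lemma~\ref{lem:Xprod}, all already in place; no genuinely hard identity (for instance, no $q$-hypergeometric summation) is needed, because the reciprocal structure $w'_i/w_i = \tau_i^2/(w_0/w'_0)^{-1}$ falls out of the identity \emph{for free} once one inserts $i=0$ into Lemma~\ref{lem:GWWcon}(ii).
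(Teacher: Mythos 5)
Your proof is correct, and it follows the paper's high-level strategy (start from Lemma \ref{lem:GWWcon} at $i=0$, then use Proposition \ref{lem:GEWE} and Lemma \ref{lem:Xprod} to fix the constants), but it differs from the paper's proof in three noteworthy ways. First, the paper packages the argument through Lemma \ref{lem:need}: it defines $S = (\sum_i\tau_i^{-1}k_i){\sf W} - f\sum_i\tau_i^{-1}A_i$, multiplies the intertwining identity on the left by $E^*_0$, simplifies using \eqref{eq:GWk}, Lemma \ref{lem:red1}, and \eqref{eq:GWWsInv}--\eqref{eq:GWsE} to conclude $E^*_0S=0$, and then invokes $S\in M$ and Lemma \ref{lem:need} to get $S=0$. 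You instead compute the $(y,z)$-entry of each side directly, which is equivalent but more explicit; in particular it makes visible why the $M$-membership matters (entries of a matrix in $M$ depend only on $\partial(y,z)$, so the row and column through $x$ determine everything). Second, you extract both $w_i$ and $w'_i$ from the single relation \ref{lem:GWWcon}(ii) by specializing to $y=x$, $z=x$, and $y=z=x$, whereas the paper runs the argument twice (once with \ref{lem:GWWcon}(ii) for ${\sf W}$, once with \ref{lem:GWWcon}(i) for ${\sf W}^{-1}$). Third, for \eqref{eq:Gfrac34} the paper says ``similarly verified,'' meaning the dual analogues of Lemmas \ref{lem:red1}, \ref{lem:need}; you instead transfer the result directly using $P=Q$ and the invertibility of $P$, which cleanly reuses the $M$-side computation and avoids any repetition. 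All steps check out, so this is a valid, slightly more streamlined variant.
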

  \begin{proof} First, we verify the equation on the left in \eqref{eq:Gfrac12}. Define
   \begin{align*}
  S= \Biggl(\sum_{i=0}^D \tau^{-1}_i k_i \Biggr) {\sf W} - f\sum_{i=0}^D \tau^{-1}_i A_i.
  \end{align*}
We show that $S=0$.
  Setting $i=0$ in Lemma  \ref{lem:GWWcon}(ii), we obtain
  \begin{align*}
 {\sf W}^{-1} E^*_0 {\sf W} = {\sf W}^* E_0 ({\sf W}^*)^{-1}.
  \end{align*}
  Therefore
    \begin{align} \label{eq:Gtwosides}
  E^*_0 {\sf W}^{-1} E^*_0 {\sf W} = E^*_0 {\sf W}^* E_0 ({\sf W}^*)^{-1}.
  \end{align}
  We examine each side of \eqref{eq:Gtwosides}. By \eqref{eq:GWk},
  \begin{align*}
   E^*_0 {\sf W}^{-1} E^*_0 {\sf W} = \frac{\sum_{i=0}^D \tau^{-1}_i k_i}{f \vert X \vert} E^*_0{\sf W}.
  \end{align*}
  By Lemma \ref{lem:red1} and     \eqref{eq:GWWsInv},   \eqref{eq:GWsE},
  \begin{align*}
  E^*_0 {\sf W}^* E_0 ({\sf W}^*)^{-1} &= f E^*_0 E_0 ({\sf W}^*)^{-1} 
  = f E^*_0 E_0 \Biggl( f^{-1} \sum_{i=0}^D \tau^{-1}_i E^*_i\Biggr) \\
  & =\sum_{i=0}^D \tau^{-1}_i E^*_0 E_0 E^*_i 
  = \vert X \vert^{-1} \sum_{i=0}^D \tau^{-1}_i E^*_0 A_i.
  \end{align*}
  Evaluating \eqref{eq:Gtwosides} using the above comments, we obtain $0=E^*_0S$. By construction $S \in M$, so $S=0$ in view of Lemma \ref{lem:need}.
  We have verified the equation on the left in \eqref{eq:Gfrac12}. \\
  \noindent Next, we verify the equation on the right in \eqref{eq:Gfrac12}.
  Define
   \begin{align*}
  S'=f\Biggl(\sum_{i=0}^D \tau_i k_i \Biggr) {\sf W}^{-1} - \sum_{i=0}^D \tau_i A_i.
  \end{align*}
We show that $S'=0$.
  Setting $i=0$ in Lemma  \ref{lem:GWWcon}(i), we obtain
  \begin{align*}
 {\sf  W} E^*_0 {\sf W}^{-1} = ({\sf W}^*)^{-1} E_0 {\sf W}^*.
  \end{align*}
  Therefore
    \begin{align} \label{eq:Gtwosides2}
  E^*_0 {\sf W} E^*_0 {\sf W}^{-1} = E^*_0 ({\sf W}^*)^{-1} E_0{\sf  W}^*.
  \end{align}
  We examine each side of \eqref{eq:Gtwosides2}.
  By \eqref{eq:GWk},
  \begin{align*}
   E^*_0 {\sf W} E^*_0 {\sf W}^{-1} = \frac{f\sum_{i=0}^D \tau_i k_i}{ \vert X \vert} E^*_0 {\sf W}^{-1}.
  \end{align*}
   By Lemma \ref{lem:red1} and   \eqref{eq:GWWS},    \eqref{eq:GWsE},
    \begin{align*}
  E^*_0 ({\sf W}^*)^{-1} E_0 {\sf W}^* &= f^{-1} E^*_0 E_0 {\sf W}^* 
  = f^{-1}E^*_0 E_0 \Biggl( f \sum_{i=0}^D \tau_i E^*_i\Biggr) \\
  & =\sum_{i=0}^D \tau_i E^*_0 E_0 E^*_i 
  = \vert X \vert^{-1} \sum_{i=0}^D \tau_i E^*_0 A_i.
  \end{align*}
\noindent  Evaluating \eqref{eq:Gtwosides2} using the above comments, we obtain $0=E^*_0S'$. By construction $S' \in M$, so $S'=0$ in view of Lemma \ref{lem:need}.
  We have verified the equation on the right in \eqref{eq:Gfrac12}.
  \\
\noindent The equations in \eqref{eq:Gfrac34} are similarly verified.
  \end{proof}
  
  \noindent Recall the all 1's matrix $J \in {\rm Mat}_X(\mathbb C)$.
  
  \begin{proposition}\label{cor:GentryW} We have
  \begin{align*}
         {\sf W} \circ {\sf W}^{-1} = \vert X \vert^{-1} J.
  \end{align*}
  \end{proposition}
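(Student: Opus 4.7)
The plan is to compute $\sf W \circ W^{-1}$ directly using the explicit expressions for $\sf W$ and $\sf W^{-1}$ furnished by Proposition \ref{prop:GAi}, and then invoke Lemma \ref{lem:Xprod} to identify the resulting scalar factor.

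First I would substitute the formulas
\begin{align*}
{\sf W} = f \frac{ \sum_{i=0}^D \tau^{-1}_i A_i}{\sum_{i=0}^D \tau^{-1}_i k_i}, \qquad {\sf W}^{-1} = \frac{1}{f} \frac{ \sum_{j=0}^D \tau_j A_j}{\sum_{j=0}^D \tau_j k_j}
\end{align*}
from Proposition \ref{prop:GAi} into the Hadamard product $\sf W \circ W^{-1}$. The scalar factors of $f$ cancel, leaving a double sum $\sum_{i,j} \tau_i^{-1}\tau_j (A_i\circ A_j)$ divided by $(\sum_i \tau^{-1}_i k_i)(\sum_j \tau_j k_j)$.

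Next I would exploit the identity $A_i\circ A_j = \delta_{i,j} A_i$ (which is immediate from the defining entries of the distance matrices). This collapses the double sum to $\sum_{i=0}^D \tau^{-1}_i \tau_i A_i = \sum_{i=0}^D A_i$. Since $\sum_{i=0}^D A_i = J$ (this is property (ii) of the distance matrices recalled in Section 4), the numerator is simply $J$.

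Finally, the denominator $(\sum_{i=0}^D \tau^{-1}_i k_i)(\sum_{i=0}^D \tau_i k_i)$ is precisely $|X|$ by Lemma \ref{lem:Xprod}. Combining these observations yields $\sf W\circ W^{-1} = |X|^{-1} J$, as desired. There is no real obstacle here; all the work has been done in Proposition \ref{prop:GAi} and Lemma \ref{lem:Xprod}, and this proposition is essentially the clean combinatorial consequence of those formulas.
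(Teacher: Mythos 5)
Your proof is correct and follows exactly the route the paper takes: substitute the $A_i$-expansions of ${\sf W}$ and ${\sf W}^{-1}$ from Proposition \ref{prop:GAi}, use $A_i \circ A_j = \delta_{i,j}A_i$ and $\sum_i A_i = J$, and then identify the denominator with $\vert X\vert$ via Lemma \ref{lem:Xprod}. Nothing is missing; the paper's proof is a terse version of exactly this computation.
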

  \begin{proof} To verify this equation, eliminate $\sf W$ and $\sf W^{-1}$ using  \eqref{eq:Gfrac12}, and evaluate
  the result using Lemma \ref{lem:Xprod} and $J=\sum_{i=0}^D A_i$.
   \end{proof}
  
  \begin{proposition} \label{prop:GTYPE2}  The matrix $\sf W$  is type II.
  \end{proposition}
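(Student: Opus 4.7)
The plan is to deduce this from Proposition \ref{cor:GentryW} together with the characterization of type II matrices given in Lemma \ref{lem:type2Char}. Recall that lemma says a matrix $W \in {\rm Mat}_X(\mathbb C)$ is type II if and only if $W^{-1}$ exists and $W^t \circ W^{-1} = \vert X \vert^{-1} J$. So all I need to verify is (a) $\sf W$ is invertible, (b) $\sf W$ is symmetric, and (c) ${\sf W}^t \circ {\sf W}^{-1} = \vert X \vert^{-1} J$.

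First, invertibility is immediate from \eqref{eq:GWWsInv}, which gives an explicit formula for ${\sf W}^{-1}$ in terms of the primitive idempotents. Second, symmetry follows from Definition \ref{def:GWWs}: we have ${\sf W} = f \sum_{i=0}^D \tau_i E_i$, and since $E_i^t = E_i$ for $0 \leq i \leq D$, we get ${\sf W}^t = {\sf W}$. So ${\sf W}^t \circ {\sf W}^{-1} = {\sf W} \circ {\sf W}^{-1}$, which equals $\vert X \vert^{-1} J$ by Proposition \ref{cor:GentryW}. Applying Lemma \ref{lem:type2Char} concludes the argument.

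There is essentially no obstacle; the substantive work has already been done in Lemma \ref{lem:Xprod} and Propositions \ref{prop:GAi}, \ref{cor:GentryW}, where the expansion of $\sf W^{\pm 1}$ in the distance matrix basis and the identity ${\sf W} \circ {\sf W}^{-1} = \vert X \vert^{-1} J$ were established. The present proposition is a short bookkeeping step that packages those results into the form of Definition \ref{def:type2}.
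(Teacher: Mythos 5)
Your argument matches the paper's proof exactly: the paper also cites Lemma \ref{lem:type2Char}, Proposition \ref{cor:GentryW}, and the symmetry of $\sf W$, and your spelled-out verification of invertibility and symmetry is just the obvious unpacking of that citation. The proposal is correct.
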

  \begin{proof} By Lemma \ref{lem:type2Char}, Proposition \ref{cor:GentryW}, and since $\sf W$ is symmetric.
  \end{proof}

  \noindent We mention a result for later use. Recall that $\sf W^*$ depends on the base vertex $x \in X$.
  \begin{lemma}  Pick $y \in X$ and write $\ell = \partial(x,y)$. Then
 \begin{align}
   {\sf  W}_{x,y} = \frac{f \tau^{-1}_\ell}{\sum_{i=0}^D \tau^{-1}_i k_i}, \qquad \qquad 
    {\sf W}^*_{y,y} = f \tau_\ell. \label{eq:GWxy}
 \end{align}    
 \noindent Moreover,
  \begin{align} \label{eq:GWPW}
 {\sf W}_{x,y}  {\sf W}^*_{y,y} =  \frac{f^2}{\sum_{i=0}^D \tau^{-1}_i k_i}.
  \end{align}
  \end{lemma}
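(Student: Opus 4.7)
The plan is to derive both entry formulas directly by evaluating the explicit expansions of $\sf W$ and $\sf W^*$ at the specified entries, then multiply.

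First, I would compute ${\sf W}_{x,y}$. The cleanest handle on this entry is the expansion of $\sf W$ in the distance-matrix basis, which is the equation on the left in \eqref{eq:Gfrac12}:
\begin{align*}
{\sf W} = f\,\frac{\sum_{i=0}^D \tau_i^{-1} A_i}{\sum_{i=0}^D \tau_i^{-1} k_i}.
\end{align*}
Taking the $(x,y)$-entry, I use $(A_i)_{x,y} = \delta_{i,\partial(x,y)} = \delta_{i,\ell}$, so only the $i = \ell$ term survives in the numerator, yielding the claimed expression for ${\sf W}_{x,y}$.

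Next I would compute ${\sf W}^*_{y,y}$ directly from the defining formula \eqref{eq:GWWS}, namely ${\sf W}^* = f\sum_{i=0}^D \tau_i E^*_i$. Since each $E^*_i$ is diagonal with $(E^*_i)_{y,y} = \delta_{i,\partial(x,y)} = \delta_{i,\ell}$, again only the $i=\ell$ term contributes, giving ${\sf W}^*_{y,y} = f\tau_\ell$. (Alternatively, one could use the formula for $\sf W^*$ on the left in \eqref{eq:Gfrac34} together with $(A^*_i)_{y,y} = \vert X \vert (E_i)_{x,y}$, but the $E^*_i$ route is shorter.)

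Finally, the product formula \eqref{eq:GWPW} is obtained by multiplying the two expressions: the $\tau_\ell^{-1}$ factor from ${\sf W}_{x,y}$ cancels the $\tau_\ell$ factor from ${\sf W}^*_{y,y}$, leaving $f^2 / \sum_{i=0}^D \tau_i^{-1} k_i$ as claimed. There is no serious obstacle here; the only thing to be careful about is identifying the correct distance matrix / dual idempotent that picks out the entry, so that the Kronecker-delta collapse of the sums is applied unambiguously.
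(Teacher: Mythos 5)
Your proposal is correct and matches the paper's proof exactly: the paper also evaluates ${\sf W}_{x,y}$ from the $A_i$-expansion of $\sf W$ (the left equation in \eqref{eq:Gfrac12}) and ${\sf W}^*_{y,y}$ from the $E^*_i$-expansion of $\sf W^*$ in \eqref{eq:GWWS}, then multiplies to get \eqref{eq:GWPW}. No differences to note.
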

 \begin{proof}  The equations in \eqref{eq:GWxy} come from
 \begin{align*}
 {\sf   W} = f \frac{\sum_{i=0}^D \tau^{-1}_i A_i}{\sum_{i=0}^D \tau^{-1}_i k_i}, \qquad \qquad
 {\sf W}^* = f \sum_{i=0}^D \tau_i E^*_i.
  \end{align*}
 Equation \eqref{eq:GWPW} follows from \eqref{eq:GWxy}.
 \end{proof}
  
  \section{Strengthening Assumption \ref{def:spintypex} }
  
 \noindent  In this section, Assumption  \ref{def:spintype} remains in effect but we will strengthen Assumption  \ref{def:spintypex}.
 Under the strengthened assumption, 
 we will show that the matrix $\sf W$ from Definition \ref{def:GWWs}  is a spin model for an appropriate choice of $f$.
  \medskip
  
  \noindent Throughout this section, the following assumption is in effect in addition to Assumption \ref{def:spintype}.
  
   \begin{assumption} \label{def:spintype2} \rm We refer to Assumption \ref{def:spintype}.
   Let $A$ denote the adjacency matrix of $\Gamma$.
   We assume that for all $x \in X$,
    \begin{align}
 \label{eq:MainAssume2}
 \begin{split}
& \sum_{i=0}^D E^*_i \frac{A -\varepsilon I}{\alpha} E^*_i  \biggl(1+ \frac{\theta_{i}-\varepsilon}{\alpha}\,\frac{1}{q+q^{-1}}\biggr) \\
& =
  \sum_{i=0}^D E_i \frac{A^* -\varepsilon I}{\alpha} E_i  \biggl(1+ \frac{\theta_{i}-\varepsilon}{\alpha}\,\frac{1}{q+q^{-1}}\biggr),
  \end{split}
\end{align}
 where $A^*=A^*(x)$ and $E^*_i= E^*_i(x)$ for $0 \leq i\leq D$.
 \end{assumption}

  \noindent  Recall the matrix $\sf W$ from Definition \ref{def:GWWs}.
  We have seen that
  \begin{align*}
 & {\sf W} = f \sum_{i=0}^D \tau_i E_i, \qquad \qquad {\sf W}^{-1} = f^{-1} \sum_{i=0}^D \tau^{-1}_i E_i,
  \\
  &{\sf W} = f \frac{ \sum_{i=0}^D \tau^{-1}_i A_i}{\sum_{i=0}^D \tau^{-1}_i k_i},
  \qquad \qquad
  {\sf W}^{-1} = \frac{1}{f}\, \frac{ \sum_{i=0}^D \tau_i A_i}{\sum_{i=0}^D \tau_i k_i}.
  \end{align*}
  The parameter $0 \not= f \in \mathbb C$ is free.
We will show that $\sf W$ is a spin model, provided that
 \begin{align*}
  f^2 = \vert X \vert^{1/2} \sum_{i=0}^D \tau^{-1}_i k_i.
  \end{align*}

 \begin{proposition} \label{lem:GWWWE} For  $a,b,c \in X$ we have
  \begin{align} \label{eq:GST}
  \sum_{e \in X} \frac{{\sf W}_{e,b} {\sf W}_{e,c}}{{\sf W}_{e,a}} =  \frac{f^2}{\sum_{i=0}^D \tau^{-1}_i k_i} \,\frac{{\sf W}_{b,c}}{{\sf W}_{a,b}{\sf W}_{c,a}}.
  \end{align}    
  \end{proposition}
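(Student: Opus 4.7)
The plan is to derive \eqref{eq:GST} by reading off the $(b,c)$-entry of the braid relation $\sf W W^*(a) W = W^*(a) W W^*(a)$ from Proposition \ref{prop:GWWW}, taking the base vertex to be $a$. Under Assumption \ref{def:spintype2} we may invoke this braid relation for any choice of base vertex, and in particular for $a\in X$. The matrix ${\sf W}^*={\sf W}^*(a)$ is diagonal with $(y,y)$-entry $f\tau_{\partial(a,y)}$ by \eqref{eq:GWxy}, so the braid relation reduces at the $(b,c)$-entry to a one-sum identity, which I will then massage into the form \eqref{eq:GST}.

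More precisely, the $(b,c)$-entry of ${\sf W}^*(a){\sf W}{\sf W}^*(a)$ is
\begin{align*}
\bigl({\sf W}^*(a){\sf W}{\sf W}^*(a)\bigr)_{b,c}
= {\sf W}^*(a)_{b,b}\,{\sf W}_{b,c}\,{\sf W}^*(a)_{c,c}
= f^{2}\tau_{\partial(a,b)}\tau_{\partial(a,c)}{\sf W}_{b,c},
\end{align*}
while the $(b,c)$-entry of ${\sf W}{\sf W}^*(a){\sf W}$ is
\begin{align*}
\bigl({\sf W}{\sf W}^*(a){\sf W}\bigr)_{b,c}
= \sum_{e\in X}{\sf W}_{b,e}\bigl(f\tau_{\partial(a,e)}\bigr){\sf W}_{e,c}.
\end{align*}
Equating these yields the key identity
\begin{align*}
\sum_{e\in X}\tau_{\partial(a,e)}{\sf W}_{b,e}{\sf W}_{e,c}
= f\,\tau_{\partial(a,b)}\tau_{\partial(a,c)}{\sf W}_{b,c}.
\end{align*}

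To finish, I would use \eqref{eq:GWxy} to express the reciprocal ${\sf W}_{e,a}^{-1}$ as
\begin{align*}
\frac{1}{{\sf W}_{e,a}} = \frac{\tau_{\partial(a,e)}\,\sum_{i=0}^{D}\tau_i^{-1}k_i}{f},
\end{align*}
and likewise for ${\sf W}_{a,b}$ and ${\sf W}_{c,a}$, exploiting the symmetry of $\sf W$. Substituting the first of these on the left side of \eqref{eq:GST} converts the sum to the one obtained from the braid relation, while substituting the other two on the right side produces the matching scalar factor $\tau_{\partial(a,b)}\tau_{\partial(a,c)}\bigl(\sum_i\tau_i^{-1}k_i\bigr){\sf W}_{b,c}$. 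The two sides then agree by the identity above.

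There is no real obstacle here; the only point requiring care is bookkeeping the factors of $f$ and of $\sum_{i=0}^D\tau_i^{-1}k_i$ that arise when passing between the two expressions for $\sf W$ in Proposition \ref{prop:GAi}. The strength of Assumption \ref{def:spintype2} (as opposed to just \ref{def:spintypex}) is used precisely to guarantee that a Boltzmann pair, hence the braid relation \eqref{eq:GWWW}, is available at the arbitrary base vertex $a$ that appears in \eqref{eq:GST}.
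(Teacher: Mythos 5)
Your proposal is correct and takes essentially the same approach as the paper: both read off the $(b,c)$-entry of the braid relation $\sf WW^*W = W^*WW^*$ with base vertex chosen to be $a$. The only cosmetic difference is that the paper compresses the bookkeeping by invoking \eqref{eq:GWPW} directly (which packages the relation ${\sf W}^*_{e,e}=s/{\sf W}_{a,e}$ with $s=f^2/\sum_i\tau_i^{-1}k_i$), whereas you unpack this into the two formulas of \eqref{eq:GWxy}; you also correctly flag that Assumption \ref{def:spintype2} is what licenses using $a$ as the base vertex.
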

       \begin{proof}  For notational convenience, define
       \begin{align*}
       s =  \frac{f^2}{\sum_{i=0}^D \tau^{-1}_i k_i}.
       \end{align*}
       Consider the matrix ${\sf W}^*={\sf W}^*(x)$ from Definition \ref{def:GWWs}, where we choose $x=a$.
       We compute the $(b,c)$-entry for either side of
  \begin{align*}
\sf  W W^* W = W^* W W^*.
  \end{align*}
  Using matrix multiplication and \eqref{eq:GWPW},
  \begin{align*}
 \bigl( {\sf W W^* W} \bigr)_{b,c} &= \sum_{e \in X} {\sf W}_{b,e} {\sf W}^*_{e,e} {\sf W}_{e,c} \\
                                             & = \sum_{e \in X} {\sf W}_{b,e}\, \frac{s}{{\sf W}_{a,e}}\,{\sf W}_{e,c} \\
                                              & = s \sum_{e \in X} \frac{ {\sf W}_{e,b} {\sf W}_{e,c}}{ {\sf W}_{e,a}}.
\end{align*}
  \noindent Using matrix multiplication and \eqref{eq:GWPW},
  \begin{align*}
  \bigl( {\sf W^* W W^*}\bigr)_{b,c} &= {\sf W}^*_{b,b} {\sf W}_{b,c} {\sf W}^*_{c,c} \\
                         &=\frac{s}{{\sf W}_{a,b}}\, {\sf W}_{b,c}\, \frac{s}{{\sf W}_{a,c}} \\
                             &=s^2 \,\frac{{\sf W}_{b,c}}{{\sf W}_{a,b}{\sf W}_{c,a}}.
  \end{align*}
  By these comments we obtain \eqref{eq:GST}.
\end{proof}

  \begin{theorem} \label{thm:main} For the matrix $\sf W$ in Definition \ref{def:GWWs}, the following are equivalent:
  \begin{enumerate}
  \item[\rm (i)] $\sf W$ is a spin model;
\item[\rm (ii)] $  f^2 = \vert X \vert^{1/2} \sum_{i=0}^D \tau^{-1}_i k_i$.
\end{enumerate}
  \end{theorem}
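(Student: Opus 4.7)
The plan is to read off the theorem as essentially a coefficient-matching consequence of Proposition \ref{lem:GWWWE}, since almost all of the work needed to make $\sf W$ into a spin model has already been done.

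First I would recall what remains to be checked. By Definition \ref{def:spinM}, $\sf W$ is a spin model iff it is (a) symmetric, (b) type II, and (c) satisfies the star-triangle condition with constant $\vert X\vert^{1/2}$. Conditions (a) and (b) hold unconditionally: symmetry is immediate from Definition \ref{def:GWWs} (since each $E_i$ is symmetric), and the type II property was established in Proposition \ref{prop:GTYPE2}, both independently of the choice of $0\neq f\in\mathbb{C}$. Hence the only content of the theorem lies in condition (c).

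Next I would invoke Proposition \ref{lem:GWWWE}, which for every $a,b,c\in X$ gives
\begin{align*}
\sum_{e\in X}\frac{{\sf W}_{e,b}{\sf W}_{e,c}}{{\sf W}_{e,a}}
\;=\;\frac{f^2}{\sum_{i=0}^{D}\tau_i^{-1}k_i}\,\frac{{\sf W}_{b,c}}{{\sf W}_{a,b}{\sf W}_{c,a}}.
\end{align*}
Comparing this identity with the type III requirement \eqref{eq:type3}, the star-triangle condition is equivalent to
\begin{align*}
\frac{f^2}{\sum_{i=0}^{D}\tau_i^{-1}k_i}\;=\;\vert X\vert^{1/2},
\end{align*}
which is exactly condition (ii). For the direction (ii)$\Rightarrow$(i) this immediately yields \eqref{eq:type3}. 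For (i)$\Rightarrow$(ii) one needs the extra remark that the ratio ${\sf W}_{b,c}/({\sf W}_{a,b}{\sf W}_{c,a})$ is a nonzero scalar (for any particular choice of $a,b,c$), which is clear because $\sf W$ is type II and hence has all entries nonzero. Thus the coefficients on the two sides can be equated.

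There is no real obstacle here; the theorem is a bookkeeping step that isolates the unique normalization of $f$ consistent with the type III condition. All the structural input — the braid relation \eqref{eq:GWWW}, the identities \eqref{eq:GWPW} and \eqref{eq:Gfrac12}, and the type II property — is already packaged in Propositions \ref{prop:GWWW}, \ref{prop:GTYPE2}, and \ref{lem:GWWWE}. The only thing to be mildly careful about is that $f$ is allowed to be any nonzero complex number satisfying the square equation in (ii), and that the convention $\vert X\vert^{1/2}>0$ fixed in Section 2 makes the statement unambiguous.
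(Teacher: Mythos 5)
Your proposal is correct and follows the paper's own proof essentially verbatim: isolate the three conditions of Definition \ref{def:spinM}, observe that (i) symmetry and (ii) type II hold unconditionally (the latter by Proposition \ref{prop:GTYPE2}), and reduce (iii) to the normalization of $f$ by comparing Proposition \ref{lem:GWWWE} with \eqref{eq:type3}. Your additional remark that the ratio ${\sf W}_{b,c}/({\sf W}_{a,b}{\sf W}_{c,a})$ is nonzero (so the scalar coefficients can be equated) is a minor clarification that the paper leaves implicit.
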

  \begin{proof} Consider the conditions (i)--(iii) in Definition \ref{def:spinM}.  Definition \ref{def:spinM}(i) holds by construction. Definition \ref{def:spinM}(ii) holds by Proposition \ref{prop:GTYPE2}.
  Definition \ref{def:spinM}(iii) holds if and only if  $ f^2 = \vert X \vert^{1/2} \sum_{i=0}^D \tau^{-1}_i k_i$,
  in view of  Proposition  \ref{lem:GWWWE}.
  The result follows.
   \end{proof}
   
   \noindent  Next we check that the spin model $\sf W$ in Theorem \ref{thm:main} is afforded by $\Gamma$.
   
   \begin{lemma} \label{lem:GsupportG} Referring to Theorem \ref{thm:main},  assume that the equivalent conditions {\rm (i), (ii)} hold.
   Then ${\sf W} \in M \subseteq N({\sf W})$.
   \end{lemma}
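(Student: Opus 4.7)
The first claim ${\sf W} \in M$ is immediate from Definition \ref{def:GWWs}, which writes ${\sf W} = f\sum_{i=0}^D \tau_i E_i$ as an explicit linear combination of the primitive idempotents $E_i \in M$.

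For the containment $M \subseteq N({\sf W})$, my plan is to exhibit the Nomura eigenvectors as images of standard basis vectors under the operator ${\sf W}^*(c){\sf W}$. Fix $b, c \in X$ and let $Y_{bc}$ denote the column vector with $(Y_{bc})_x = {\sf W}_{x,b}/{\sf W}_{x,c}$; by definition, to have some $S \in N({\sf W})$ means that each such $Y_{bc}$ is an eigenvector of $S$. Using the spin-model calibration from Theorem \ref{thm:main}(ii) together with \eqref{eq:GWPW}, I would first check that ${\sf W}^*(c)_{y,y} = \vert X \vert^{1/2}/{\sf W}_{c,y}$; then, since ${\sf W}$ is symmetric, a direct entrywise calculation yields
\begin{align*}
{\sf W}^*(c)\,{\sf W}\,e_b = \vert X\vert^{1/2}\, Y_{bc},
\end{align*}
identifying $Y_{bc}$ (up to the scalar $\vert X\vert^{-1/2}$) with ${\sf W}^*(c){\sf W} e_b$.

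Next I would invoke the algebra isomorphism $\rho$ of Lemma \ref{lem:GrhoIso} with base vertex $c$. Since $\rho$ acts as conjugation by ${\sf W}^*(c){\sf W}$ and satisfies $\rho({\sf A}) = {\sf B}$ by Proposition \ref{lem:GrhoMap}, the affine relations $A = \alpha {\sf A}+\varepsilon I$ and $A^*(c) = \alpha {\sf B}+\varepsilon I$ give $A \cdot ({\sf W}^*(c){\sf W}) = ({\sf W}^*(c){\sf W}) \cdot A^*(c)$. Applying both sides to $e_b$ and using that $A^*(c)$ is diagonal with $(b,b)$-entry $\theta^*_{\partial(c,b)} = \theta_{\partial(c,b)}$ (by Assumption \ref{def:spintype}), I obtain $A \, Y_{bc} = \theta_{\partial(c,b)} Y_{bc}$. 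Hence every $Y_{bc}$ is an eigenvector of $A$, so $A \in N({\sf W})$.

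To conclude, $N({\sf W})$ is a Bose-Mesner algebra by the result of \cite{N:algebra}, hence is closed under matrix multiplication and contains $I$. Since $A$ generates $M$ as recorded in Section 4, every polynomial in $A$ lies in $N({\sf W})$, so $M \subseteq N({\sf W})$. The conceptual heart of the argument — and the main thing that could go wrong if one were careless — is the identification $Y_{bc} = \vert X\vert^{-1/2} {\sf W}^*(c){\sf W} e_b$: this is precisely where the normalization $f^2 = \vert X\vert^{1/2} \sum_{i=0}^D \tau^{-1}_i k_i$ from Theorem \ref{thm:main} is used in an essential way, since without it the product ${\sf W}_{c,y}\,{\sf W}^*(c)_{y,y}$ would not equal the universal constant $\vert X\vert^{1/2}$ required to recognize the columns of ${\sf W}^*(c){\sf W}$ as genuine Nomura eigenvectors.
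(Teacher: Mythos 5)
Your argument is correct, but it takes a genuinely different route from the paper's. The paper's proof is one line: it observes ${\sf W}\in M$ by construction and then cites \cite[Theorem~13.9]{nomSpinModel}, which reduces the containment $M\subseteq N({\sf W})$ to the intertwining identity of Lemma \ref{lem:GWWcon}(ii) holding for every base vertex. You instead give a self-contained derivation: identify the Nomura vectors $Y_{bc}$ (up to scalar) with columns of ${\sf W}^*(c){\sf W}$, use $\rho({\sf A})={\sf B}$ in the form $A\,({\sf W}^*(c){\sf W})=({\sf W}^*(c){\sf W})\,A^*(c)$, observe that $A^*(c)$ is diagonal with $(b,b)$-entry $\theta^*_{\partial(c,b)}=\theta_{\partial(c,b)}$, and conclude each $Y_{bc}$ is an $A$-eigenvector; then finish via Nomura's theorem that $N({\sf W})$ is a Bose-Mesner algebra and the fact that $A$ generates $M$. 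Both arguments hinge on the same underlying intertwiner (your Proposition \ref{lem:GrhoMap} is equivalent in content to Lemma \ref{lem:GrhoE}(i), which is Lemma \ref{lem:GWWcon} in disguise), so the conceptual content is the same; what you buy is independence from the prior paper at the cost of length.

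One small overstatement: the calibration $f^2=\vert X\vert^{1/2}\sum_i\tau_i^{-1}k_i$ is not actually essential to this particular containment. The Nomura algebra $N({\sf W})$ is invariant under nonzero scalar rescaling of ${\sf W}$, and the constant $s={\sf W}_{c,y}\,{\sf W}^*(c)_{y,y}$ in \eqref{eq:GWPW} is independent of $y$ for any choice of $f$; so $Y_{bc}=s^{-1}{\sf W}^*(c){\sf W}e_b$ would hold and the eigenvector argument would go through with $s$ in place of $\vert X\vert^{1/2}$. The calibration only makes the scalar equal to $\vert X\vert^{1/2}$; it is needed for the type III (star-triangle) condition of the spin model, not for $M\subseteq N({\sf W})$. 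Since the lemma as stated assumes the calibration anyway, this does not affect correctness, but your closing sentence gives the wrong impression about where the normalization is doing real work.
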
 
   \begin{proof} By construction ${\sf W}\in M$.  It was shown in  \cite[Theorem~13.9]{nomSpinModel} that $M \subseteq N({\sf W})$ is a consequence of Lemma  \ref{lem:GWWcon}(ii) holding
   for every choice of  base vertex $x \in X$.
   \end{proof}

\noindent We have shown that the spin model $\sf W$ in Theorem \ref{thm:main} is afforded by $\Gamma$. Some algebraic consequences of
$\sf W$ are listed in the Appendix.

  \section{Combinatorial aspects of the central element $Z$}
  
Throughout this section, Assumptions \ref{def:spintype}, \ref{def:spintype2} are in effect. We fix a vertex $x \in X$ and write $T=T(x)$. Recall the matrix $Z=Z(x)$ from Definition \ref{def:Z}. Our goal for this section
is to describe the combinatorial implications of $Z$. In our discussion, we will often use the data in the Appendix.

\begin{lemma} \label{lem:c1} For $1 \leq i \leq D$,
\begin{align*}
E^*_{i-1} {\sf  A} E^*_{i-1} {\sf A} E^*_i \biggl(1+ \frac{\vartheta_{i-1}}{q+q^{-1}}\biggr) = E^*_{i-1} {\sf  A} E^*_i {\sf A} E^*_i \biggl(1+ \frac{\vartheta_i}{q+q^{-1}}\biggr).
\end{align*}
\end{lemma}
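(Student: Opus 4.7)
The plan is to prove Lemma \ref{lem:c1} by recognizing both sides as disguised occurrences of the central element $Z$. Recall from Proposition \ref{lem:Z1}(iii) that
\begin{align*}
E^*_j {\sf A} E^*_j \biggl(1+\frac{\vartheta_j}{q+q^{-1}}\biggr) = E^*_j Z = Z E^*_j \qquad (0 \leq j \leq D).
\end{align*}
This identity rewrites the ``diagonal'' block $E^*_j {\sf A} E^*_j$, weighted by the appropriate scalar, as a simple expression in $Z$.

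Applying this with $j=i-1$ and multiplying on the right by ${\sf A} E^*_i$, the left-hand side of the lemma becomes
\begin{align*}
E^*_{i-1}{\sf A}E^*_{i-1}{\sf A}E^*_i \biggl(1+\frac{\vartheta_{i-1}}{q+q^{-1}}\biggr) = E^*_{i-1} Z \,{\sf A}E^*_i.
\end{align*}
Applying the identity with $j=i$ and multiplying on the left by $E^*_{i-1}{\sf A}$, the right-hand side of the lemma becomes
\begin{align*}
E^*_{i-1}{\sf A}E^*_i {\sf A}E^*_i \biggl(1+\frac{\vartheta_i}{q+q^{-1}}\biggr) = E^*_{i-1}{\sf A} E^*_i Z.
\end{align*}

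Now I invoke Proposition \ref{lem:Z1}(iv), which asserts that $Z$ is central in $T$. Since $E^*_{i-1}$, ${\sf A}$, $E^*_i$ all lie in $T$, the element $Z$ commutes with each of them, so
\begin{align*}
E^*_{i-1} Z \,{\sf A} E^*_i = E^*_{i-1} {\sf A} Z E^*_i = E^*_{i-1}{\sf A} E^*_i Z,
\end{align*}
and the two sides agree. There is no real obstacle here: the lemma is essentially a formal consequence of the two facts that $Z$ acts on $E^*_j M E^*_j$ as multiplication by $(1+\vartheta_j/(q+q^{-1}))E^*_j {\sf A} E^*_j$ and that $Z$ is central. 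The only thing to watch is the placement of the scalar factors, but since they are scalars they commute through everything, and the computation above makes this precise.
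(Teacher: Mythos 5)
Your proof is correct and follows essentially the same route as the paper: both use Proposition \ref{lem:Z1}(iii) to identify each side with a product involving $Z$ and $E^*_{i-1}{\sf A}E^*_i$, then invoke the centrality of $Z$ (Proposition \ref{lem:Z1}(iv)) to equate them. You have simply unpacked the paper's one-line argument into explicit steps.
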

\begin{proof} The matrix $Z$ is central in $T$, so 
\begin{align*}
     Z E^*_{i-1} {\sf A} E^*_i =  E^*_{i-1} {\sf A} E^*_i Z.
\end{align*}
The result follows in view of Proposition \ref{lem:Z1}(iii).
\end{proof}

\begin{lemma} \label{lem:c2} For $1 \leq i \leq D$,
\begin{align*}
E^*_{i-1} A E^*_{i-1} A E^*_i \biggl(1+ \frac{\vartheta_{i-1}}{q+q^{-1}}\biggr) = E^*_{i-1} A E^*_i A E^*_i \biggl(1+ \frac{\vartheta_i}{q+q^{-1}}\biggr) + E^*_{i-1} A E^*_i \frac{\vartheta_{i-1} - \vartheta_i}{q+q^{-1}} \varepsilon.
\end{align*}
\end{lemma}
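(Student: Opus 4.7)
The plan is to derive this identity directly from Lemma \ref{lem:c1} by substituting the definition ${\sf A} = (A-\varepsilon I)/\alpha$ and exploiting the orthogonality relations $E^*_r E^*_s = \delta_{r,s} E^*_r$. No new module-theoretic input is needed; this is just a bookkeeping extension of Lemma \ref{lem:c1} from the ``bold'' matrix ${\sf A}$ to the genuine adjacency matrix $A$.

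First I would simplify $E^*_{i-1}{\sf A}E^*_{i-1}{\sf A}E^*_i$. Substituting ${\sf A} = \alpha^{-1}(A-\varepsilon I)$ and expanding, the four cross-terms produced by the two copies of $-\varepsilon I$ collapse dramatically: any term containing the factor $E^*_{i-1}E^*_i$ vanishes, while $E^*_{i-1}E^*_{i-1}=E^*_{i-1}$. The net result is
\begin{align*}
\alpha^2\,E^*_{i-1}{\sf A}E^*_{i-1}{\sf A}E^*_i = E^*_{i-1}AE^*_{i-1}AE^*_i - \varepsilon\,E^*_{i-1}AE^*_i.
\end{align*}
The analogous simplification on the right side of Lemma \ref{lem:c1}, again using $E^*_{i-1}E^*_i=0$ and $E^*_iE^*_i=E^*_i$, gives
\begin{align*}
\alpha^2\,E^*_{i-1}{\sf A}E^*_i{\sf A}E^*_i = E^*_{i-1}AE^*_iAE^*_i - \varepsilon\,E^*_{i-1}AE^*_i.
\end{align*}

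Next I would multiply Lemma \ref{lem:c1} through by $\alpha^2$ and insert the two identities above. The common $-\varepsilon\,E^*_{i-1}AE^*_i$ term appears on both sides but weighted by different scalars, namely $1+\vartheta_{i-1}/(q+q^{-1})$ on the left and $1+\vartheta_i/(q+q^{-1})$ on the right; moving one of them across produces the residual term $\varepsilon\,E^*_{i-1}AE^*_i\,(\vartheta_{i-1}-\vartheta_i)/(q+q^{-1})$, which is precisely the extra term appearing in Lemma \ref{lem:c2}. Rearranging yields the stated identity.

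The main obstacle is essentially absent here; the only thing to watch is the ordering of idempotents when expanding $E^*_{i-1}(A-\varepsilon I)E^*_{i-1}(A-\varepsilon I)E^*_i$ and its counterpart, to make sure one correctly tracks which $-\varepsilon I$ terms survive under the relations $E^*_{i-1}E^*_i=0$ and $(E^*_j)^2=E^*_j$. Once that expansion is done, the conclusion is immediate.
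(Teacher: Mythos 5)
Your proposal is correct and follows exactly the same approach as the paper's one-line proof (``Eliminate $\sf A$ in Lemma~\ref{lem:c1} using \eqref{eq:AB}, and simplify the result''), merely spelling out the expansion and the cancellations under $E^*_{i-1}E^*_i=0$ and $(E^*_j)^2=E^*_j$ in detail. The algebra checks out term by term.
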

\begin{proof} Eliminate $\sf A$ in Lemma \ref{lem:c1} using \eqref{eq:AB}, and simplify the result.
\end{proof}

\noindent For $y,z \in X$ we will examine the $(y,z)$-entry on either side of the equation in Lemma \ref{lem:c2}.  We will assume that $y \in \Gamma_{i-1}(x)$ and $z \in \Gamma_i(x)$ and $\partial(y,z) \in \lbrace 1,2\rbrace$;
otherwise the $(y,z)$-entry of each term is zero.
\medskip

\noindent For $y \in X$ we abbreviate $\Gamma(y) = \Gamma_1(y)$.

\begin{lemma} \label{lem:c5} Pick an integer $i$ $(1 \leq i \leq D)$, and pick $y,z \in X$ such that
\begin{align*}
\partial(x,y) = i-1, \qquad \quad \partial(x,z) = i, \qquad \quad \partial (y,z)=1.
\end{align*}
Define
\begin{align*}
z_i = \bigl \vert \Gamma_{i-1} (x) \cap \Gamma(y) \cap \Gamma(z) \bigr \vert.
\end{align*}
Then
 \begin{align}
z_i \biggl(1+ \frac{\vartheta_{i-1}}{q+q^{-1}}\biggr) = \bigl(a_1-z_i \bigr) \biggl(1+ \frac{\vartheta_i}{q+q^{-1}}\biggr) + \frac{\vartheta_{i-1} - \vartheta_i}{q+q^{-1}} \varepsilon.    \label{eq:zif}
\end{align}
\end{lemma}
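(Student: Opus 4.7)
The plan is to take the $(y,z)$-entry of each side of the matrix identity in Lemma \ref{lem:c2}, and interpret the resulting scalar equation combinatorially. The key observation is that for any matrix $M$ and any $h,j$,
\[
\bigl(E^*_h M E^*_j\bigr)_{y,z} \;=\; \begin{cases} M_{y,z}, & \text{if } \partial(x,y)=h \text{ and } \partial(x,z)=j, \\ 0, & \text{otherwise.} \end{cases}
\]
Under the hypotheses $\partial(x,y)=i-1$, $\partial(x,z)=i$, $\partial(y,z)=1$, the outer factors $E^*_{i-1}$ (on the left) and $E^*_i$ (on the right) contribute $1$, so each term reduces to a sum over intermediate vertices filtered by their distance from $x$.

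First, I would evaluate the $(y,z)$-entry of $E^*_{i-1} A E^*_{i-1} A E^*_i$. Inserting $A_{y,w}A_{w,z}$ and restricting to $\partial(x,w)=i-1$ gives exactly the number of common neighbors of $y$ and $z$ at distance $i-1$ from $x$, which is $z_i$ by definition. Next, for the $(y,z)$-entry of $E^*_{i-1} A E^*_i A E^*_i$, the analogous count yields $\bigl|\Gamma_i(x) \cap \Gamma(y) \cap \Gamma(z)\bigr|$. Here I would invoke the triangle inequality: any common neighbor $w$ of $y$ and $z$ satisfies $\partial(x,w) \in \{i-2, i-1, i, i+1\}$ a priori, but the combined constraints $|\partial(x,w)-\partial(x,y)|\leq 1$ and $|\partial(x,w)-\partial(x,z)|\leq 1$ force $\partial(x,w) \in \{i-1, i\}$. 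Since the total number of common neighbors of adjacent vertices equals $a_1 = p^1_{1,1}$, the count at distance $i$ is $a_1 - z_i$. Finally, the $(y,z)$-entry of $E^*_{i-1} A E^*_i$ is simply $A_{y,z}=1$ because $\partial(y,z)=1$.

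Assembling these three pieces with the scalar coefficients from Lemma \ref{lem:c2} produces exactly \eqref{eq:zif}. There is no real obstacle in this argument; the only point requiring a bit of care is the triangle-inequality dichotomy that pins the distance of a common neighbor to $\{i-1,i\}$, which is what makes the splitting $z_i + (a_1 - z_i) = a_1$ legitimate and turns the matrix identity into a clean scalar recursion.
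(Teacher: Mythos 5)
Your proposal is correct and takes exactly the approach the paper uses: the paper's proof is the single sentence ``For the equation in Lemma \ref{lem:c2}, compute the $(y,z)$-entry of each side,'' and your argument simply spells out that computation, including the triangle-inequality observation that common neighbors of $y,z$ lie at distance $i-1$ or $i$ from $x$.
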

  \begin{proof} For the equation in Lemma \ref{lem:c2}, compute the $(y,z)$-entry of each side.
  \end{proof}
\noindent  In a moment, we will solve \eqref{eq:zif} for $z_i$. The following result will be useful.
 \begin{lemma} \label{lem:dnz} For $1 \leq i \leq D$ we have
 \begin{align*}
 2(q+q^{-1})+\vartheta_{i-1}+\vartheta_i = a^{-1}q^{2i-D-1}(q+q^{-1})(a+q^{D-2i+1})^2.
 \end{align*}
 Moreover 
 \begin{align*}
 2(q+q^{-1})+\vartheta_{i-1}+\vartheta_i \not=0.
 \end{align*}
 \end{lemma}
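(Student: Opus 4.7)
The plan is to establish the identity by direct computation from the formula $\vartheta_i = aq^{2i-D} + a^{-1}q^{D-2i}$, and then deduce the non-vanishing from the distinctness of $\{\vartheta_i\}_{i=0}^D$ together with $q^2\neq -1$ (part of Definition \ref{def:qRacah}).

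For the first part, I would compute
\begin{align*}
\vartheta_{i-1}+\vartheta_i &= a(q^{2i-D-2}+q^{2i-D}) + a^{-1}(q^{D-2i+2}+q^{D-2i}) \\
&= (1+q^2)\bigl(aq^{2i-D-2}+a^{-1}q^{D-2i}\bigr)
\end{align*}
and then use $1+q^2 = q(q+q^{-1})$ to rewrite this as $(q+q^{-1})(aq^{2i-D-1}+a^{-1}q^{D-2i+1})$. Adding $2(q+q^{-1})$ and factoring out $(q+q^{-1})$ reduces the target identity to verifying
\[
2 + aq^{2i-D-1}+a^{-1}q^{D-2i+1} = a^{-1}q^{2i-D-1}(a+q^{D-2i+1})^2,
\]
which is immediate upon expanding the square on the right: the three terms become $aq^{2i-D-1}$, $2$, and $a^{-1}q^{D-2i+1}$ respectively.

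For the non-vanishing claim, note that $q+q^{-1}\neq 0$ since $q^2\neq -1$ by Definition \ref{def:qRacah}. It remains to show $a+q^{D-2i+1}\neq 0$. Here I would use the observation made below \eqref{eq:vtheta} that the scalars $\{\vartheta_i\}_{i=0}^D$ are mutually distinct (since the $\{\theta_i\}_{i=0}^D$ are). A parallel computation gives
\[
\vartheta_i-\vartheta_{i-1} = (q^2-1)q^{-1}a^{-1}q^{2i-D-1}(a-q^{D-2i+1})(a+q^{D-2i+1}),
\]
so $\vartheta_i\neq \vartheta_{i-1}$ forces $a+q^{D-2i+1}\neq 0$ (and incidentally $a-q^{D-2i+1}\neq 0$).

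No part of this is a real obstacle; the only subtlety is remembering to invoke distinctness of the $\vartheta_i$ rather than trying to argue directly from the hypotheses on $a$ and $q$, since a priori $a=-q^{D-2i+1}$ is not forbidden by the conditions $a\neq 0$, $q\neq 0$, $q^2\neq\pm 1$ alone.
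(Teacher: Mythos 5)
Your proof is correct. The algebraic verification of the identity is exactly what the paper's one-line proof (``By \eqref{eq:vtheta} and \eqref{eq:ineq1}, \eqref{eq:ineq2}'') implicitly does, so that part is the same. Where you diverge is the non-vanishing argument: the paper cites the Appendix inequality \eqref{eq:ineq2} (which asserts $a^2 q^{2i} \neq 1$ for $1-D \leq i \leq D-1$, hence $a \neq \pm q^{D-2i+1}$), and \eqref{eq:ineq1} for $q+q^{-1}\neq 0$, whereas you derive $a + q^{D-2i+1} \neq 0$ by factoring $\vartheta_i - \vartheta_{i-1}$ and appealing to the mutual distinctness of the $\vartheta_i$ (which is noted in the paper just below \eqref{eq:vtheta}), and use $q^2\neq -1$ from Definition \ref{def:qRacah} for $q+q^{-1}\neq 0$. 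The two routes are logically equivalent: the distinctness of the $\vartheta_i$ is in fact exactly encoded by \eqref{eq:ineq1} and \eqref{eq:ineq2}. Your version is slightly more self-contained in that it reproves the needed special case of \eqref{eq:ineq2} from first principles rather than citing the Appendix; the paper's version is shorter but assumes the reader has absorbed the Appendix data (which is imported from \cite[Section~12]{nomSpinModel}). Both are fine.
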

 \begin{proof} By \eqref{eq:vtheta} and    \eqref{eq:ineq1}, \eqref{eq:ineq2}.
 \end{proof}
 \noindent The following result is essentially due to Curtin and Nomura \cite[Lemma~3.5]{CNhom}.

   \begin{proposition} \label{lem:c6} {\rm (See \cite[Lemma~3.5]{CNhom}).} With reference to Lemma \ref{lem:c5}, 
  \begin{align}
  z_i =\frac{ a_1 (q+q^{-1} + \vartheta_i) + \varepsilon (\vartheta_{i-1}-\vartheta_i)}{2(q+q^{-1})+ \vartheta_{i-1}+\vartheta_i} \qquad \qquad (1 \leq i \leq D). \label{eq:ziraw}
 \end{align}
   \end{proposition}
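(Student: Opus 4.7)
The plan is to treat Proposition 15.7 as the direct algebraic consequence of Lemma 15.5 that it appears to be. Lemma 15.5 already supplies the key linear equation in the unknown $z_i$, namely
\begin{align*}
z_i \biggl(1+ \frac{\vartheta_{i-1}}{q+q^{-1}}\biggr) = \bigl(a_1-z_i \bigr) \biggl(1+ \frac{\vartheta_i}{q+q^{-1}}\biggr) + \frac{\vartheta_{i-1} - \vartheta_i}{q+q^{-1}} \varepsilon,
\end{align*}
and Lemma 15.6 certifies that the denominator appearing in the target formula is nonzero, so the linear equation has a unique solution and no division-by-zero issue arises.

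The execution is then a short two-step computation. First, I would collect the $z_i$-terms on the left by adding $z_i(1 + \vartheta_i/(q+q^{-1}))$ to both sides, producing
\begin{align*}
z_i \biggl(2 + \frac{\vartheta_{i-1} + \vartheta_i}{q+q^{-1}}\biggr) = a_1\biggl(1 + \frac{\vartheta_i}{q+q^{-1}}\biggr) + \frac{\vartheta_{i-1} - \vartheta_i}{q+q^{-1}} \varepsilon.
\end{align*}
Second, I would clear the common denominator $q+q^{-1}$ by multiplying both sides through, which gives
\begin{align*}
z_i \bigl(2(q+q^{-1}) + \vartheta_{i-1} + \vartheta_i\bigr) = a_1(q+q^{-1} + \vartheta_i) + \varepsilon(\vartheta_{i-1} - \vartheta_i).
\end{align*}
Dividing by the nonzero factor $2(q+q^{-1}) + \vartheta_{i-1} + \vartheta_i$ (justified by Lemma 15.6) yields the claimed expression for $z_i$.

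There is essentially no obstacle: Lemma 15.5 does the combinatorial work of producing the equation, Lemma 15.6 handles the only nontriviality (nonvanishing of the denominator, which in turn rests on the inequalities \eqref{eq:ineq1}, \eqref{eq:ineq2} recorded in the appendix), and the rest is one rearrangement of a single linear equation. The only mild care needed is to note that the derivation is valid for every $i$ in the stated range $1 \leq i \leq D$, which is immediate since both Lemmas 15.5 and 15.6 are stated in exactly that range.
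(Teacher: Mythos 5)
Your proposal is correct and coincides exactly with the paper's proof, which reads simply ``Solve \eqref{eq:zif} for $z_i$ using Lemma \ref{lem:dnz}.'' Your write-up merely spells out the rearrangement that the paper leaves implicit; the substance is identical.
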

   \begin{proof}
  Solve  \eqref{eq:zif} for $z_i$ using Lemma \ref{lem:dnz}.
  \end{proof}
  \begin{remark}\rm
   By Proposition \ref{lem:c6}, the integer $z_i$ in Lemma \ref{lem:c5} is independent of the choice of $x,y,z$.
  \end{remark}
  
  \begin{lemma} \label{lem:c7} For $1\leq i \leq D$ we have
  \begin{align*}
  z_i &= \frac{a_1}{1-aq^{1-D}} \, \frac{1-q^{2-2i}}{1+ a^{-1}q^{D-2i+1}}, \\
  a_1 - z_i &=  \frac{a_1q^{1-i}}{1-aq^{1-D}} \, \frac{a^{-1}q^{D-i}-aq^{i-D}}{1+ a^{-1} q^{D-2i+1}}.
  \end{align*}
  In particular,
  \begin{align*}
  z_2 &=   \frac{a_1}{1-aq^{1-D}} \, \frac{1-q^{-2}}{1+ a^{-1} q^{D-3}}, \\
   a_1 - z_2 &=  \frac{a_1q^{-1}}{1-aq^{1-D}} \, \frac{a^{-1}q^{D-2}-aq^{2-D}}{1+ a^{-1} q^{D-3}}.
  \end{align*}
  \end{lemma}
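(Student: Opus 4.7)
The plan is to combine the Curtin--Nomura formula \eqref{eq:ziraw} with the combinatorial boundary condition $z_1 = 0$, and then carry out an algebraic simplification via multiplicative factorizations of the $\vartheta_j$-expressions. The boundary condition $z_1 = 0$ is immediate from the definition in Lemma \ref{lem:c5}: for $i=1$ we have $y = x$, so $\Gamma_0(x) \cap \Gamma(y) \cap \Gamma(z) = \{x\}\cap\Gamma(x)\cap\Gamma(z) = \emptyset$ because $x \notin \Gamma(x)$. Setting $i=1$ in \eqref{eq:ziraw} and solving for $\varepsilon$ yields
$$\varepsilon = -\frac{a_1(q + q^{-1} + \vartheta_1)}{\vartheta_0 - \vartheta_1}.$$

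Next I would set up the factorizations. Writing $P_j = 1 + aq^{2j-D-1}$ and $Q_j = 1 - aq^{2j-D-1}$, a direct expansion of \eqref{eq:vtheta} shows
$$q + q^{-1} + \vartheta_j = a^{-1}q^{D-2j}P_j P_{j+1}, \qquad \vartheta_{j-1} - \vartheta_j = a^{-1}q^{D-2j}(q^2-1)P_j Q_j,$$
while Lemma \ref{lem:dnz} becomes $2(q+q^{-1}) + \vartheta_{j-1} + \vartheta_j = a^{-1}q^{D-2j+1}(q+q^{-1})P_j^2$. Substituting the above value of $\varepsilon$ together with these factorizations into \eqref{eq:ziraw}, the numerator acquires $a_1 P_1 P_i$ as a common factor and the denominator is proportional to $P_1 P_i^2 Q_1$. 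The residual bracket $P_{i+1}Q_1 - P_2 Q_i$ in the numerator collapses, upon direct expansion, to $a(q^2+1)q^{1-D}(q^{2i-2}-1)$. After the $(q^2+1)$ cancels against $q(q+q^{-1})$ in the denominator and one uses the identity $1+a^{-1}q^{D-2i+1} = a^{-1}q^{D-2i+1}P_i$, the first claimed formula emerges.

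The formula for $a_1 - z_i$ is then a purely algebraic consequence of the identity
$$(1-aq^{1-D})(1+a^{-1}q^{D-2i+1}) - (1-q^{2-2i}) = q^{1-i}(a^{-1}q^{D-i} - aq^{i-D}),$$
which is verified by direct expansion. The $i=2$ formulas are immediate specializations. The main obstacle is the $q$- and $a$-power bookkeeping in the intermediate step: although every cancellation is routine once one recognizes that $P_1 P_i$ is the common factor to extract, it is easy to get lost in the manipulations. A practical shortcut is to substitute the claimed closed-form $z_i$ together with the $\varepsilon$ derived above into \eqref{eq:zif}, and verify the resulting rational identity in $q$ and $a$ directly; this bypasses the constructive derivation in favor of a straight verification.
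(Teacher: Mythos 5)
Your proof is correct, and it takes a genuinely different route from the paper's one-line proof (which just says to evaluate \eqref{eq:ziraw} by substituting \eqref{eq:vtheta} and the explicit Appendix formulas for $\varepsilon$ and $a_1$ in terms of $a,q$, then simplify). Your key observation is that the $i=1$ boundary condition $z_1=0$ -- immediate from $x\notin\Gamma(x)$ -- forces $\varepsilon(q^2-1)Q_1 = -a_1P_2$, which lets you eliminate $\varepsilon$ from \eqref{eq:ziraw} without ever invoking the Appendix closed form for $\varepsilon$. The multiplicative factorizations $q+q^{-1}+\vartheta_j = a^{-1}q^{D-2j}P_jP_{j+1}$, $\vartheta_{j-1}-\vartheta_j = a^{-1}q^{D-2j}(q^2-1)P_jQ_j$, and the Lemma \ref{lem:dnz} form of the denominator are all correct, as is the crucial collapse $P_{i+1}Q_1 - P_2Q_i = aq^{1-D}(q^2+1)(q^{2i-2}-1)$ and the complementary identity producing the $a_1-z_i$ formula. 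I verified that the $\varepsilon$ you derive agrees with \eqref{eq:bformx}, so the argument is consistent with the paper. What your approach buys: it is more self-contained and conceptually cleaner, replacing brute-force substitution of the rather opaque Appendix formula for $\varepsilon$ by a transparent boundary condition and a clean factorization scheme. The one quibble is a minor bookkeeping imprecision in the middle paragraph: the common factor that actually appears in the numerator is $a^{-1}q^{D-2i}P_i$ (with the additional $Q_1$ coming from clearing the $\varepsilon$ substitution), not $a_1P_1P_i$; the extra $P_1$ you mention has already cancelled between the numerator and denominator of the $\varepsilon$ expression. This does not affect the result, and your closing suggestion -- to verify the claimed $z_i$ together with the derived $\varepsilon$ directly against \eqref{eq:zif} as a rational identity in $a,q$ -- is a perfectly sound alternative.
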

  \begin{proof} Evaluate \eqref{eq:ziraw} using  \eqref{eq:vtheta} and the Appendix data.
  \end{proof}
  \begin{lemma} Assume that $a_1 \not=0$. Then $z_i$ and $a_1-z_i$ are nonzero for $2 \leq i \leq D$.
  \end{lemma}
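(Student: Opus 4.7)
The plan is to use the explicit factored expressions for $z_i$ and $a_1 - z_i$ from Lemma \ref{lem:c7} and verify that every factor in the numerators is nonzero. The denominators in those factored forms are already known to be nonzero, since the derivation of Lemma \ref{lem:c7} uses Lemma \ref{lem:dnz} (which guarantees $2(q+q^{-1}) + \vartheta_{i-1} + \vartheta_i \neq 0$) together with the assumption that $1 - a q^{1-D}$ and $1 + a^{-1}q^{D-2i+1}$ are nonzero; the latter are ensured by the appendix inequalities \eqref{eq:ineq1}, \eqref{eq:ineq2}.

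For $z_i$, the numerator is $a_1(1-q^{2-2i})$. The factor $a_1$ is nonzero by hypothesis, and $1 - q^{2-2i} \neq 0$ for $2 \leq i \leq D$ because this amounts to $q^{2(i-1)} \neq 1$ for $1 \leq i-1 \leq D-1$, which is part of the nondegeneracy data in the appendix (it is equivalent to distinctness of the consecutive pairs $\vartheta_{j-1}, \vartheta_j$ in \eqref{eq:vtheta}). Hence $z_i \neq 0$.

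For $a_1 - z_i$, the numerator is $a_1 q^{1-i}(a^{-1}q^{D-i} - a q^{i-D})$. Here $a_1 \neq 0$ by hypothesis and $q^{1-i} \neq 0$ since $q \neq 0$ by Definition \ref{def:qRacah}. The remaining factor vanishes precisely when $a^2 = q^{2(D-i)}$, i.e.\ when $a = \pm q^{D-i}$; but this equality is excluded by the appendix inequalities for $2 \leq i \leq D$ (equivalently, it would force a coincidence $\vartheta_0 = \vartheta_{D-i}$ or a similar collision ruled out by the distinctness of $\lbrace \vartheta_j \rbrace_{j=0}^D$). Therefore $a_1 - z_i \neq 0$ as well.

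The main potential obstacle is confirming that the appendix inequalities \eqref{eq:ineq1}, \eqref{eq:ineq2} genuinely cover the factor $a^{-1}q^{D-i} - a q^{i-D}$ for every $i$ in the range $2 \leq i \leq D$, and in particular for the extremal value $i=2$ (which gives the condition $a \neq \pm q^{D-2}$, made explicit in the second half of Lemma \ref{lem:c7}). Once this bookkeeping against the appendix is carried out, the proof is simply: read off the factors from Lemma \ref{lem:c7}, and cite the appropriate nonvanishing condition for each.
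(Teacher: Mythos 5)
Your proof is correct and takes essentially the same route as the paper, which simply cites Lemma \ref{lem:c7} together with the appendix inequalities \eqref{eq:ineq1}, \eqref{eq:ineq2}; your verification that every factor in the factored expressions is nonzero is exactly what those citations are doing. One tiny slip in a parenthetical: the vanishing of $a^{-1}q^{D-i}-aq^{i-D}$ (i.e.\ $a^2=q^{2(D-i)}$) corresponds to the coincidence $\vartheta_0=\vartheta_i$, not $\vartheta_0=\vartheta_{D-i}$, but this does not affect the argument since the relevant fact is that $a^2q^{2(i-D)}\neq 1$ is covered by \eqref{eq:ineq2} with index $i-D\in\lbrace 2-D,\ldots,0\rbrace\subseteq\lbrace 1-D,\ldots,D-1\rbrace$.
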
 
  \begin{proof} By Lemma \ref{lem:c7} and  \eqref{eq:ineq1}, \eqref{eq:ineq2}.
  \end{proof}
  
  \noindent We have a  comment.
  \begin{lemma} For $1 \leq i \leq D-1$,
  \begin{align*}
  (a_1-z_i)z_{i+1} = a_i z_2.
  \end{align*}
  \end{lemma}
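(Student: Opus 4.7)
The plan is to prove the identity by a direct computation using the closed forms from Lemma \ref{lem:c7} together with the $q$-Racah formula for $a_i$ collected in the Appendix.

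For brevity I write $M_j = 1 + a^{-1}q^{D-2j+1}$. Lemma \ref{lem:c7} immediately gives
\begin{align*}
(a_1-z_i)\,z_{i+1} = \frac{a_1^{2}\,q^{1-i}\,(a^{-1}q^{D-i}-a q^{i-D})\,(1-q^{-2i})}{(1-a q^{1-D})^{2}\,M_i\,M_{i+1}},
\end{align*}
while
\begin{align*}
a_i\, z_2 = \frac{a_1\, a_i\,(1-q^{-2})}{(1-a q^{1-D})\,M_2}.
\end{align*}
Equating these and clearing common factors reduces the claim to the rational identity
\begin{align*}
a_i = \frac{a_1\,q^{1-i}\,(a^{-1}q^{D-i}-a q^{i-D})\,(1-q^{-2i})\,M_2}{(1-a q^{1-D})(1-q^{-2})\,M_i\,M_{i+1}}.
\end{align*}

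The right-hand side above is (after a short rearrangement) the standard $q$-Racah closed form of the intersection number $a_i$ in terms of $a_1$, $a$, $q$, $D$ that is recorded in the Appendix; matching the two expressions completes the argument. Non-vanishing of every denominator is already furnished by the Appendix inequalities that were invoked in Lemma \ref{lem:dnz} and Lemma \ref{lem:c7}, so no extra case analysis is needed. As a sanity check, Lemma \ref{lem:c7} specializes at $i=1$ to $z_1 = 0$, and the claimed identity collapses to the tautology $a_1 z_2 = a_1 z_2$.

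I expect the main obstacle to be purely bookkeeping: correctly matching the four $q^{\pm k}$ factors, the two denominators $M_i$ and $M_{i+1}$, and the difference $a^{-1}q^{D-i}-a q^{i-D}$ against the Appendix formula for $a_i$. No structural or algebraic subtlety is anticipated, and the final verification amounts to a brief manipulation of $q$-shifted products of the type already performed in the proof of Lemma \ref{lem:c7}.
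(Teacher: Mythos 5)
Your proposal is correct and follows exactly the approach the paper uses: the paper's proof is the one-liner "Use Lemma \ref{lem:c7} and the Appendix data," and your calculation simply carries out that substitution explicitly (and the reduction to the Appendix closed form of $a_i$ does check, since $1-q^{-2i}=q^{-i}(q^i-q^{-i})$, $1-q^{-2}=q^{-1}(q-q^{-1})$, and each $1+a^{-1}q^{D-2j+1}=a^{-1}(a+q^{D-2j+1})$).
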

  \begin{proof} Use Lemma \ref{lem:c7} and the Appendix data.
  \end{proof}
  
  
  \noindent Next, we consider  $p^i_{2,i-1}$ for $2 \leq i \leq D$.  By combinatorial counting or \cite[Lemma~4.1.7]{bcn},
\begin{align} \label{eq:pi2im1}
p^i_{2,i-1} = \frac{c_i(a_i+a_{i-1}-a_1)}{c_2} \qquad \qquad (2 \leq i \leq D).
\end{align}
Using the Appendix data, we obtain 
\begin{align*}
a_i + a_{i-1}-a_1 &= \frac{a(a+a^{-1})q^{D-2i+1}(q^{i-1}-q^{1-i})(aq^{i-D}-a^{-1}q^{D-i})}{(q-q^{-1})(a+q^{D-2i+3})(a+q^{D-2i-1})}
\\
&\quad  \times \frac{(q+q^{-1})(a+q^{D-1})(a+q^{-D-1})(aq^{2-D}-a^{-1}q^{D-2})}{(aq-a^{-1}q^{-1})(a-q^{1-D})(a+q^{D-3})}
\end{align*}
for $2 \leq i \leq D-1$ and
\begin{align*}
a_D &+ a_{D-1}-a_1\\
 &=  \frac{a(a^2-a^{-2})q^{1-D}(q^{D-1}-q^{1-D})(q+q^{-1})(a+q^{D-1})(aq^{2-D}-a^{-1}q^{D-2})}{(q-q^{-1})(a+q^{3-D})(aq-a^{-1}q^{-1})(a-q^{1-D})(a+q^{D-3})}.
\end{align*}
These formulas show that if  $a_1 \not=0$, then $a_i+a_{i-1}-a_1 \not=0$ for $2 \leq i \leq D$ and  $p^i_{2,i-1} \not=0$ for $2 \leq i \leq D$.
  \medskip
  
  \noindent Pick an integer $i$ $(2 \leq i \leq D)$ such that $p^i_{2,i-1} \not=0$, and pick $y,z \in X$ such that
  \begin{align*}
\partial(x,y) = i-1, \qquad \quad \partial(x,z) = i, \qquad \quad \partial (y,z)=2.
\end{align*}
We will consider 
\begin{align*}
 \bigl \vert \Gamma_{i-1} (x) \cap \Gamma(y) \cap \Gamma(z) \bigr \vert,
\qquad \quad  \bigl \vert \Gamma_{i} (x) \cap \Gamma(y) \cap \Gamma(z) \bigr \vert.
\end{align*}
By construction,
\begin{align}
c_2 = \bigl \vert \Gamma_{i-1} (x) \cap \Gamma(y) \cap \Gamma(z) \bigr \vert +
  \bigl \vert \Gamma_{i} (x) \cap \Gamma(y) \cap \Gamma(z) \bigr \vert. \label{eq:easystep}
  \end{align}

 \begin{lemma} \label{lem:c8}  Pick an integer $i$ $(2 \leq i \leq D)$ such that $p^i_{2,i-1} \not=0$, and pick $y,z \in X$ such that
\begin{align*}
\partial(x,y) = i-1, \qquad \quad \partial(x,z) = i, \qquad \quad \partial (y,z)=2.
\end{align*}
Then
\begin{align*}
 \bigl \vert \Gamma_{i-1} (x) \cap \Gamma(y) \cap \Gamma(z) \bigr \vert  \biggl(1+ \frac{\vartheta_{i-1}}{q+q^{-1}}\biggr) 
=  \bigl \vert \Gamma_{i} (x) \cap \Gamma(y) \cap \Gamma(z) \bigr \vert  \biggl(1+ \frac{\vartheta_{i}}{q+q^{-1}}\biggr).
\end{align*}
\end{lemma}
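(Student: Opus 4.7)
The plan is to mimic the derivation of Lemma \ref{lem:c5}, applying the same matrix identity but extracting the $(y,z)$-entry under the hypothesis $\partial(y,z)=2$ rather than $\partial(y,z)=1$. Specifically, I will start from the equation of Lemma \ref{lem:c2} and compute the $(y,z)$-entry of each side for vertices $y,z$ satisfying the hypotheses of Lemma \ref{lem:c8}.

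First I will identify the combinatorial meaning of the relevant matrix entries. Since each $E^*_j$ is diagonal with $(E^*_j)_{w,w}=1$ precisely when $\partial(x,w)=j$, a direct expansion using $y\in\Gamma_{i-1}(x)$ and $z\in\Gamma_i(x)$ gives
\begin{align*}
(E^*_{i-1} A E^*_{i-1} A E^*_i)_{y,z} &= \bigl\vert \Gamma_{i-1}(x)\cap \Gamma(y)\cap \Gamma(z)\bigr\vert, \\
(E^*_{i-1} A E^*_i A E^*_i)_{y,z} &= \bigl\vert \Gamma_i(x)\cap \Gamma(y)\cap \Gamma(z)\bigr\vert.
\end{align*}
Next I will observe that since $\partial(y,z)=2$, we have $A_{y,z}=0$, and hence $(E^*_{i-1} A E^*_i)_{y,z}=0$; this is the key simplification that distinguishes the present case from Lemma \ref{lem:c5}, where the corresponding entry equals $1$.

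Plugging these three entry evaluations into the $(y,z)$-component of Lemma \ref{lem:c2}, the $\varepsilon$-correction term on the right-hand side contributes $0$, and what remains is precisely the assertion of Lemma \ref{lem:c8}. No inhomogeneous term survives, which explains the clean proportionality between the two cardinalities. I do not foresee any real obstacle, since Lemma \ref{lem:c2} (itself a consequence of the centrality of $Z$) is already in hand; the proof reduces to bookkeeping on matrix entries.
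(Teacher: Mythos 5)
Your proof is correct and follows exactly the paper's approach: take the $(y,z)$-entry of the equation in Lemma \ref{lem:c2}, note that the two triple-product entries count the relevant sets, and that the inhomogeneous term vanishes because $\partial(y,z)=2$ makes $A_{y,z}=0$.
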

\begin{proof}  For the equation in Lemma \ref{lem:c2}, compute the $(y,z)$-entry of each side.
\end{proof}

\noindent The following result is essentially due to Caughman and Wolff \cite[Lemma~9.4]{CW}.
\begin{proposition} \label{lem:c11} {\rm (See \cite[Lemma~9.4]{CW}).} Pick an integer $i$ $(2 \leq i \leq D)$ such that $p^i_{2,i-1} \not=0$, and pick $y,z \in X$ such that
\begin{align}
\partial(x,y) = i-1, \qquad \quad \partial(x,z) = i, \qquad \quad \partial (y,z)=2.   \label{eq:fixyz}
\end{align}
Then
\begin{align*}
 \bigl \vert \Gamma_{i-1} (x) \cap \Gamma(y) \cap \Gamma(z) \bigr \vert &= c_2 \frac{q+q^{-1}+ \vartheta_{i}}{2(q+q^{-1})+\vartheta_{i-1}+\vartheta_i}, \\
  \bigl \vert \Gamma_{i} (x) \cap \Gamma(y) \cap \Gamma(z) \bigr \vert &= c_2 \frac{q+q^{-1}+ \vartheta_{i-1}}{2(q+q^{-1})+\vartheta_{i-1}+\vartheta_i}.
\end{align*}
\end{proposition}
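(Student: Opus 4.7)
The plan is to combine Lemma \ref{lem:c8} with the obvious partition of $\Gamma(y) \cap \Gamma(z)$ obtained from \eqref{eq:easystep}, and then solve the resulting $2 \times 2$ linear system. Fix $i$, $y$, $z$ as in \eqref{eq:fixyz} and abbreviate $m_{i-1} = |\Gamma_{i-1}(x) \cap \Gamma(y) \cap \Gamma(z)|$ and $m_i = |\Gamma_i(x) \cap \Gamma(y) \cap \Gamma(z)|$. The key observation justifying \eqref{eq:easystep} is that any common neighbor $w$ of $y$ and $z$ satisfies $\partial(x,w) \in \{i-2,i-1,i\}$ (from $\partial(x,y)=i-1$, $\partial(y,w)=1$) and $\partial(x,w) \in \{i-1,i,i+1\}$ (from $\partial(x,z)=i$, $\partial(z,w)=1$), whose intersection is $\{i-1,i\}$. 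Hence $m_{i-1}+m_i = c_2$.

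From Lemma \ref{lem:c8} we have
\begin{equation*}
m_{i-1}\Bigl(1+\frac{\vartheta_{i-1}}{q+q^{-1}}\Bigr) = m_i\Bigl(1+\frac{\vartheta_i}{q+q^{-1}}\Bigr),
\end{equation*}
which, after clearing the common factor $q+q^{-1}$, becomes
\begin{equation*}
m_{i-1}(q+q^{-1}+\vartheta_{i-1}) = m_i(q+q^{-1}+\vartheta_i).
\end{equation*}
Together with $m_{i-1}+m_i = c_2$, this is a nonsingular linear system in $(m_{i-1},m_i)$ provided the sum of the two coefficients, namely $2(q+q^{-1})+\vartheta_{i-1}+\vartheta_i$, is nonzero. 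Nonvanishing of this sum is exactly the content of Lemma \ref{lem:dnz}. Solving yields the stated formulas:
\begin{equation*}
m_{i-1} = c_2\,\frac{q+q^{-1}+\vartheta_i}{2(q+q^{-1})+\vartheta_{i-1}+\vartheta_i}, \qquad m_i = c_2\,\frac{q+q^{-1}+\vartheta_{i-1}}{2(q+q^{-1})+\vartheta_{i-1}+\vartheta_i}.
\end{equation*}

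There is essentially no obstacle here once Lemma \ref{lem:c8} is available; the hypothesis $p^i_{2,i-1} \ne 0$ only serves to guarantee that the configuration of $x,y,z$ exists so that the cardinalities $m_{i-1}, m_i$ are well-defined. The only mild subtlety is checking that $\Gamma(y)\cap\Gamma(z)$ decomposes cleanly into the two subconstituents indexed by $i-1$ and $i$, which is the triangle-inequality observation made in the first paragraph. As a consistency check, note that the formula is independent of the choice of $y,z$ (as it must be, since the right-hand side depends only on $i$, $c_2$ and the eigenvalue parameters), which is a combinatorial rigidity property imposed by the central element $Z$.
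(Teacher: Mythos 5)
Your proof is correct and follows exactly the same route as the paper: combine the obvious partition $m_{i-1}+m_i=c_2$ from \eqref{eq:easystep} with the one-line relation from Lemma \ref{lem:c8}, and invoke Lemma \ref{lem:dnz} to ensure the $2\times 2$ system is nonsingular. Your accompanying remarks (the triangle-inequality justification of the partition, and the role of the hypothesis $p^i_{2,i-1}\neq 0$) are also accurate.
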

\begin{proof} Combine \eqref{eq:easystep} with Lemma \ref{lem:c8},  and use Lemma \ref{lem:dnz}.
\end{proof}

\begin{remark}\rm
Referring to Proposition \ref{lem:c11}, the following quantities are independent of the choice of $x,y,z$:
\begin{align*}
 \bigl \vert \Gamma_{i-1} (x) \cap \Gamma(y) \cap \Gamma(z) \bigr \vert, \qquad \qquad
  \bigl \vert \Gamma_{i} (x) \cap \Gamma(y) \cap \Gamma(z) \bigr \vert.
  \end{align*}
\end{remark}

\noindent Next, we rephrase Proposition \ref{lem:c11} in various ways.

\begin{lemma} \label{lem:c12}
 Referring to Proposition \ref{lem:c11}, we have
\begin{align*}
 \bigl \vert \Gamma_{i-1} (x) \cap \Gamma(y) \cap \Gamma(z) \bigr \vert &= c_2 \frac{a_{i-1}-z_i}{a_i + a_{i-1}-a_1},\\
  \bigl \vert \Gamma_{i} (x) \cap \Gamma(y) \cap \Gamma(z) \bigr \vert &=  c_2 \frac{a_{i}-a_1+z_i}{a_i + a_{i-1}-a_1}.
\end{align*}
\end{lemma}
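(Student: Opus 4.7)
The plan is to derive Lemma \ref{lem:c12} directly from Proposition \ref{lem:c11}, treating the two formulas as algebraic rephrasings of what is already known. Concretely, for the first equation I would verify the identity
\begin{align*}
c_2\,\frac{q+q^{-1}+\vartheta_i}{2(q+q^{-1})+\vartheta_{i-1}+\vartheta_i} = c_2\,\frac{a_{i-1}-z_i}{a_i+a_{i-1}-a_1},
\end{align*}
and for the second equation I would avoid repeating the work by appealing to \eqref{eq:easystep}.

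First I would note that the denominator on the left is nonzero by Lemma \ref{lem:dnz}, and that the denominator on the right is nonzero because $a_i+a_{i-1}-a_1 = c_2\,p^{i}_{2,i-1}/c_i$ by \eqref{eq:pi2im1}, and $p^i_{2,i-1}\neq 0$ by hypothesis. So cross-multiplying is legal and reduces the claim to
\begin{align*}
(a_i+a_{i-1}-a_1)(q+q^{-1}+\vartheta_i) = (a_{i-1}-z_i)\bigl(2(q+q^{-1})+\vartheta_{i-1}+\vartheta_i\bigr).
\end{align*}
To remove the $a_1$ from the left, I would invoke Proposition \ref{lem:c6} in the form
\begin{align*}
a_1(q+q^{-1}+\vartheta_i) = z_i\bigl(2(q+q^{-1})+\vartheta_{i-1}+\vartheta_i\bigr) - \varepsilon(\vartheta_{i-1}-\vartheta_i).
\end{align*}
Substituting and simplifying collapses the target identity to the single relation
\begin{align*}
a_i(q+q^{-1}+\vartheta_i) - a_{i-1}(q+q^{-1}+\vartheta_{i-1}) = \varepsilon(\vartheta_i-\vartheta_{i-1}).
\end{align*}

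The main obstacle is verifying this last identity, but it is a routine check using the closed-form expression $\vartheta_i=aq^{2i-D}+a^{-1}q^{D-2i}$ together with the closed-form formula for $a_i$ recorded in the Appendix; the $\varepsilon$ contribution comes out exactly right because the $\vartheta_i$--part of $a_i$ depends on $i$ only through $\vartheta_i$ itself, while the $\varepsilon$--part satisfies a simple linear relation with the eigenvalues. Once the first equation is established, the second formula follows immediately from \eqref{eq:easystep}: subtracting $c_2(a_{i-1}-z_i)/(a_i+a_{i-1}-a_1)$ from $c_2$ and simplifying the numerator gives $c_2(a_i-a_1+z_i)/(a_i+a_{i-1}-a_1)$, as required.
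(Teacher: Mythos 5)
Your proof is correct, but it takes a genuinely different route from the paper's. The paper proves Lemma \ref{lem:c12} \emph{independently} of Proposition \ref{lem:c11}, by a combinatorial double-counting argument: the quantity $Q=\vert\Gamma_{i-1}(x)\cap\Gamma(y)\cap\Gamma(z)\vert$ is independent of the choice of $y,z$ (a fact already established), so it equals its average over all valid pairs; counting triples $(y,w,z)$ two ways gives $c_i(a_{i-1}-z_i)=p^i_{2,i-1}\,Q$, and then \eqref{eq:pi2im1} converts $p^i_{2,i-1}$ into the denominator $a_i+a_{i-1}-a_1$. You instead treat Lemma \ref{lem:c12} as an algebraic rephrasing of Proposition \ref{lem:c11}, cross-multiplying, eliminating $a_1$ via Proposition \ref{lem:c6}, and reducing everything to the single identity $a_i(q+q^{-1}+\vartheta_i)-a_{i-1}(q+q^{-1}+\vartheta_{i-1})=\varepsilon(\vartheta_i-\vartheta_{i-1})$. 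One small improvement you could make: that final identity is not something you need to verify from scratch with the closed-form formula for $a_i$ --- it is exactly \eqref{eq:aiApp} in the Appendix, after multiplying through by $q+q^{-1}$ and rearranging, so you can simply cite it. Your approach buys a shorter, purely algebraic derivation and makes explicit the logical dependence of Lemma \ref{lem:c12} on Proposition \ref{lem:c11}; the paper's approach is self-contained combinatorics and, unlike yours, gives an independent consistency check on Proposition \ref{lem:c11}. Your handling of the second equation via \eqref{eq:easystep} is fine.
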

\begin{proof} Consider the equations in the lemma statement. For each equation, the quantity
on the left is the same for all ordered pairs $y,z$ that satisfy  \eqref{eq:fixyz}.   Therefore, this quantity must be equal to its average value. This average value is routinely computed by combinatorial counting, and given on the right-hand side of the equation. To illustrate, we perform the combinatorial counting for the first equation. For this equation, let $Q$ denote the quantity on the left. Let
 $N$ denote the number of 3-tuples $(y,w,z)$ of vertices such that $y,z$ satisfy  \eqref{eq:fixyz} and $w \in  \Gamma_{i-1} (x) \cap \Gamma(y) \cap \Gamma(z)$.
We compute $N$ in two ways. On one hand, there are $k_i$ choices for $z$, and for each choice there are $c_i$ choices for $w$, and for each choice there are $a_{i-1}-z_i$ choices for $y$. Therefore
$N = k_i c_i (a_{i-1}-z_i)$. On the other hand, there are $k_i p^i_{2,i-1}$ choices for the pair $(y,z)$, and for each choice  there are $Q$  choices
for $w$. Therefore $N= k_i p^i_{2,i-1} Q$. By these comments, $c_i (a_{i-1}-z_i) = p^i_{2,i-1} Q$. Solve this equation for $Q$ and evaluate the result using
\eqref{eq:pi2im1} to find that $Q$ is equal to the right-hand side of the first equation.
\end{proof}

\begin{lemma} \label{lem:c12a}
 Referring to Proposition \ref{lem:c11}, we have
\begin{align*}
 \bigl \vert \Gamma_{i-1} (x) \cap \Gamma(y) \cap \Gamma(z) \bigr \vert &=c_2 \frac{q}{q+q^{-1}} \frac{a+q^{D-2i-1}}{a+q^{D-2i+1}},\\
  \bigl \vert \Gamma_{i} (x) \cap \Gamma(y) \cap \Gamma(z) \bigr \vert &= c_2   \frac{q^{-1}}{q+q^{-1}} \frac{a+q^{D-2i+3}}{a+q^{D-2i+1}}.
\end{align*}
\end{lemma}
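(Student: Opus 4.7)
The plan is to observe that Lemma~\ref{lem:c12a} is a purely algebraic reformulation of Proposition~\ref{lem:c11}. The left-hand sides are given in Proposition~\ref{lem:c11} as
\begin{align*}
c_2 \frac{q+q^{-1}+ \vartheta_{i}}{2(q+q^{-1})+\vartheta_{i-1}+\vartheta_i}, \qquad c_2 \frac{q+q^{-1}+ \vartheta_{i-1}}{2(q+q^{-1})+\vartheta_{i-1}+\vartheta_i},
\end{align*}
so I only need to factor the numerators and the common denominator as products of linear factors in $a$, and cancel.

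First I would factor the common denominator using Lemma~\ref{lem:dnz}, which gives
\begin{align*}
2(q+q^{-1})+\vartheta_{i-1}+\vartheta_i = a^{-1}q^{2i-D-1}(q+q^{-1})(a+q^{D-2i+1})^2.
\end{align*}
Next I would establish the two factorizations
\begin{align*}
q+q^{-1}+\vartheta_i &= a^{-1}q^{2i-D}(a+q^{D-2i+1})(a+q^{D-2i-1}),\\
q+q^{-1}+\vartheta_{i-1} &= a^{-1}q^{2i-D-2}(a+q^{D-2i+3})(a+q^{D-2i+1}).
\end{align*}
Each is a direct check: expand the product on the right using $\vartheta_j=aq^{2j-D}+a^{-1}q^{D-2j}$, and verify that the middle terms produce $q+q^{-1}$ while the outer terms reproduce $\vartheta_i$ (respectively $\vartheta_{i-1}$). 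These are the same sort of identities collected in the Appendix, so one could alternatively just cite them.

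Finally I would substitute both factorizations into the quotients from Proposition~\ref{lem:c11}. The factor $(a+q^{D-2i+1})$ cancels once between numerator and denominator in each case, the powers of $a$ cancel, and the powers of $q$ collapse to $q/(q+q^{-1})$ in the first formula and $q^{-1}/(q+q^{-1})$ in the second, yielding exactly the expressions claimed in Lemma~\ref{lem:c12a}. There is no real obstacle here; the only thing to be careful about is tracking signs and exponents of $q$ in the three-line simplification, and confirming (via the nonvanishing inequalities already used in Lemma~\ref{lem:dnz}) that $a+q^{D-2i+1}\neq 0$ so that the cancellation is legitimate.
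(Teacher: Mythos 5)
Your proof is correct and matches the paper's approach: the paper likewise proves Lemma~\ref{lem:c12a} by evaluating Proposition~\ref{lem:c11} via~\eqref{eq:vtheta} and Lemma~\ref{lem:dnz}. The two factorizations you state for $q+q^{-1}+\vartheta_i$ and $q+q^{-1}+\vartheta_{i-1}$ check out, and the cancellation of $(a+q^{D-2i+1})$ is justified by~\eqref{eq:ineq2}, so the argument is sound and amounts to a slightly more explicit version of the paper's one-line proof.
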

\begin{proof} Evaluate Proposition \ref{lem:c11} using \eqref{eq:vtheta} and Lemma   \ref{lem:dnz}.
\end{proof}

\noindent   As an aside, we mention a generalization of \cite[Lemma~9.6]{CW}(i).
\begin{lemma}\rm Assume that $a_1 \not=0$. Then for $2 \leq i \leq D-1$,
\begin{align*}
(q+q^{-1})^2 \,\frac{a_{i-1}-z_i}{a_i+a_{i-1}-a_1} \,\frac{a_{i+1}-a_1+z_{i+1}}{a_{i+1}+a_i-a_1}=1.
\end{align*}
\end{lemma}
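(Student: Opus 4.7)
The plan is to obtain the identity by combining the two different formulas for the quantities $\bigl\vert\Gamma_{i-1}(x)\cap\Gamma(y)\cap\Gamma(z)\bigr\vert$ and $\bigl\vert\Gamma_{i}(x)\cap\Gamma(y)\cap\Gamma(z)\bigr\vert$ given in Lemma \ref{lem:c12} and Lemma \ref{lem:c12a}, and then telescoping.

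First, the hypothesis $a_1\neq 0$ guarantees (by the comment just after \eqref{eq:pi2im1}) that $p^i_{2,i-1}\neq 0$ for all $2\le i\le D$. In particular the triples $(x,y,z)$ used in Proposition \ref{lem:c11} exist for $2\le i\le D$, so both Lemma \ref{lem:c12} and Lemma \ref{lem:c12a} apply for these values. Comparing the two lemmas termwise, I obtain the identities
\begin{align*}
\frac{a_{i-1}-z_i}{a_i+a_{i-1}-a_1} &= \frac{q}{q+q^{-1}}\,\frac{a+q^{D-2i-1}}{a+q^{D-2i+1}}, \\
\frac{a_i-a_1+z_i}{a_i+a_{i-1}-a_1} &= \frac{q^{-1}}{q+q^{-1}}\,\frac{a+q^{D-2i+3}}{a+q^{D-2i+1}},
\end{align*}
valid for $2\le i\le D$.

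The second of these identities, with $i$ replaced by $i+1$ (allowable since $2\le i\le D-1$), becomes
\[
\frac{a_{i+1}-a_1+z_{i+1}}{a_{i+1}+a_i-a_1} = \frac{q^{-1}}{q+q^{-1}}\,\frac{a+q^{D-2i+1}}{a+q^{D-2i-1}}.
\]
Multiplying this with the first identity above, the factors $(a+q^{D-2i-1})$ and $(a+q^{D-2i+1})$ cancel between numerator and denominator, producing
\[
\frac{a_{i-1}-z_i}{a_i+a_{i-1}-a_1}\cdot\frac{a_{i+1}-a_1+z_{i+1}}{a_{i+1}+a_i-a_1} = \frac{q\cdot q^{-1}}{(q+q^{-1})^{2}} = \frac{1}{(q+q^{-1})^{2}}.
\]
Multiplying both sides by $(q+q^{-1})^2$ yields the claimed identity.

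The only subtlety is confirming that the relevant quantities in the denominators are nonzero so that the fractions make sense: by Lemma \ref{lem:dnz} we have $2(q+q^{-1})+\vartheta_{i-1}+\vartheta_i\neq 0$, and the comment after \eqref{eq:pi2im1} gives $a_i+a_{i-1}-a_1\neq 0$ (for $2\le i\le D$) under the assumption $a_1\neq 0$. So the computation is essentially a direct cancellation once the two lemmas are aligned; there is no serious obstacle.
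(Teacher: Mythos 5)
Your proof is correct and takes essentially the same approach as the paper's, whose proof is simply the one-line instruction ``Use Lemmas \ref{lem:c12}, \ref{lem:c12a}.'' You have filled in exactly what that citation entails: equate the two expressions for $\vert\Gamma_{i-1}(x)\cap\Gamma(y)\cap\Gamma(z)\vert$ and for $\vert\Gamma_{i}(x)\cap\Gamma(y)\cap\Gamma(z)\vert$ (cancelling $c_2$), shift the index by one in the second identity, and multiply so the $(a+q^{D-2i\pm1})$ factors cancel.
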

\begin{proof} Use Lemmas \ref{lem:c12}, \ref{lem:c12a}.
\end{proof}

\noindent In Lemma \ref{lem:c2} we used $Z$ to obtain a matrix equation. Next, we use $Z$ to obtain another matrix equation.

\begin{lemma} \label{lem:c14} For $0 \leq i \leq D$ there is a linear equation with the following terms and coefficients:
 \begin{align*}
 0 = \qquad                       
 \hbox{
\begin{tabular}[t]{c|c}
{\rm term}& {\rm coefficient} 
 \\
 \hline
$E^*_i {\sf A} E^*_{i-1} {\sf A} E^*_i $ & $2\vartheta_i - \beta \vartheta_{i-1}$ \\
$E^*_i {\sf A} E^*_{i} {\sf A} E^*_i $ & $(q-q^{-1})(q^2-q^{-2})$ \\
$E^*_i {\sf A} E^*_{i+1} {\sf A} E^*_i $ & $2\vartheta_i - \beta \vartheta_{i+1}$ \\
$E^*_i {\sf A} E^*_i $ & $-(q-q^{-1})(q^2-q^{-2})(q+q^{-1} + \vartheta_i)$ \\
$E^*_i $ & $(q^2-q^{-2})^2\vartheta_i$ 
   \end{tabular}
   }
   \end{align*}

\noindent In the above table, we interpret $E^*_{-1}=0$ and $E^*_{D+1}=0$.
\end{lemma}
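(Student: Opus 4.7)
The plan is to sandwich the first universal Askey--Wilson relation from Proposition \ref{prop:AWZ} between $E^*_i$ on the left and $E^*_i$ on the right, and then simplify the result using the eigenvalue formula ${\sf B}= \sum_{j=0}^D \vartheta_j E^*_j$, the triple-product vanishing from Lemma \ref{lem:TPR}(i), and the identities for $Z$ supplied by Proposition \ref{lem:Z1}.

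Concretely, I would first rewrite each term of
\begin{align*}
{\sf A}^2 {\sf B} - \beta {\sf A}{\sf B}{\sf A}  + {\sf B} {\sf A}^2 + (q^2-q^{-2})^2 {\sf B} = (q^2-q^{-2})^2 Z - (q-q^{-1})(q^2-q^{-2}) Z{\sf A}
\end{align*}
multiplied on each side by $E^*_i$. Using ${\sf B} E^*_j = \vartheta_j E^*_j = E^*_j {\sf B}$, and inserting $I=\sum_j E^*_j$ between the two copies of $\sf A$, the left-hand side collapses to
\begin{align*}
\sum_{j}(2\vartheta_i - \beta \vartheta_j)\, E^*_i {\sf A} E^*_j {\sf A} E^*_i + (q^2-q^{-2})^2 \vartheta_i E^*_i,
\end{align*}
where by Lemma \ref{lem:TPR}(i) only $j\in \{i-1,i,i+1\}$ survive (with the convention $E^*_{-1}=E^*_{D+1}=0$). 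For the right-hand side I would use Proposition \ref{lem:Z1}(iii) together with the centrality of $Z$ to compute
\begin{align*}
E^*_i Z E^*_i &= \Bigl(1+\tfrac{\vartheta_i}{q+q^{-1}}\Bigr)\, E^*_i {\sf A} E^*_i, \\
E^*_i Z {\sf A} E^*_i &= Z E^*_i {\sf A} E^*_i = \Bigl(1+\tfrac{\vartheta_i}{q+q^{-1}}\Bigr)\, E^*_i {\sf A} E^*_i {\sf A} E^*_i.
\end{align*}

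Then I would move everything to one side and collect the coefficient of each of the five listed terms. The coefficients of $E^*_i {\sf A} E^*_{i\pm 1} {\sf A} E^*_i$ come directly from the left-hand sum and match $2\vartheta_i - \beta \vartheta_{i\pm 1}$ as desired. The coefficients of $E^*_i {\sf A} E^*_i$ and of $E^*_i$ match after noting $(q^2-q^{-2})(1+\vartheta_i/(q+q^{-1}))=(q^2-q^{-2})+(q-q^{-1})\vartheta_i$ and $(q-q^{-1})(q+q^{-1})=q^2-q^{-2}$.

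The only slightly delicate point, and the main thing to watch, is the coefficient of $E^*_i {\sf A} E^*_i {\sf A} E^*_i$, where the contribution from the $j=i$ term of the left-hand sum combines with the contribution from the $Z{\sf A}$ term on the right. That combined coefficient is
\begin{align*}
\vartheta_i(2-\beta) + (q-q^{-1})(q^2-q^{-2}) + (q-q^{-1})^2 \vartheta_i,
\end{align*}
and the $\vartheta_i$ terms cancel precisely because $2-\beta = -(q-q^{-1})^2$ (recall $\beta = q^2+q^{-2}$ by Lemma \ref{lem:gam}), leaving the tabulated value $(q-q^{-1})(q^2-q^{-2})$. Once this cancellation is verified, the five coefficients agree with the table and the identity is established.
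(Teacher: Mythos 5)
Your proposal is correct and takes essentially the same route as the paper: both sandwich the first universal Askey--Wilson relation from Proposition \ref{prop:AWZ} between $E^*_i$ on each side, expand ${\sf B}=\sum_j\vartheta_j E^*_j$, insert $I=\sum_j E^*_j$ between the two copies of ${\sf A}$ using Lemma \ref{lem:TPR}(i), and evaluate the $Z$-terms via Proposition \ref{lem:Z1}(iii) and centrality. Your explicit check that the coefficient of $E^*_i{\sf A}E^*_i{\sf A}E^*_i$ reduces to $(q-q^{-1})(q^2-q^{-2})$ using $2-\beta=-(q-q^{-1})^2$ is the only nontrivial cancellation, and it is handled correctly.
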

\begin{proof}
For the first equation in Proposition  \ref{prop:AWZ}, multiply each term on the left and right by $E^*_i$.
Evaluate the result using the following observations:
\begin{align*}
E^*_i {\sf A}^2 {\sf B} E^*_i &= \vartheta_i E^*_i {\sf A}^2 E^*_i = E^*_i {\sf B} {\sf A}^2 E^*_i,
\\
E^*_i {\sf A}^2 E^*_i &= E^*_i {\sf A} I {\sf A} E^*_i= \sum_{j=0}^D E^*_i {\sf A} E^*_j {\sf A} E^*_i \\
 &=  E^*_i {\sf A} E^*_{i-1} {\sf A} E^*_i
+E^*_i {\sf A} E^*_i {\sf A} E^*_i+E^*_i {\sf A} E^*_{i+1} {\sf A} E^*_i, \\
E^*_i {\sf A}{\sf B} {\sf A} E^*_i &= \sum_{j=0}^D \vartheta_j E^*_i {\sf A} E^*_j {\sf A} E^*_i \\
&= \vartheta_{i-1} E^*_i {\sf A} E^*_{i-1} {\sf A} E^*_i
+\vartheta_i E^*_i {\sf A} E^*_i {\sf A} E^*_i+\vartheta_{i+1}E^*_i {\sf A} E^*_{i+1} {\sf A} E^*_i,
\end{align*}
\begin{align*}
 E^*_i {\sf B} E^*_i &= \vartheta_i E^*_i, \\
 E^*_i Z E^*_i &= E^*_i Z = E^*_i {\sf A} E^*_i \biggl( 1 + \frac{\vartheta_i}{q+q^{-1}}\biggr), \\
 E^*_i Z {\sf A} E^*_i &= E^*_i {\sf A} E^*_i {\sf A} E^*_i \biggl( 1 + \frac{\vartheta_i}{q+q^{-1}}\biggr).
\end{align*}
\end{proof}

\noindent Next, we restate Lemma \ref{lem:c14} using $A$ instead of $\sf A$.
  
  \begin{lemma} \label{lem:c15} For $0 \leq i \leq D$ there is a linear equation with the following terms and coefficients:
 \begin{align*}
 0 = \qquad 
 \hbox{
\begin{tabular}[t]{c|c}
{\rm term}& {\rm coefficient} 
 \\
 \hline
$E^*_i A E^*_{i-1} A E^*_i $ & $\frac{2\vartheta_i - \beta \vartheta_{i-1}}{(q-q^{-1})(q^2-q^{-2})}$ \\
$E^*_i A E^*_{i} A E^*_i $ & $1$ \\
$E^*_i A E^*_{i+1} A E^*_i $ & $\frac{2\vartheta_i - \beta \vartheta_{i+1}}{(q-q^{-1})(q^2-q^{-2})}$ \\
$E^*_i A E^*_i $ & $-2\varepsilon - \alpha (q+q^{-1} + \vartheta_i)$ \\
$E^*_i $ & $\varepsilon^2 + \alpha \varepsilon (q+q^{-1}+ \vartheta_i) + (q+q^{-1})\alpha^2 \vartheta_i$ 
   \end{tabular}}
   \end{align*}
\noindent In the above table, we interpret $E^*_{-1}=0$ and $E^*_{D+1}=0$.
\end{lemma}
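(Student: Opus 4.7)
The plan is to derive Lemma \ref{lem:c15} directly from Lemma \ref{lem:c14} by the substitution ${\sf A}=(A-\varepsilon I)/\alpha$, followed by clearing the overall factor $(q-q^{-1})(q^2-q^{-2})/\alpha^2$.

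First, I would expand each of the five terms in Lemma \ref{lem:c14} in terms of $A$. For the triple products with $j\neq i$, the orthogonality $E^*_iE^*_j=0$ causes every $\varepsilon I$ contribution to drop out, leaving
\begin{align*}
E^*_i {\sf A} E^*_{j} {\sf A} E^*_i = \frac{1}{\alpha^2}\, E^*_i A E^*_{j} A E^*_i \qquad (j=i\pm 1).
\end{align*}
For $j=i$, the cross terms survive and give
\begin{align*}
E^*_i {\sf A} E^*_{i} {\sf A} E^*_i = \frac{1}{\alpha^2}\bigl(E^*_i A E^*_i A E^*_i - 2\varepsilon E^*_i A E^*_i + \varepsilon^2 E^*_i\bigr),
\end{align*}
while $E^*_i{\sf A}E^*_i=\alpha^{-1}(E^*_i AE^*_i-\varepsilon E^*_i)$.

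Next I would substitute these expansions into the equation of Lemma \ref{lem:c14}, multiply both sides by $\alpha^{2}/((q-q^{-1})(q^2-q^{-2}))$, and collect the terms $E^*_iAE^*_{i\pm 1}AE^*_i$, $E^*_iAE^*_iAE^*_i$, $E^*_iAE^*_i$, $E^*_i$ separately. The coefficients of $E^*_iAE^*_{i\pm 1}AE^*_i$ become $(2\vartheta_i-\beta\vartheta_{i\pm 1})/((q-q^{-1})(q^2-q^{-2}))$ and that of $E^*_iAE^*_iAE^*_i$ becomes $1$, matching the first three rows of the table.

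The remaining step is to verify the coefficients of $E^*_iAE^*_i$ and $E^*_i$. The $E^*_iAE^*_i$ coefficient aggregates a $-2\varepsilon$ coming from the $(q-q^{-1})(q^2-q^{-2})\,E^*_i{\sf A}E^*_i{\sf A}E^*_i$ term and a $-\alpha(q+q^{-1}+\vartheta_i)$ coming from the $E^*_i{\sf A}E^*_i$ row, giving $-2\varepsilon-\alpha(q+q^{-1}+\vartheta_i)$. The $E^*_i$ coefficient picks up $\varepsilon^2$ from the $j=i$ triple product, $\alpha\varepsilon(q+q^{-1}+\vartheta_i)$ from the $E^*_i{\sf A}E^*_i$ row, and, after using the simplification $(q^2-q^{-2})^2/((q-q^{-1})(q^2-q^{-2}))=q+q^{-1}$ on the last row of Lemma \ref{lem:c14}, a contribution $(q+q^{-1})\alpha^2\vartheta_i$. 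Summing these gives exactly $\varepsilon^2+\alpha\varepsilon(q+q^{-1}+\vartheta_i)+(q+q^{-1})\alpha^2\vartheta_i$, which matches the last row of the table.

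No genuine obstacle is anticipated; the proof is a bookkeeping exercise in which the only thing to watch is the $j=i$ versus $j\neq i$ distinction for the triple products, since this is where the $\varepsilon I$ contributions split between the $E^*_iAE^*_i$ and $E^*_i$ rows of the table.
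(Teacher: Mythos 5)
Your proof is correct and matches the paper's own one-line argument (``Eliminate $\sf A$ in Lemma \ref{lem:c14} using \eqref{eq:AB}, and simplify the result''), with the bookkeeping carried out in detail; all five coefficients are verified as you state.
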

\begin{proof} Eliminate $\sf A$ in Lemma \ref{lem:c14} using \eqref{eq:AB}, and simplify the result.
\end{proof}

\noindent For $y,z \in X$ we will examine the $(y,z)$-entry in the linear equation from Lemma \ref{lem:c15}.  We will assume $y,z \in \Gamma_i(x)$ and $\partial(y,z) \leq 2$;
otherwise the $(y,z)$-entry of each term is zero. 

\begin{lemma} \label{lem:cibi} For $0 \leq i \leq D$,
\begin{align*}
0 &= c_i \, \frac{2 \vartheta_i - \beta \vartheta_{i-1}}{(q-q^{-1})(q^2-q^{-2})} + a_i + b_i \, \frac{2\vartheta_i - \beta \vartheta_{i+1}}{(q-q^{-1})(q^2-q^{-2})} \\
& \quad + \varepsilon^2 + \alpha \varepsilon (q+ q^{-1} + \vartheta_i) + (q+ q^{-1}) \alpha^2 \vartheta_i.
\end{align*} 
\end{lemma}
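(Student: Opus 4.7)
The plan is to take the $(y,y)$-entry, for a vertex $y \in \Gamma_i(x)$, of the linear matrix equation recorded in Lemma \ref{lem:c15}. Since $k_i > 0$ for $0 \leq i \leq D$, such a vertex $y$ exists. Because the ambient equation already reads $0 = \cdots$, the corresponding scalar equation at $(y,y)$ is automatic; the work reduces to identifying the $(y,y)$-entry of each of the five matrix terms.

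First I would record the entrywise identifications. For $y \in \Gamma_i(x)$ one has $(E^*_i)_{y,y} = 1$, while $(E^*_j)_{y,y} = 0$ for $j \ne i$. Therefore
\[
(E^*_i A E^*_j A E^*_i)_{y,y} \;=\; \bigl|\{z \in X : z \sim y,\ \partial(x,z)=j\}\bigr|
\]
for any $j$, which by definition of the intersection numbers equals $c_i$, $a_i$, $b_i$ when $j = i-1, i, i+1$ respectively (and is vacuous when $j = -1$ or $j = D+1$, matching the conventions $E^*_{-1}=0$, $E^*_{D+1}=0$, $c_0=0$, $b_D=0$). Next, $(E^*_i A E^*_i)_{y,y} = A_{y,y} = 0$ since the adjacency matrix has zero diagonal, so the fourth term contributes nothing. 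Finally $(E^*_i)_{y,y} = 1$.

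Substituting these values into the table of Lemma \ref{lem:c15} gives
\[
0 \;=\; c_i\,\frac{2\vartheta_i-\beta\vartheta_{i-1}}{(q-q^{-1})(q^2-q^{-2})} + a_i + b_i\,\frac{2\vartheta_i-\beta\vartheta_{i+1}}{(q-q^{-1})(q^2-q^{-2})} + \varepsilon^2 + \alpha\varepsilon(q+q^{-1}+\vartheta_i) + (q+q^{-1})\alpha^2\vartheta_i,
\]
which is exactly the claimed identity. There is no real obstacle; the only point that requires a brief word is the vanishing of the $E^*_i A E^*_i$ contribution, which is the reason the coefficient $-2\varepsilon - \alpha(q+q^{-1}+\vartheta_i)$ from Lemma \ref{lem:c15} does not appear in the statement of Lemma \ref{lem:cibi}.
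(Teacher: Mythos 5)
Your proof is correct and uses exactly the paper's approach: pick $y \in \Gamma_i(x)$ and take the $(y,y)$-entry of the matrix identity in Lemma \ref{lem:c15}. The paper's proof is a one-line instruction to do precisely this; your version simply supplies the bookkeeping, including the correct observation that $(E^*_i A E^*_i)_{y,y}=A_{y,y}=0$ kills the fourth term while $(E^*_i A E^*_i A E^*_i)_{y,y}=a_i$ supplies the $a_i$ in the statement.
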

\begin{proof} Pick $y \in \Gamma_i(x)$. Compute the $(y,y)$-entry of each term in the linear equation from Lemma \ref{lem:c15}.
\end{proof}
\begin{remark}\rm The equation in Lemma \ref{lem:cibi} can be checked directly using  the Appendix data.
\end{remark}

\noindent The next lemma is essentially due to Curtin and Nomura \cite[Lemma~3.5]{CNhom}. We mentioned in the Appendix that $a_1\not=0$ implies
$a_i \not=0$ for $1 \leq i \leq D$.
\begin{lemma} \label{lem:CN} {\rm (See \cite[Lemma~3.5]{CNhom}).} 
Assume  that $a_1 \not=0$.
Pick an integer $i$ $(1 \leq i \leq D)$, and pick $y,z \in X$ such that
\begin{align} \label{eq:yzchoice}
\partial(x,y) = i, \qquad \quad \partial(x,z) = i, \qquad \quad \partial (y,z)=1.
\end{align}
If $1 \leq i \leq D-1$ then
\begin{align*}
 \bigl \vert \Gamma_{i-1} (x) \cap \Gamma(y) \cap \Gamma(z) \bigr \vert &= \frac{c_i(a_1-z_i)}{a_i},\\
  \bigl \vert \Gamma_i (x) \cap \Gamma(y) \cap \Gamma(z) \bigr \vert &=a_1 - \frac{c_i(a_1-z_i)}{a_i} -     \frac{b_iz_{i+1}}{a_i},\\
  \bigl \vert \Gamma_{i+1} (x) \cap \Gamma(y) \cap \Gamma(z) \bigr \vert &= \frac{b_iz_{i+1}}{a_i}.
  \end{align*}
  If $i=D$ then
  \begin{align*}
 \bigl \vert \Gamma_{D-1} (x) \cap \Gamma(y) \cap \Gamma(z) \bigr \vert &= \frac{c_D(a_1-z_D)}{a_D},\\
  \bigl \vert \Gamma_D(x) \cap \Gamma(y) \cap \Gamma(z) \bigr \vert &=a_1 - \frac{c_D(a_1-z_D)}{a_D}.
  \end{align*}
\end{lemma}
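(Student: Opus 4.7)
Write $n_j = |\Gamma_j(x) \cap \Gamma(y) \cap \Gamma(z)|$ for $j \in \{i-1, i, i+1\}$. The plan is to compute these three intersection counts by combinatorial double counting, patterned on the proof of Lemma~\ref{lem:c12}, with Lemma~\ref{lem:c5} furnishing the key inner count at each step.

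Since $y, z \in \Gamma_i(x)$ and $y \sim z$, any common neighbor of $y, z$ lies in $\Gamma_{i-1}(x) \cup \Gamma_i(x) \cup \Gamma_{i+1}(x)$ by the triangle inequality, so $n_{i-1} + n_i + n_{i+1} = |\Gamma(y) \cap \Gamma(z)| = a_1$. When $i = D$ we have $n_{D+1} = 0$ and thus $n_D = a_1 - n_{D-1}$, reducing the $i = D$ case to finding $n_{D-1}$ alone.

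To compute $n_{i-1}$, I would count the number $N$ of triples $(y, z, w)$ with $y, z \in \Gamma_i(x)$, $y \sim z$, and $w \in \Gamma_{i-1}(x) \cap \Gamma(y) \cap \Gamma(z)$ in two ways. First, enumerating ordered pairs $(y,z)$ first yields $\sum_{(y,z)} n_{i-1}(y,z)$, with $k_i a_i$ pairs contributing. Second, choose $w \in \Gamma_{i-1}(x)$ ($k_{i-1}$ ways), then $y \in \Gamma_i(x) \cap \Gamma(w)$ ($b_{i-1}$ ways), then $z \in \Gamma_i(x) \cap \Gamma(w) \cap \Gamma(y)$; for this last step the pair $(w, y)$ satisfies the hypotheses of Lemma~\ref{lem:c5}, so the common neighbors of $w, y$ in $\Gamma_{i-1}(x)$ number $z_i$, and the triangle inequality forces those in $\Gamma_i(x)$ to number $a_1 - z_i$. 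Using the standard identity $k_{i-1} b_{i-1} = k_i c_i$, this yields $N = k_i c_i (a_1 - z_i)$, matching the asserted value $c_i(a_1 - z_i)/a_i$ on average. The computation for $n_{i+1}$ is entirely parallel: choosing $w \in \Gamma_{i+1}(x)$ ($k_{i+1}$ ways), then $y \in \Gamma_i(x) \cap \Gamma(w)$ ($c_{i+1}$ ways), then $z \in \Gamma_i(x) \cap \Gamma(w) \cap \Gamma(y)$ (which equals $z_{i+1}$ by Lemma~\ref{lem:c5} applied at level $i+1$, with the roles of $y$ and $z$ reversed), and using $k_{i+1} c_{i+1} = k_i b_i$, yields the averaged value $b_i z_{i+1}/a_i$. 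Then $n_i = a_1 - n_{i-1} - n_{i+1}$.

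The main obstacle is that these double counts yield only the \emph{averages} of the $n_j$ over admissible pairs, whereas the lemma claims the formulas per-pair. To upgrade to per-pair equalities, I would take the $(y, z)$-entry of the matrix identity in Lemma~\ref{lem:c15} with $y, z \in \Gamma_i(x)$ and $\partial(y,z) = 1$: this produces a per-pair linear equation $\mu_1 n_{i-1} + n_i + \mu_2 n_{i+1} = \mu_3$ whose coefficients depend only on $i$. Combined with the per-pair identity $n_{i-1} + n_i + n_{i+1} = a_1$, this pins down $(n_{i-1}, n_i, n_{i+1})$ to a one-parameter affine family for each admissible pair. Locking the parameter per-pair is the delicate point; I expect to obtain the needed second per-pair relation either from the centrality of $Z$ applied to a different matrix product (exploiting the structure of Lemma~\ref{lem:c1} or an analog one level higher) or from the thinness of the relevant irreducible $T$-modules, with non-degeneracy underwritten by $2(q+q^{-1}) + \vartheta_{i-1} + \vartheta_i \ne 0$ from Lemma~\ref{lem:dnz} together with its level-$(i+1)$ analog.
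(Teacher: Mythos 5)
Your combinatorial double-counting for the average values of the three intersection numbers is correct and matches what the paper does; the use of Lemma \ref{lem:c5} for the inner counts and the identities $k_{i-1}b_{i-1}=k_ic_i$, $k_{i+1}c_{i+1}=k_ib_i$ is exactly right. However, there is a genuine gap in your route to the per-pair statement, and you have honestly flagged it yourself. The system you set up is the sum identity $n_{i-1}+n_i+n_{i+1}=a_1$ together with one further linear relation from the $(y,z)$-entry of Lemma \ref{lem:c15}; that is two equations in three unknowns, and as you note this pins $(n_{i-1},n_i,n_{i+1})$ down only to a one-parameter affine family per pair. Your suggested sources for a third independent per-pair relation (another consequence of centrality of $Z$, or thinness of the $T$-modules) are not carried out, and it is not clear they would produce a relation independent of the two you already have. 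This is not a cosmetic omission: without the constancy, the double-counting yields only averages, which is a strictly weaker conclusion than the lemma asserts.

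The paper sidesteps this difficulty entirely. Its proof \emph{cites} \cite[Lemma~3.5]{CNhom}, where Curtin and Nomura establish, by methods internal to their spin-model framework, that each of the three quantities is independent of the ordered pair $(y,z)$ subject to \eqref{eq:yzchoice}. Given that constancy, each quantity must equal its average, and the averages are computed by the counting you already did. The paper also takes care to check that the hypothesis of the cited lemma ($t_1\notin\{t_0,-t_0\}$) is satisfied, translating it to $\tau_1\notin\{\tau_0,-\tau_0\}$ via $\tau_1/\tau_0=t_0/t_1$ and verifying it from \eqref{eq:tFORM} and \eqref{eq:ineq2}; your proposal omits this verification. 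In short: your averaging computation is the easy half and you have it right, but the hard half — per-pair constancy — is the substance of the cited result, and your sketch toward reproving it from the matrix relations of Section 15 does not close.
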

\begin{proof} Consider the equations in the lemma statement. It is shown in \cite[Lemma~3.5]{CNhom} that for each equation, the quantity
on the left is the same for all ordered pairs $y,z$ that satisfy \eqref{eq:yzchoice}.  Therefore, this quantity must be equal to its average value. This average value is routinely computed by combinatorial counting, and given on the right-hand side of the equation. We remark that the condition $t_1 \not\in \lbrace t_0, -t_0\rbrace$ given in \cite[Lemma~3.5]{CNhom} is satisfied.
The reason is that
$\tau_1 \not\in \lbrace \tau_0, -\tau_0\rbrace$
by  \eqref{eq:tFORM} and  \eqref{eq:ineq2}, and also $\tau_1/\tau_0 = t_0/t_1$ because  ${\sf W}= \sum_{i=0}^D t_i A_i$ by \cite[Section~3]{CNhom}.
\end{proof}

  \begin{lemma} \label{lem:c9} Assume that $a_1\not=0$.  For $1 \leq i \leq D-1$,
\begin{align*}
2\varepsilon +\alpha (q+q^{-1} + \vartheta_i) &=
 \frac{c_i(a_1-z_i)}{a_i}\, \frac{2\vartheta_i - \beta \vartheta_{i-1}}{(q-q^{-1})(q^2-q^{-2})} \\
 &\quad+ a_1 - \frac{c_i(a_1-z_i)}{a_i} -     \frac{b_iz_{i+1}}{a_i} \\
&\quad+ \frac{b_iz_{i+1}}{a_i}\, \frac{2\vartheta_i - \beta \vartheta_{i+1}}{(q-q^{-1})(q^2-q^{-2})}.
\end{align*}
\noindent Moreover,
\begin{align*}
2\varepsilon +\alpha (q+q^{-1} + \vartheta_D) &=
 \frac{c_D(a_1-z_D)}{a_D}\, \frac{2\vartheta_D - \beta \vartheta_{D-1}}{(q-q^{-1})(q^2-q^{-2})} \\
 &\quad+ a_1 - \frac{c_D(a_1-z_D)}{a_D}.
\end{align*}
\end{lemma}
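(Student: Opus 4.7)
The plan is to compute the $(y,z)$-entry of the matrix identity supplied by Lemma~\ref{lem:c15} for a carefully chosen pair of vertices $y,z$, and then substitute the combinatorial values from Lemma~\ref{lem:CN}.

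Fix an integer $i$ with $1 \leq i \leq D$ and pick $y,z \in X$ with $\partial(x,y) = \partial(x,z) = i$ and $\partial(y,z) = 1$. (Such a pair exists precisely because $a_i \not= 0$, which follows from $a_1 \not= 0$ by the Appendix data.) Apply Lemma~\ref{lem:c15} and read off the $(y,z)$-entry of each term on the right-hand side of the displayed linear equation. Since $E^*_i$ is diagonal and $y \not= z$, the contribution from $E^*_i$ vanishes. For each $j \in \{i-1, i, i+1\}$ (with the convention $E^*_{-1} = E^*_{D+1} = 0$),
\[
\bigl(E^*_i A E^*_j A E^*_i\bigr)_{y,z} = \bigl|\Gamma_j(x) \cap \Gamma(y) \cap \Gamma(z)\bigr|,
\]
while $(E^*_i A E^*_i)_{y,z} = A_{y,z} = 1$ since $\partial(y,z) = 1$ and both endpoints lie in $\Gamma_i(x)$. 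Substituting these values into Lemma~\ref{lem:c15} yields
\begin{align*}
0 &= \frac{2\vartheta_i - \beta \vartheta_{i-1}}{(q-q^{-1})(q^2-q^{-2})}\,\bigl|\Gamma_{i-1}(x) \cap \Gamma(y) \cap \Gamma(z)\bigr| + \bigl|\Gamma_i(x) \cap \Gamma(y) \cap \Gamma(z)\bigr| \\
&\quad + \frac{2\vartheta_i - \beta \vartheta_{i+1}}{(q-q^{-1})(q^2-q^{-2})}\,\bigl|\Gamma_{i+1}(x) \cap \Gamma(y) \cap \Gamma(z)\bigr| - 2\varepsilon - \alpha(q+q^{-1}+\vartheta_i).
\end{align*}
Rearranging isolates $2\varepsilon + \alpha(q+q^{-1}+\vartheta_i)$ on the left side.

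To finish, substitute the formulas from Lemma~\ref{lem:CN} for the three intersection numbers. For $1 \leq i \leq D-1$ this produces the three-term right-hand side claimed in the lemma. For $i = D$, the term $\bigl|\Gamma_{D+1}(x) \cap \Gamma(y) \cap \Gamma(z)\bigr|$ vanishes (consistent with the convention $E^*_{D+1} = 0$), and Lemma~\ref{lem:CN} supplies the simpler two-term expression, giving the second displayed equation. Since every step is a direct bookkeeping computation—the non-routine input being Lemma~\ref{lem:c15} (already proved) and Lemma~\ref{lem:CN} (already cited)—there is no real obstacle; the only care needed is sign-tracking of the coefficient $-2\varepsilon - \alpha(q+q^{-1}+\vartheta_i)$ of $E^*_i A E^*_i$ when moving it to the other side.
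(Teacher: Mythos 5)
Your proposal is correct and follows the paper's own argument exactly: take $y,z \in \Gamma_i(x)$ with $\partial(y,z)=1$, read the $(y,z)$-entry of the linear identity from Lemma~\ref{lem:c15}, and substitute the intersection counts from Lemma~\ref{lem:CN}. You simply spell out the bookkeeping (the vanishing of the $E^*_i$ term, the interpretation of $(E^*_i A E^*_j A E^*_i)_{y,z}$ as a local intersection number) that the paper leaves implicit.
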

\begin{proof} Let $1 \leq i \leq D$. Pick $y,z \in \Gamma_i(x)$ such that $\partial(y,z)=1$. For the linear equation in Lemma \ref{lem:c15},
compute the $(y,z)$-entry of each term using Lemma \ref{lem:CN}. The result follows.
\end{proof}

\begin{remark}\rm The equations in Lemma \ref{lem:c9} can be checked directly using Lemma \ref{lem:c7} and the Appendix data.
\end{remark}

\noindent Next, we consider $p^i_{2,i}$ for $2 \leq i \leq D-1$. By combinatorial counting or \cite[Lemma~4.1.7]{bcn},
\begin{align*}
p^i_{2,i} = \frac{ c_i(b_{i-1}-1) + b_i(c_{i+1}-1) + a_i(a_i-a_1-1)}{c_2} \qquad \qquad (2 \leq i \leq D-1).
\end{align*}
We will not express $p^i_{2,i}$ in terms of $a, q$ because in general the formula does not factor nicely.
The reader should keep in mind that possibly $p^i_{2,i} =0$.
\medskip

\noindent
Pick an integer $i$ $(2 \leq i \leq D-1)$ such that $p^i_{2,i} \not=0$. Pick $y,z \in X$ such that
\begin{align*}
\partial(x,y) = i, \qquad \quad \partial(x,z) = i, \qquad \quad \partial (y,z)=2.
\end{align*}
We will consider 
\begin{align*} 
 \bigl \vert \Gamma_{i-1} (x) \cap \Gamma(y) \cap \Gamma(z) \bigr \vert, \qquad 
   \bigl \vert \Gamma_{i} (x) \cap \Gamma(y) \cap \Gamma(z) \bigr \vert, \qquad 
     \bigl \vert \Gamma_{i+1} (x) \cap \Gamma(y) \cap \Gamma(z) \bigr \vert.
   \end{align*}
 By construction,
\begin{align} \label{eq:sumac}
c_2 =  \bigl \vert \Gamma_{i-1} (x) \cap \Gamma(y) \cap \Gamma(z) \bigr \vert+
   \bigl \vert \Gamma_{i} (x) \cap \Gamma(y) \cap \Gamma(z) \bigr \vert +
     \bigl \vert \Gamma_{i+1} (x) \cap \Gamma(y) \cap \Gamma(z) \bigr \vert.
   \end{align}

  \begin{lemma} \label{lem:c19} Pick an integer $i$ $(2 \leq i \leq D-1)$ such that $p^i_{2,i} \not=0$. Pick $y,z \in X$ such that
\begin{align*}
\partial(x,y) = i, \qquad \quad \partial(x,z) = i, \qquad \quad \partial (y,z)=2.
\end{align*}
Then
\begin{align*}
0 &=
 \vert \Gamma_{i-1}(x) \cap \Gamma(y) \cap \Gamma(z) \vert \frac{2\vartheta_i - \beta \vartheta_{i-1}}{(q-q^{-1})(q^2-q^{-2})} \\
 &\quad+ \vert \Gamma_{i}(x) \cap \Gamma(y) \cap \Gamma(z) \vert \\
&\quad+ \vert \Gamma_{i+1}(x) \cap \Gamma(y) \cap \Gamma(z) \vert \frac{2\vartheta_i - \beta \vartheta_{i+1}}{(q-q^{-1})(q^2-q^{-2})}.
\end{align*}
\end{lemma}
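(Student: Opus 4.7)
The plan is to recover the claimed scalar identity by evaluating a single matrix identity entrywise. Concretely, I would start from the matrix equation displayed in Lemma \ref{lem:c15}, which expresses a certain linear combination of $E^*_i A E^*_{i-1} A E^*_i$, $E^*_i A E^*_i A E^*_i$, $E^*_i A E^*_{i+1} A E^*_i$, $E^*_i A E^*_i$, and $E^*_i$ as the zero matrix. Picking $y,z \in X$ with $\partial(x,y)=\partial(x,z)=i$ and $\partial(y,z)=2$, I would compute the $(y,z)$-entry of each of the five terms.

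The two low-order terms vanish for soft reasons: since $E^*_i$ is diagonal and $y\neq z$ (because $\partial(y,z)=2$), we have $(E^*_i)_{y,z}=0$, and since $A_{y,z}=0$ whenever $\partial(y,z)\neq 1$, we also get $(E^*_i A E^*_i)_{y,z}=(E^*_i)_{y,y}A_{y,z}(E^*_i)_{z,z}=0$. Thus the coefficients of $E^*_i A E^*_i$ and $E^*_i$ in Lemma \ref{lem:c15} contribute nothing to the $(y,z)$-entry, regardless of their explicit form.

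For each of the three remaining terms $E^*_i A E^*_j A E^*_i$ with $j\in\{i-1,i,i+1\}$, I would use that $E^*_i$ and $E^*_j$ are diagonal with $0/1$ entries to carry out the matrix multiplication. Since $(E^*_i)_{y,y}=(E^*_i)_{z,z}=1$, we have
\[
(E^*_i A E^*_j A E^*_i)_{y,z}=\sum_{w\in X}A_{y,w}\,(E^*_j)_{w,w}\,A_{w,z}
=\bigl|\Gamma_j(x)\cap \Gamma(y)\cap \Gamma(z)\bigr|,
\]
giving exactly the three combinatorial quantities appearing in the statement. Substituting these into the $(y,z)$-entry of the equation in Lemma \ref{lem:c15} immediately yields the displayed identity. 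I do not anticipate any genuine obstacle; the content of the lemma is just the entrywise evaluation, and the only thing to be careful about is noting that the hypothesis $p^i_{2,i}\neq 0$ merely guarantees the existence of a pair $(y,z)$ of the required kind, so that the statement is non-vacuous.
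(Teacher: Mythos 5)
Your proposal is correct and matches the paper's own (one-line) proof: compute the $(y,z)$-entry of each term in the equation of Lemma \ref{lem:c15}. The only thing you add is the explicit justification that $(E^*_i)_{y,z}=0$ and $(E^*_i A E^*_i)_{y,z}=0$ because $\partial(y,z)=2\notin\{0,1\}$, which the paper leaves implicit.
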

\begin{proof} For the linear equation in Lemma \ref{lem:c15}, compute the $(y,z)$-entry of each term.
\end{proof}

\begin{lemma}\label{lem:c20} Referring to Lemma \ref{lem:c19},
\begin{align*}
-c_2 &=
 \vert \Gamma_{i-1}(x) \cap \Gamma(y) \cap \Gamma(z) \vert \biggl( \frac{2\vartheta_i - \beta \vartheta_{i-1}}{(q-q^{-1})(q^2-q^{-2})}-1\biggr) \\
&\quad+ \vert \Gamma_{i+1}(x) \cap \Gamma(y) \cap \Gamma(z) \vert \biggl(  \frac{2\vartheta_i - \beta \vartheta_{i+1}}{(q-q^{-1}  )  (q^2-q^{-2})}-1 \biggr).
\end{align*}
\end{lemma}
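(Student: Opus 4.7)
The plan is straightforward: Lemma \ref{lem:c20} is obtained from Lemma \ref{lem:c19} by eliminating the middle term $|\Gamma_i(x) \cap \Gamma(y) \cap \Gamma(z)|$ via the combinatorial identity \eqref{eq:sumac}.

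More precisely, I would first rewrite \eqref{eq:sumac} as
\begin{align*}
\bigl|\Gamma_i(x) \cap \Gamma(y) \cap \Gamma(z)\bigr| = c_2 - \bigl|\Gamma_{i-1}(x) \cap \Gamma(y) \cap \Gamma(z)\bigr| - \bigl|\Gamma_{i+1}(x) \cap \Gamma(y) \cap \Gamma(z)\bigr|,
\end{align*}
which is valid under the hypotheses of Lemma \ref{lem:c19} (i.e.\ $2 \leq i \leq D-1$, $p^i_{2,i} \neq 0$, and $y,z$ chosen as in the statement). Substituting this expression for the middle term into the identity of Lemma \ref{lem:c19} and collecting the common $-1$ coefficient on the flanking terms produces
\begin{align*}
-c_2 &= \bigl|\Gamma_{i-1}(x) \cap \Gamma(y) \cap \Gamma(z)\bigr| \biggl( \frac{2\vartheta_i - \beta \vartheta_{i-1}}{(q-q^{-1})(q^2-q^{-2})} - 1 \biggr) \\
&\quad + \bigl|\Gamma_{i+1}(x) \cap \Gamma(y) \cap \Gamma(z)\bigr| \biggl( \frac{2\vartheta_i - \beta \vartheta_{i+1}}{(q-q^{-1})(q^2-q^{-2})} - 1 \biggr),
\end{align*}
which is precisely the desired identity. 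There is essentially no obstacle here; the step is a direct linear substitution, and both inputs (Lemma \ref{lem:c19} and \eqref{eq:sumac}) are already in place.
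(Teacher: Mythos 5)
Your proof is correct and matches the paper's own proof, which simply says to compare \eqref{eq:sumac} with Lemma \ref{lem:c19}; you have just spelled out the elimination of the middle term explicitly.
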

\begin{proof} Compare  \eqref{eq:sumac} and Lemma \ref{lem:c19}.
\end{proof}

\noindent Let us examine the coefficients that show up in Lemma \ref{lem:c20}.  For $2 \leq i \leq D-1$ we have
\begin{align*}
& \frac{2\vartheta_i - \beta \vartheta_{i-1}}{(q-q^{-1})(q^2-q^{-2})}- 1 = \frac{q^{2i-D-2}(a+ q^{D-2i+1})(a-q^{D-2i+3})}{a(q-q^{-1})}.
\end{align*}
Call this common value $\xi_i$, and note that $\xi_i \not=0$ for $2 \leq i \leq D-1$.
For $2 \leq i \leq D-1$ we have
\begin{align*}
\frac{2\vartheta_i - \beta \vartheta_{i+1}}{(q-q^{-1} )   (q^2-q^{-2})}-1  = - \,\frac{q^{2i-D+2}(a+ q^{D-2i-1})(a-q^{D-2i-3})}{a(q-q^{-1})}.
\end{align*}
Call this common value $-\zeta_i$. Note that $\zeta_i \not=0$ for $2 \leq i \leq D-2$, and $\zeta_{D-1}=0$ if and only if $a=q^{-D-1}$.

\begin{proposition}\label{prop:Dconst} For $2 \leq i \leq D-1$ we have
\begin{align}\label{eq:DUineq}
0 &\leq \frac{\zeta_i }{\xi_i } \biggl( p^i_{2,i}(c_2-z_2-1) - (b_{i-1}-a_1-1+z_i)(c_{i+1}-z_{i+1}-1)    \biggr).
\end{align}
\end{proposition}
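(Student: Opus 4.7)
Fix $y \in \Gamma_i(x)$, and for each $z \in \Gamma_i(x) \cap \Gamma_2(y)$ define $m(z) = \vert\Gamma_{i-1}(x) \cap \Gamma(y) \cap \Gamma(z)\vert$ and $n(z) = \vert\Gamma_{i+1}(x) \cap \Gamma(y) \cap \Gamma(z)\vert$. Set $P = p^i_{2,i}$, $R = b_{i-1}-a_1-1+z_i$, $S = c_{i+1}-z_{i+1}-1$, $M = \sum_z m(z)$, $N = \sum_z n(z)$, and $T = \sum_z m(z) n(z)$. By Lemma \ref{lem:c20}, $m(z)\xi_i - n(z)\zeta_i = -c_2$, which rewrites as $m(z) = (\zeta_i/\xi_i)\, n(z) - c_2/\xi_i$: the function $m(z)$ depends affinely on $n(z)$ with slope $\zeta_i/\xi_i$. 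A direct algebraic expansion then gives
\begin{equation*}
PT - MN \;=\; \frac{\zeta_i}{\xi_i}\Bigl(P \textstyle\sum_z n(z)^2 - N^2\Bigr).
\end{equation*}
The parenthetical quantity is nonnegative by the Cauchy--Schwarz inequality, so $(\zeta_i/\xi_i)(PT - MN) \geq 0$ unconditionally.

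It remains to identify $M = c_i R$, $N = b_i S$, and $T = c_i b_i(c_2 - z_2 - 1)$ by double counting. For $M$, I reverse the order of summation to an outer sum over $w \in \Gamma_{i-1}(x) \cap \Gamma(y)$ (cardinality $c_i$); for each such $w$, the $b_{i-1}$ elements of $\Gamma(w) \cap \Gamma_i(x)$ split by $\partial(y, z) \in \{0,1,2\}$, with the $\partial(y,z)=1$ count given by $a_1 - z_i$ via the definition of $z_i$. The count of $z$ at distance $2$ from $y$ is then $R$, so $M = c_i R$; the identity $N = b_i S$ is symmetric.

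For $T$, I iterate over pairs $(w, w') \in (\Gamma_{i-1}(x) \cap \Gamma(y)) \times (\Gamma_{i+1}(x) \cap \Gamma(y))$. Triangle inequalities force $\partial(w, w') = 2$, so each pair contributes $c_2$ common neighbors, automatically in $\Gamma_i(x)$ by further triangle intersections. Subtracting the common neighbor $z=y$ and those with $\partial(y, z) = 1$, and summing the latter by swapping to an outer sum over $v \in \Gamma(y)$, I show (again via triangle inequalities) that only the case $v \in \Gamma_i(x) \cap \Gamma(y)$ contributes, and for each such $v$ Lemma \ref{lem:CN} yields the factor $(c_i(a_1-z_i)/a_i)(b_i z_{i+1}/a_i)$; the total $c_i b_i(a_1-z_i) z_{i+1}/a_i$ then collapses to $c_i b_i z_2$ by the identity $(a_1-z_i)z_{i+1} = a_i z_2$ established just after Lemma \ref{lem:c7}. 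Substituting $M = c_i R$, $N = b_i S$, $T = c_i b_i(c_2 - z_2 - 1)$ into $(\zeta_i/\xi_i)(PT - MN) \geq 0$ and dividing by the positive quantity $c_i b_i$ delivers the claim.

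The main obstacle is the evaluation of $T$: pinning down $\partial(w, w') = 2$ via triangle inequality, eliminating two of three cases for the auxiliary vertex $v$ by further triangle inequalities, and invoking the non-trivial identity $(a_1-z_i)z_{i+1} = a_i z_2$ are what produce the precise closed form $c_i b_i(c_2 - z_2 - 1)$ matching the proposition's expression. The sign analysis in the first paragraph, once the affine relation between $m(z)$ and $n(z)$ is recognized, is a clean one-line consequence of Cauchy--Schwarz that requires no case split on the signs of $\xi_i$ and $\zeta_i$.
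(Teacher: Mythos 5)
Your proposal is correct and follows essentially the same route as the paper: fix $y\in\Gamma_i(x)$, observe that the affine relation $m(z)\xi_i - n(z)\zeta_i = -c_2$ from Lemma \ref{lem:c20} turns the sign question into a variance/Cauchy--Schwarz computation, and then evaluate the three sums $M$, $N$, $T$ combinatorially; your identity $PT-MN = (\zeta_i/\xi_i)\bigl(P\sum n^2 - N^2\bigr)$ is just the paper's chain $0\le\sum(\mathcal D(z)-\mathcal D_i)^2 = \frac{\zeta_i}{\xi_i}\sum(\mathcal D(z)\mathcal U(z)-\mathcal D_i\mathcal U_i)$ written in unnormalized form and with the variance taken over $n$ rather than $m$. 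The one organizational difference is that the paper isolates the degenerate case $p^i_{2,i}=0$ up front (observing there that $R=S=0$ so equality holds), whereas you handle it implicitly through the unconditional identities $M=c_iR$, $N=b_iS$, $T=c_ib_i(c_2-z_2-1)$; your more explicit double-counting derivation of $T$ via pairs $(w,w')$, the triangle-inequality case elimination, and the identity $(a_1-z_i)z_{i+1}=a_iz_2$ is a useful expansion of what the paper compresses into the phrase ``by combinatorial counting.''
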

\begin{proof} 
Fix $y \in \Gamma_i(x)$. The set $\Gamma_i(x) \cap \Gamma_2(y)$ has cardinality $p^i_{2,i}$.
We define two functions $\mathcal D, \mathcal U$ on the set $\Gamma_i(x) \cap \Gamma_2(y)$. For $z \in \Gamma_i(x) \cap \Gamma_2(y)$, 
\begin{align*}
{\mathcal  D}(z) = \vert \Gamma_{i-1}(x) \cap \Gamma(y) \cap \Gamma(z)\vert, \qquad \quad {\mathcal U}(z) = \vert \Gamma_{i+1}(x) \cap \Gamma(y) \cap \Gamma(z)\vert.
\end{align*}
\noindent By combinatorial counting,
\begin{align*}
 \sum_{z \in \Gamma_i(x) \cap \Gamma_2(y)} {\mathcal D}(z) = c_i (b_{i-1}-a_1-1+z_i),
\qquad 
 \sum_{z \in \Gamma_i(x) \cap \Gamma_2(y)} {\mathcal U}(z) = b_i (c_{i+1}-z_{i+1}-1).
 \end{align*}
Assume for the moment that $p^i_{2,i} =0$. Then  $\Gamma_i(x) \cap \Gamma_2(y)$ is empty, so
\begin{align*}
b_{i-1}-a_1-1+z_i =0, \qquad \qquad c_{i+1}-z_{i+1}-1=0.
\end{align*}
In this case, \eqref{eq:DUineq} holds with equality. For the rest of this proof, we assume that $p^i_{2,i} \not=0$.
We consider the average values
\begin{align*}
{\mathcal D}_i &= \frac{1}{p^i_{2,i}} \sum_{z \in \Gamma_i(x) \cap \Gamma_2(y)} {\mathcal D}(z) = \frac{ c_i(b_{i-1}-a_1-1+z_i)}{p^i_{2,i}}, \\
{\mathcal U}_i &= \frac{1}{p^i_{2,i}} \sum_{z \in \Gamma_i(x) \cap \Gamma_2(y)} \mathcal U(z)  = \frac{b_i(c_{i+1}-z_{i+1}-1)}{p^i_{2,i}}.
\end{align*} 
By Lemma \ref{lem:c20} the following holds 
for $z \in \Gamma_i(x) \cap \Gamma_2(y)$:
\begin{align} \label{eq:c2need}
 - c_2 &= \mathcal D(z) \xi_i - \mathcal U(z) \zeta_i.
\end{align}
Summing \eqref{eq:c2need} over $z \in \Gamma_i(x) \cap \Gamma_2(y)$, we obtain
\begin{align}\label{eq:c2need2}
-c_2 &= \mathcal D_i \xi_i - \mathcal U_i \zeta_i.
\end{align}
Combining \eqref{eq:c2need}, \eqref{eq:c2need2} we find that for $z \in \Gamma_i(x) \cap \Gamma_2(y)$,
\begin{align*} 
(\mathcal D(z) - \mathcal D_i) \xi_i = (\mathcal U(z) - \mathcal U_i) \zeta_i.
\end{align*}
By combinatorial counting, 
\begin{align*}
\sum_{z \in \Gamma_i(x) \cap \Gamma_2(y)} \mathcal D(z) \mathcal U(z) = c_i b_i (c_2-z_2-1).
\end{align*}
We may now argue
\begin{align*}
0 &\leq  \sum_{z \in \Gamma_i(x) \cap \Gamma_2(y)} (\mathcal D(z)-\mathcal D_i)^2  \\
&=\frac{\zeta_i}{\xi_i} \sum_{z \in \Gamma_i(x) \cap \Gamma_2(y)} \bigl(\mathcal D(z)- \mathcal D_i \bigr) \bigl(\mathcal U(z) - \mathcal U_i \bigr) \\
&=\frac{\zeta_i}{\xi_i} \sum_{z \in \Gamma_i(x) \cap \Gamma_2(y)} \bigl(\mathcal D(z) \mathcal U(z) -  \mathcal D_i \mathcal U_i \bigr)  \\
&=\frac{\zeta_i c_i b_i }{\xi_i p^i_{2,i} } \biggl( p^i_{2,i}(c_2-z_2-1) - (b_{i-1}-a_1-1+z_i)(c_{i+1}-z_{i+1}-1)    \biggr).
\end{align*}
The result follows.
\end{proof}
\noindent Next, we consider the case of equality in \eqref{eq:DUineq}. If $p^i_{2,i} =0$  then equality holds in
\eqref{eq:DUineq}, according to the previous proof. We now consider the case in which $p^i_{2,i} \not=0$.

\begin{proposition} \label{prop:cond3} Pick an integer $i$ $(2 \leq i \leq D-1)$ such that $p^i_{2,i}\not=0$. Then the following are equivalent:
\begin{enumerate}
\item[\rm (i)] equality holds in \eqref{eq:DUineq};
\item[\rm (ii)] for all $y,z \in \Gamma_i(x)$ such that $\partial (y,z)=2$,
\begin{align*}
\vert \Gamma_{i-1}(x) \cap \Gamma(y) \cap \Gamma(z) \vert = \frac{ c_i (b_{i-1}-a_1-1+z_i)}{p^i_{2,i}};
\end{align*}
\item[\rm (iii)] there exists $y \in \Gamma_i(x)$ such that
$\vert \Gamma_{i-1}(x) \cap \Gamma(y) \cap \Gamma(z) \vert $ is the same for all $z \in \Gamma_i(x) \cap \Gamma_2(y)$.
\end{enumerate}
\end{proposition}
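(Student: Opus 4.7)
My plan is to reuse the machinery from the proof of Proposition \ref{prop:Dconst}. Recall that for a fixed $y \in \Gamma_i(x)$, with the functions ${\mathcal D}, {\mathcal U}$ on $\Gamma_i(x) \cap \Gamma_2(y)$ and the average value ${\mathcal D}_i = c_i(b_{i-1}-a_1-1+z_i)/p^i_{2,i}$, the proof of Proposition \ref{prop:Dconst} established the chain of equalities
\begin{align*}
\sum_{z \in \Gamma_i(x) \cap \Gamma_2(y)} \bigl({\mathcal D}(z) - {\mathcal D}_i\bigr)^2
= \frac{\zeta_i c_i b_i}{\xi_i p^i_{2,i}} \biggl( p^i_{2,i}(c_2-z_2-1) - (b_{i-1}-a_1-1+z_i)(c_{i+1}-z_{i+1}-1)\biggr).
\end{align*}
The crucial observation is that the right-hand side is independent of $y$, so the sum on the left is independent of the choice of $y \in \Gamma_i(x)$, and equality in \eqref{eq:DUineq} holds if and only if this common sum is zero.

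For ${\rm (i)} \Rightarrow {\rm (ii)}$, I would use the above identity. Equality in \eqref{eq:DUineq} forces $\sum_z ({\mathcal D}(z)-{\mathcal D}_i)^2 = 0$ for every $y \in \Gamma_i(x)$, which in turn forces ${\mathcal D}(z) = {\mathcal D}_i$ for every $z \in \Gamma_i(x) \cap \Gamma_2(y)$; unpacking the definitions of ${\mathcal D}$ and ${\mathcal D}_i$ yields (ii). The implication ${\rm (ii)} \Rightarrow {\rm (iii)}$ is trivial: pick any $y \in \Gamma_i(x)$ for which $\Gamma_i(x) \cap \Gamma_2(y)$ is nonempty (such $y$ exists because $p^i_{2,i}\not=0$).

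For ${\rm (iii)} \Rightarrow {\rm (i)}$, fix $y$ as guaranteed and let $c$ denote the common value of ${\mathcal D}(z)$ on $\Gamma_i(x) \cap \Gamma_2(y)$. Averaging over $z$ shows $c = {\mathcal D}_i$, so the sum of squares $\sum_z ({\mathcal D}(z)-{\mathcal D}_i)^2$ vanishes for this $y$. Since the displayed identity shows this sum is independent of $y$ and equals (up to the nonzero factor $\zeta_i c_i b_i / (\xi_i p^i_{2,i})$) the defect in \eqref{eq:DUineq}, we conclude that equality holds in \eqref{eq:DUineq}.

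I do not expect any serious obstacle: the substantive work has already been done inside the proof of Proposition \ref{prop:Dconst}, and this proposition is essentially the tautological statement that a sum of squares of real numbers vanishes if and only if each summand vanishes, together with the fact that the constant value in (iii) must be the mean. The only small point to verify is that $\zeta_i \not=0$ is needed to divide through cleanly; this is available for $2 \leq i \leq D-2$, while the case $i=D-1$ with $\zeta_{D-1}=0$ (i.e., $a=q^{-D-1}$) must be inspected separately, where \eqref{eq:c2need} directly forces ${\mathcal D}(z)$ to be constant on $\Gamma_i(x) \cap \Gamma_2(y)$ (so all three conditions hold automatically).
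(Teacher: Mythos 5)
Your argument is correct and matches the paper's intent; the paper's own proof is simply ``Immediate from the proof of Proposition \ref{prop:Dconst},'' and you have correctly unpacked that. One small remark: the case-split on $\zeta_{D-1}=0$ at the end is unnecessary, since the identity from the proof of Proposition \ref{prop:Dconst} reads
\begin{align*}
\sum_{z \in \Gamma_i(x) \cap \Gamma_2(y)} \bigl({\mathcal D}(z)-{\mathcal D}_i\bigr)^2
= \frac{c_i b_i}{p^i_{2,i}}\cdot\Bigl(\text{right-hand side of \eqref{eq:DUineq}}\Bigr),
\end{align*}
and the prefactor $c_i b_i/p^i_{2,i}$ is already nonzero for $2\le i\le D-1$ under the standing hypothesis $p^i_{2,i}\neq 0$; when $\zeta_i=0$ both sides vanish and all three conditions hold vacuously, which is exactly what your separate analysis recovers, so no harm is done.
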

\begin{proof} Immediate from the proof of Proposition \ref{prop:Dconst}.
\end{proof}
\noindent Next, we express the right-hand side of \eqref{eq:DUineq} in terms of $a, q$. By the discussion below
Lemma \ref{lem:c20},

\begin{align*}
\frac{\zeta_i}{\xi_i} = \frac{ q^4 (a+q^{D-2i-1})(a-q^{D-2i-3})}{(a+ q^{D-2i+1})(a-q^{D-2i+3})} \qquad \qquad (2 \leq i \leq D-1).
\end{align*}

 \begin{proposition}\label{prop:bigform} For $2 \leq i \leq D-1$,
 \begin{align*}
& p^i_{2,i}(c_2-z_2-1) - (b_{i-1}-a_1-1+z_i)(c_{i+1}-z_{i+1}-1)  
\\
&= \frac{(q^i-q^{-i})(q^{i-1}-q^{1-i})  (aq^{i-D}-a^{-1} q^{D-i})(a-q^{D-2i-3})(a-q^{D-2i+3})(aq^{i-D+1}-a^{-1}q^{D-i-1})         }
 {(aq^{2i-D-2}-a^{-1} q^{D+2-2i})(aq^{2i+2-D}-a^{-1} q^{D-2i-2})  (a+ q^{D-2i+1})(a+q^{D-2i-1})   } 
 \\
&\quad \times \frac{q^{2-2D}(a+a^{-1})(a^2 q-a^{-2}q^{-1})(aq^D-a^{-1} q^{-D})(a q^{2-D}-a^{-1} q^{D-2})}
 {(aq-a^{-1}q^{-1})^2 (q-q^{-1})^2 (q+q^{-1})(aq^2-a^{-1}q^{-2})} 
\\
&\quad \times  \frac{(a+q^{-D-1})(a-q^{D+1})(a+q^{D-1})^2}
 { (a+q^{D-3}) (a-q^{3-D})(a-q^{1-D})^2}. 
 \end{align*}
 \end{proposition}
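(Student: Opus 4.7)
The plan is to prove this identity by direct substitution and algebraic simplification. Every quantity appearing on the left-hand side already has a closed-form expression in terms of $a$, $q$, $D$, $i$, and $a_1$: Lemma \ref{lem:c7} supplies $z_i$ and $a_1-z_i$ in factored form, the Appendix gives factored formulas for $c_i$, $a_i$, $b_i$, and the displayed formula just before Lemma \ref{lem:c19} expresses $p^i_{2,i}$ as a rational combination of the $c$'s, $a$'s, $b$'s and $a_1$. Hence the identity reduces in principle to a finite rational-function identity in the variables $a, q$ (with the integer parameters $D, i$) that can be verified by direct calculation.

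First I would simplify the secondary building blocks $c_2-z_2-1$, $c_{i+1}-z_{i+1}-1$, and $b_{i-1}-a_1-1+z_i$ one at a time. Using $c_1=1$ and Lemma \ref{lem:c7}, each of the first two should factor as a product of $q$-shifted linear forms in $a$; for the third I would rewrite $b_{i-1}=k-a_{i-1}-c_{i-1}$ and then apply Lemma \ref{lem:c7} and the Appendix data. Having obtained clean factorizations for these three ingredients and for $p^i_{2,i}$, I would substitute them into the two products $p^i_{2,i}(c_2-z_2-1)$ and $(b_{i-1}-a_1-1+z_i)(c_{i+1}-z_{i+1}-1)$, place the difference over a common denominator, and compare the resulting numerator with the claimed right-hand side. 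The target factorization itself is a roadmap for the cancellations to look for: the factors $aq^{i-D}-a^{-1}q^{D-i}$ and $aq^{i-D+1}-a^{-1}q^{D-i-1}$ should come from $a_i$ and $a_{i-1}$ appearing in $p^i_{2,i}$; the factors $a+q^{D-2i\pm 1}$ should come from the denominators of $z_i$ and $z_{i+1}$ in Lemma \ref{lem:c7}; and the remaining $i$-independent factors should encode the $a_1$ and $z_2$ shared between the two terms.

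The main obstacle is the sheer size of the expression: more than a dozen $q$-linear factors in $a$ need to be tracked through the difference, and several of the expected cancellations rely on non-obvious combinations such as $a_i+a_{i-1}-a_1$, whose factored form was recorded above Lemma \ref{lem:c8}. To keep the bookkeeping manageable I would clear all denominators on both sides, convert the claim to a polynomial identity of bounded degree in $a^{\pm 1}$, $q^{\pm 1}$ (with $D, i$ playing the role of parameters in the exponents), and verify it by comparing coefficients. No new combinatorial input beyond the lemmas already established is required, so once the symbolic identity is confirmed the proposition follows immediately.
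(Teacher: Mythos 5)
Your proposal is correct and takes essentially the same approach as the paper, whose entire proof is ``Use Lemma~\ref{lem:c7} and the Appendix data''; you simply spell out the routine (if bulky) rational-function verification that this one-line proof alludes to. The only small slip is that the quantity $a_i+a_{i-1}-a_1$ you cite as a key combination is actually the one arising in $p^i_{2,i-1}$ from \eqref{eq:pi2im1}, whereas the formula for $p^i_{2,i}$ involves $a_i(a_i-a_1-1)$ instead — but this does not affect the soundness of the overall substitution-and-simplification plan.
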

 \begin{proof} Use Lemma  \ref{lem:c7} 
 and the Appendix data.
 \end{proof}
 
 
 \begin{remark}\rm \label{remark} 
 Assume that 
\begin{align}\label{eq:aaaa}
a_1 =0 \quad {\rm or} \quad  a=q^{D+1} \quad {\rm or}  \quad a^2=q^{-2D} \quad {\rm or} \quad a^4=q^{-2}. 
\end{align}
 Then the equivalent conditions (i)--(iii)
 in Proposition \ref{prop:cond3} hold for $2 \leq i \leq D-1$. Next assume that $a=q^{-D-1}$, and \eqref{eq:aaaa} does not hold. Then the equivalent conditions (i)--(iii) 
 in Proposition \ref{prop:cond3} hold for $i= D-1$ but not for $2 \leq i \leq D-2$.
 Next assume that $a\not=q^{-D-1}$, and \eqref{eq:aaaa} does not hold.
 Then for $2 \leq i \leq D-1$  the equivalent conditions (i)--(iii) 
 in Proposition \ref{prop:cond3} do not hold.
 \end{remark} 

\noindent Next we consider $p^D_{2,D}$. By combinatorial counting or \cite[Lemma~4.1.7]{bcn},
\begin{align*}
p^D_{2,D} = \frac{ c_D(b_{D-1}-1) + a_D(a_D-a_1-1)}{c_2}.
\end{align*}
\noindent Using the Appendix data, we obtain
\begin{align*}
p^D_{2,D} & = \frac{ (q^D-q^{-D})(q^{D-1}-q^{1-D})(a^2-a^{-2})(a^2 q - a^{-2} q^{-1})}
{(q^2-q^{-2})(q-q^{-1}) (aq-a^{-1}q^{-1})(aq^2-a^{-1}q^{-2})} \\
&\quad \times \frac{ (aq^{1-D}-a^{-1}q^{D-1})(aq^{4-D}-a^{-1} q^{D-4})}
{ (aq^{D-1}-a^{-1} q^{1-D})(aq^{D-2}-a^{-1} q^{2-D})}.
\end{align*}
Note that $p^D_{2,D}=0$ if and only if $a^2=-1$ or $a^4 = q^{-2}$. 
\medskip

\noindent
For the moment, assume  that $p^D_{2,D}\not=0$.  Pick $y,z \in X$ such that
\begin{align*}
\partial(x,y) = D, \qquad \quad \partial(x,z) = D, \qquad \quad \partial (y,z)=2.
\end{align*}
We will consider 
\begin{align} \label{eq:unknownD}
 \bigl \vert \Gamma_{D-1} (x) \cap \Gamma(y) \cap \Gamma(z) \bigr \vert, \qquad 
   \bigl \vert \Gamma_{D} (x) \cap \Gamma(y) \cap \Gamma(z) \bigr \vert.
   \end{align}
 By construction,
\begin{align} \label{eq:sumacd}
c_2 =  \bigl \vert \Gamma_{D-1} (x) \cap \Gamma(y) \cap \Gamma(z) \bigr \vert+
   \bigl \vert \Gamma_{D} (x) \cap \Gamma(y) \cap \Gamma(z) \bigr \vert.
   \end{align}

  \begin{lemma} \label{lem:c19d} Assume that $p^D_{2,D}\not=0$.  Pick $y,z \in X$ such that
\begin{align*}
\partial(x,y) = D, \qquad \quad \partial(x,z) = D, \qquad \quad \partial (y,z)=2.
\end{align*}
Then
\begin{align} \label{eq:missing}
0 &=
 \vert \Gamma_{D-1}(x) \cap \Gamma(y) \cap \Gamma(z) \vert \frac{2\vartheta_D - \beta \vartheta_{D-1}}{(q-q^{-1})(q^2-q^{-2})} + \vert \Gamma_{D}(x) \cap \Gamma(y) \cap \Gamma(z) \vert.
\end{align}
\end{lemma}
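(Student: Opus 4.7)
The plan is to mimic exactly the argument used for Lemma \ref{lem:c19}, specializing the linear relation of Lemma \ref{lem:c15} to the boundary index $i=D$. With the convention $E^*_{D+1}=0$ already built into Lemma \ref{lem:c15}, the term $E^*_D A E^*_{D+1} A E^*_D$ drops out of the five-term relation, which is precisely why one gets a two-term rather than a three-term identity at the top subconstituent.

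First, I would fix the vertices $y,z$ as in the hypothesis and compute the $(y,z)$-entry of each surviving term in Lemma \ref{lem:c15} taken at $i=D$. Since $y,z \in \Gamma_D(x)$, the idempotents $E^*_D$ on the outside ensure that only contributions from vertices in $\Gamma_D(x)$ survive as intermediate indices. For any $w \in X$,
\begin{align*}
(E^*_D A E^*_j A E^*_D)_{y,z} = \vert \Gamma_j(x) \cap \Gamma(y) \cap \Gamma(z) \vert,
\end{align*}
so the term with $j=D-1$ contributes $\vert \Gamma_{D-1}(x) \cap \Gamma(y) \cap \Gamma(z)\vert$ and the term with $j=D$ contributes $\vert \Gamma_D(x) \cap \Gamma(y) \cap \Gamma(z)\vert$. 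The single-$A$ term $(E^*_D A E^*_D)_{y,z}$ equals $A_{y,z}$, which is zero because $\partial(y,z)=2$; and $(E^*_D)_{y,z}=0$ since $y\neq z$. Thus all non-adjacency terms and the $j=D+1$ term vanish, and only the coefficients $\tfrac{2\vartheta_D-\beta\vartheta_{D-1}}{(q-q^{-1})(q^2-q^{-2})}$ and $1$ (from Lemma \ref{lem:c15}) survive, giving exactly \eqref{eq:missing}.

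There is no real obstacle here: the only conceptual point to watch is the boundary convention $E^*_{D+1}=0$, which is exactly what makes the ``$\vartheta_{D+1}$'' coefficient disappear and leaves only two unknowns on the right rather than three. Note that the hypothesis $p^D_{2,D}\neq 0$ is used only to guarantee that a pair $y,z$ with the specified distances exists; it is not needed inside the entry-by-entry computation itself. In short, the proof is the one-line argument: \emph{For the linear equation in Lemma \ref{lem:c15} with $i=D$, compute the $(y,z)$-entry of each term, observing that the $E^*_{D+1}$ term and the terms $E^*_D A E^*_D$, $E^*_D$ contribute zero under the distance hypotheses.}
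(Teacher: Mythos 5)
Your proposal is correct and takes exactly the same approach as the paper, which simply says to compute the $(y,z)$-entry of each term in the linear equation from Lemma \ref{lem:c15}; you have filled in the details accurately, including the observations that the $E^*_{D+1}$ term is absent by convention, that $(E^*_D A E^*_D)_{y,z}=A_{y,z}=0$ since $\partial(y,z)=2$, and that $(E^*_D)_{y,z}=0$ since $y\neq z$.
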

\begin{proof} For the linear equation in Lemma \ref{lem:c15}, compute the $(y,z)$-entry of each term.
\end{proof}

\noindent In a moment, we will solve the linear equations \eqref{eq:sumacd}, \eqref{eq:missing} for the unknowns \eqref{eq:unknownD}.  Note that
\begin{align*}
& \frac{2\vartheta_D - \beta \vartheta_{D-1}}{(q-q^{-1})(q^2-q^{-2})}- 1 = \frac{q^{D-2}(a+ q^{1-D})(a-q^{3-D})}{a(q-q^{-1})}.
\end{align*}
This common value is nonzero by \eqref{eq:ineq2}.
\medskip

 \noindent The following result is essentially due to Curtin and Nomura \cite[Lemma~3.5]{CNhom}.

\begin{proposition} \label{lem:Gform} {\rm (See \cite[Lemma~3.5]{CNhom}).}    Assume that $p^D_{2,D}\not=0$. Pick $y,z \in X$ such that
\begin{align}
\partial(x,y) = D, \qquad \quad \partial(x,z) = D, \qquad \quad \partial (y,z)=2.       \label{eq:xyzDD}
\end{align}
Then
\begin{align*}
\vert \Gamma_{D-1}(x) \cap \Gamma(y) \cap \Gamma(z) \vert  &= \frac{c_2 (q-q^{-1})(q^2-q^{-2})}{(q-q^{-1})(q^2-q^{-2}) - 2\vartheta_D + \beta \vartheta_{D-1}},\\
\vert \Gamma_D(x) \cap \Gamma(y) \cap \Gamma(z) \vert &= \frac{c_2 (\beta \vartheta_{D-1} - 2 \vartheta_D)}{(q-q^{-1})(q^2-q^{-2})-2\vartheta_D+\beta \vartheta_{D-1}}.
\end{align*}
\end{proposition}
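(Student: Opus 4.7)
The proposition's statement presents two numerical quantities that I will denote
$u = |\Gamma_{D-1}(x) \cap \Gamma(y) \cap \Gamma(z)|$ and
$v = |\Gamma_{D}(x) \cap \Gamma(y) \cap \Gamma(z)|$.
The plan is to observe that $u$ and $v$ satisfy two independent linear equations and then solve the resulting $2 \times 2$ linear system. This is entirely routine once one checks that the relevant $2 \times 2$ matrix is invertible.

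First I would collect the two linear relations. Equation \eqref{eq:sumacd}, which is purely combinatorial, says $u+v=c_2$. Equation \eqref{eq:missing} from Lemma \ref{lem:c19d} (obtained by taking the $(y,z)$-entry of Lemma \ref{lem:c15}) gives
$$
u \,\frac{2\vartheta_D - \beta\vartheta_{D-1}}{(q-q^{-1})(q^2-q^{-2})} + v = 0.
$$
Let $\lambda = (2\vartheta_D - \beta\vartheta_{D-1})/\bigl((q-q^{-1})(q^2-q^{-2})\bigr)$. Using $v = c_2 - u$ in the second equation yields $u(\lambda - 1) = -c_2$, that is
$$
u = \frac{c_2}{1-\lambda} = \frac{c_2 (q-q^{-1})(q^2-q^{-2})}{(q-q^{-1})(q^2-q^{-2}) - 2\vartheta_D + \beta\vartheta_{D-1}},
$$
and then $v = c_2 - u$ simplifies immediately to $c_2(\beta\vartheta_{D-1}-2\vartheta_D)/\bigl((q-q^{-1})(q^2-q^{-2}) - 2\vartheta_D + \beta\vartheta_{D-1}\bigr)$, matching the stated formulas.

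The only step that requires justification beyond the linear algebra is that the denominator $1-\lambda$ is nonzero. This is exactly the assertion made in the display immediately after Lemma \ref{lem:c19d}: the quantity
$\frac{2\vartheta_D - \beta\vartheta_{D-1}}{(q-q^{-1})(q^2-q^{-2})} - 1 = \frac{q^{D-2}(a+q^{1-D})(a-q^{3-D})}{a(q-q^{-1})}$
is nonzero by the inequalities \eqref{eq:ineq2} recorded in the Appendix. So invertibility of the system is free.

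I do not anticipate any obstacle here: the whole argument is a two-equation/two-unknown solve, and the nontriviality of the coefficient matrix has been prepared in advance. The only small care needed is in the algebraic simplification $c_2 - u$, which amounts to combining fractions over the common denominator $(q-q^{-1})(q^2-q^{-2}) - 2\vartheta_D + \beta\vartheta_{D-1}$ and cancelling the $(q-q^{-1})(q^2-q^{-2})$ in the numerator against the identical term in the expansion, leaving $\beta\vartheta_{D-1}-2\vartheta_D$.
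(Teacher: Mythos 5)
Your proof is correct and follows the same route as the paper: solve the two-by-two linear system given by \eqref{eq:sumacd} and Lemma \ref{lem:c19d}, with invertibility justified by the nonvanishing displayed just above the proposition.
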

\begin{proof} Solve the linear equations  \eqref{eq:sumacd}, \eqref{eq:missing} using the comment above the proposition statement.
\end{proof}

\begin{remark}\rm
Referring to Proposition \ref{lem:Gform}, the following quantities are independent of the choice of $x,y,z$:
\begin{align*}
 \bigl \vert \Gamma_{D-1} (x) \cap \Gamma(y) \cap \Gamma(z) \bigr \vert, \qquad \qquad
  \bigl \vert \Gamma_{D} (x) \cap \Gamma(y) \cap \Gamma(z) \bigr \vert.
  \end{align*}
\end{remark}

\noindent Next, we rephrase Proposition \ref{lem:Gform} in various ways.

\begin{lemma} \label{lem:Gformd} Referring to Proposition \ref{lem:Gform}, we have
\begin{align*}
\vert \Gamma_{D-1}(x) \cap \Gamma(y) \cap \Gamma(z) \vert  &= \frac{c_D(b_{D-1}-a_1-1+z_D)}{p^D_{2,D}},\\
\vert \Gamma_D(x) \cap \Gamma(y) \cap \Gamma(z) \vert &= \frac{c_D(a_1-z_D)+a_D(a_D-a_1-1)}{p^D_{2,D}}.
\end{align*}
\end{lemma}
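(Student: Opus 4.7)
The plan is to establish both equalities by double counting, in the spirit of the proof of Lemma \ref{lem:c12}. By the Remark following Proposition \ref{lem:Gform}, the two quantities on the left-hand side are independent of the choice of $(y,z)$ subject to \eqref{eq:xyzDD}, so each equals its average over all such pairs, and that average is accessible by a standard count.

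For the first equation, abbreviate $Q_1 = \vert \Gamma_{D-1}(x) \cap \Gamma(y) \cap \Gamma(z)\vert$ and count in two ways the number $N$ of triples $(y,w,z)$ with $y,z \in \Gamma_D(x)$, $\partial(y,z)=2$, and $w \in \Gamma_{D-1}(x) \cap \Gamma(y) \cap \Gamma(z)$. Grouping by $(y,z)$ gives $N = k_D \, p^D_{2,D}\, Q_1$. Grouping by $(y,w)$: there are $k_D$ choices for $y \in \Gamma_D(x)$ and $c_D$ choices for $w \in \Gamma_{D-1}(x) \cap \Gamma(y)$; with such $y,w$ fixed, the triangle inequality forces every common neighbor of $y$ and $w$ to lie in $\Gamma_{D-1}(x) \cup \Gamma_D(x)$, and exactly $z_D$ of these lie in $\Gamma_{D-1}(x)$ by Lemma \ref{lem:c5}, hence $a_1 - z_D$ lie in $\Gamma_D(x)$. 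Among the $b_{D-1}$ neighbors of $w$ in $\Gamma_D(x)$, exactly one is $y$ itself, $a_1 - z_D$ are adjacent to $y$, and the remaining $b_{D-1} - 1 - (a_1 - z_D) = b_{D-1} - a_1 - 1 + z_D$ lie at distance two from $y$. Equating the two expressions for $N$ and solving yields the first formula.

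For the second equation, set $Q_2 = \vert \Gamma_D(x) \cap \Gamma(y) \cap \Gamma(z)\vert$. The most expedient route is to combine the identity $Q_1 + Q_2 = c_2$ from \eqref{eq:sumacd} with the formula $c_2\, p^D_{2,D} = c_D(b_{D-1} - 1) + a_D(a_D - a_1 - 1)$ stated just above Lemma \ref{lem:c19d}; substituting the value of $Q_1$ just obtained and simplifying, the terms involving $b_{D-1}$ cancel and one reads off the claimed expression $(c_D(a_1 - z_D) + a_D(a_D - a_1 - 1))/p^D_{2,D}$. Alternatively, one may rerun the previous double-counting scheme with $\Gamma_D(x)$ in place of $\Gamma_{D-1}(x)$ at the inner step, invoking Lemma \ref{lem:CN} at $i = D$ to count the neighbors of $w$ in $\Gamma_D(x)$ that are adjacent to $y$. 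I anticipate no real obstacle; the only subtle point is the triangle-inequality classification of the neighbors of $w$, which is routine.
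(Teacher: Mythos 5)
Your proposal is correct and follows essentially the same route as the paper: invoke independence of the quantities from the particular pair $(y,z)$ (the Remark after Proposition \ref{lem:Gform}), then compute the average by double counting as in the proof of Lemma \ref{lem:c12}. Your inner count for $Q_1$ is right (given adjacent $y\in\Gamma_D(x)$, $w\in\Gamma_{D-1}(x)$, exactly $z_D$ of their $a_1$ common neighbors lie in $\Gamma_{D-1}(x)$, leaving $b_{D-1}-1-(a_1-z_D)$ neighbors of $w$ in $\Gamma_D(x)$ at distance two from $y$), and deducing $Q_2$ from $Q_1+Q_2=c_2$ together with the displayed value of $c_2\,p^D_{2,D}$ is a clean and accurate way to finish.
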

\begin{proof}
Consider the equations in the lemma statement. For each equation, the quantity
on the left is the same for all ordered pairs $y,z$ that satisfy \eqref{eq:xyzDD}.   Therefore, this quantity must be equal to its average value. This average value is routinely computed by combinatorial counting, and given on the right-hand side of the equation.
\end{proof}

\begin{lemma} \label{lem:versionsd} Referring to Proposition \ref{lem:Gform}, we have
\begin{align*}
&\vert \Gamma_{D-1}(x) \cap \Gamma(y) \cap \Gamma(z) \vert \\
&\quad = 
\frac{-q^{-1} (q^2-q^{-2})(aq^2-a^{-1}q^{-2})(aq^{2-D}-a^{-1}q^{D-2})(a+q^{D-1})}
{(aq-a^{-1} q^{-1})(aq^{D-1}-a^{-1} q^{1-D})(aq^{4-D}-a^{-1} q^{D-4})(a+ q^{D-3})},
\\
&\vert \Gamma_D(x) \cap \Gamma(y) \cap \Gamma(z) \vert \\
&\quad =\frac{ q^{-1} (q+q^{-1})(aq^2 -a^{-1} q^{-2})(aq^{D-2}-a^{-1} q^{2-D})(aq^{2-D}-a^{-1} q^{D-2})(a+ q^{D-1})}
{ (aq-a^{-1} q^{-1})(aq^{D-1}-a^{-1} q^{1-D})(aq^{4-D}-a^{-1} q^{D-4})(a+ q^{D-3})}.
\end{align*}
\end{lemma}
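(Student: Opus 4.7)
The plan is to derive Lemma \ref{lem:versionsd} as a direct algebraic consequence of Proposition \ref{lem:Gform}, by substituting the explicit $q$-Racah expressions for $\vartheta_D$, $\vartheta_{D-1}$, $\beta$, and $c_2$, and then factoring the resulting rational expressions in $a$ and $q$.

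First, I would record the needed closed forms. By \eqref{eq:vtheta} we have $\vartheta_D = aq^D + a^{-1}q^{-D}$ and $\vartheta_{D-1} = aq^{D-2} + a^{-1}q^{2-D}$, and by Lemma \ref{lem:gam}, $\beta = q^2+q^{-2}$. The value of $c_2$ is taken from the Appendix data (the same data used to prove Lemmas \ref{lem:c7}, \ref{lem:c9}, \ref{lem:Gformd}, and Proposition \ref{prop:bigform}), so it already appears in factored form in terms of $a$ and $q$. Next I would compute the two key quantities
\begin{align*}
\beta\vartheta_{D-1}-2\vartheta_D &= (q^2+q^{-2})(aq^{D-2}+a^{-1}q^{2-D})-2(aq^D+a^{-1}q^{-D}), \\
(q-q^{-1})(q^2-q^{-2})-2\vartheta_D+\beta\vartheta_{D-1} &= (q-q^{-1})(q^2-q^{-2})+\bigl(\beta\vartheta_{D-1}-2\vartheta_D\bigr).
\end{align*}
Each of these is a Laurent polynomial in $a$ and $q$ which I would factor by grouping. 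In particular I expect
\begin{align*}
\beta\vartheta_{D-1}-2\vartheta_D &= q^{-1}(q+q^{-1})(aq^{D-2}-a^{-1}q^{2-D})(a\cdot q^{\mathrm{stuff}}+\cdots),
\end{align*}
and similarly the denominator will factor into terms of the shape $(aq-a^{-1}q^{-1})$, $(aq^{D-1}-a^{-1}q^{1-D})$, $(aq^{4-D}-a^{-1}q^{D-4})$, and $(a+q^{D-3})$, matching the denominators claimed in Lemma \ref{lem:versionsd}.

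With the factorizations in hand, substituting into the two displayed formulas of Proposition \ref{lem:Gform} and cancelling common factors between numerator and denominator against the factored $c_2$ from the Appendix yields the stated expressions. Concretely, I would verify the top formula by showing
\begin{align*}
\frac{c_2(q-q^{-1})(q^2-q^{-2})}{(q-q^{-1})(q^2-q^{-2})-2\vartheta_D+\beta\vartheta_{D-1}}
= \frac{-q^{-1}(q^2-q^{-2})(aq^2-a^{-1}q^{-2})(aq^{2-D}-a^{-1}q^{D-2})(a+q^{D-1})}{(aq-a^{-1}q^{-1})(aq^{D-1}-a^{-1}q^{1-D})(aq^{4-D}-a^{-1}q^{D-4})(a+q^{D-3})},
\end{align*}
and analogously for the second formula; the ratio of the two new expressions must equal $(\beta\vartheta_{D-1}-2\vartheta_D)/((q-q^{-1})(q^2-q^{-2}))$, which provides a useful consistency check that reduces the work.

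The principal obstacle is not conceptual but bookkeeping: the factored forms involve many $q$-binomial-like terms of the shape $aq^r - a^{-1}q^{-r}$ and $a+q^r$, and one must pair each factor in the numerator and denominator correctly so as not to lose or gain extra factors of $q$. As a safeguard I would cross-check by specializing $D$ to a small value (for example $D=3$ or $D=4$) and verifying both displayed identities symbolically, and by confirming that the sum of the two expressions equals $c_2$ as required by \eqref{eq:sumacd}. This matches the strategy used in the proofs of Lemmas \ref{lem:c7} and Lemma \ref{lem:c9}, where the same Appendix data delivers the factorization.
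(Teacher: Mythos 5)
Your proposal matches the paper's proof exactly: the paper's one-line argument is "Evaluate the formulas in Proposition \ref{lem:Gform}, using \eqref{eq:vtheta} and the Appendix data," which is precisely the substitution-and-factorization you describe. Your additional consistency checks (the ratio identity and the sum equaling $c_2$) are reasonable safeguards but not part of the paper's argument.
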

\begin{proof} Evaluate the formulas in Proposition \ref{lem:Gform}, using  \eqref{eq:vtheta} and the  Appendix data.
\end{proof}

 \noindent By the {\it local graph} at $x$, we mean the subgraph of $\Gamma$ induced by $\Gamma(x)$. 
 For the sake of completeness, we mention a result about this local graph due to Curtin and Nomura \cite[Theorem~4.2]{CNhom}. We will translate their result into our notation (see Remarks \ref{rem:notation}, \ref{rem:typo}) and point of view,
 using standard results from \cite[Theorem~1.3.1]{bcn}.
  
\begin{lemma} \label{lem:c10} {\rm (See \cite[Theorem~4.2]{CNhom}).}     Assume that $a_1 \not=0$. Then the local graph $\Gamma(x)$ is connected strongly-regular, with eigenvalues $a_1$ and
\begin{align*}
r &= \frac{a (a+a^{-1})(aq^{2-D}-a^{-1}q^{D-2})}{q(a-q^{1-D})(a+q^{D-3})}, \\
s&= \frac{(1+aq^{D+1})(a^{-1}q^{D-2}-aq^{2-D})}{q^2(aq-a^{-1}q^{-1})(a+q^{D-3})}.
\end{align*}
The multiplicities of $r,s$ are
\begin{align*}
mult_r &= \frac{(q^{D-1}-q^{1-D})(1-aq^{1-D})(1+aq^{D+1})(a^3-q^{D-1})}{a(1-a^3q^{D+1})(q-q^{-1})(aq-a^{-1}q^{-1})}, \\
mult_s &= \frac{q^{D+1} (a+a^{-1})(q^{-D}-q^{D})(1-aq^{1-D})(a^3-q^{D-3})}{(q-q^{-1})(1-aq^{D-1})(1-a^3q^{D+1})}.
\end{align*}
\end{lemma}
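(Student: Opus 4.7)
The plan is to reduce the statement to \cite[Theorem~4.2]{CNhom} via a hypothesis check and a notational translation. Curtin and Nomura work under $1$-homogeneity together with a $q$-Racah eigenvalue condition; both properties are built into our Assumptions~\ref{def:spintype}, \ref{def:spintype2} through the central element $Z$. In particular Lemma~\ref{lem:CN} already supplies the local intersection numbers that constitute the $1$-homogeneity condition, and Assumption~\ref{def:spintype} specifies the eigenvalues in the required form.

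First I would establish directly that $\Gamma(x)$ is strongly regular of degree $a_1$. Regularity is immediate. For adjacent $y, z \in \Gamma(x)$, Lemma~\ref{lem:CN} with $i = 1$ gives the common-neighbor count inside $\Gamma(x)$ as the constant $\lambda = a_1 - 1 - b_1 z_2 / a_1$. For nonadjacent $y, z \in \Gamma(x)$, necessarily $\partial(y, z) = 2$ and $x \in \Gamma(y) \cap \Gamma(z)$. Taking the $(y, z)$-entry of the matrix equation of Lemma~\ref{lem:c15} with $i = 1$, using $|\Gamma_0(x) \cap \Gamma(y) \cap \Gamma(z)| = 1$ together with $|\Gamma_1(x) \cap \Gamma(y) \cap \Gamma(z)| + |\Gamma_2(x) \cap \Gamma(y) \cap \Gamma(z)| = c_2 - 1$, I would solve the resulting linear relation to obtain a constant value $\mu$ for $|\Gamma_1(x) \cap \Gamma(y) \cap \Gamma(z)|$. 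Hence $\Gamma(x)$ is strongly regular; the two distinct nontrivial eigenvalues computed below force connectedness (a disconnected strongly regular graph is a disjoint union of cliques, with $-1$ as its unique nontrivial eigenvalue).

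Second, I would extract the eigenvalues and their multiplicities from the $(v, k, \lambda, \mu)$ parameters of the local graph, where $v = k_1$ and $k = a_1$. The nontrivial eigenvalues $r, s$ are the roots of $t^2 - (\lambda - \mu)t - (k - \mu)$, and $\mathrm{mult}_r, \mathrm{mult}_s$ are given by the standard formula expressing them in terms of $v, k, r, s$. Substituting the explicit forms of $a_1$, $b_1$, $c_2$, $z_2$, $k_1$ from Lemma~\ref{lem:c7} and the Appendix, and simplifying via~\eqref{eq:vtheta} and Lemma~\ref{lem:thBasic}, I would recover the stated closed forms.

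The main obstacle will be the lengthy algebraic bookkeeping: the expressions for $r, s, \mathrm{mult}_r, \mathrm{mult}_s$ are delicate products of factors $a \pm q^j$, and obtaining the precise factorisation requires repeated use of the $q$-Racah identities. The dictionary $p \leftrightarrow q^2$, $x \leftrightarrow -aq^{1-D}$ from Remark~\ref{rem:notation} will serve as a cross-check against the formulas of \cite[Theorem~4.2]{CNhom} whenever the direct simplification becomes unwieldy.
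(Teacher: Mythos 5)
Your proposal is mathematically sound, and the route you sketch is really the same one the paper implicitly takes: the paper gives no independent proof of Lemma~\ref{lem:c10} but simply cites \cite[Theorem~4.2]{CNhom}, translates notation via Remarks~\ref{rem:notation} and~\ref{rem:typo}, and appeals to ``standard results from \cite[Theorem~1.3.1]{bcn}'' for the eigenvalue/multiplicity formulas of a strongly regular graph. What you add is a derivation of the strong regularity of $\Gamma(x)$ directly from the paper's own machinery: the constant $\lambda$ from Lemma~\ref{lem:CN} at $i=1$, and the constant $\mu$ from the $(y,z)$-entry of Lemma~\ref{lem:c15} at $i=1$ together with the accounting identity $1 + |\Gamma_1(x)\cap\Gamma(y)\cap\Gamma(z)| + |\Gamma_2(x)\cap\Gamma(y)\cap\Gamma(z)| = c_2$. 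Since Lemmas~\ref{lem:CN} and~\ref{lem:c15} are the paper's internal encoding of exactly what Curtin--Nomura established via $1$-homogeneity, the two proofs are different in presentation but not in substance; your version has the advantage of being self-contained within the paper's framework, at the cost of the algebraic bookkeeping you anticipate. One small point worth making explicit in the connectedness argument: a disconnected strongly regular graph is a disjoint union of cliques, whose restricted eigenvalues are $k$ and $-1$, so once you verify that the computed $r$ and $s$ are both different from $a_1$ you conclude that the valency eigenvalue has multiplicity one, hence $\Gamma(x)$ is connected; as you have written it this step is stated a bit loosely (``with $-1$ as its unique nontrivial eigenvalue'' is not quite the cleanest formulation) but the conclusion is correct.
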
  
 \begin{remark} \label{rem:typo} \rm There is a typo in \cite[Theorem~4.2]{CNhom}. The given value of $\mu$ is off by a factor of $-1$.
 \end{remark}

 \section{Reversing the logical direction} 

  \noindent In Section 14, we were given a distance-regular graph $\Gamma$ that satisfies various assumptions, and
  we showed that $\Gamma$ affords a spin model. In the present section, we reverse the logical direction as follows.
  \medskip
  
  \noindent
  For the rest of  this section, the following assumptions are in effect.
  Let $\Gamma=(X,\mathcal R)$ denote a distance-regular graph with diameter $D \geq 3$.
  We assume that $\Gamma$ affords a spin model $\sf W$. 
  By  \cite[Lemma~11.4]{nomSpinModel}, there exists an ordering $\lbrace E_i \rbrace_{i=0}^D$ of
  the primitive idempotents of $\Gamma$ that is formally self-dual; we assume that this ordering has $q$-Racah type. 
    Since $\lbrace E_i\rbrace_{i=0}^D$ is a basis for $M$ and
  $\sf W$ is an invertible element of $M$, there exist nonzero scalars $f$, $\lbrace \tau_i \rbrace_{i=0}^D$ in $\mathbb C$ such that $\tau_0=1$ and
  \begin{align*}
  {\sf W} = f \sum_{i=0}^D \tau_i E_i.
  \end{align*}
   By Lemma \ref{lem:WminusSM} the matrix ${\sf W}^{(-)}$ is a spin model. By  \cite[Lemma~11.2]{nomSpinModel} the spin model ${\sf W}^{(-)}$ is afforded by $\Gamma$.
  By  \cite[Section~12]{nomSpinModel} we see that after replacing $\sf W$ by $\sf W^{(-)}$ if necessary, there exist  nonzero $a, \alpha \in \mathbb C$ and $\varepsilon \in \mathbb C$
  that satisfy
  \begin{align}
  \tau_i &= (-1)^i a^{-i} q^{i(D-i)} \qquad \qquad (0 \leq i \leq D),       \label{eq:tFORM}
  \\
   \label{eq:eFORM2}
  \theta_i &= \theta^*_i=\alpha (aq^{2i-D}+a^{-1} q^{D-2i}) + \varepsilon \qquad  (0 \leq i \leq D)
  \end{align}
  and also \eqref{eq:ineq1}--\eqref{eq:bformx} in the Appendix.
 By construction Assumption \ref{def:spintype} is satisfied, and  \eqref{eq:Gtheta} matches  \eqref{eq:eFORM2}.
We will show that  $a, \alpha, \varepsilon$ satisfy Assumption \ref{def:spintype2}.
\medskip

\noindent There is a kind of type II matrix said to have Hadamard type \cite[Definition~8.5]{nomSpinModel}.
 By \cite[Lemma~8.4]{nomSpinModel} and \cite[Definition~8.5]{nomSpinModel}, the spin model $\sf W$ has Hadamard type if and only if $\tau_i \in \lbrace 1,-1\rbrace$ for $0 \leq i \leq D$.
In fact $\sf W$ does not have Hadamard type, because $\tau^2_1 \tau^{-1}_2 =q^2$ by \eqref{eq:tFORM} and $q^2 \not\in \lbrace 1,-1\rbrace$  by \eqref{eq:ineq1}. 
Pick any $x \in X$ and write $T=T(x)$. We will speak of the $T$-modules, using the definitions and notation from \cite{cerzo} and \cite[Section~9]{nomSpinModel}.
By \cite[Lemma~9.6]{nomSpinModel}, the standard $T$-module is a direct sum of irreducible $T$-modules. Let $U$ denote an irreducible $T$-module.
The $T$-module $U$ is thin,
by \cite[Proposition~11.10]{nomSpinModel} and because $\sf W$ does not have Hadamard type.
Let $r$ and $d$ denote the endpoint and diameter of $U$, respectively.
 By \cite[Lemma~12.6]{nomSpinModel}, the $T$-module $U$ has dual endpoint  $r$.
According to \cite[Lemma~12.9]{nomSpinModel} the intersection numbers of $U$ are given by
\begin{align*}
b_i(U)  &=
 \frac{ \alpha (q^{i-d} - q^{d-i}) (a q^{2r+i -D} - a^{-1} q^{D-2r-i})(a^3-q^{3D-2d-6r-2i-1}) }
        {a q^{D-d-2r} (a q^{2r+2i-D} - a^{-1} q^{D-2r-2i} ) (a + q^{D-2r-2i-1} ) },        
\\
c_i(U) &=
 \frac{ \alpha a (q^{i}- q^{-i})(a q^{d+2r+i-D} - a^{-1} q^{D-d-2r-i})
                  (a^{-1} - q^{2d-D+2r-2i+1}) }
        {q^{d-D+2r} (a q^{2r+2i-D} - a^{-1} q^{D-2r-2i}) (a + q^{D-2r-2i+1} ) }     
\end{align*}
for $1 \leq i \leq d-1$ and
\begin{align*}
b_0(U)&=
  \frac{ \alpha (q^{-d} - q^d) ( a^3 - q^{3D-2d-6r-1} ) }
         {a q^{D-d-2r} (a + q^{D-2r-1}) },        
\\
c_d(U) &=
 \frac{ \alpha  (q^{-d} - q^d)(a - q^{D-2r-1}) }
        { q^{d-1} (a + q^{D-2d-2r+1}) }                 
\end{align*}
and
\begin{align*}
 a_i(U) = \theta_r - c_i(U) - b_i(U) \qquad \qquad (0 \leq i \leq d),
 \end{align*}
 where $c_0(U)=0$ and $b_d(U)=0$. By \cite[Lemma~12.6]{nomSpinModel} the dual intersection numbers of $U$ satisfy
 \begin{align*}
 b^*_i(U)&= b_i(U) \quad (0 \leq i \leq d-1), \qquad \quad
 c^*_i(U) =c_i(U) \quad (1 \leq i \leq d), \\
 &\qquad \qquad  a^*_i(U)=a_i(U) \quad (0 \leq i \leq d).
 \end{align*}
 \noindent Using  the above formulas, we obtain
 \begin{align*}
&\frac{a_i(U)-\varepsilon}{\alpha} \biggl(1+ \frac{\theta_{r+i}-\varepsilon}{\alpha}\,\frac{1}{q+q^{-1}}\biggr) \\
& = \frac{(aq^{2r+d-D}+a^{-1}q^{D-d-2r})(q^{d+1}+q^{-d-1}) + (aq^{2r+d-D}+a^{-1}q^{D-d-2r})^2}{q+q^{-1}} \\
& =\frac{a^*_i(U)-\varepsilon}{\alpha} \biggl(1+ \frac{\theta_{r+i}-\varepsilon}{\alpha}\,\frac{1}{q+q^{-1}}\biggr) 
 \end{align*}
 for $0 \leq i \leq d$. Let $A$ denote the adjacency matrix of $\Gamma$. let $A^*=A^*(x)$ denote the dual adjacency matrix of $\Gamma$
 with respect to $x$ and the ordering $\lbrace E_i \rbrace_{i=0}^D$. The previous equations show that the following holds on $U$:
 \begin{align*}
& \sum_{i=0}^D E^*_i \frac{A -\varepsilon I}{\alpha} E^*_i  \biggl(1+ \frac{\theta_{i}-\varepsilon}{\alpha}\,\frac{1}{q+q^{-1}}\biggr) \\
& = \frac{(aq^{2r+d-D}+a^{-1}q^{D-d-2r})(q^{d+1}+q^{-d-1}) + (aq^{2r+d-D}+a^{-1}q^{D-d-2r})^2}{q+q^{-1}} I \\
& =\sum_{i=0}^D E_i \frac{A^* -\varepsilon I}{\alpha} E_i  \biggl(1+ \frac{\theta_{i}-\varepsilon}{\alpha}\,\frac{1}{q+q^{-1}}\biggr).
\end{align*}
We must have
  \begin{align*}
& \sum_{i=0}^D E^*_i \frac{A -\varepsilon I}{\alpha} E^*_i  \biggl(1+ \frac{\theta_{i}-\varepsilon}{\alpha}\,\frac{1}{q+q^{-1}}\biggr) \\
& =
  \sum_{i=0}^D E_i \frac{A^* -\varepsilon I}{\alpha} E_i  \biggl(1+ \frac{\theta_{i}-\varepsilon}{\alpha}\,\frac{1}{q+q^{-1}}\biggr),
\end{align*}
because the two sides agree on each irreducible $T$-module. By these comments, 
the scalars $a, \alpha, \varepsilon$ satisfy Assumption \ref{def:spintype2}.

  \section{Directions for future research}
  
  \noindent In this section, we give some  open problems.
  
  \begin{problem}\rm We refer to a distance-regular graph $\Gamma$ that satisfies  Assumptions \ref{def:spintype}, \ref{def:spintype2}.
  The quantities computed in Lemmas \ref{lem:c7}, \ref{lem:c12a}, \ref{lem:CN}, \ref{lem:versionsd} and Proposition \ref{prop:bigform}
  must be nonnegative integers, and this condition imposes a restriction on the parameters $q, a$. It is possible that this restriction implies an upper bound on the diameter $D$.
  Investigate this possibility by considering the cases $D=3, 4,5,\ldots $ in detail.
  \end{problem}
  
  \begin{problem}\rm Investigate the distance-regular graphs $\Gamma$ that satisfy Assumptions \ref{def:spintype}, \ref{def:spintype2} and at least one of
  \begin{align*}
  a = q^{D+1}, \qquad \quad a^2 = q^{-2D}, \qquad \quad a^4 = q^{-2}, \qquad \quad a=q^{-D-1}. 
  \end{align*}
  These cases are 
  discussed in Remark \ref{remark}.
  \end{problem}
  


  \noindent The following problem is motivated by Assumption \ref{def:spintype2}.
  \begin{problem}\rm  Let $\Gamma=(X, \mathcal R)$ denote a distance-regular graph with diameter $D\geq 3$ and adjacency matrix $A$.
   Fix $x \in X$ and write $T=T(x)$.
Assume that $T$ contains a central element $Z$ of the form
\begin{align*}
Z = \sum_{i=0}^D E^*_i A E^*_i \alpha_i + \sum_{i=0}^D E^*_i \beta_i, \qquad \qquad \alpha_i, \beta_i \in \mathbb C.
\end{align*}
Further assume that $Z$ is not a scalar multiple of the identity $I$.
 Investigate the algebraic and combinatorial consequences of these assumptions.
\end{problem}

  \noindent The following problem is motivated by \eqref{eq:GWWs}.
  \begin{problem}\rm Let $\Gamma=(X, \mathcal R)$ denote a distance-regular graph with diameter $D\geq 3$ and adjacency matrix $A$.
  Assume that $\Gamma$ is $Q$-polynomial with respect to the ordering $\lbrace E_i \rbrace_{i=0}^D$ of the primitive idempotents.
  Fix $x \in X$ and consider the dual adjacency matrix  $A^*=A^*(x)$ with respect to $\lbrace E_i \rbrace_{i=0}^D$.
  Assume that the Bose-Mesner algebra $M$ contains an invertible matrix $W$, and the dual Bose-Mesner algebra $M^*=M^*(x)$ 
  contains an invertble matrix $W^*$, such that 
   $W^{-1}A^*W= W^* A (W^*)^{-1}$. Investigate the algebraic and combinatorial consequences of these assumptions.
  \end{problem}

 \noindent The following problem is motivated by Section 15.
\begin{problem} \rm  Let $\Gamma=(X, \mathcal R)$ denote a distance-regular graph with diameter $D\geq 3$.
   Fix $x \in X$ and write $T=T(x)$. We make three assumptions (i)--(iii).
   \begin{enumerate}
   \item[\rm (i)] For $1 \leq i \leq D$ the following matrices are linearly dependent:
   \begin{align*}
   E^*_{i-1} A E^*_{i-1} A E^*_i, \qquad
    E^*_{i-1} A E^*_i A E^*_i, \qquad
     E^*_{i-1} A E^*_i.
     \end{align*}
     \item[\rm (ii)] For $2 \leq i \leq D-1$ the following matrices are linearly dependent:
   \begin{align*}
   E^*_{i} A E^*_{i-1} A E^*_i, \qquad
    E^*_{i} A E^*_i A E^*_i, \qquad
     E^*_{i} A E^*_{i+1} A E^*_i, \qquad 
     E^*_i A E^*_i, \qquad 
     E^*_i.
     \end{align*}
  \item[\rm (iii)] The following matrices are linearly dependent:
   \begin{align*}
   E^*_D A E^*_{D-1} A E^*_D, \qquad
    E^*_{D} A E^*_D A E^*_D, \qquad
     E^*_D A E^*_D, \qquad 
     E^*_D.
     \end{align*}
     \end{enumerate}
Investigate the algebraic and combinatorial consequences of the above three assumptions.
 \end{problem}
    \noindent The following problem is motivated by Section 16.
\begin{problem} \rm  Let $\Gamma=(X, \mathcal R)$ denote a distance-regular graph with diameter $D\geq 3$.
   Fix $x \in X$ and write $T=T(x)$. Assume that every irreducible $T$-module is thin. 
Further assume that up to isomorphism, there exists at most one irreducible $T$-module
 with any given endpoint and diameter. Investigate the algebraic and combinatorial consequences of these assumptions.
 \end{problem}
  
  \begin{conjecture}\rm The article \cite{nortonPT} discusses the Norton algebra of a distance-regular graph  $\Gamma = (X, \mathcal R)$ that is $Q$-polynomial
  with respect to a primitive idempotent $E$. Let $x,y\in X$ be distinct.  In \cite[Theorem~4.4]{nortonPT} the Norton algebra product of $E{\hat x} \star E{\hat y}$
  is expressed as a linear combination of three vectors
  \begin{align} \label{eq:threevec}
  C(x,y), \qquad B(x,y), \qquad E{\hat x} + E{\hat y}.
  \end{align}
  We conjecture that if $\Gamma$ satisfies Assumptions \ref{def:spintype}, \ref{def:spintype2} then the vectors \eqref{eq:threevec} are linearly dependent. 
  \end{conjecture}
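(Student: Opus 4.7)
The plan is to combine the Norton algebra computation from \cite[Theorem~4.4]{nortonPT} with the structural rigidity that Assumptions \ref{def:spintype} and \ref{def:spintype2} impose on the subconstituent algebra $T = T(x)$. Fix $x \in X$ and set $i = \partial(x,y)$. All three vectors $C(x,y)$, $B(x,y)$, $E{\hat x} + E{\hat y}$ lie in $EV$, where $V = \mathbb{C}^X$, so the asserted dependence can be checked after restricting to each irreducible $T$-module in the standard module decomposition.

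First I would recall, from \cite{nortonPT}, the explicit expansions of $C(x,y)$ and $B(x,y)$ as vectors of the form $E\sum_{z} c_{x,y,z}\, {\hat z}$, where the sums run over vertices $z$ in specific distance classes relative to $x$ and $y$ and the coefficients $c_{x,y,z}$ are controlled by the intersection numbers $p^h_{r,s}$. Using $E = \vert X \vert^{-1}\sum_{j=0}^D \theta^*_j A_j$, each of the three vectors can be re-expressed inside the algebra-module $MM^*{\hat x} + MM^*{\hat y}$, so that the question of linear dependence translates into a matrix identity that lives entirely inside $T$.

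Next I would invoke the thinness results from Section 16: under our assumptions, every irreducible $T$-module is thin with $q$-Racah intersection parameters as recorded there. Thinness forces each subspace $E^*_j U$ of an irreducible $T$-module $U$ to be at most one-dimensional, so the span of $\{C(x,y), B(x,y), E{\hat x} + E{\hat y}\}$ intersected with $U$ has dimension at most two whenever $\partial(x,y)$ is compatible with the endpoint and diameter of $U$. Hence a linear dependence is forced on each irreducible component. The key point is that the scalar coefficients of the dependence can be taken \emph{uniform} across components, since every module inherits the global $q$-Racah parameters $a, q, \alpha, \varepsilon$ of $\Gamma$.

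The main obstacle I anticipate is exhibiting scalars $\lambda_C(i), \lambda_B(i), \lambda_E(i)$, depending only on $i = \partial(x,y)$ and on $q, a, \alpha, \varepsilon$, such that $\lambda_C(i) C(x,y) + \lambda_B(i) B(x,y) + \lambda_E(i)(E{\hat x} + E{\hat y}) = 0$. I expect the correct ansatz to be dictated by the universal Askey-Wilson relation \eqref{eq:GZ1}, pulled back via the braid identity \eqref{eq:GWWW} to the pair ${\hat x}, {\hat y}$; this mirrors the way Propositions \ref{lem:c6} and \ref{lem:c11} extracted intersection-number identities from the centrality of $Z$. Once the ansatz is in hand, verifying the dependence on every thin irreducible $T$-module reduces to a single Askey-Wilson-type identity in $q, a$ that can be established uniformly by combining Lemma \ref{lem:thBasic}, Lemma \ref{lem:GeigProd}, and the closed-form intersection numbers $b_i(U), c_i(U)$ listed in Section 16.
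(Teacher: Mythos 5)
This is stated in the paper as a \emph{conjecture}, placed in Section 17 among open problems; the authors give no proof, so there is no argument in the paper against which your proposal can be compared. That makes it essential to assess whether your proposal actually constitutes a proof, and it does not.

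Your plan has two genuine gaps. First, and most seriously, you explicitly defer the heart of the matter: you write that the main obstacle is ``exhibiting scalars $\lambda_C(i), \lambda_B(i), \lambda_E(i)$ \ldots such that $\lambda_C(i) C(x,y) + \lambda_B(i) B(x,y) + \lambda_E(i)(E{\hat x} + E{\hat y}) = 0$,'' and then merely announce that you ``expect the correct ansatz to be dictated by'' the ${\mathbb Z}_3$-symmetric Askey-Wilson relation and the braid identity. Until those scalars are produced and the resulting identity is verified against the closed-form module data of Section 16, nothing has been proved. Second, the module-theoretic reduction is not sound as stated. Under Assumptions \ref{def:spintype}, \ref{def:spintype2} the irreducible $T$-modules are thin and (by formal self-duality) dual thin, so the component of each of the three vectors in an irreducible $T$-module $U$ lies in $E_1 U$, a space of dimension at most one. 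That indeed forces a linear dependence among any two of the three projected vectors \emph{within each $U$}, but the dependence ratios may vary from one $U$ to another depending on the endpoint $r$ and diameter $d$ of $U$. Your assertion that ``the scalar coefficients of the dependence can be taken uniform across components, since every module inherits the global $q$-Racah parameters'' is precisely what needs to be established, not assumed: the intersection numbers $b_i(U), c_i(U)$ recorded in Section 16 depend explicitly on $r$ and $d$, so uniformity of the dependence coefficients is a nontrivial cancellation that must be checked. Without that verification, the componentwise dependences do not assemble into the single global linear relation the conjecture asserts.

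In short, what you have written is a plausible research program toward resolving the conjecture, not a proof. To turn it into one you would need to (a) write down explicit candidates for $\lambda_C, \lambda_B, \lambda_E$ in terms of $q, a, \alpha, \varepsilon$ and $i=\partial(x,y)$, (b) compute the $U$-components of $C(x,y)$, $B(x,y)$, $E\hat x + E\hat y$ using the Norton algebra formulas from \cite{nortonPT} together with the formulas for $b_i(U), c_i(U)$ in Section 16, and (c) verify that the proposed dependence holds on every irreducible $T$-module $U$ with all admissible $(r,d)$, showing that the $r$- and $d$-dependence cancels.
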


  \section{Appendix}
  This appendix contains some data about the distance-regular graph $\Gamma=(X,\mathcal R)$ that satisfies Assumptions \ref{def:spintype}, \ref{def:spintype2}. 
  The scalars $a, \alpha, \varepsilon, q$ found below are from \eqref{eq:Gtheta}, \eqref{eq:MainAssume2}. 
   The data in this appendix is taken from (or easily follows from)  \cite[Section~12]{nomSpinModel}.
  
   \begin{align}
  q^{2i} &\neq 1 && (1 \leq i \leq D),                \label{eq:ineq1}
\\
  a^2 q^{2i} &\neq 1  &&  (1-D \leq i \leq D-1),             \label{eq:ineq2}
\\
 a^3 q^{2i-D-1} &\neq 1  &&  (1 \leq i \leq D),            \label{eq:ineq3}
\end{align}
  \begin{align}
 \alpha &= 
  \frac{(a q^{2-D} - a^{-1} q^{D-2}) ( a + q^{D-1}) }
         {q^{D-1} (q^{-1} - q)(a q - a^{-1} q^{-1}) (a - q^{1-D}) },       \label{eq:aformx}
\\
 \varepsilon &=
   \frac{q (a+a^{-1}) (a + q^{-D-1}) (a q^{2-D}- a^{-1} q^{D-2}) }
            { (q-q^{-1}) (a - q^{1-D}) (a q - a^{-1} q^{-1}) }.                \label{eq:bformx}
\end{align}

   \begin{align*}
 \vartheta_i &= a q^{2i-D} + a^{-1} q^{D-2i} \qquad \qquad (0 \leq i \leq D),
\\
  \theta_i &= \theta^*_i=\alpha \vartheta_i + \varepsilon \qquad \qquad (0 \leq i \leq D),
  \end{align*}
   \begin{align*}
b_0 = b^*_0 &=
  \frac{ \alpha (q^{-D} - q^D)(a^3 - q^{D-1}) }
         {a (a + q^{D-1}) },                          
\\
b_i = b^*_i &=
  \frac{ \alpha (q^{i-D} - q^{D-i})(a q^{i-D} - a^{-1} q^{D-i}) (a^3 - q^{D-2i-1}) }
         {a (a q^{2i-D} - a^{-1} q^{D-2i}) (a + q^{D-2i-1}) }    && (1 \leq i \leq D-1),  
\\
c_i =c^*_i &=
 \frac{\alpha a (q^{i} - q^{-i})(a q^i - a^{-1} q^{-i}) (a^{-1} - q^{D-2i+1} ) }
        { (a q^{2i-D} - a^{-1} q^{D-2i}) (a + q^{D-2i+1} ) } &&  (1 \leq i \leq D-1),   
\\
c_D =c^*_D&=
 \frac{ \alpha (q^{-D} - q^D) (a - q^{D-1}) }
        {q^{D-1} (a + q^{1-D}) },                       
        \\
a_i = a^*_i &= \frac{\alpha a (a+a^{-1})(1+aq^{D+1})(q^i-q^{-i})(a^{-1}q^{D-i}-aq^{i-D})}{q^{2i-D+1}(a+q^{D-1})(a+q^{D-2i-1})(a+q^{D-2i+1})}
&& (1 \leq i \leq D-1), \\
a_D = a^*_D &= \frac{\alpha a (a^{-2}-a^2)(q^D-q^{-D})}{(a+q^{D-1})(a+q^{1-D})}.
\end{align*}

\noindent
The following formulas and statements are readily checked.
\begin{align*}
\varepsilon &= a_1 \frac{q (a+q^{D-3})}{(q-q^{-1})(a-q^{D-1})}, \\
a_1 &= \frac{(a+a^{-1})(1-aq^{1-D})(1+aq^{D+1})(aq^{2-D}-a^{-1}q^{D-2})}{(1+aq^{3-D})(1-aq^{D-1})(aq-a^{-1}q^{-1})}.
\end{align*}

\noindent  For $1 \leq i \leq D-1$,
 \begin{align*}
 a_i = a_1\frac{a q^{2-2i}(a+q^{D-3}) (q^i-q^{-i})(a^{-1} q^{D-i}-aq^{i-D})}{(1-aq^{1-D})(q-q^{-1})(a+q^{D-2i+1})(a+q^{D-2i-1})}.    \end{align*}
For $1 \leq i \leq D$,
  \begin{align}  \label{eq:aiApp}
a_{i-1} \biggl(1+ \frac{\vartheta_{i-1}}{q+q^{-1}}\biggr) = a_i \biggl(1+ \frac{\vartheta_i}{q+q^{-1}}\biggr) + \frac{\vartheta_{i-1} - \vartheta_i}{q+q^{-1}} \varepsilon.
\end{align}
For $0 \leq i \leq D$,
\begin{align}
a_i \biggl(1+ \frac{\vartheta_i}{q+q^{-1}}\biggr) = \varepsilon \, \frac{\vartheta_i - \vartheta_0}{q+q^{-1}}.  \label{eq:aiApp2}
\end{align}

\noindent The graph $\Gamma$ is called {\it bipartite} whenever $a_i = 0$ for $0 \leq i \leq D$. This occurs if and only if $a^2=-1$.
The graph $\Gamma$ is called {\it almost bipartite} whenever $a_i = 0 $ for $0 \leq i \leq D-1$ and $a_D\not=0$. This occurs if and only if $a=-q^{-D-1}$.
If $\Gamma$ is neither bipartite nor almost bipartite, then $a_i \not=0$ for $1 \leq i \leq D$. Note that $a_1 \not=0$ implies
$a_i \not=0$ for $1 \leq i \leq D$.

\bigskip

\noindent Kazumasa Nomura \hfil\break
\noindent Tokyo Medical and Dental University \hfil\break
\noindent Kohnodai Ichikawa 272-0827 Japan \hfil\break
\noindent email: {\tt knomura@pop11.odn.ne.jp} \hfil\break

\noindent Paul Terwilliger \hfil\break
\noindent Department of Mathematics \hfil\break
\noindent University of Wisconsin \hfil\break
\noindent 480 Lincoln Drive \hfil\break
\noindent Madison, WI 53706-1388 USA \hfil\break
\noindent email: {\tt terwilli@math.wisc.edu }\hfil\break

\section{Statements and Declarations}

\noindent {\bf Funding}: The author declares that no funds, grants, or other support were received during the preparation of this manuscript.
\medskip

\noindent  {\bf Competing interests}:  The author  has no relevant financial or non-financial interests to disclose.
\medskip

\noindent {\bf Data availability}: All data generated or analyzed during this study are included in this published article.

\end{document}